\title[Hyperbolicity of renormalization]{Hyperbolicity of renormalization of critical quasicircle maps}
\author[Willie Rush Lim]{Willie Rush Lim}
\address{Dept. of Mathematics, Brown University, RI 02912}
\email{willie\_rush\_lim@brown.edu}
\date{}
\begin{document}

\begin{abstract}
    There is a well developed renormalization theory of real analytic critical circle maps by de Faria, de Melo, and Yampolsky. In this paper, we extend Yampolsky's result on the hyperbolicity of renormalization periodic points to a larger class of dynamical objects, namely critical quasicircle maps, i.e. analytic self-homeomorphisms of a quasicircle with a single critical point. Unlike critical circle maps, the inner and outer criticalities of critical quasicircle maps can be distinct. We develop a compact analytic renormalization operator called ``Corona Renormalization`` with a hyperbolic fixed point whose stable manifold has codimension one and consists of critical quasicircle maps of the same criticality and periodic type rotation number. Our proof is an adaptation of Pacman Renormalization Theory for Siegel disks as well as rigidity results on the escaping dynamics of transcendental entire functions.
\end{abstract}



\maketitle

\setcounter{tocdepth}{1}
\tableofcontents

\section{Introduction}
\label{sec:intro}

A Jordan curve $\Hq$ in the Riemann sphere $\RS = \C \cup \{\infty\}$ is called a quasicircle if it is the image of the unit circle under a quasiconformal homeomorphism of $\RS$. The key player in this paper is the following.

\begin{definition}
    A (\emph{uni}-)\emph{critical quasicircle map} is an orientation-preserving homeomorphism $f: \Hq \to \Hq$ of a quasicircle which extends to a holomorphic map on a neighborhood of $\Hq$ and has exactly one critical point on $\Hq$.
\end{definition}

Given a critical quasicircle map $f: \Hq \to \Hq$, the behaviour at the unique critical point on $\Hq$ can be encoded by two positive integers, namely the inner criticality $d_0$ and the outer criticality $d_\infty$. 
Roughly speaking, for any point near the critical value located inside or outside of $\Hq$, the number of preimages that are located inside or outside is $d_0$ or $d_\infty$ respectively.
The total local degree of $f$ at the critical point is $d_0+d_\infty-1$ and it is at least $2$. When the criticalities are specified, we call $f: \Hq \to \Hq$ a \emph{$(d_0,d_\infty)$-critical} quasicircle map. 
See Figure \ref{fig:cqc-comparison} for some examples.

\begin{figure}
    \centering
    \includegraphics[width=\linewidth]{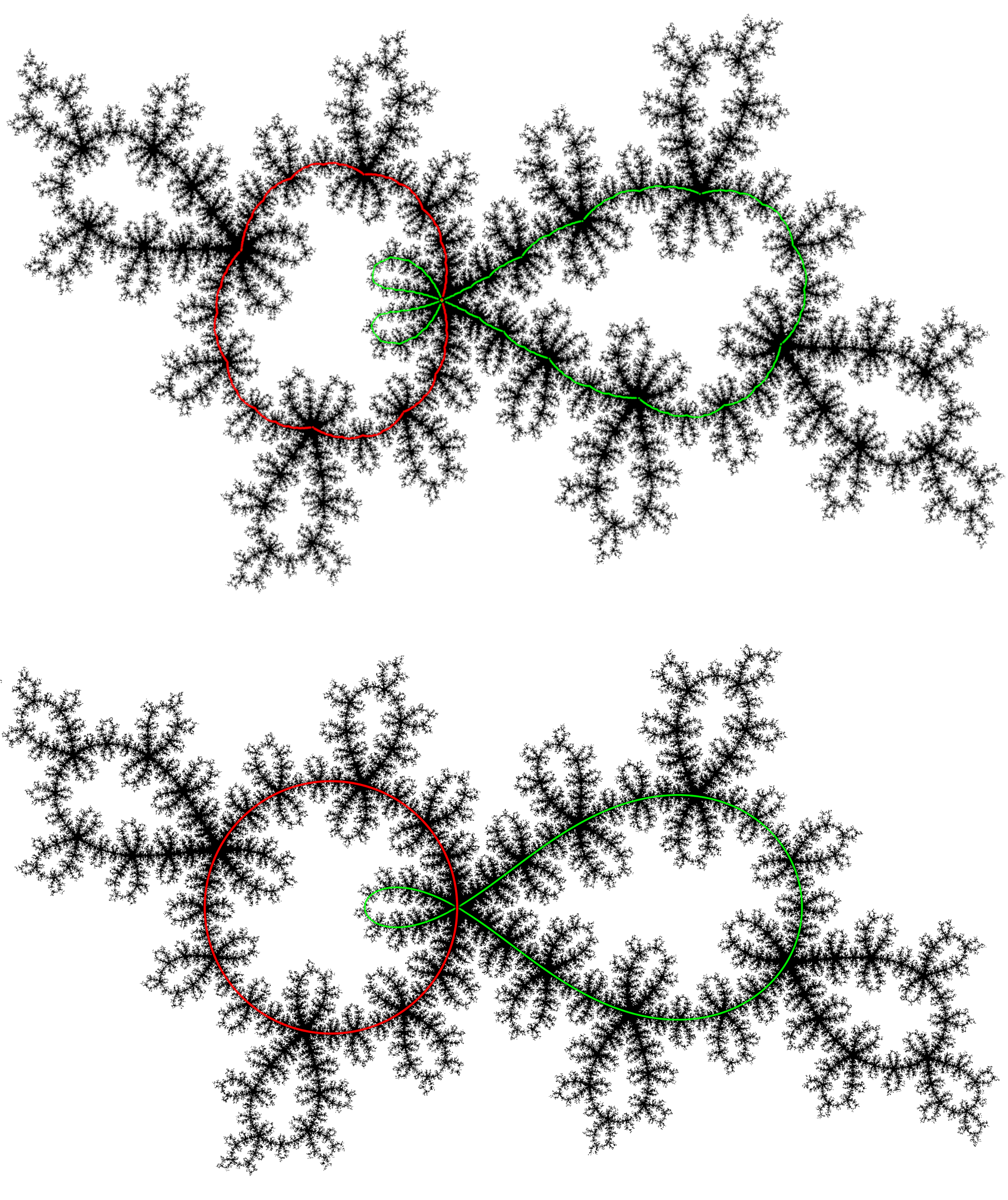}
    \caption{The Julia sets of
    \leavevmode\\
    \\
    \begin{minipage}{\linewidth} 
    \begin{align*}
        f_{3,2}(z) = bz^3\dfrac{4-z}{1-4z+6z^2} \quad \text{and} \quad f_{2,2}(z) = cz^2 \dfrac{z-3}{1-3z}
    \end{align*}
    at the top and the bottom respectively. The critical values $b\approx -1.144208-0.964454i$ and $c \approx -0.755700-0.654917i$ are picked such that $f_{3,2}:\Hq \to \Hq$ is a $(3,2)$-critical quasicircle map on some quasicircle $\Hq$, $f_{2,2}: \T \to \T$ is a $(2,2)$-critical circle map, and both have the golden mean rotation number $\theta = \frac{\sqrt{5}-1}{2}$. Both $\Hq$ and $\T$ are colored red, and their preimages are colored green.
  \end{minipage}
    }
    \label{fig:cqc-comparison}
\end{figure}

For trivial reasons, we are exclusively concerned with the case when the rotation number $\theta$ of $f$ is irrational. Given an irrational number $\theta \in (0,1)$ with continued fraction expansion 
\[
\theta = [0;a_1,a_2, a_3,\ldots] := \dfrac{1}{a_1 + \frac{1}{a_2 + \frac{1}{a_3 + \ldots}}},
\]
we say that $\theta$ is \emph{of bounded type} if $a_n$'s are uniformly bounded above, \emph{pre-periodic} if there are positive integers $m$ and $p$ such that $a_n = a_{n+p}$ for all $n \geq m$, and \emph{periodic} if additionally $m=1$. We will denote corresponding spaces by $\IrratBdd$, $\IrratPre$ and $\IrratPer$ respectively.

The term ``critical quasicircle map`` first appeared in the work of Petersen \cite{Pe04} who proved Denjoy distortion estimates and demonstrated that $f$ is quasisymmetrically conjugate to irrational rotation if and only if $\theta$ is of bounded type. 
Petersen's result was a generalization of the Herman-\'{S}wi\k{a}tek theorem \cite{H87,S88} for critical circle maps and they all apply to the general multicritical case.
While our primary focus will be on the bounded type case, it is worth noting that these estimates imply that for any irrational $\theta$, $f$ is topologically conjugate to an irrational rotation.

We will be working with critical quasicircle maps $f: \Hq \to \Hq$ where $\Hq$ is a \emph{Herman curve}, that is, $\Hq$ is not contained in the closure of any rotation domain of $f$. In the bounded type regime, this is equivalent to the assumption that both $d_0$ and $d_\infty$ are at least two.

Given any pair of integers $d_0, d_\infty \geq 2$, the problem of realization of $(d_0,d_\infty)$-critical quasicircle maps was solved in our previous work \cite{Lim23a} by studying \emph{a priori bounds} and degeneration of Herman rings of the simplest configuration. In \cite{Lim23b}, we initiated the study of renormalizations $\Rcp^n f$ of $f$ and proved $C^{1+\alpha}$ rigidity for bounded type critical quasicircle maps. In short, $\Rcp^n f$ is defined as follows. 

Let $\{p_n/q_n\}_{n\geq 1}$ denote the best rational approximations of $\theta$. 
The $n$\textsuperscript{th} pre-renormalization of $f$ is the commuting pair $p\Rcp^n f= ( f^{q_n}|_{I_{n+1}}, f^{q_{n+1}}|_{I_n})$, where $I_n \subset \Hq$ is the shortest interval between the critical point $c$ of $f$ and $f^{q_n}(c)$. 
This is the first return map of $f$ back to the interval $I_{n+1} \cup I_n$. Then, the $n$\textsuperscript{th} renormalization $\Rcp^n f$ is the normalized commuting pair obtained by rescaling $p\Rcp^n f$ to unit size. 
A major consequence of $C^{1+\alpha}$ rigidity is that $\Rcp^n f$ converges exponentially fast to a unique horseshoe attractor.

Before we state our main result, let us review two extensively studied special cases, namely critical circle maps and Siegel maps.

\subsection{Critical circle maps}
\label{ss:critical-circle-maps}

A \emph{critical circle map} is a critical quasicircle map $f: \Hq \to \Hq$ where $\Hq$ is simply the unit circle $\T = \R/\Z$. By symmetry, the inner criticality $d_0$ must coincide with the outer criticality $d_\infty$.

The renormalization theory of critical circle maps serves to justify the golden mean universality phenomena empirically observed in smooth families of critical circle maps by Feigenbaum et al. \cite{FKS} and \"Ostlund et al. \cite{ORSS}. Historically, this is one of the two main examples of universality in one-dimensional dynamics, the other being the Feigenbaum-Coullet-Tresser universality observed in unimodal maps \cite{F1,F2,TC,CT}. In \cite{FKS, ORSS}, the golden mean universality was translated into a conjecture on the hyperbolicity of the renormalization operator on the space of critical commuting pairs. The conjecture was later generalized by various authors, in particular Lanford \cite{Lan88} who introduced renormalization horseshoes to account for more complicated universalities. Below, we provide a brief historical summary of the development of the theory.
    
In \cite{dF99}, de Faria introduced the notion of \emph{holomorphic commuting pairs} and proved the universality of scaling ratios and the existence of renormalization horseshoe for critical circle maps with bounded type rotation number.
Shortly after, de Faria and de Melo \cite{dFdM2} proved exponential convergence of renormalization to the horseshoe as well as $C^{1+\alpha}$ conjugacy between any two critical circle maps under the bounded type assumption. 
By studying the parabolic limits, Yampolsky extended the horseshoe to all irrational rotation numbers in \cite{Y01}.
Two years later, Yampolsky \cite{Y03a,Y03b} introduced the cylinder renormalization operator $\Rcyl$ which, unlike $\Rcp$, acts on the space of critical circle maps directly. In this framework, he proved the global uniform hyperbolicity of the renormalization horseshoe, bringing Lanford's program to completion.

\begin{theorem}[Hyperbolicity of the renormalization horseshoe \cite{Y01,Y03a,Y03b}]
    The cylinder renormalization operator $\Rcyl$ is a compact real-analytic operator on the space of critical circle maps on a small neighborhood of $\mathbb{T}$. 
    It admits a uniformly hyperbolic horseshoe attractor $\mathcal{A}$ on which $\Rcyl: \mathcal{A}\to \mathcal{A}$ is conjugated to the two-sided shift. 
    The horseshoe $\mathcal{A}$ has exactly one unstable direction and critical circle maps with irrational rotation number are contained in the stable sets of $\mathcal{A}$.
\end{theorem}

\subsection{Siegel maps}

What happens to a critical quasicircle map $f: \Hq \to \Hq$ if either $d_0$ or $d_\infty$ is one? In the bounded type regime, this is equivalent to the statement that $\Hq$ is the boundary of a rotation domain. By Douady-Ghys surgery \cite{D87,G84}, $\Hq$ can then be assumed to be the boundary of a \emph{Siegel disk}, i.e. a simply connected rotation domain centered at a neutral fixed point $\alpha$, and $f$ is called a \emph{Siegel map}. 

Stirnemann \cite{St94} first gave a computer-assisted proof of the existence of a renormalization fixed point with a golden mean Siegel disk. McMullen \cite{McM98} applied a measurable deep point argument to prove the existence of renormalization horseshoe for bounded type rotation number. In \cite[\S4]{AL22}, Avila and Lyubich established quasiconformal rigidity of bounded type Siegel maps; via McMullen's method, the regularity can be improved to $C^{1+\alpha}$. Gaidashev and Yampolsky \cite{Y08, GY22} gave a computer-assisted proof of the golden mean hyperbolicity of renormalization of Siegel disks using the formalism of \emph{almost commuting pairs}. 

In \cite{DLS}, Dudko, Lyubich, and Selinger constructed for every periodic irrational $\theta$ a variant of $\Rcp$ called \emph{pacman renormalization operator} $\Rpac$ that acts on a space of \emph{pacmen}, i.e. holomorphic maps $f: (U,\alpha) \to (V,\alpha)$ between two nested disks admitting a fixed point at $\alpha$ and a unique critical point on $U$, and satisfying certain branched covering-like conditions. They showed that $\Rpac$ admits a hyperbolic fixed point whose stable manifold has codimension one and consists of Siegel maps with rotation number $\theta$. One remarkable feature is that every pacman on the unstable manifold admits a global transcendental analytic extension. Ideas from transcendental dynamics were successfully adapted in \cite{DL23} to study the escaping dynamics of pacmen on the unstable manifold, which ultimately led to a progress in MLC and new examples of positive area Julia sets.

The idea that infinite anti-renormalizations induce a global transcendental extension was first observed and established in the setting of the classical Feigenbaum renormalization fixed point in the 1990's \cite{Ep92,McM96,Bu97}. Recently, T. Alland \cite{A23} adapted the framework developed in \cite{DL23} to study the detailed structure of the transcendental dynamics and fully describe the group of quasisymmetries of the Feigenbaum Julia set.

\subsection{The main result}
\label{ss:corona-renormalization}

In this paper, we will continue our study of renormalization of critical quasicircle maps and prove hyperbolicity of renormalization periodic points. Our approach will follow the ideas behind Pacman Renormalization Theory. We design a renormalization operator acting on the space of \emph{coronas}, a doubly-connected version of pacmen. 

A corona is a holomorphic map $f: U \to V$ between two nested annuli $U \Subset V$ such that $f: U\backslash \gamma_0 \to V \backslash \gamma_1$ is a unicritical branched covering map where $\gamma_0$ (resp. $\gamma_1$) is an arc connecting the two boundary components of $U$ (resp. $V$). The number of preimages of $\gamma_1$ on the boundary components of $U$ determines the inner and outer criticalities $d_0 \geq 2$ and $d_\infty\geq 2$ of $f$; the total degree of $f$ is equal to $d_0+d_\infty-1$. When the criticalities are specified, we call $f$ a $(d_0,d_\infty)$-critical corona. See Figure \ref{fig:corona} for reference. Within the space of unicritical holomorphic maps on an annulus, the space of maps admitting a corona structure is open.

\begin{figure}
\begin{tikzpicture}[scale=1]
\coordinate (v4) at (1,1) {};
\coordinate (v3) at (1.5,0.5) {};
\coordinate (v2) at (1.5,-0.5) {};
\coordinate (v1) at (1,-1) {};
\coordinate (w1) at (2.7,-1.9) {};
\coordinate (w2) at (3.3,-1.7) {};
\coordinate (w3) at (3.8,-1.15) {};
\coordinate (w4) at (3.9,-0.5) {};
\coordinate (w5) at (3.9,0.5) {};
\coordinate (w6) at (3.8,1.15) {};
\coordinate (w7) at (3.3,1.7) {};
\coordinate (w8) at (2.7,1.9) {};

\draw[fill=green!10!white] (0,0) ellipse (5 and 4);
\draw[fill=yellow!20!white] (w1) .. controls (2.8,-1.56) .. (w2) .. controls (3.34,-1.28) .. (w3) .. controls (3.6,-0.76) .. (w4) .. controls (3.4,-0.4) and (3.4,0.4) .. (w5) .. controls (3.6,0.76) .. (w6) .. controls (3.34,1.28) .. (w7) .. controls (2.8,1.56) .. (w8) .. controls (-5.2,4.5) and (-5.2,-4.5) .. (w1); 
\draw[fill=green!10!white] (v1) .. controls (1.4,-1.2) and (1.7,-0.8) .. (v2) .. controls (1.8,-0.2) and (1.8,0.2) .. (v3) .. controls (1.7,0.8) and (1.4,1.2) .. (v4) .. controls (-2.5,3) and (-2.5,-3) .. (v1);
\draw[blue, fill=white] (0,0) ellipse (0.5 and 0.4);

\draw[ultra thick, red] (v1) .. controls (1.4,-1.2) and (1.7,-0.8) .. (v2); 
\draw[ultra thick, black] (v2) .. controls (1.8,-0.2) and (1.8,0.2) .. (v3); 
\draw[ultra thick, red] (v3) .. controls (1.7,0.8) and (1.4,1.2) .. (v4); 
\draw[ultra thick, blue] (v4) .. controls (-2.5,3) and (-2.5,-3) .. (v1);
\draw[ultra thick, red] (w1) .. controls (2.8,-1.56) .. (w2);
\draw[ultra thick, blue] (w2) .. controls (3.34,-1.28) .. (w3); 
\draw[ultra thick, red] (w3) .. controls (3.6,-0.76) .. (w4); 
\draw[ultra thick, black] (w4) .. controls (3.4,-0.4) and (3.4,0.4) .. (w5); 
\draw[ultra thick, red] (w5) .. controls (3.6,0.76) .. (w6); 
\draw[ultra thick, blue] (w6) .. controls (3.34,1.28) .. (w7); 
\draw[ultra thick, red] (w7) .. controls (2.8,1.56) .. (w8);
\draw[ultra thick, black] (w8) .. controls (-5.2,4.5) and (-5.2,-4.5) .. (w1); 

\node[yellow!50!black] at (-0.5,-1.9) {$U$};
\node[green!50!black] at (-1.2,-3.1) {$V$};
\node[red] at (1.2,-0.7) {$\gamma^0_2$};
\node[red] at (1.2,0.7) {$\gamma^0_1$};
\node[red] at (3.05,-2) {$\gamma^\infty_1$};
\node[red] at (4.05,-0.8) {$\gamma^\infty_2$};
\node[red] at (4.05,0.8) {$\gamma^\infty_3$};
\node[red] at (3.05,2) {$\gamma^\infty_4$};
\draw[red] (-1.27,1) -- (-2.25,1.75);
\draw[red] (-0.42,-0.2) -- (-4.33,-2);
\node[red] at (-2,1.2) {$\gamma_0$};
\node[red] at (-3.25,-1.8) {$\gamma_1$};
\node at (2.5,0) {$\bullet$};
\node at (2.5,-0.25) {$c_0$};
\node at (-3.85,0.85) {$f$};
\draw[line width=0.5pt,-latex] (-2.5,0.75) .. controls (-3.5,1) and (-5,0) .. (-4.25,-1);
\end{tikzpicture}

\caption{A $(d_0,d_\infty)$-critical corona $f: U \to V$ with $d_0=2$ and $d_\infty=3$. 
There are $2d_0-2$ preimages of $\gamma_1$ on the inner boundary of $U$, and $2d_\infty-2$ preimages of $\gamma_1$ on the outer boundary of $U$.}
\label{fig:corona}
\end{figure}

We define corona renormalization operator as follows. 
First, consider the quadrilateral that is the connected component of $V$ minus $\gamma_1$ and $f(\gamma_1 \cap U)$ that contains the critical value of $f$.
The first return map onto this quadrilateral will be called a \emph{pre-corona}. Gluing opposite sides of this quadrilateral projects the pre-corona to a new corona on a new annulus, which is called the \emph{prime corona renormalization} $\Rprm f$ of $f$. In general, a corona renormalization operator is any iterate of the prime corona renormalization. The precise definition is supplied in \S\ref{ss:renormalization-operator}.

We say that a $(d_0,d_\infty)$-critical corona $f$ is \emph{rotational} with rotation number $\theta$ if it admits an invariant quasicircle $\Hq$ such that $f:\Hq \to \Hq$ defines a $(d_0,d_\infty)$-critical quasicircle map with rotation number $\theta$. The prime renormalization of a $(d_0,d_\infty)$-critical rotational corona is again a $(d_0,d_\infty)$-critical rotational corona, and the induced action on the rotation number is governed by
\[
r_{\textnormal{prm}}(\theta) = \begin{cases}
    \dfrac{\theta}{1-\theta}, & \text{ if } 0 \leq \theta < \cfrac{1}{2},\\
    2-\dfrac{1}{\theta}, & \text{ if } \cfrac{1}{2} \leq \theta < 1.
\end{cases}
\]
Every number in $\IrratPer$ is a periodic point of $r_{\textnormal{prm}}$. In general, every bounded type critical quasicircle map $f$ is corona renormalizable: every sufficiently high pre-renormalization $p\Rcp^n f$ can be projected under some gluing map to a rotational corona with the same criticality. 

Here is our main theorem.

\begin{thmx}[Hyperbolicity of renormalization]
\label{main-theorem}
    For any integers $d_0, d_\infty \geq 2$ and any $\theta \in \IrratPer$, there exists a corona renormalization operator 
    \[
    \Rstar: (\Ustar,f_*) \to (\Bstar,f_*)
    \]
    with the following properties.
    \begin{enumerate}[label=\textnormal{(\arabic*)}]
        \item\label{main-1} $\Ustar$ is an open subset of a Banach analytic manifold $\Bstar$ consisting of $(d_0,d_\infty)$-critical coronas.
        \item\label{main-2} $\Rstar$ is a compact analytic operator with a unique fixed point $f_*$ which is hyperbolic.
        \item\label{main-3} The local stable manifold $\mani^s_{\textnormal{loc}}$ of $f_*$ corresponds to the space of rotational coronas with rotation number $\theta$ in $\Ustar$.
        \item\label{main-4} The local unstable manifold $\unstloc$ is one-dimensional.
    \end{enumerate}
\end{thmx}

One immediate application of this theorem is the following. Given a critical quasicircle map $f: \Hq \to \Hq$, we define a \emph{Banach neighborhood} $\mathcal{N}$ of $f$ to be a Banach ball of the form $\mathcal{N}_U(f,\varepsilon)$ defined as follows.
Let $\varepsilon>0$ be a small number, and let $U$ be a skinny annular neighborhood of $\Hq$ such that $f$ is holomorphic on a neighborhood of $U$. 
Then, $\mathcal{N}_U(f,\varepsilon)$ is the space of unicritical holomorphic maps $g:U \to \C$ such that $g$ extends continuously to the boundary of $U$ and $\sup_{z \in U}|f(z)-g(z)| < \varepsilon$, equipped with the sup norm. 
It is natural to ask whether a generic perturbation of $f$ in a Banach neighborhood of $f$ still admits an invariant quasicircle.

\begin{corx}[Structure of conjugacy classes]
\label{cor:continuity-submanifold}
    Consider any sufficiently small Banach neighborhood $\mathcal{N}$ of a $(d_0,d_\infty)$-critical quasicircle map $f$ with pre-periodic rotation number $\theta$. The space $\mathcal{S}$ of maps in $\mathcal{N}$ that restrict to a $(d_0,d_\infty)$-critical quasicircle map with rotation number $\theta$ forms an analytic submanifold of $\mathcal{N}$ of codimension at most one. The invariant quasicircle of $g \in \mathcal{S}$ moves holomorphically in $g$.
\end{corx}

At this stage, we cannot yet conclude that the codimension of $\mathcal{S}$ in $\mathcal{N}$ is always exactly one.
This issue, together with other conjectures and further future applications, is discussed at the end of this paper.

\subsection{Passing to transcendental dynamics}

The most difficult step in the proof of Theorem \ref{main-theorem} is item \ref{main-4}, which will be accomplished via transcendental dynamics. 
Given a corona $f$ on the local unstable manifold $\unstloc$, its anti-renormalizations can be projected to a single dynamical plane and admit a maximal transcendental extension $\Fbold$ called a \emph{cascade} associated to $f$. 
A cascade can be described as a collection $\left\{\Fbold^P\right\}_{P \in \Tbold}$ of $\sigma$-proper maps parametrized by a dense semigroup $\Tbold \subset (\R_{\geq 0},+)$ such that $\Fbold^{P} \circ \Fbold^Q = \Fbold^{P+Q}$.
The maps $\Fbold^P$ share a common critical value, which will be normalized to be at $0$. 
The second half of this paper is dedicated to the study of the dynamics of $\Fbold$. 

A central conjecture in rational dynamics is the absence of invariant line fields, which implies the famous density of hyperbolicity conjecture \cite{McM94,McS98}. In our framework, we can define an invariant line field of a cascade $\Fbold$ to be a measurable Beltrami differential $\mu(z) \frac{d \bar{z}}{dz}$ on $\C$ such that $\left(\Fbold^P\right)^*\mu=\mu$ almost everywhere for all $P \in \Tbold$, $|\mu|=1$ on a positive measure set, and $\mu=0$ elsewhere. The existence of an invariant line field $\mu$ indicates the existence of a non-trivial deformation space for $\Fbold$ associated to the support of $\mu$. To justify item \ref{main-4}, we prove a rigidity theorem for cascades $\Fbold$. 

\begin{thmx}[Rigidity of escaping dynamics in $\unstloc$]
\label{main-theorem-rigidity}
    Consider a cascade $\mathbf{F}$ associated to a corona $f$ in $\unstloc$. The full escaping set
    \[
        \Esc(\Fbold) := \left\{ z \in \C \: : \: \text{ either } z \not\in \bigcap_{P} \Dom\left(\Fbold^P\right) \text{ or } \Fbold^P(z) \to \infty \text{ as } P \to \infty \right\}
    \]
    supports no invariant line field and moves conformally within the subspace $\{ f \in \unstloc \: | \: 0 \not\in \Esc(\Fbold)\}$.
    Furthermore, if $\Fbold$ has an attracting cycle, then the Julia set of $\Fbold$ supports no invariant line field.
\end{thmx}

Here, a \emph{conformal motion} of a set $E \subset \C$ refers to a holomorphic motion of $E$ that is conformal almost everywhere on $E$. (See Definition \ref{def:conformal-motion}.) One may compare this theorem to Rempe's result \cite{R09} on the rigidity of the escaping set of transcendental entire functions. Our methods also allow for an analog of Theorem \ref{main-theorem-rigidity} in other settings, such as pacman and period-doubling renormalization fixed points. Ultimately, 
\[
    \textnormal{Theorem \ref{main-theorem-rigidity}} \enspace \Longrightarrow \enspace \dime{\unstloc} \leq \text{number of critical orbits} \leq 1 \enspace \Longrightarrow \enspace \textnormal{Theorem \ref{main-theorem}\ref{main-4}}.
\]

Let us highlight three key differences between our case and the pacmen case. A more comprehensive summary can be found in \S\ref{ss:outline}.
    
Firstly, the existence of a non-attracting transverse direction for pacman renormalization is much more straightforward. Unlike coronas, every pacman is designed to admit a natural fixed point $\alpha$ associated to it. For a Siegel pacman, the $\alpha$-fixed point is the center of its Siegel disk. The multiplier $\lambda=f'(\alpha)$ of $\alpha$ induces a non-attracting eigenvalue at the pacman renormalization fixed point, whereas in the corona case, a rigidity result from \cite[Theorem B]{Lim23b} needs to be applied. 

Secondly, the proof of item \ref{main-4} for pacmen does not require such a rigidity theorem. After obtaining the transcendental structure, it is immediate that the multiplier $\lambda$ induces a natural foliation $\{W^u(\lambda)\}_\lambda$ of the unstable manifold of the pacman renormalization fixed point. By applying the $\lambda$-lemma along parabolic leaves, Dudko, Lyubich, and Selinger showed that
\[
    \dime{W^u\left(p/q\right)} \leq \textnormal{number of free critical orbits in } W^u\left(p/q\right) = 0
\]
where $p/q \in \Q$ is sufficiently close to $\theta \in \IrratPer$.
    
Thirdly, the original aim of the study of the finite-time escaping set associated to the transcendental extension of pre-pacmen in \cite{DL23} was to attain a puzzle structure, which was ultimately applied to understand the dynamics of maps on the unstable manifold and transfer the results to the quadratic family $\{z^2+c\}_c$. In our case, the full escaping set $\Esc(\Fbold)$ is of great interest because, together with the postcritical set, it is the measure-theoretic attractor of $\Fbold$ on the Julia set.

\subsection{Outline}
\label{ss:outline}

This paper is divided into five parts. Part 1 covers a few preliminary concepts. Section \ref{sec:sector-renorm} is a review of sector renormalization from \cite{DLS,DL23}, which is a toy model of the induced action of $\mathcal{R}$ on the invariant quasicircle. In Section \ref{sec:small-orbits}, we prove a generalization of Lyubich's Small Orbits Theorem \cite[\S2]{L99}, which will later be applied to deduce the absence of neutral eigenvalues for the renormalization operator of interest. (In the pacman case, the foliation induced by the multiplier of the $\alpha$-fixed point removes the need of such a generalization.)

A lot of Parts 2, 3, and 4 are inspired by the original work on pacman renormalization in \cite{DLS} and the detailed study of transcendental dynamics on the unstable manifold in \cite{DL23}. As previously mentioned, the main difference lies in the proof that the renormalization operator has exactly one unstable direction. Once we prove that our renormalization fixed point is hyperbolic, we treat the local unstable manifold $\unstloc$ as a holomorphic family of unicritical transcendental maps of unknown dimension. By adapting some ideas from \cite{R09}, we deduce the rigidity of escaping dynamics and claim that the deformation space of hyperbolic coronas on the unstable manifold must be supported on the Fatou set, the domain of stability. This implies that $\unstloc$ is one-dimensional.

Part 2 begins with the introduction of \emph{coronas} and \emph{pre-coronas}. We define the corona renormalization operator and show that for any renormalizable corona $f$, we can define a compact analytic operator $\renorm$ on a Banach neighborhood of $f$. In Section \ref{sec:rotational-coronas}, we analyze the structure of a rotational corona $f$. We prove that any critical quasicircle map can be renormalized to a rotational corona and, by applying \cite[Theorem F]{Lim23b}, we show that rotational coronas are quasiconformally rigid.

In Section \ref{sec:stab-mani}, we construct a compact analytic corona renormalization operator 
\[
\Rstar: (\Ustar, f_*) \to (\Bstar,f_*)
\]
whose fixed point $f_*$ is a rotational corona with periodic rotation number. In Theorem \ref{thm:weak-hyperbolicity}, we show that $\Rstar$ and $f_*$ satisfy items \ref{main-2} and \ref{main-3} in Theorem \ref{main-theorem}, and that the dimension of the local unstable manifold $\unstloc$ is finite and positive. The proof relies on a number of ingredients. 
\begin{enumerate}[label=(\roman*)]
    \item For any corona $[f: U \to V] \in \Ustar$ that is many times renormalizable, we construct a renormalization tiling on the domain $U$ which approximates the Herman quasicircle $\Hq_*$ of $f_*$ by lifting the domain of a high renormalization of $f$. This tiling is robust under perturbation, and we use them to show in Corollary \ref{cor:rotational-criterion} that any infinitely renormalizable rotational corona that stays close to $f_*$ must be a rotational corona. 
    \item By \cite[Theorem K]{Lim23b}, renormalizations $\Rstar^n f$ of a rotational corona near $f_*$ must converge exponentially fast to $f_*$.
    \item We apply Small Orbits Theorem \ref{thm:small-orbits}, which works in the presence of both attracting and repelling eigenvalues. 
\end{enumerate}
These three ingredients imply that $(D\Rstar)_{f_*}$ has no neutral eigenvalues. To show that a repelling direction exists, we apply \cite[Theorem B]{Lim23b}, a result on combinatorial rigidity of unicritical Herman quasicircles of a nice class of rational maps.

The rest of the paper is dedicated to proving that $(D\Rstar)_{f_*}$ has exactly one repelling eigenvalue. In Part 3, we begin to look at coronas $f$ on the local unstable manifold $\unstloc$. We show the maximal analytic extension of the pre-corona associated to $f$ is a commuting pair of $\sigma$-proper maps $\Fbold = (\fbold_\pm : \Xbold_\pm \to \C)$. This allows us to identify $\unstloc$ with $\manibold_{\textnormal{loc}}$, the holomorphic family of pairs $\Fbold$. Given $\Fbold = (\fbold_\pm) \in \manibold_{\textnormal{loc}}$ and $n \leq 0$, we consider the $\sigma$-proper commuting pair $\Fbold_n$ associated to $\Rstar^n f$ and denote by $\Fbold_n^\# = (\fbold_{n,\pm}^\#)$ the rescaled version of $\Fbold_n$ such that $\fbold_\pm$ are iterates of $\fbold_{n,\pm}^\#$. We identify $\Fbold$ as a cascade, that is, the semigroup $\left(\Fbold^{\geq 0}, \circ\right)$ generated by $\fbold_{n,\pm}^\#$ for all $n \leq 0$. It is isomorphic to a dense sub-semigroup $(\Tbold,+)$ of $\R_{\geq 0}$ and its elements can be written as $\Fbold^P: \Dom\left(\Fbold^P\right) \to \C$ for $P \in \Tbold$. 

In Part 4 of this paper, we study in depth the escaping dynamics of $\Fbold$ as well as the properties of the Fatou set $\Fcas(\Fbold)$ and the Julia set $\Jcas(\Fbold)$ of $\Fbold$. Most of Section \ref{sec:external} mirrors \cite[\S5]{DL23} (aside from a number of combinatorial differences), in which we study the structure of the finite-time escaping set of the cascade $\Fbold_*$ associated to the renormalization fixed point. We construct external rays and deduce its tree structure via their branch points, which are called \emph{alpha-points}. These rays induce dynamical wakes partitioning the whole dynamical plane.

In Section \ref{sec:escaping-sets}, we apply the external structure of $\Fbold_*$ to prove the first half of Theorem \ref{main-theorem-rigidity}. We first prove that for $\Fbold \in \manibold_{\textnormal{loc}}$, the \emph{finite-time escaping set} 
\[
\Esc_{<\infty}(\Fbold) := \bigcup_{P \in \T} \C \backslash \Dom\left(\Fbold^P\right)
\]
carries no invariant line field and locally moves holomorphically unless it contains a pre-critical point. We then observe that appropriately truncated wakes of $\Fbold_*$ survive under perturbation; these truncated wakes act as an analog of logarithmic tracts for transcendental functions. By adapting the ideas from \cite{R09}, we study the motion of points $z$ in the Julia set whose orbit $\{\Fbold^P(z)\}_{P\in \Tbold}$ remains close to $\infty$ for all $P$. We then show that the \emph{infinite-time escaping set}
    \[
    \Esc_\infty(\Fbold) := \{z \in \C \backslash \Esc_{<\infty}(\Fbold) \: : \: \Fbold^P(z) \to \infty \}
    \] 
also carries no invariant line field and locally moves holomorphically unless it contains a pre-critical point. 

In Section \ref{sec:fatou-julia}, we prove fundamental results on $\Fcas(\Fbold)$ and $\Jcas(\Fbold)$. This includes the density of periodic points, expansion and measure-theoretic properties, and the absence of no wandering domains --- all of which are classical for rational maps. Section \ref{sec:hyperbolic} is entirely dedicated to hyperbolic cascades. In it, we show that when $\Fbold$ is hyperbolic, the Julia set of $\Fbold$ is the union of the full escaping set 
\[
\Esc(\Fbold):= \Esc_{<\infty}(\Fbold) \cup \Esc_\infty(\Fbold)
\]
and a zero measure set of non-escaping points. We also prove the existence and rigidity of hyperbolic cascades in $\manibold_{\textnormal{loc}}$, including the second part of Theorem \ref{main-theorem-rigidity}. 

Part 5 wraps up the proof. Since $\Fbold$ is unicritical, we conclude that hyperbolic components in $\manibold_{\textnormal{loc}}$, and thus $\manibold_{\textnormal{loc}}$ itself, is one-dimensional. We then formulate three conjectures on potential applications of hyperbolicity and future directions.


\part{Preliminaries}

\section{Sector renormalization}
\label{sec:sector-renorm}

In this section, we describe a toy model for our renormalization operator, which is sector renormalization acting on rigid rotations on the circle. We also describe cascades associated to sector renormalization fixed points, following \cite{DL23}.

\subsection{Renormalization of rotations and translations}
\label{ss:renorm-rotation}
Let us equip the unit circle $\T \subset \C$ with the induced intrinsic metric. Given two points $x$ and $y$ on $\T$, we denote by $[x,y] \subset \T$ the shortest closed interval with endpoints $x$ and $y$. Consider the rigid rotation 
\[
R_\theta : \T \to \T, z \mapsto e^{2\pi i \theta} z
\]
for some fixed $\theta \in \R/\Z$. Let us fix a point $x \in \T$ and consider 
\[
    X_-:=[R_\theta^{-1}(x),x], \quad Y:=[x,R_\theta(x)], \quad X_+ := \overline{\T \backslash (Y \cup X_-)}. 
\]
The first return map on $X_- \cup X_+$ is precisely the commuting pair
\[
    (R_\theta|_{X_+},\: R_\theta^2 |_{X_-}),
\]
Let us assume that $1 \not\in Y$ and denote by $\omega$ the length of $X_-\cup X_+$. Then, the power map $z \mapsto z^{1/\omega}$ sends $X_-\cup X_+$ onto $\T$ by gluing its endpoints, and it projects the commuting pair above to a new rigid rotation $R_{r_{\textnormal{prm}}(\theta)}$ on $\T$.
We will call $R_{r_{\textnormal{prm}}(\theta)}$ the \emph{prime renormalization} of $R_\theta$; it is independent of the initial choice of $x$.

\begin{lemma}[{\cite[Lemma A.1]{DLS}}]
    We have
    \[
    r_{\textnormal{prm}}(\theta) = \begin{cases}
        \dfrac{\theta}{1-\theta}, & \text{ if } 0 \leq \theta < \dfrac{1}{2},\\[0.1in]
        2-\dfrac{1}{\theta}, & \text{ if } \dfrac{1}{2} \leq \theta < 1.
    \end{cases}
    \]
\end{lemma}

In general, a \emph{sector renormalization} $\Rsec(R_\theta)$ of $R_\theta$ is a rotation $R_\mu$ defined as follows. First, consider a pair of intervals $X_-$ and $X_+$ on $\T$ satisfying $X_- \cap X_+ = \{1\}$. Suppose the first return map on $X := X_- \cup X_+$, which we call a sector pre-renormalization, is a pair of the form
    \begin{equation}
    \label{eqn:pair-01}
        (R_\theta^\abold|_{X_-}, \: R_\theta^\bbold|_{X_+}),
    \end{equation}
for some positive integers $\abold$ and $\bbold$ called the \emph{renormalization return times} of $\Rsec$. The map $z \mapsto z^{1/\omega}$, where $\omega$ is the length of $X$, glues the endpoints of $X$ together and projects the pair (\ref{eqn:pair-01}) to a new rotation $R_\mu = \Rsec(R_\theta)$.

\begin{example}
\label{eg:standard-renormalization}
    Recall that the Gauss map $G$ sends an irrational $x=[0;a_1,a_2,\ldots]$ to another irrational $G(x)=[0;a_2,a_3,\ldots]$. Often, we are interested in the $n$\textsuperscript{th} \emph{standard} renormalization operator sending $R_\theta$ to $R_{G^n\theta}$. This can be constructed as follows. Let $\{p_n/q_n\}_{n\geq 1}$ denote the best rational approximations of $\theta$. For $n \geq 1$, set 
    \[
    X_{n,-}=[R_\theta^{q_{n-1}+q_n}(1),1] \quad \text{and} \quad X_{n,+}=[1,R_\theta^{q_n}(1)],
    \]
    then $(R_\theta^{q_n}|_{X_{n,-}}, \: R_\theta^{q_{n-1}+q_n}|_{X_{n,+}})$ projects onto the rotation $R_{G^n(\theta)}$.
\end{example}

Let $\IrratPer$ denote the set of irrationals $\theta \in (0,1)$ of periodic type, that is, $\theta = G^l(\theta)$ for some $l\geq 1$.

\begin{lemma}[{\cite[Lemma A.2]{DLS}}]
\label{lem:prime-to-sector}
    Every sector renormalization is an iteration of the prime renormalization. In particular, $\mu = r_{\textnormal{prm}}^m(\theta)$ for some $m \geq 1$, and $R_\theta$ is a fixed point of some sector renormalization if and only if $\theta \in \IrratPer$.
\end{lemma}

Under the universal cover $\R \to \T, z \mapsto e^{-2\pi i z}$, the rotation $R_\theta$ can be lifted to the commuting pair of translations
\[
    T_{-\theta} : z \mapsto z-\theta, \qquad T_{1-\theta}: z \mapsto z+1-\theta.
\]
The deck transformation $\chi:= T_{1}$ is equal to $T_{1-\theta} \circ T_{-\theta}^{-1}$, and the original rotation $R_\theta$ can be recovered from the quotient map $T_{-\theta}/\langle \chi \rangle$.

Consider a general commuting pair of translations $(T_{-\ubold}, T_{\vbold})$ where $\ubold,\vbold \in \R_{\geq 0}$. The prime renormalization $\Rprm$ of $(T_{-\ubold}, T_{\vbold})$ is the new commuting pair $(T_{-\ubold_1}, T_{\vbold_1})$ where
\begin{equation}
\label{eqn:sector-renorm}
(T_{-\ubold_1}, T_{\vbold_1}):= \Bigg\{
\setlength\arraycolsep{0.5pt}\def\arraystretch{1.2}
\begin{array}{cccccc}
     (& T_{-\ubold} \circ T_{\vbold} & \text{, } & T_{\vbold} &) & \quad \text{ if } \ubold \geq \vbold,\\
     (& T_{-\ubold} & \text{, } & T_{-\ubold} \circ T_{\vbold}&) & \quad \text{ if }\ubold <\vbold.
\end{array}
\end{equation}
Set $\chi := T_{\vbold} \circ T_{-\ubold}^{-1}$ and $\chi_1 =  T_{\vbold_1} \circ T_{-\ubold_1}^{-1}$. The prime renormalization of pairs of translations is equivalent to that of rotations in the following sense.

\begin{lemma}
\label{lem:extra}
    If $T_{-\ubold}/\langle \chi \rangle \equiv R_\theta$, then 
    \[
    \theta = \frac{\vbold}{\ubold + \vbold}
    \quad \text{and} \quad
    T_{-\ubold_1}/\langle \chi_1 \rangle \equiv R_{r_{\textnormal{prm}}(\theta)}.
    \]
\end{lemma}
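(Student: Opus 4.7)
The plan is to verify both claims by direct computation, treating the two pieces separately.

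For the first identity $\theta = \vbold/(\ubold+\vbold)$, I would first observe that $\chi = T_{\vbold}\circ T_{-\ubold}^{-1} = T_{\ubold+\vbold}$, so the quotient $\R/\langle \chi\rangle$ is a circle of circumference $\ubold+\vbold$. The affine map $z \mapsto z/(\ubold+\vbold)$ identifies this quotient with $\T = \R/\Z$ and conjugates the induced map $T_{-\ubold}/\langle \chi\rangle$ to translation by $-\ubold/(\ubold+\vbold) \equiv \vbold/(\ubold+\vbold) \pmod{1}$. Under the identification $T_{-\ubold}/\langle \chi\rangle \equiv R_\theta$, this forces $\theta = \vbold/(\ubold+\vbold)$.

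For the second identity, I would apply the definition (\ref{eqn:sector-renorm}) in the two cases. If $\ubold \geq \vbold$, then $(T_{-\ubold_1}, T_{\vbold_1}) = (T_{\vbold-\ubold}, T_{\vbold})$, so $\ubold_1 = \ubold - \vbold$, $\vbold_1 = \vbold$, and $\chi_1 = T_{\ubold_1+\vbold_1} = T_\ubold$. If instead $\ubold < \vbold$, then $(T_{-\ubold_1}, T_{\vbold_1}) = (T_{-\ubold}, T_{\vbold-\ubold})$, so $\ubold_1 = \ubold$, $\vbold_1 = \vbold - \ubold$, and $\chi_1 = T_\vbold$. Applying the formula from the first part to the renormalized pair, I obtain $\theta_1 := \vbold_1/(\ubold_1+\vbold_1)$ with $T_{-\ubold_1}/\langle \chi_1\rangle \equiv R_{\theta_1}$.

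It remains to match $\theta_1$ with $r_{\textnormal{prm}}(\theta)$. In the case $\ubold \geq \vbold$ one has $\theta \leq 1/2$, so $r_{\textnormal{prm}}(\theta) = \theta/(1-\theta) = \vbold/\ubold = \theta_1$. In the case $\ubold < \vbold$ one has $\theta > 1/2$, so $r_{\textnormal{prm}}(\theta) = (2\theta-1)/\theta = 2 - (\ubold+\vbold)/\vbold = (\vbold-\ubold)/\vbold = \theta_1$. This establishes the desired equality.

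The argument is essentially a bookkeeping exercise: the main (minor) point requiring care is the sign convention in identifying $T_{-\ubold}/\langle \chi\rangle$ with $R_\theta$ versus $R_{-\theta}$, which must be consistent with the convention implicitly used in defining $r_{\textnormal{prm}}$ on rotations in Appendix \ref{ss:renorm-rotation}. Beyond that, there is no genuine obstacle.
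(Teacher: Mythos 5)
Your computation is correct, and the paper in fact offers no proof of this lemma at all (it is stated as a routine consequence of the definitions, with the two neighboring lemmas cited from \cite{DLS}); your direct verification is exactly the argument that is being omitted. In particular the second identity is checked correctly in both cases, since $\chi_1 = T_{\ubold}$ when $\ubold \geq \vbold$ and $\chi_1 = T_{\vbold}$ when $\ubold < \vbold$, and the first part applied to the renormalized pair gives $\theta_1 = \vbold_1/(\ubold_1+\vbold_1)$, which matches the two branches of $r_{\textnormal{prm}}$.

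On the sign convention you flag: it is worth resolving rather than leaving open. Under the universal cover the paper actually writes down, $z \mapsto e^{-2\pi i z}$, the translation $T_{-\ubold}$ descends (after rescaling by $\ubold+\vbold$) to $w \mapsto e^{2\pi i \ubold/(\ubold+\vbold)}w$, i.e.\ to $R_{\ubold/(\ubold+\vbold)}$; this is the convention consistent with Lemma \ref{lem:dynamics-on-H}, where $\theta = \ubold/(\ubold+\vbold)$. Your identification of ``translation by $c$ on $\R/\Z$'' with $R_c$ uses the cover $z \mapsto e^{2\pi i z}$ and yields the formula $\theta = \vbold/(\ubold+\vbold)$ as printed in the lemma; the two readings differ by the orientation-reversing involution $\theta \mapsto 1-\theta$. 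The discrepancy is harmless for the second identity precisely because $r_{\textnormal{prm}}(1-\theta) = 1 - r_{\textnormal{prm}}(\theta)$, as your two case computations implicitly show, so the conjugacy $T_{-\ubold_1}/\langle\chi_1\rangle \equiv R_{r_{\textnormal{prm}}(\theta)}$ holds under either convention. No gap in the argument.
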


\subsection{Cascade of translations}
\label{ss:cascade}
By writing $(-\ubold, \vbold)$ as a column vector, the transformation in (\ref{eqn:sector-renorm}) is represented by either $I^- := \begin{pmatrix}
    1 & 1 \\ 0 & 1
\end{pmatrix}$ if $\ubold \geq \vbold$ or $I^+ := \begin{pmatrix}
    1 & 0 \\ 1 & 1
\end{pmatrix}$ if $\ubold < \vbold$. Consider the region $\R_{\leq 0} \times \R_{\geq 0}$, which is split equally into two sectors by the diagonal line $\{x+y=0\}$. The lower sector is mapped by $I^-$ onto $\R_{\leq 0} \times \R_{\geq 0}$, whereas the upper sector is mapped by $I^+$ onto $\R_{\leq 0} \times \R_{\geq 0}$.

From now on, suppose $\theta$ is of periodic type, so there exists some $m >0$ such that $r_{\textnormal{prm}}^m(\theta) = \theta$. Set $\ubold =\theta$ and $\vbold = 1-\theta$. By (\ref{eqn:sector-renorm}), there is a unique $2\times 2$ matrix $\Mbold$ of the form $I_1 I_2 \ldots I_m$, where $m \in \N$ is some integer and $I_i \in \{I^+, I^-\}$ for all $i$, such that the $m$\textsuperscript{th} prime renormalization of $(T_{-\ubold},T_{\vbold})$ is equal to
\[
    \Rprm^m(T_{-\ubold},T_{\vbold}) = (T_{-\ubold_1},T_{\vbold_1})
    \quad \text{ where } \quad
    \begin{pmatrix}
        -\ubold_1 \\ \vbold_1
    \end{pmatrix} = \Mbold \begin{pmatrix}
        -\ubold \\ \vbold
    \end{pmatrix}.
\]
The matrix $\Mbold$ is an element of the modular group $\text{SL}_2(\Z)$ mapping a sector in $\R_{\leq 0} \times \R_{\geq 0}$ onto $\R_{\leq 0} \times \R_{\geq 0}$. The condition $r^m_{\textnormal{prm}}(\theta) = \theta$ implies that $\begin{pmatrix}
        -\ubold_1 \\ \vbold_1
\end{pmatrix}$ is a scalar multiple of $\begin{pmatrix}
        -\ubold \\ \vbold
    \end{pmatrix}$. We conclude that $\Mbold$ has two eigenvalues $\tbold >1$ and $1/\tbold$, and that
\[
    \begin{pmatrix}
        -\ubold_1 \\ \vbold_1
    \end{pmatrix} = \frac{1}{\tbold} \begin{pmatrix}
        -\ubold \\ \vbold
    \end{pmatrix}.
\]
We call $\Mbold$ the \emph{anti-renormalization matrix} associated with $\theta$.

Observe that $\Mbold$ has to be a matrix of positive integers and its leading eigenvalue $\tbold$ must be irrational. For $n \in \N$, we write 
\[
    \ubold_n := \tbold^{-n} \ubold \quad \text{and} \quad
    \vbold_n := \tbold^{-n} \vbold.
\]
We then obtain a full pre-renormalization tower $\{ (T_{-\ubold_n}, T_{\vbold_n})\}_{n\in \Z}$ where 
\[
    \Rprm^m (T_{-\ubold_n}, T_{\vbold_n}) = (T_{-\ubold_{n+1}}, T_{\vbold_{n+1}}).
\]
Given $(n,a,b) \in \Z \times \Z_{\geq 0} \times \Z_{\geq 0}$, let us write
\[
    T^{(n,a,b)} := T_{-\ubold_n}^a \circ T_{\vbold_n}^b = T_{\tbold^{-n}(b\vbold - a \ubold)}.
\]

\begin{lemma}
\label{lem:free-action}
    Given a pair of elements $(n,a,b)$ and $(n',a',b')$ of $\Z \times \Z_{\geq 0} \times \Z_{\geq 0}$, 
    \[
    T^{(n,a,b)} = T^{(n',a',b')} \quad \text{ if and only if } \quad (a \ b)\Mbold^{n} = (c \ d)\Mbold^{n'}.
    \]
\end{lemma}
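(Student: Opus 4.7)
The plan is to identify the translation amount with a pairing against the $(1/\tbold)$-eigenvector of $\Mbold$, and then exploit the irrationality of $\theta$ to upgrade equality of scalars to equality of row vectors. First I would record the direct formula $T^{(n,a,b)} = T_{\tbold^{-n}(b\vbold - a\ubold)}$, so that $T^{(n,a,b)} = T^{(n',a',b')}$ is equivalent to the scalar identity $\tbold^{-n}(b\vbold - a\ubold) = \tbold^{-n'}(b'\vbold - a'\ubold)$. The crucial observation is that this scalar is precisely the pairing against $\vec v := (-\ubold, \vbold)^T$: since $\Mbold \vec v = \tbold^{-1} \vec v$, one has
\[
    (a\ b)\,\Mbold^{n}\,\vec v \;=\; \tbold^{-n}(b\vbold - a\ubold).
\]

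The implication ``$\Leftarrow$'' then follows at once by right-multiplying $(a\ b)\Mbold^{n} = (a'\ b')\Mbold^{n'}$ by $\vec v$. For the converse, set $\vec w := (a\ b)\Mbold^{n} - (a'\ b')\Mbold^{n'}$. The identity $\det\Mbold = \tbold \cdot \tbold^{-1} = 1$ places $\Mbold$ in $\mathrm{SL}_2(\Z)$, so $\Mbold^{k}$ is an integer matrix for every $k\in\Z$ (this is the only point where negative powers need care), and hence $\vec w \in \Z^2$. The hypothesis $T^{(n,a,b)} = T^{(n',a',b')}$ now reads $\vec w \cdot \vec v = -w_1\ubold + w_2\vbold = 0$. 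Since $\ubold/\vbold = \theta/(1-\theta)$ is irrational (as $\theta\in\Theta_{per}$ is irrational), the only integer solution to this linear relation is $\vec w = (0,0)$, forcing $(a\ b)\Mbold^{n} = (a'\ b')\Mbold^{n'}$.

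There is no substantial obstacle: once the translation amount is rewritten as a pairing with the eigenvector $\vec v$, the identification of the kernel of this pairing inside $\Z^2$ with the zero vector is a one-line consequence of irrationality. The only items to double-check are the formula $T^{(n,a,b)} = T_{\tbold^{-n}(b\vbold - a\ubold)}$ (already recorded in the excerpt) and the integrality of $\Mbold^{-1}$, both of which are routine.
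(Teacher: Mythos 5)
Your proof is correct and complete. The paper states this lemma without proof (it is treated as routine in Appendix \ref{appendix-B}), so there is no argument to compare against, but your route is clearly the intended one: the key identity $(a\ b)\,\Mbold^{n}\vec v = \tbold^{-n}(b\vbold - a\ubold)$ follows from $\Mbold\vec v = \tbold^{-1}\vec v$ (valid for all $n\in\Z$ since $\Mbold$ is invertible), the reverse implication is right-multiplication by $\vec v$, and the forward implication correctly uses that $\Mbold\in\mathrm{SL}_2(\Z)$ makes $\vec w$ an integer vector, so that $-w_1\ubold + w_2\vbold = 0$ with $\ubold/\vbold$ irrational forces $\vec w = 0$. (You also implicitly corrected the typo in the statement, where $(c\ d)$ should read $(a'\ b')$.)
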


\begin{definition}
    We define the space $\Tbold$ of \emph{power-triples} to be the quotient of the semigroup $\Z\times \Z_{\geq 0} \times \Z_{\geq 0}$ under the equivalence relation $\sim$ where $(n,a,b) \sim (n-1,a',b')$ if and only if $(a' \ b') = (a \ b) \Mbold$.
\end{definition}

We will equip $\Tbold$ with the binary operation $+$ defined by
\[
    (n,a,b) + (n,a',b') = (n, a+a', b+b').
\]
With respect to $+$, $\Tbold$ has a unique identity element $0 := (n,0,0)$. Thus, $(\Tbold,+)$ still has the structure of a semigroup. According to Lemma \ref{lem:free-action}, $\Tbold$ acts freely on $\R$ as a cascade of translations $\left(T^P\right)_{P \in \Tbold}$.

\begin{lemma}[{\cite[Lemma 2.2]{DL23}}]
\label{lem:power-triple-embedding}
    There is an embedding $\iota: \Tbold \to \R_{\geq 0}$ such that $\iota(n-1,a,b) = \tbold^{-1} \iota(n,a,b)$. Identifying $\Tbold$ with $\iota(\Tbold) \subset \R_{\geq 0}$ equips $\Tbold$ with 
    \begin{enumerate}[label=\textnormal{(\arabic*)}]
        \item a linear order $\geq$, which can be described as follows: $P\geq Q$ if and only if for any sufficiently large negative integer $n$, we can write $P=(n,a,b)$ and $Q=(n,a',b')$ where $a \geq a'$ and $b \geq b'$;
        \item subtraction, that is, if $P,T \in \Tbold$ and $P \geq T$, then $P-T \in \Tbold$;
        \item scalar multiplication by $\tbold$: $P=(n,a,b) \mapsto \tbold P = (n+1,a,b)$, which is an automorphism of $\Tbold$.
    \end{enumerate}
    Moreover, for $P \in \Tbold$, $n \in \Z$, and $x \in \R$,
    \[
        T^P(x) = \tbold^n \cdot T^{\tbold^n P} \left(\tbold^{-n} x\right).
    \]
\end{lemma}

If $T^P$ is a translation by $l >0$, then $T^{\tbold^n P}$ is a translation by $\tbold^{-n} l$. The following observation is immediate.

\begin{lemma}[Proper discontinuity]
\label{lem:proper-discontinuity-cascades}
    If $P \in \Tbold_{>0}$ is small, then $|T^P(0)|$ is large.
\end{lemma}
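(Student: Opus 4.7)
The plan is to exploit that the embedding $\iota(P)$ and the translation amount $|T^P(0)|$ scale oppositely under the multiplicative $\tbold$-action on $\Tbold$, so that their product is $\tbold$-invariant and can be bounded below by a positive constant via $\Mbold$-invariance together with a standard fact about integral quadratic forms.

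Write $T^P = T_{\ell(P)}$ for the translation amount. For a representative $(n, a, b)$ of $P$, the formulas of Appendix \ref{appendix-B} give $\ell(P) = \tbold^{-n}\psi(a, b)$ where $\psi(a, b) := b\vbold - a\ubold$. Since $\begin{pmatrix} -\ubold \\ \vbold \end{pmatrix}$ is a right eigenvector of $\Mbold$ with eigenvalue $\tbold^{-1}$, we have $\psi \circ \Mbold = \tbold^{-1}\psi$. The embedding $\iota$ must satisfy $\iota(\tbold P) = \tbold\,\iota(P)$ (Lemma \ref{lem:power-triple-embedding}) and be constant on $\sim$-classes, which forces $\iota(n, a, b) = \tbold^{n}\phi(a, b)$ for a linear functional $\phi$ with $\phi \circ \Mbold = \tbold\phi$; i.e., $\phi$ is the other left eigenfunctional of $\Mbold$. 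Consequently the quadratic form $\Phi := \phi\psi$ on $\R^2$ is $\Mbold$-invariant, and
\[
\iota(P)\cdot|\ell(P)| \;=\; \tbold^{n}\phi(a,b)\cdot\tbold^{-n}|\psi(a,b)| \;=\; |\Phi(a, b)|
\]
is independent of the representative, hence a well-defined function on $\Tbold$.

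To finish, I would show $|\Phi| \geq c_0 > 0$ uniformly on $\Z^2 \setminus \{0\}$. The space of $\Mbold$-invariant real quadratic forms on $\R^2$ is one-dimensional, and since $\Mbold \in \mathrm{SL}_2(\Z)$, a brief Galois argument (the action $\tbold \leftrightarrow \tbold^{-1}$ on $\Q(\tbold)/\Q$ swaps $\phi$ and $\psi$ up to real scalars, so $\bar\Phi = \pm\Phi$) shows that, after rescaling by a nonzero constant, $\Phi$ becomes an integer-coefficient binary quadratic form $Q_{\mathrm{int}}$. The null-lines of $\Phi$ are the eigendirections of $\Mbold^{T}$, whose slopes involve the irrational $\tbold$; hence $Q_{\mathrm{int}}(a, b) \neq 0$ for all $(a, b) \in \Z^2 \setminus \{0\}$, giving $|Q_{\mathrm{int}}(a, b)| \geq 1$ and therefore $|\Phi(a, b)| \geq c_0$ for some constant $c_0 > 0$. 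Choosing an integer representative $(a, b) \in \Z_{\geq 0}^2 \setminus \{0\}$ of any $P \in \Tbold_{>0}$,
\[
|T^P(0)| \;=\; \frac{|\Phi(a, b)|}{\iota(P)} \;\geq\; \frac{c_0}{\iota(P)} \;\longrightarrow\; +\infty \quad\text{as } \iota(P) \to 0^+.
\]

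The main obstacle is the identification of $\Phi$ with an integral $\Mbold$-invariant quadratic form and its non-vanishing on $\Z^2 \setminus \{0\}$; beyond this, the argument is purely elementary scaling.
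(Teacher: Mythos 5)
Your proof is correct, and it is worth noting that it supplies a genuine argument where the paper offers essentially none: the paper declares the lemma ``immediate'' after the one-line observation that $T^{\tbold^{n}P}$ translates by $\tbold^{-n}$ times the translation length of $T^{P}$. That scaling remark alone reduces the lemma to showing that $|T^{Q}(0)|$ is bounded below over the (infinite) set of $Q\in\Tbold_{>0}$ with $\iota(Q)$ in a fundamental domain $[1,\tbold)$ of the scaling action, and that uniform bound is exactly the non-trivial Diophantine content — equivalently, the badly-approximable property of the quadratic irrational slope $\ubold/\vbold$. Your route makes this explicit: the product $\iota(P)\cdot|T^{P}(0)|=|\Phi(a,b)|$ with $\Phi=\phi\psi$ is $\Mbold$-invariant because $\phi$ and $\psi$ are left eigenfunctionals for the reciprocal eigenvalues $\tbold$ and $\tbold^{-1}$, the space of invariant quadratic forms is one-dimensional (the induced eigenvalues on $\mathrm{Sym}^2$ are $\tbold^{2},1,\tbold^{-2}$), it contains a nonzero integral form since the $1$-eigenspace of an integral matrix is defined over $\Q$, and the null lines have irrational slope because $\tbold\notin\Q$ forces the eigenvector ratios of $\Mbold$ out of $\Q$. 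This is the classical norm-form proof that quadratic irrationals are badly approximable, transplanted to the power-triple formalism; it buys a clean, self-contained, quantitative bound $|T^{P}(0)|\geq c_{0}/\iota(P)$, at the cost of the small algebraic detour (Galois/rationality of the invariant form) that you correctly flag as the only real work. The few steps you leave implicit — that $\iota(n,a,b)=\tbold^{n}\phi(a,b)$ also uses additivity of $\iota$ in $(a,b)$, and that $\ubold,\vbold$ lie in $\Q(\tbold)$ up to a common real scalar because $(-\ubold,\vbold)^{T}$ is an eigenvector of the integer matrix $\Mbold$ — are routine and do not affect correctness.
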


For all $P \in \Tbold$, let us denote $b_P := T^{-P}(0)$. We say that $b_P$ is \emph{dominant} if every $b_Q$ on $[0,b_P]$ satisfies $Q\geq P$. By proper discontinuity, we can enumerate all dominant points $\left\{ b_{P_n} \right\}_{n \in \Z}$ such that $P_n < P_{n+1}$ for all $n$. The following lemma will be applied in the proof of Proposition \ref{prop:zero-chains}.

\begin{lemma}[{\cite[Lemma 2.4]{DL23}}]
    \label{lem:dominant-push}
    For every $i \in \Z$, there exist some $Q_i \in \Tbold_{>0}$ and some integers $m,n$ such that $n<m\leq i$ and $T^{Q_i}$ maps $[b_{P_i}, b_{P_{i+1}}]$ to $[b_{P_n}, b_{P_m}]$.
\end{lemma}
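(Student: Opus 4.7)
Since each $T^Q$ is a rigid translation of $\mathbb{R}$, the conclusion is equivalent to finding $Q_i \in \Tbold_{>0}$ with $P_{i+1}-P_i \leq Q_i$ and $Q_i < P_i$ (in the order on $\Tbold$) such that both $P_i - Q_i$ and $P_{i+1} - Q_i$ are themselves dominant indices in the sequence $\{P_j\}_{j \in \mathbb{Z}}$. Indeed, if $Q_i < P_i$, then $T^{Q_i}(b_{P_i}) = T^{Q_i - P_i}(0) = b_{P_i - Q_i}$ and similarly $T^{Q_i}(b_{P_{i+1}}) = b_{P_{i+1}-Q_i}$, and setting $P_n := P_i - Q_i$, $P_m := P_{i+1}-Q_i$, the required inequalities $n < m \leq i$ are automatic. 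My plan is to produce such $Q_i$ by exploiting the self-similar combinatorial structure of the dominant sequence inherited from the periodicity of $\theta$.

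The plan has three steps. First, use Lemma \ref{lem:power-triple-embedding}(3) and the periodicity of $\theta$ to observe that the scaling $P \mapsto \tbold P$ is an order-preserving automorphism of $(\Tbold, +, \leq)$ sending dominant indices to dominant indices and shifting the indexing by a fixed integer $k \geq 1$; this is the analog on the real line of the identity $\tbold P_n = P_{n+k}$ used for critical points on $\Hq$ in the proof of Proposition \ref{prop:zero-chains}. Second, verify the lemma directly for each $i$ in one fundamental range of size $k$ at the ``top'' scale $n = 0$: here the dominant $P_j$'s correspond to explicit short words in the two base translations $T_{-\ubold_0}$ and $T_{\vbold_0}$, determined by the continued fraction expansion of $\theta = \vbold/(\ubold+\vbold)$, and the candidate $Q_i$ can be read off from the standard recurrence for best rational approximations. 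Third, lift to arbitrary $i \in \mathbb{Z}$ via self-similarity: if $(Q_j, n, m)$ witnesses the lemma at a base index $j$, then $(\tbold^\ell Q_j, n+\ell k, m+\ell k)$ witnesses it at index $j + \ell k$ for every $\ell \in \mathbb{Z}$, thanks to the rescaling identity
\[
T^P(x) = \tbold^\ell \cdot T^{\tbold^\ell P}\!\left(\tbold^{-\ell} x\right)
\]
from Lemma \ref{lem:power-triple-embedding}, which intertwines the translation action at different scales and also intertwines the dominance structure.

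The main obstacle lies in the second step: a finite but delicate combinatorial verification that each gap $[b_{P_i}, b_{P_{i+1}}]$ at the top scale is a translate of a preceding gap by an element of $\Tbold_{>0}$. Concretely, one must recognize each such gap as the image of an earlier one under a short composition of $T_{-\ubold_0}$ and $T_{\vbold_0}$ whose translation amount lies in $\Tbold$, a purely combinatorial task essentially equivalent to the classical convergent recursion $q_{j+1} = a_{j+1} q_j + q_{j-1}$ applied to the partial quotients $[0;a_1,\ldots,a_p]$ of one period of $\theta$. Once this finite base case is settled, the self-similar extension in the third step is automatic, and the lemma follows.
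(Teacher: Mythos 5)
The paper does not actually prove this lemma; it is imported verbatim from \cite[Lemma 2.4]{DL23}, so your proposal can only be judged on its own terms. Your reformulation is correct: since $T^{Q}(b_P)=T^{Q-P}(0)=b_{P-Q}$ and the action is free (Lemma \ref{lem:free-action}), the statement is precisely the assertion that some $Q_i\in\Tbold_{>0}$ with $Q_i<P_i$ makes both $P_i-Q_i$ and $P_{i+1}-Q_i$ dominant, and then $m\leq i$ and $n<m$ are automatic from strict monotonicity of the enumeration (your extra requirement $P_{i+1}-P_i\leq Q_i$ is redundant). Your Step 3 is also sound: $b_{\tbold P}=\tbold^{-1}b_P$, so $P\mapsto\tbold P$ permutes the dominant indices by a shift $i\mapsto i+k$, and a witness at index $j$ rescales to one at $j+\ell k$ by Lemma \ref{lem:power-triple-embedding}.

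The genuine gap is that the entire substance of the lemma sits in your Step 2, and you do not carry it out. Everything else is bookkeeping; what must actually be proved is that for each base index the witness $Q_i$ exists, and this requires establishing the recursive structure of consecutive dominant points of the \emph{cascade} --- for instance, that $P_{i+1}-P_i$ lies in $\Tbold_{>0}$ (it is the current closest return from the opposite side of $0$) and that $2P_i-P_{i+1}$ is again dominant, which would give the uniform witness $Q_i:=P_{i+1}-P_i$ with $m=i$. Your appeal to the classical recursion $q_{j+1}=a_{j+1}q_j+q_{j-1}$ does not discharge this: the dominant returns here are ordered by the embedding $\iota$ of Lemma \ref{lem:power-triple-embedding} (i.e.\ by $a\alpha+b\beta$ for the Perron eigenvector), not by the word length $a+b$ that governs closest returns of the rotation $R_\theta$, and these two orders genuinely disagree on $\Z_{\geq0}^2$. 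Moreover, the relevant power-triples need not be representable as non-negative words ``at the top scale $n=0$'' at all, since pushing $(n,a,b)$ with $n<0$ up to level $0$ involves $\Mbold^{-1}$, which has negative entries; so the finite check is over elements living at several levels, and identifying which differences $P_i-Q_i$ remain in the semigroup $\Tbold$ is part of what must be verified. As written, ``the candidate $Q_i$ can be read off from the standard recurrence'' is an assertion, not an argument, and the proof is incomplete at exactly the point where the lemma has content.
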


\section{Small Orbits Theorem}
\label{sec:small-orbits}

Consider a complex Banach space $\mathcal{B}$. Given a linear operator $L: \mathcal{B} \to \mathcal{B}$, denote the corresponding set of eigenvalues by $\spec{L}$. We say that an eigenvalue $\lambda \in \spec{L}$ is \emph{attracting} if $|\lambda|<1$, \emph{neutral} if $|\lambda|=1$, and \emph{repelling} if $|\lambda|>1$.

\begin{theorem}[Small Orbits Theorem]
\label{thm:small-orbits}
    Let $\renorm : (\mathcal{U},0) \to (\mathcal{B},0)$ be a compact analytic operator on a neighborhood $\mathcal{U}$ of $0$ in a complex Banach space $\mathcal{B}$. If the differential $D\renorm_0: \mathcal{B} \to \mathcal{B}$ has a neutral eigenvalue, then $\renorm$ has slow small orbits: for any neighborhood $\mathcal{V}$ of $0$, there is a forward orbit $\{\renorm^n g\}_{n \in \N}$ in $\mathcal{V}$ such that 
    \[
    \lim_{n\to \infty} \frac{1}{n}\log\|\renorm^n g\| = 0.
    \]
\end{theorem}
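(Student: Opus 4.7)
The plan is to build a local center manifold at the fixed point $0$, on which $\renorm$ restricts to an analytic germ whose differential at $0$ has spectrum entirely on the unit circle, and then to select a slow orbit on that finite-dimensional leaf using classical small-orbit theory. Since $\renorm$ is compact and analytic, $L := D\renorm_0$ is a compact operator, so its non-zero spectrum consists of isolated eigenvalues of finite multiplicity accumulating only at $0$. First I decompose
\[
\mathcal{B} = \mathcal{B}_- \oplus \mathcal{B}_0 \oplus \mathcal{B}_+
\]
into the $L$-invariant spectral subspaces corresponding to spectrum strictly inside, on, and strictly outside the unit circle. Compactness forces $\mathcal{B}_0$ and $\mathcal{B}_+$ to be finite-dimensional, while $L|_{\mathcal{B}_0}$ and $L|_{\mathcal{B}_+}$ are invertible. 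The neutral-eigenvalue hypothesis is precisely $\mathcal{B}_0 \neq 0$. Choosing adapted norms, for any preassigned $\delta > 0$ I arrange
\[
\|Lv_-\| \leq (\rho_-+\delta)\|v_-\|, \quad \|Lv_+\| \geq (\rho_+-\delta)\|v_+\|, \quad (1+\delta)^{-1}\|v_0\| \leq \|Lv_0\| \leq (1+\delta)\|v_0\|,
\]
with $\rho_- < 1 < \rho_+$ and $v_\bullet \in \mathcal{B}_\bullet$.

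Following the Hirsch--Pugh--Shub scheme adapted to compact operators, I introduce two invariant cones at $0$:
\[
C^{cu} = \{v_-+v_0+v_+ : \|v_-\| \leq \varepsilon(\|v_0\|+\|v_+\|)\}, \quad C^{cs} = \{v_-+v_0+v_+ : \|v_+\| \leq \varepsilon(\|v_-\|+\|v_0\|)\}.
\]
A standard contraction-mapping graph transform on Lipschitz sections $\mathcal{B}_0 \oplus \mathcal{B}_+ \to \mathcal{B}_-$ produces a finite-dimensional local center-unstable manifold $W^{cu}$ tangent at $0$ to $\mathcal{B}_0 \oplus \mathcal{B}_+$, characterized as the set of points admitting backward orbits staying in a small ball. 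Symmetrically, a forward graph transform on Lipschitz sections $\mathcal{B}_- \oplus \mathcal{B}_0 \to \mathcal{B}_+$ yields a local center-stable manifold $W^{cs}$ tangent at $0$ to $\mathcal{B}_- \oplus \mathcal{B}_0$, consisting of points whose forward orbits stay in the ball. Their transverse intersection $W^c := W^{cs} \cap W^{cu}$ is a finite-dimensional analytic submanifold of dimension $\dim \mathcal{B}_0 \geq 1$, tangent to $\mathcal{B}_0$ at $0$, and locally invariant under $\renorm$.

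Given the neighborhood $\mathcal{V}$, the proof concludes on the finite-dimensional leaf $W^c$. The restriction $\renorm|_{W^c}$ is an analytic germ at $0 \in \C^k$ whose differential $L|_{\mathcal{B}_0}$ has spectrum entirely on the unit circle, so one is in the setting of classical small-orbit theory for holomorphic germs with indifferent linearization. Depending on the arithmetic of the eigenvalues, this furnishes either an attracting parabolic petal, a Siegel-type domain of linearization, or a Cremer-type slow orbit (via Perez--Marco hedgehog theory); in every case there exists a non-zero $g \in W^c \cap \mathcal{V}$ whose forward orbit is contained in $W^c \cap \mathcal{V}$ and for which $\|\renorm^n g\|$ grows or decays at most polynomially in $n$. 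This immediately yields
\[
\lim_{n\to\infty} \frac{1}{n}\log\|\renorm^n g\| = 0.
\]

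The principal obstacle is the construction of the center-stable manifold $W^{cs}$: it is infinite-dimensional and $\renorm$ is not invertible on $\mathcal{B}_-$, so the standard finite-dimensional Hirsch--Pugh--Shub argument does not apply directly. This is exactly where the paper's ``two cones'' refinement of Lyubich's original one-cone argument is needed -- Lyubich's hypothesis of purely attracting and neutral spectrum allowed him to dispense with the center-stable side entirely, whereas here the presence of a finite-dimensional repelling direction $\mathcal{B}_+$ forces the second cone $C^{cs}$ and a genuinely forward graph transform. Compactness of $\renorm$ is essential for this step: it makes the forward graph transform on Lipschitz sections with values in the finite-dimensional fibre $\mathcal{B}_+$ a contraction in a suitably weighted sup-norm, despite the infinite-dimensional base. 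Once both invariant manifolds exist and intersect transversally, the slow-orbit conclusion on the finite-dimensional center leaf reduces to a classical invocation of small-orbit theory.
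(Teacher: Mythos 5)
Your reduction to a finite\hyp{}dimensional center manifold has a genuine gap at its two load-bearing points. First, the local center (and center-stable, center-unstable) manifolds of an analytic map are in general only finitely smooth and non-unique; they need \emph{not} be analytic, even in finite dimensions. So the restriction $\renorm|_{W^c}$ is not a holomorphic germ, and the machinery you invoke in the last step --- Leau--Fatou petals, Siegel linearization, P\'erez-Marco hedgehogs --- is unavailable. This is not a technicality: a merely smooth germ of $(\R^2,0)$ whose differential is an irrational rotation can be uniformly repelling, so no small orbits need exist on a $C^k$ center leaf. Avoiding exactly this obstruction is the whole point of Lyubich's theorem. Second, even if an analytic invariant center manifold existed, your "classical small-orbit theory" is one-dimensional; when $\dim_\C E^c\geq 2$ the existence of slow small orbits for an indifferent holomorphic germ is essentially the statement being proved (the finite-dimensional case of the theorem itself), and the higher-dimensional hedgehog theory you would need is only partially developed --- the paper's own closing remark cites \cite{FLRT,LRT} as recent partial progress under the restriction of a single neutral eigenvalue. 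So the final step is either circular or unsupported.

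The paper's proof is arranged precisely to sidestep both issues. It keeps your spectral splitting and the two cone fields $C^{cu}$, $C^{cs}$, but instead of building a center manifold it perturbs the operator to $\renorm_\lambda=\lambda\renorm$ with $\lambda\nearrow 1$, so that the center directions become attracting and an honest \emph{analytic} stable manifold $\mathcal{A}$ exists. A Kobayashi-metric argument (Lemma \ref{lem:main-for-small-orbits}: the graph transform acts on $d_c$-dimensional leaves tangent to $C^{cu}$ as a holomorphic covering, hence a Kobayashi isometry, while $D\renorm_0$ contracts on $E^s$) forces $\mathcal{A}$ to meet the center part $\partial^c\mathcal{D}$ of the boundary of a polydisk. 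Passing to the limit $\lambda_n\to 1$ via compactness of $\renorm$ yields an orbit $\{g_n\}$ confined to $\overline{\mathcal{D}}$ and lying in the forward-invariant cone $\{\|h^s\|\leq\|h^{cu}\|\}$; slowness then follows directly from the estimate $\|g_{n+1}^{cu}\|\geq e^{-c_2}\|g_n^{cu}\|$, which uses only that $(D\renorm_0|_{E^c\oplus E^u})^{-1}$ has spectral radius at most one. No finite-dimensional germ theory is invoked anywhere. If you want to keep your outline, you must either prove analyticity of $W^c$ in this setting (you cannot) or replace the last step by an argument of this perturbative type.
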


In the absence of repelling eigenvalues of $D\renorm_0$, the theorem above was proven by Lyubich in \cite[\S2]{L99}. The original Small Orbits Theorem was applied to rule out neutral eigenvalues in the proof of hyperbolicity of quadratic-like renormalization horseshoe \cite{L99,L02} and more recently the proof of hyperbolicity of pacman renormalization fixed points \cite{DLS}. In the context of corona renormalization (see \S\ref{ss:weak-hyperbolicity}), Lyubich's original formulation is not enough to rule out neutral eigenvalues and we have to resort to the upgraded version, which is Theorem \ref{thm:small-orbits}.

Below we will generalize Lyubich's proof. The key addition is the application of two invariant cones, namely the center-stable cone $\mathcal{C}^{cs}$ and the center-unstable cone $\mathcal{C}^{cu}$.

\begin{proof}
Let $\renorm$ be as in the hypothesis. Denote the unit disk in $\C$ by $\D:= \{z \in \C \: : \: |z|<1\}$. We present the Banach space $\mathcal{B}$ as a direct sum 
\[
\mathcal{B} = E^s \oplus E^c \oplus E^u,
\]
where subspaces $E^s, E^c, E^u$ are invariant under $D\renorm_0$ and
\[
\spec{D\renorm_0|_{E^s}} \subset \D, \quad \spec{D\renorm_0|_{E^c}} \subset \partial \D, \quad \spec{D\renorm_0|_{E^u}} \subset \C \backslash \overline{\D}.
\]
Note that the spectrum can only accumulate at $0$ because $\renorm$ is a compact operator. In particular, the subspace $E^c \oplus E^u$ must be finite-dimensional. We will assume that each of the three subspaces have positive dimension. (Else, we are reduced to \cite[\S 2]{L99}.)

For $h \in \mathcal{B}$, we will write $h=h^s +h^c+h^u$, where for $a \in \{s,c,u\}$, $h^a$ is the projection of $h$ onto the subspace $E^a$. We will also denote $h^{cs}:= h^c + h^s$ and $h^{cu}:= h^c + h^u$. There exist an adapted norm $\| \cdot \|$ on $\mathcal{B}$ and some positive constants $\mu_s,\mu_{cs},\mu_{cu},\mu_u$ such that $\mu_s<1<\mu_u$, $\mu_s<\mu_{cu}$, $\mu_{cs}<\mu_u$, and
\begin{align*}
\|D\renorm_0 h\|&\leq \mu_s \| h \| &&\text{ for all } h \in E^s, \\
\|D\renorm_0 h\|&\leq \mu_{cs} \| h \| && \text{ for all } h \in E^{cs}, \\
\|D\renorm_0 h\| &\geq \mu_{cu} \| h \| && \text{ for all } h \in E^{cu},\\
\|D\renorm_0 h\| &\geq \mu_u \| h \| && \text{ for all } h \in E^u.
\end{align*}

The proof below will involve two fixed constants $\alpha>1$ and $\delta>0$ where $\delta$ is small. We consider a pair of cone fields $C^{cu}$ and $C^{cs}$ given by
\begin{equation}
\label{eqn:cone-fields}
C^{cu}_f = \{ h \in T_f \mathcal{U} \: : \: \alpha \| h^s\| \leq \|h^{cu}\|\} \quad \text{and} \quad C^{cs}_f = \{ h \in T_f \mathcal{U} \: : \: \alpha\| h^u\| \leq \|h^{cs}\|\}
\end{equation}
for every $f \in \mathcal{U}$. For $a \in \{s,c,u\}$, we denote by $D^a = D^a(\delta)$ the open ball of radius $\delta$ centered at $0$ in $E^a$. Let
\[
\mathcal{D} := D^s \times D^c \times D^u
\]
the corresponding open polydisk centered at $0$ in $\mathcal{B}$. The boundary of $\mathcal{D}$ can be decomposed as follows:
\[
\partial^s \mathcal{D} := \partial D^s \times D^c \times D^u, \qquad \partial^c \mathcal{D} := D^s \times \partial D^c \times D^u, \qquad \partial^u \mathcal{D} := D^s \times D^c \times \partial D^u.
\]

\begin{claim1}
    For sufficiently small $\delta>0$, the following properties hold.
    \begin{enumerate}
        \item If $f \in \overline{\mathcal{D}}$, then $\renorm f \not\in \partial^s \mathcal{D}$;
        \item If $f \in \partial^u \mathcal{D}$, then $\renorm f \not\in \overline{\mathcal{D}}$;
        \item The cone field $C^{cu}$ is forward invariant: if $f, \renorm f \in \mathcal{D}$, then 
        \[
        D\renorm_f(C^{cu}_f) \subset C^{cu}_{\renorm f};
        \]
        \item The cone field $C^{cs}$ is backward invariant: if $f, \renorm f \in \mathcal{D}$, then 
        \[
        (D\renorm_f)^{-1}(C^{cs}_{\renorm f}) \subset C^{cs}_f.
        \]
    \end{enumerate}
\end{claim1}

\begin{proof}
    Fix a small constant $\varepsilon>0$. We can assume that $\delta$ is sufficiently small depending on $\varepsilon$ such that the difference
    \[
        Gf := \renorm f - D\renorm_0 f
    \]
    has $C^1$ norm on $\overline{\mathcal{D}}$ bounded by $\varepsilon$, that is, for all $f \in \overline{\mathcal{D}}$ and $h \in T_f \mathcal{U}$,
    \[
    \|G f\| \leq \varepsilon \|f\|, \quad \text{ and } \|DG_f h \| \leq \varepsilon \|h\|.
    \]
    
    When $f$ lies in $\overline{\mathcal{D}}$,
    \[
    \| (\renorm f)^s \| \leq \|D\renorm_0|_{E^s} (f^s)\| + \|(Gf)^s\| \leq \mu_s \| f^s \| + \varepsilon \| f \|.
    \]
    Assuming $\mu_s+3\varepsilon<1$, we then have $\| (\renorm f)^s\| < \delta$. Additionally, when $\| f^u \| = \delta$,
    \[
        \| (\renorm f)^u \| \geq \|D\renorm_0|_{E^u} (f^u)\| - \|(Gf)^u\| \geq \mu_u \delta - \varepsilon \|f\|.
    \]
    Assuming $\mu_u-3\varepsilon>1$, we then have $\| (\renorm f)^u \| > \delta$. Hence, (1) and (2) hold.
    
    Suppose both $f$ and $\renorm f$ are in $\mathcal{D}$. For every $h \in C^{cu}_f$, we have
    \begin{align*}
        \|(D\renorm_f h)^{cu}\| &= \| D\renorm_0|_{E^c\oplus E^u}(h^{cu}) + (DG_f(h))^{cu} \| \\
        &\geq \mu_{cu} \| h^{cu}\| - \varepsilon \|h\| \\
        &\geq \left( \mu_{cu} -\varepsilon\left(1+ \frac{1}{\alpha}\right)\right) \|h^{cu}\|,
    \end{align*}
    and
    \begin{align*}
        \alpha \|(D\renorm_f h)^s\| &= \alpha \| D\renorm_0|_{E^s}(h^s) + (DG_f(h))^s\| \\
        &\leq \alpha \left( \mu_s \|h^s\| + \varepsilon \|h\| \right) \\
        &\leq \left(\mu_s+ (\alpha + 1)\varepsilon\right) \|h^{cu}\|.
    \end{align*}
    We can take $\varepsilon$ to be small enough such that $\alpha \|(D\renorm_f h)^s\| \leq \|(D\renorm_f h)^{cu}\|$ and thus $D\renorm_f h \in C^{cu}_{\renorm f}$. The proof that the cone field $C^{cs}$ is backward invariant works in a similar way.
\end{proof}

Let us consider the perturbation $\renorm_\lambda := \lambda\cdot \renorm$ for $0<\lambda<1$. When $\lambda$ is sufficiently close to $1$, $\renorm_\lambda$ still satisfies all the properties listed in Claim 1. The following claim is a consequence of Lemma \ref{lem:main-for-small-orbits}, which we will elaborate later separately.

\begin{claim2}
    For any $0<\lambda<1$ sufficiently close to $1$ and for any sufficiently small $\delta>0$, there exists some point $f_\lambda \in \partial^c \mathcal{D}$ such that the orbit $\{\renorm^n_\lambda f_\lambda\}_{n\in\N}$ lies entirely inside of $\overline{\mathcal{D}}$ and $\renorm^n_\lambda f_\lambda \to 0$. 
\end{claim2}

Since $\renorm$ is compact, there exist an increasing sequence $\{\lambda_n\}_{n\in\N}$ of positive numbers and some $g \in \overline{\mathcal{D}}$ such that as $n\to \infty$, $\lambda_n \to 1$ and $\renorm_{\lambda_n} f_{\lambda_n} \to g$. Clearly, for all $n \in \N$, the $n^{\text{th}}$ iterate $g_n:= \renorm^n g$ lies in $\overline{\mathcal{D}}$.

Consider the cone $\hat{C}^{cu} = \{ f \in \mathcal{B} \: : \: \|f^s\| \leq \|f^{cu}\|\}$.
Similar to the proof of Claim 1 (3), $\hat{C}^{cu}$ is forward invariant under $\renorm_\lambda$ in the sense that if both $f$ and $\renorm_\lambda f$ are in $\overline{\mathcal{D}}$, then $f \in \hat{C}^{cu}$ implies $\renorm_{\lambda} f \in \hat{C}^{cu}$. 
Since $f_\lambda$ is in $ \partial^c \mathcal{D}$, then $f_\lambda $ is also inside of $\hat{C}^{cu}$. 
Hence, each $g_n$ is also in $\hat{C}^{cu}$. This implies that for every $n \in \N$,
\begin{equation}
    \label{ineq:c,u-estimate}
    g_{n+1}^{cu} = D\renorm_0|_{E^c \oplus E^u} (g_n^{cu}) + O\left(\|g_n^{cu}\|^2\right). 
\end{equation}

At last, we will show that the orbit of $g$ is a slow small orbit. Indeed, suppose for a contradiction that 
\begin{equation}
\label{ineq:last}
\liminf_{n\to\infty} \frac{1}{n} \log \|g_n\| < -c_0
\end{equation}
for some constant $c_0 >0$. Note that this property holds for every norm that is equivalent to $\| \cdot \|$. Pick some $c_1 \in [0,c_0)$. There exists an adapted norm $\| \cdot \|$ equivalent to the original one such that the operator norm of $D\renorm_0|_{E^c \oplus E^u}^{-1}$ is at most $e^{c_1}$. By (\ref{ineq:c,u-estimate}), for sufficiently small $\delta>0$, there is some $c_2 \in (0,c_0)$ such that 
\[
    \|g^{cu}_{n+1}\| \geq e^{-c_2} \|g^{cu}_n\| \qquad \text{for all } n \in \N.
\]
This contradicts (\ref{ineq:last}).
\end{proof}

It remains to prove Claim 2, which will follow directly from the lemma below. Again, we suppose $\mathcal{B}$ can be decomposed into $E^s \oplus E^c \oplus E^u$ and consider the cone fields $C^{cu}$ and $C^{cs}$ defined in (\ref{eqn:cone-fields}). We consider a small neighborhood $\mathcal{U} \subset \mathcal{B}$ of some polydisk $\mathcal{D}$ centered at $0$. For any $r>0$, we denote the open disk $\{ z \in \C \: : \: |z| < r \}$ by $\D_r$.

\begin{lemma}
\label{lem:main-for-small-orbits}
     Let $\renorm : (\mathcal{U},0) \to (\mathcal{B},0)$ be a compact analytic operator such that the differential $D\renorm_0$ preserves the decomposition $\mathcal{B} = E^s \oplus E^c \oplus E^u$ and satisfies the following properties.
    \begin{enumerate}[label=\textnormal{(\arabic*)}]
        \item \label{H1} Hyperbolicity: There exists some $0<r<1$ such that
        \[
        \spec{D\renorm_0|_{E^s}} \subset \D_r, \quad  \spec{D\renorm_0|_{E^c}} \subset \D \backslash \D_r, \quad \spec{D\renorm_0|_{E^u}} \subset \C \backslash \overline{\D}.
        \]
        \item \label{H2} Boundary behaviour: If $f \in \overline{\mathcal{D}}$, then $\renorm f \not\in \partial^s \mathcal{D}$. If $f \in \partial^u \mathcal{D}$, then $\renorm f \not\in \overline{\mathcal{D}}$.
        \item \label{H3} Invariant cone fields: Whenever $f,\renorm f \in \overline{\mathcal{D}}$,
        \[
        D\renorm_f(C^{cu}_f) \subset C^{cu}_{\renorm f}, \qquad (D\renorm_f)^{-1}(C^{cs}_{\renorm f}) \subset C^{cs}_f.
        \]
    \end{enumerate}
    Then, there exists some $f \in \partial^c \mathcal{D}$ such that $\{ \renorm^n f \}_{n \in \N} \subset \overline{\mathcal{D}}$ and $\|\renorm^n f\| \to 0$ as $n \to \infty$.
\end{lemma}

\begin{proof}
    By the compactness of $\renorm$, the subspace $E^c \oplus E^u$ has a finite dimension. Let $d_c:= \dime{E^c}$ and $d_u := \dime{E^u}$. By \ref{H1}, the stable manifold
    \[
    \mathcal{A} = \{ f \in \overline{\mathcal{D}} \: : \: \{ \renorm^n f \}_{n \in \N} \subset \overline{\mathcal{D}} \text{ and } \|\renorm^n f \| \to 0 \}
    \]
    exists and is a forward invariant analytic submanifold of codimension $d_u$. 
    
    Let us assume for a contradiction that $\mathcal{A}$ is disjoint from $\partial^c \mathcal{D}$.

    \begin{claim1}
        The set $\mathcal{A}^o:= \mathcal{A} \cap \mathcal{D}$ is a forward invariant open submanifold of $\mathcal{A}$.
    \end{claim1}

    \begin{proof}
        The only non-trivial property to prove here is forward invariance. Suppose $f \in \mathcal{A}^o$. As $f \in \mathcal{A}$, then $\renorm^n f \in \overline{\mathcal{D}}$ for all $n \geq 1$. By \ref{H2}, $\renorm f$ cannot lie in $\partial^s \mathcal{D} \cup \partial^u \mathcal{D}$. By the assumption, $\renorm f$ cannot lie in $\partial^c \mathcal{D}$ either. Thus, $\renorm f \in \mathcal{D}$.
    \end{proof}

    \begin{claim2}
        The set $\partial^c \mathcal{A} := \overline{\mathcal{A}} \backslash (\mathcal{A}^o \cup \partial^s \mathcal{D})$ is also forward invariant.
    \end{claim2}

    \begin{proof}
        Suppose for a contradiction that there is some $f \in \partial^c \mathcal{A}$ such that $\renorm f \in \mathcal{A}^o \cup \partial^s \mathcal{D}$. By \ref{H2}, $\renorm f$ must lie in $\mathcal{A}^o$, which implies that $f \in \mathcal{A} \cap (\partial^c \mathcal{D} \cup \partial^u \mathcal{D})$. However, this is impossible because $f$ does not lie in $\partial^c \mathcal{D}$ by our main assumption, nor in $\partial^u \mathcal{D}$ due to \ref{H2}. 
    \end{proof}

    \begin{claim3}
        The tangent space $T_f \mathcal{A}^o$ at every point $f$ in $\mathcal{A}^o$ is contained in $C^{cs}_f$.
    \end{claim3}

    \begin{proof}
        Let $f \in \mathcal{A}^o$. As $\mathcal{A}^o$ is tangent to the subspace $E^s \oplus E^c$ at $0$, for all sufficiently high $n$, $\renorm^n f$ is sufficiently close to $0$ and so the tangent space $T_{\renorm^nf}\mathcal{A}^o$ lies within $C^{cs}_{\renorm^nf}$. By backward invariance of $C^{cs}$ in \ref{H3}, $T_f \mathcal{A}^o$ also lies within $C^{cs}_f$.
    \end{proof}

    Let us consider the family $\mathcal{G}$ of all immersed analytic $d_c$-dimensional submanifolds $\Gamma$ of $\mathcal{A}^o$ containing $0$ with the following properties.
    \begin{enumerate}[label=(\alph*), start=1]
        \item \label{G1} The tangent space $T_f\Gamma$ at every point $f \in \Gamma$ lies in the cone $C_f^{cu}$;
        \item \label{G2} The accumulation set $\overline{\Gamma} \backslash \Gamma$ lies in $\partial^c \mathcal{A}$.
    \end{enumerate}
    Note that $\mathcal{G}$ is non-empty: it contains $\mathcal{A}^o \cap (E^c \oplus E^u)$ because, by Claim 3, the intersection between $\mathcal{A}^o$ and the subspace $E^c \oplus E^u$ is transversal. Another consequence of Claim 3 is the following claim.

    \begin{claim4}
        For every $\Gamma \in \mathcal{G}$ and $h \in T_f\Gamma$, $\| h^c \| \asymp \| h \|$. In particular, the projection $P: \Gamma \to D^c$ is non-singular.
    \end{claim4}

    \begin{proof}
        Let $h \in T_f\Gamma$. By Property \ref{G1} and Claim 3, $\alpha \|h^s\| \leq \| h^{cu} \|$ and $\alpha \|h^u\| \leq \| h^{cs}\|$. By triangle inequality, these imply that $(\alpha -1 ) \max\{ \| h^s \|, \| h^u\| \} \leq \| h^c \|$ and consequently $\| h^c\| \leq \|h\| \leq \frac{\alpha+1}{\alpha-1}\|h^c\|$.
    \end{proof}

    Recall that the Kobayashi norm of a tangent vector $v \in T_f \Gamma$ at a point $f$ on a complex manifold $\Gamma$ is defined as
    \[
    \| h \|_\Gamma := \inf \left\{ \|w\|_\D \: : \: D\phi_f(w)=h \text{ for some holomorphic map } \phi:(\D,0) \to (\Gamma,f) \right\}
    \]
    where $\|w \|_\D$ denotes the Poincar\'e metric of $w \in T_0\D$ on the unit disk $\D$. We will supply every $\Gamma \in \mathcal{G}$ with the Kobayashi metric.
    
    \begin{claim5}
        There is some $K >0$ such that for every $\Gamma \in \mathcal{G}$ and $h \in T_0\Gamma$, $\|h\|_\Gamma \leq K \|h\|$.
    \end{claim5}

    \begin{proof}
        By Claim 4, there is some $\delta>0$ such that for every $\Gamma \in \mathcal{G}$, the component $\Gamma(\delta)$ of $\Gamma \cap D(\delta)$ containing $0$ is a graph of an analytic map $D^c(\delta) \to D^s \times D^u$. Therefore, for any $h \in T_0\Gamma$,
        \[
            \| h \|_\Gamma \leq \|h\|_{\Gamma(\delta)} = \| h^c\|_{D^c(\delta)}.
        \]
        Clearly, $\| h^c \|_{D^c(\delta)} \asymp \| h^c \|$ (with bounds depending only on $\delta$). By Claim 4, this yields the desired inequality $\|h\|_\Gamma \leq K \| h \|$ for some $K$ independent of $\Gamma$.
    \end{proof}
    
    By Property \ref{H3} and Claim 2, the map $\renorm$ induces a well-defined graph transform
    \[
        \renorm* : \mathcal{G} \to \mathcal{G}, \quad \Gamma \mapsto \renorm\Gamma.
    \]
    Note that $\renorm: \Gamma \to \renorm\Gamma$ is a proper non-singular map, hence a holomorphic covering map. Therefore, for every $\Gamma \in \mathcal{G}$, $n \in \N$, and non-zero tangent vector $h \in T_0 \Gamma$,
    \[
        \|h\|_\Gamma = \| (D\renorm^n)_0 (h)\|_{(\renorm*)^n(\Gamma)}.
    \]
    By Claim 5,
    \[
        \| h \|_\Gamma \leq K \| (D\renorm^n)_0 (h) \|.
    \]
    However, by \ref{H1}, $\| (D\renorm^n)_0 (h) \|$ tends to $0$ as $n\to \infty$. This yields a contradiction.
\end{proof}

\begin{remark}
    By considering the perturbation $\renorm_\lambda$ for $\lambda>1$ and proving a backward version of Lemma \ref{lem:main-for-small-orbits}, one can also show that $\renorm$ has backward small orbits, that is, a sequence $\{g_{-n}\}_{n \in \N}$ that is all contained in a neighborhood of $0$ such that $\renorm g_{-n-1} = g_{-n}$ for all $n \in \N$ and 
    \[
    \lim_{n\to \infty} \frac{1}{n} \log \| g_{-n} \| = 0.
    \]
    It is unlikely that this method can be easily modified to produce bi-infinite small orbits, an analog of P\'{e}rez-Marco's hedgehogs \cite{PM97}. In several complex variables, some progress has been achieved by \cite{FLRT,LRT} when there is only one neutral eigenvalue. 
\end{remark}



\part{Coronas}

\section{Corona renormalization operator}
\label{sec:corona-renorm}

From now on, we fix a pair of positive integers $d_0,d_\infty \geq 2$ and set $d:=d_0+d_\infty-1$. 

\subsection{$(d_0,d_\infty)$-critical coronas}

For any open annulus $A$ compactly contained in $\C$, we label the boundary components of $A$ by $\partial^0 A$ and $\partial^\infty A$, and make the convention that $\partial^\infty A$ is the outer boundary, i.e. the one that is closer to $\infty$. We also say that another annulus $A'$ is \emph{essentially} contained in $A$ if $A'$ is a deformation retract of $A$.

\begin{definition}
\label{def:corona}
    A \emph{$(d_0,d_\infty)$-critical corona}\footnote{The shape of the domain $U$ in Figure \ref{fig:corona} resembles a crown or a wreath, which is what \emph{corona} means in Latin.} is a map $f: U \to V$ between two bounded open annuli in $\C$ with the following properties.
    \begin{enumerate}
        \item The boundary components of both $U$ and $V$ are Jordan curves, and $U$ is compactly and essentially contained in $V$.
        \item There is a proper arc $\gamma_1 \subset V$ connecting $\partial^{0} V$ and $\partial^{\infty} V$ such that the preimage $f^{-1}(\gamma_1)$ is disjoint from $\gamma_1$ and is a union of $2d-1$ pairwise disjoint arcs 
        \[
        \gamma_0 \subset U, \quad \gamma^0_1, \ldots, \gamma^0_{2(d_0-1)} \subset \partial^{0} U, \quad \gamma^\infty_1, \ldots, \gamma^\infty_{2(d_\infty-1)} \subset \partial^{\infty} U.
        \]
        \item $f: U \to V$ is holomorphic and $f: U \backslash \gamma_0 \to V \backslash \gamma_1$ is a degree $d$ branched covering map.
        \item $f$ has a unique critical point $c_0$.
    \end{enumerate}
    The arc $\gamma_1$ is called the \emph{critical arc} of $f$. See Figure \ref{fig:corona} for an illustration.
\end{definition}

Let $f: U \to V$ be a $(d_0,d_\infty)$-critical corona. For any $\bullet \in \{0,\infty\}$, we divide the boundary component $\partial^\bullet U$ into
\[
    \partial^{\bullet}_\lgt U := \partial^\bullet U \cap f^{-1}(\partial^\bullet V) \quad \text{ and } \quad \partial^{\bullet}_\frb U := \partial^\bullet U \backslash f^{-1}(\partial^\bullet V)
\]
according to whether or not it is mapped to the same side the annulus. Each of the above consists of $d_\bullet-1$ components. Set
\[
    \partial_{\lgt} U := \partial^{0}_\lgt U \cup \partial^{\infty}_\lgt U \quad \text{ and } \quad \partial_\frb U := \partial^{0}_\frb U \cup \partial^{\infty}_\frb U.
\]
We call $\partial_{\lgt} U$ the \emph{legitimate boundary} of $U$ and $\partial_\frb U$ the \emph{forbidden boundary} of $U$.

\subsection{Corona renormalization}
\label{ss:renormalization-operator}

\begin{definition}
\label{def:pre-corona}
    A \emph{$(d_0,d_\infty)$-critical pre-corona} is a pair of holomorphic maps 
    \[
    F=(f_- : U_- \to S, \: f_+: U_+\to S)
    \]
    satisfying the following properties.
    \begin{enumerate}
        \item $S$ is a topological rectangle with vertical sides $\beta_-$ and $\beta_+$.
        \item $\beta_0$ is a vertical arc in $S$ dividing $S$ into subrectangles $T_-$ and $T_+$, where $\beta_\pm \subset \partial T_\pm$ and $U_\pm$ is a subrectangle of $T_\pm$ with vertical sides contained in $\beta_\pm$ and $\beta_0$.
        \item There is a gluing map $\psi: \overline{S} \to \overline{V}$ such that $\psi(\beta_-)=\psi(\beta_+)$, $\psi$ is conformal on a neighborhood of $S$ and injective on $S \backslash (\beta_-\cup\beta_+)$, and $\psi$ projects $F$ into a $(d_0,d_\infty)$-critical corona with critical arc $\psi(\beta_\pm)$.
    \end{enumerate}
    The gluing map $\psi$ will also be called the \emph{renormalization change of variables} of $F$. It glues together $f_+(x) \in \beta_-$ and $f_-(x) \in \beta_+$ for every $x$ in $\beta_0 \cap \partial U_{\pm}$. See Figure \ref{fig:pre-corona}.
\end{definition}

\begin{figure}
\begin{tikzpicture}
\draw[white, fill=green!10!white] (-5,-3) -- (5,-3) -- (5,3) -- (-5,3) -- (-5,-3);
\draw[white, fill=yellow!15!white] (-5,-1.5) -- (-2,-1.5) -- (-2,1.5) -- (-5,1.5) -- (-5,-1.5);
\draw[white, fill=yellow!15!white] (-2,-1.5) .. controls (-0.5,-1.5) .. (0,-1.8) .. controls (0.15,-1.7) .. (0.3,-1.9) .. controls (0.5,-1.8) .. (0.7,-1.9) .. controls (0.85,-1.7) .. (1,-1.8) .. controls (1.3,-1.5) and (1.7,-1.5) .. (2,-1.8) .. controls (2.15,-1.7) .. (2.3,-1.9) .. controls (2.5,-1.8) .. (2.7,-1.9) .. controls (2.85,-1.7) .. (3,-1.8) .. controls (3.5,-1.5) .. (5,-1.5) -- (5,1.5) .. controls (3,1.5) .. (2.5,1.8) .. controls (2.3,1.7) .. (2,2) .. controls (1.5,1.7) .. (1,2) .. controls (0.7,1.7) .. (0.5,1.8) .. controls (0,1.5) .. (-2,1.5);

\draw[ultra thick] (-5,1.5) -- (-2,1.5);
\draw[ultra thick] (-5,-1.5) -- (-2,-1.5);
\draw[ultra thick, black] (-2,1.5) .. controls (0,1.5) .. (0.5,1.8);
\draw[ultra thick, red] (0.5,1.8) .. controls (0.7,1.7) .. (1,2); 
\draw[ultra thick, blue] (1,2) .. controls (1.5,1.7) .. (2,2); 
\draw[ultra thick, red] (2,2) .. controls (2.3,1.7) .. (2.5,1.8); 
\draw[ultra thick, black] (2.5,1.8) .. controls (3,1.5) .. (5,1.5);
\draw[ultra thick, black] (-2,-1.5) .. controls (-0.5,-1.5) .. (0,-1.8);
\draw[ultra thick, red] (0,-1.8) .. controls (0.15,-1.7) .. (0.3,-1.9); 
\draw[ultra thick, blue] (0.3,-1.9) .. controls (0.5,-1.8) .. (0.7,-1.9); 
\draw[ultra thick, red] (0.7,-1.9) .. controls (0.85,-1.7) .. (1,-1.8); 
\draw[ultra thick, black] (1,-1.8) .. controls (1.3,-1.5) and (1.7,-1.5) .. (2,-1.8);
\draw[ultra thick, red] (2,-1.8) .. controls (2.15,-1.7) .. (2.3,-1.9); 
\draw[ultra thick, blue] (2.3,-1.9) .. controls (2.5,-1.8) .. (2.7,-1.9); 
\draw[ultra thick, red] (2.7,-1.9) .. controls (2.85,-1.7) .. (3,-1.8); 
\draw[ultra thick, black] (3,-1.8) .. controls (3.5,-1.5) .. (5,-1.5);

\node[green!50!black] at (-3,2.2) {$T_-$};
\node[green!50!black] at (-0.5,2.2) {$T_+$};
\node[yellow!50!black] at (-3.5,0) {$U_-$};
\node[yellow!50!black] at (1.5,0) {$U_+$};
\node at (4.5,-3.25) {$S$};
\draw (-5,3) -- (5,3);
\draw (-5,-3) -- (5,-3);
\draw[red] (-2,3) -- (-2,-3);
\draw[red] (-5,3) -- (-5,-3);
\draw[red] (5,3) -- (5,-3);
\node[red] at (-2.25,0) {$\beta_0$};
\node[red] at (5.25,0) {$\beta_+$};
\node[red] at (-5.25,0) {$\beta_-$};
\node at (1,-3.6) {$f_-$};
\draw[line width=0.5pt,-latex] (-3,-1) .. controls (-1,-3) and (1,-4) .. (3,-2.5);
\node at (-2.3,-3.6) {$f_+$};
\draw[line width=0.5pt,-latex] (0,-1) .. controls (-1,-3) and (-2.5,-4) .. (-3.6,-2.5);
\end{tikzpicture}

\caption{A (2,3)-critical pre-corona. It projects to the corona in Figure \ref{fig:corona} after gluing $\beta_+$ and $\beta_-$ }
\label{fig:pre-corona}
\end{figure}
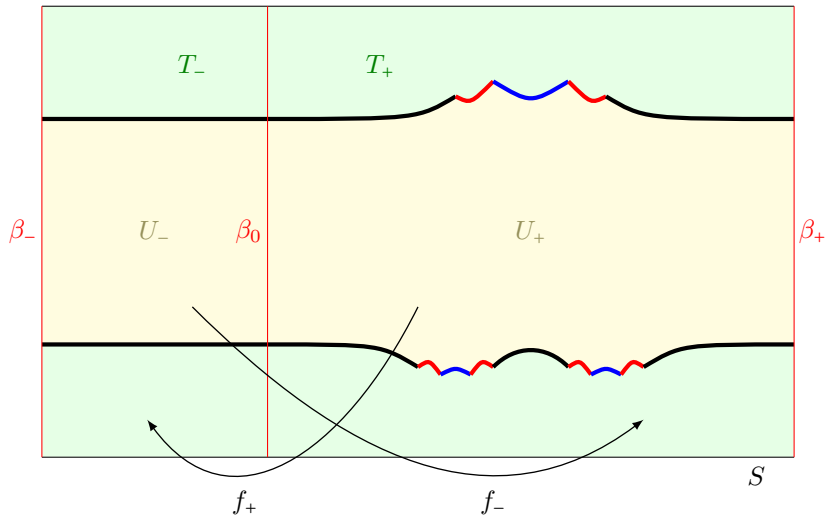
    
\begin{definition}
\label{def:renormalization}
    A corona $f: U \to V$ is \emph{renormalizable} if there exists a pre-corona
    \[
    F = (f^{k_-} : U_- \to S, \: f^{k_+} : U_+ \to S)
    \]
    on a rectangle $S \subset V$ such that $f^{k_-}$ and $f^{k_+}$ are the first return maps back to $S$ and
    \[
    \mathbf{\Delta}_F = \bigcup_{i=0}^{k_--1} \overline{f^i(U_-)} \cup \bigcup_{j=0}^{k_+-1} \overline{f^j(U_+)}
    \]
    is a closed annulus essentially contained in $U$. We call $F$ the \emph{pre-renormalization} of $f$, $k_-$ and $k_+$ the \emph{return times} of $F$, and $\mathbf{\Delta}_F$ the \emph{renormalization tiling} of $F$. The corona obtained by projecting $F$ under its gluing map is called the \emph{renormalization} of $f$.
\end{definition}

The renormalization change of variables projects a pre-corona to a corona on a non-canonical abstract Riemann surface. We will bypass this issue later by making a canonical choice for a corona renormalization fixed point and all nearby coronas.

\begin{example}[Prime renormalization]
    We say that the renormalization of a corona $f: U \to V$ is \emph{prime} if $k_-+k_+ = 3$. Below is an example of a prime corona renormalization. 
    
    Assume that the arcs $\gamma_0$, $\gamma_1$, and $\gamma_2 := f(\gamma_1)$ are pairwise disjoint. Denote by $S_1$ the unique open quadrilateral obtained by cutting $V$ along $\gamma_1 \cup \gamma_2$ such that $S_1$ is disjoint from $\gamma_0$. Let us assume further that $S_1$ does not contain the critical value nor the forbidden boundary of $U$.

    Let us remove $S_1$ from the dynamical plane. We define $\hat{V}$ to be the Riemann surface with boundary obtained from $\overline{V \backslash S_1}$ by gluing $\gamma'_1 := f^{-1}(\gamma_2) \cap \gamma_1$ and its image $\gamma_2$ along $f$. In other words, there is a quotient map $\psi: \overline{V \backslash S_1} \to \hat{V}$ that is conformal on the interior and $\psi(z)=\psi(f(z))$ for all $z \in \gamma'_1$. We embed the abstract Riemann surface $\hat{V}$ into the plane.

    The prime renormalization of $f$ is defined by the induced first return map of $f$ on $\hat{V}$. More precisely, consider the lift $S_0$ of $S_1$ under $f$ attached to $\gamma_1$. The piecewise holomorphic map
    \[
    \begin{cases}
        f(z), & \text{ if } z \in U \backslash \left(S_1 \cup f^{-1}(S_1)\right), \\
        f^2(z), & \text{ if } z \in S_0 \cap f^{-1}(U).
    \end{cases}
    \]
    descends via $\psi$ into a corona $\hat{f} : \hat{U} \to \hat{V}$ with critical ray $\hat{\gamma}_1 = \psi(\gamma'_1)$.
\end{example}

\subsection{Banach neighborhood}

In what follows, every unicritical holomorphic map $f: U \to V$ under consideration will be assumed to admit a slightly larger domain $\tilde{U}$ with piecewise smooth boundary such that $\tilde{U}$ compactly contains $U$ and $f$ extends to a unicritical holomorphic map on $\tilde{U}$ extending continuously to $\partial \tilde{U}$. We define a \emph{Banach neighborhood} of $f$ to be a neighborhood of $f$ of the form $N_{\tilde{U}}(f,\varepsilon)$, which we define to be the space of holomorphic maps $g: \tilde{U} \to \C$ that extend continuously to $\partial \tilde{U}$, admit a single critical point in $c_0(g)$, and
\[
    \sup_{z \in \tilde{U}}|f(z)-g(z)| <\varepsilon.
\]
We equip $N_{\tilde{U}}(f,\varepsilon)$ with the sup norm over $\tilde{U}$.

\begin{lemma}[Stability of the corona structure]
\label{lem:stability-of-coronas}
    Let $f: U\to V$ be a $(d_0,d_\infty)$-critical corona. For sufficiently small $\varepsilon>0$, there is a holomorphic motion $\partial U_g$ of $\partial U$ over $g \in N_{\tilde{U}}(f,\varepsilon)$ such that $g: U_g \to V$ is a $(d_0,d_\infty)$-critical corona with the same codomain $V$ and critical arc $\gamma_1$.
\end{lemma}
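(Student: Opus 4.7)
The plan is to use a Banach-space implicit function argument. The key observation is that the unique critical point $c_0$ of $f$ lies in the interior of $U$, so $f'$ is nowhere zero on a neighborhood of $\partial U$, and $f$ restricts to a local biholomorphism there. Each boundary arc of $\partial U$ maps diffeomorphically either onto a piece of $\partial V$ (legitimate boundary) or onto $\gamma_1$ (forbidden boundary), and both of these target sets are fixed independently of $g$.

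To construct the motion of $\partial U$, for each $z_0 \in \partial U$ I would apply the holomorphic implicit function theorem to the equation $g(z) = f(z_0)$: since $f'(z_0) \neq 0$ and the evaluation $g \mapsto g(z)$ is continuous linear in $g$, there is a unique solution $z = h(g, z_0)$ close to $z_0$ for $g$ in a small sup-norm neighborhood of $f$, and $g \mapsto h(g, z_0)$ is holomorphic on that neighborhood. By compactness of $\partial U$, a single $\varepsilon > 0$ works uniformly, and I obtain a map
\[
h: N_{\tilde{U}}(f, \varepsilon) \times \partial U \longrightarrow \tilde{U}.
\]
Since $h(f, \cdot) = \mathrm{id}$, holomorphicity in $g$ combined with continuity in $z_0$ implies that $h(g, \cdot): \partial U \to \tilde{U}$ is injective for $\varepsilon$ small enough, and its image $\partial U_g$ is a Jordan curve system isotopic to $\partial U$ inside $\tilde{U}$. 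This is the desired holomorphic motion.

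Defining $U_g$ as the annulus bounded by $\partial U_g$, I have $g(\partial U_g) = f(\partial U) \subset \partial V \cup \gamma_1$, so $g: \overline{U_g} \to \overline{V}$ is proper. Its topological degree agrees with that of $f$, namely $d = d_0 + d_\infty - 1$, because the degree is locally constant under continuous perturbations of proper maps that respect the boundary correspondence. The preimage $g^{-1}(\gamma_1) \cap \overline{U_g}$ is then a small perturbation of $f^{-1}(\gamma_1) \cap \overline{U}$, hence still consists of a single interior arc (the moved $\gamma_0$) together with $2(d_0 - 1) + 2(d_\infty - 1)$ boundary arcs distributed on the two components of $\partial U_g$ in the same cyclic order as before. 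The unique critical point $c_0(g)$ of $g$ supplied by the definition of $N_{\tilde{U}}(f, \varepsilon)$ lies in $U_g$ for $\varepsilon$ small enough, verifying each item of Definition~\ref{def:corona} with codomain $V$ and critical arc $\gamma_1$ unchanged.

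The hard part, such as it is, will be confirming that the combinatorial count and cyclic order of the preimage arcs on $\partial U_g$ indeed match those for $f$; this follows from the isotopy invariance of the configuration under the continuous motion $g \mapsto h(g, \cdot)$ together with the preservation of the total topological degree of the branched covering. Everything else is essentially the routine Banach-space implicit function package applied at the smooth boundary of a proper map.
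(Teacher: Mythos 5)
Your construction is the same as the paper's: solving $g(z)=f(z_0)$ for $z$ near $z_0$ is exactly the paper's map $\tau_g$ defined by $f = g\circ\tau_g$ on $\partial U$, justified by the absence of critical points of $f$ (hence of nearby $g$) in a neighborhood of $\partial U$, with injectivity and the preservation of the corona structure following as you describe. The proposal is correct and takes essentially the same route.
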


\begin{proof}
    Let $A_\delta$ be the $\delta$-neighborhood of $\partial U$, where $\delta>0$ is picked small enough such that $A_\delta$ contains no critical points of $f$. For sufficiently small $\varepsilon$, the derivative of $g \in N_{\tilde{U}}(f,\varepsilon)$ is uniformly bounded and non-vanishing on $A_\delta$, and so $g$ has no critical points in $A_\delta$. Thus, we have a well-defined map $\tau_g : \partial U \to A_\delta$ such that $\tau_f = \Id$ and $f = g\circ \tau_g$ on $\partial U$. Since $f$ has no critical value along $\partial U$, $\tau_g(z)$ is injective in $z$ and holomorphic in $g$. Therefore, we have a holomorphic motion of $\partial U$, and $\tau_g(\partial U)$ bounds an open annulus $U_g$ on which $g :U_g \to V$ is a well-defined $(d_0,d_\infty)$-critical corona with the same critical arc.
\end{proof}

The following theorem is inspired by Yampolsky's holomorphic motions argument \cite[\S7]{Y03a}. See also \cite[Proposition 2.11]{Y08} and \cite[\S2]{DL23}.

\begin{theorem}
\label{thm:renormalization}
    Suppose a unicritical holomorphic map $f: U \to V$ admits a pre-corona which projects to a corona $\hat{f}: \hat{U} \to \hat{V}$ via a quotient map $\psi_f: S_f \to \hat{V}$. For sufficiently small $\varepsilon >0$, there is a compact analytic renormalization operator $\renorm$ on a Banach neighborhood $N_{\tilde{U}}(f,\varepsilon)$ such that $\renorm f = \hat{f}$ and for each $g \in N_{\tilde{U}}(f,\varepsilon)$,
    \begin{enumerate}[label=\textnormal{(\arabic*)}]
        \item $g$ admits a pre-corona which projects to the corona $\renorm g: \hat{U}_g \to \hat{V}$, and
        \item the boundary $\partial \hat{U}_g$ and the associated gluing map $\psi_g$ depend holomorphically on $g$.
    \end{enumerate}
\end{theorem}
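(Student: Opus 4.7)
The plan is to construct $\renorm g$ for $g \in N_{\tilde{U}}(f,\varepsilon)$ in three stages: first, deform the pre-corona data of $f$ into a pre-corona for $g$ via holomorphic motions; second, perform a conformal welding that depends holomorphically on $g$ to obtain the corona $\renorm g$; and third, establish compactness and analyticity of the resulting operator.

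\textbf{Stage 1: Holomorphic motion of the pre-corona.} Recall that the pre-corona of $f$ consists of the rectangle $S$, vertical arcs $\beta_-, \beta_+, \beta_0$, and subrectangles $U_\pm \subset T_\pm$ on which $f^{k_\pm}$ are the first return maps onto $S$. Since $f$ is holomorphic on $\tilde{U}$ which compactly contains $U$, the iterates $f^j$ for $j \leq \max(k_-, k_+)$ are defined on compact neighborhoods of the relevant subdomains, and for sufficiently small $\varepsilon > 0$, the iterates $g^{k_\pm}$ remain uniformly close to $f^{k_\pm}$ on such neighborhoods for all $g \in N_{\tilde{U}}(f,\varepsilon)$. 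Keeping $S$ and $\beta_\pm$ fixed and applying the same implicit function / holomorphic motion argument as in Lemma \ref{lem:stability-of-coronas} to the non-critical portions of $\partial S$, the rectangles $U_{\pm,g}$ defined as the appropriate components of $g^{-k_\pm}(S)$ adjacent to $\beta_\pm$ have boundaries that move holomorphically in $g$, and the middle arc $\beta_{0,g}$ arises as their common vertical side. This yields a pre-corona for $g$ with the same return times $k_\pm$ depending holomorphically on $g$.

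\textbf{Stage 2: Conformal welding depending holomorphically on $g$.} The pre-corona above induces a dynamical identification on $\beta_- \cup \beta_+$: for each $x \in \beta_{0,g} \cap \partial U_{\pm,g}$, the points $g^{k_+}(x) \in \beta_-$ and $g^{k_-}(x) \in \beta_+$ must be glued. To promote this into a quotient map $\psi_g: S \to \hat{V}$ realizing the identification, I plan to interpolate between the $f$- and $g$-welding data via the Ahlfors--Bers theorem with parameters. Concretely, construct a Beltrami differential $\mu_g$ on $S$, compactly supported in a thin neighborhood of $\beta_- \cup \beta_+$, with $\|\mu_g\|_\infty \to 0$ as $g \to f$, whose integrating quasiconformal map $\Phi_g$ realizes the required welding after post-composition with $\psi_f$. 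Setting $\psi_g := \psi_f \circ \Phi_g^{-1}$ and pushing the pre-corona forward yields a corona $\renorm g : \hat{U}_g \to \hat{V}$ with $\hat{U}_g = \psi_g(U_{-,g} \cup U_{+,g})$ and $\renorm g = \psi_g \circ g^{k_\pm} \circ \psi_g^{-1}$ on the two pieces, establishing items (1) and (2).

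\textbf{Stage 3: Compactness and analyticity.} After further shrinking $\varepsilon$, the map $\renorm g$ extends holomorphically to a slightly thickened domain that compactly contains $\hat{U}_g$, with uniform sup-norm bounds across $N_{\tilde{U}}(f,\varepsilon)$. Restricting to a fixed domain $\tilde{\hat{U}}$ slightly larger than $\hat{U}$, Cauchy estimates and Montel's theorem then give compactness of the image of $\renorm$ in the target Banach space. Analyticity of $g \mapsto \renorm g$ as a map of Banach manifolds follows from the holomorphic dependence of each building block, namely the boundaries of $U_{\pm,g}$, the welding map $\psi_g$, and the iterates of $g$. The main technical obstacle is the holomorphic dependence of the welding Beltrami differential $\mu_g$ on $g$ as an $L^\infty$ family; this is delicate because the gluing identification is defined only on a one-dimensional set, but since this identification is given by an explicit dynamical formula in $g$ and the holomorphically moving boundaries of $U_{\pm,g}$, a careful quasiconformal interpolation produces a holomorphic $L^\infty$ family to which the Ahlfors--Bers parameter theorem applies directly.
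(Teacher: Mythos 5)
Your three-stage skeleton matches the paper's strategy, and Stages 1 and 3 are essentially right (the paper also gets compactness by pre-restricting $g$ to an intermediate domain $U \Subset U' \Subset \tilde U$). The problems are concentrated in Stage 2, and they are genuine.

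First, your gluing map $\psi_g := \psi_f \circ \Phi_g^{-1}$, with $\Phi_g$ integrating a Beltrami differential supported in a collar of $\beta_-\cup\beta_+$ inside $S$, is only quasiconformal — not conformal — on that collar, so it does not satisfy the defining property of a renormalization change of variables, and the projected return map $\psi_g \circ g^{k_\pm}\circ \psi_g^{-1}$ would only be quasiregular rather than holomorphic. The straightening has to happen on the \emph{target} side: one takes the equivariant correction $\tau_g$ of the pre-corona data, pushes its Beltrami coefficient forward under the conformal map $\psi_f$ to $\hat V$, extends it by zero to the rest of the plane, integrates \emph{there} to get a normalized global quasiconformal map $\phi_g$, and sets $\psi_g := \phi_g\circ\psi_f\circ\tau_g^{-1}$. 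Only then do the dilatations cancel and $\psi_g$ become conformal on the (moved) rectangle. This extra map $\phi_g$ is not a cosmetic detail you can absorb into "interpolation"; without it the theorem's conclusion fails.

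Second, the issue you yourself flag as "the main technical obstacle" — holomorphic dependence of the interpolating quasiconformal extension on $g$ — is exactly the point where the argument needs a tool you never invoke: the $\lambda$-lemma. The paper sets $\tau_g$ to be the identity on $\beta_0$ and $g^{k_\mp}\circ f^{-k_\mp}$ on $\beta_\pm$; for small $\varepsilon$ this is an \emph{equivariant holomorphic motion} of the one-dimensional set $\beta_0\cup\beta_\pm$ (it conjugates the $f$-identification to the $g$-identification by construction), and the Bers--Royden/S{\l}odkowski extension then produces a quasiconformal extension to $S$ whose Beltrami coefficient is automatically a holomorphic $L^\infty$-valued function of $g$. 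An ad hoc "careful quasiconformal interpolation" does not come with this holomorphy for free, and asserting it is precisely assuming the hard part. Relatedly, by fixing $\beta_\pm$ and letting only $\beta_0$ move you force yourself into a genuinely varying conformal welding problem on a fixed curve, which is harder than necessary; letting $\beta_\pm$ move by the dynamically defined motion reduces the welding to the fixed one already solved by $\psi_f$.
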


\begin{proof}
    There exists a pre-corona $F= \left( f^{k_\pm} : U_\pm \to S \right)$ of $f$ and a quotient map $\psi_f$ projecting $F$ to $\hat{f}$. Recall the arcs $\beta_\pm$ and $\beta_0$ associated to $F$. For $g \in N_{\tilde{U}}(f,\varepsilon)$, consider the map $\tau_g : \beta_0 \cup \beta_\pm \to \C$ defined by setting $\tau_g$ to be the identity map on $\beta_0$ and the composition $g^{k_\mp}\circ f^{-k_\mp}$ on $\beta_\pm$; this is an equivariant holomorphic motion of $\beta_0 \cup \beta_\pm$ for sufficiently small $\varepsilon>0$. By $\lambda$-lemma \cite{BR86,ST86}, $\tau_g$ extends to a holomorphic motion of $S$ over a neighborhood of $f$.

    Let $\mu_g$ be the Beltrami differential of $\tau_g$. Define a global Beltrami differential $\nu_g$ by setting $\nu_g = (\psi_f)_* \mu_g$ on $\hat{V}$ and $\nu_g \equiv 0$ outside of $\hat{V}$. Integrate $\nu_g$ to obtain a unique quasiconformal map $\phi_g$ fixing $\infty$, the critical point of $f$, and the critical value of $f$. Then, $\psi_g := \phi_g \circ \psi_f \circ \tau_g^{-1}$ is a conformal map on $S_g := \tau_g(S_f)$ depending holomorphically on $g$.

    The gluing map $\psi_g$ projects the pair $\left( g^{k_-}, g^{k_+} \right)$ on $S_g$ to a map $\hat{g}$ close to $\hat{f}$. By Lemma \ref{lem:stability-of-coronas}, $\hat{g}$ restricts to a corona that has the same range as $\hat{f}$ and depends analytically on $g$. This yields an analytic operator $ g \mapsto \hat{g}$. To make this operator compact, we modify it as follows. Pick another annulus $U'$ where $U \Subset U' \Subset \tilde{U}$. We define $\renorm$ on $N_{\tilde{U}}(f,\varepsilon)$ to be the renormalization of the restriction of $g$ to $U'$.
\end{proof}


\section{Rotational coronas}
\label{sec:rotational-coronas}

Throughout this section, we fix a bounded type irrational number $\theta \in \IrratBdd$.

\subsection{Definition of rotational coronas}

\begin{definition}[Inner and outer criticalities]
    Consider a quasicircle $\Hq \subset \C$ and denote the bounded and unbounded components of $\RS \backslash \Hq$ by $Y^0$ and $Y^\infty$ respectively. We say that $f: \Hq \to \Hq$ is a \emph{$(d_0,d_\infty)$-critical quasicircle map} if it is a critical quasicircle map where for any $\bullet \in \{0,\infty\}$ and any point $z \in Y^\bullet$ close to the critical value of $f$, there are exactly $d_\bullet$ preimages of $z$ in $Y^\bullet$ that are close to the critical point of $f$.
\end{definition}

When a holomorphic map $f$ is given, we also say that an invariant quasicircle $\Hq \subset \C$ is a \emph{$(d_0,d_\infty)$-critical Herman quasicircle} if $f: \Hq \to \Hq$ is a $(d_0,d_\infty)$-critical quasicircle map. The term \emph{Herman quasicircle} originates from \cite{Lim23a} and is meant to acknowledge that its first non-trivial examples arise from degeneration of Herman rings.

Now, let us consider a $(d_0,d_\infty)$-critical corona $f: U \to V$.
We say that $f$ is \emph{laminated} if it admits a collection of laminations $\{\mathscr{L}^\bullet_i\}_{\bullet \in \{0,\infty\}, i \geq 0}$ satisfying the following properties for each $\bullet \in \{0,\infty\}$.
\begin{enumerate}[label = \textnormal{(\roman*)}]
    \item The support of each $\mathscr{L}^\bullet_i$ is a union of closed topological rectangles with pairwise disjoint interiors, and every leaf of $\mathscr{L}^\bullet_i$ is vertical, i.e. it joins the top and bottom horizontal sides of a rectangle. For $i=0$, there are $d_\bullet-1$ rectangles.
    \item $\mathscr{L}^\bullet_0$ lives in $\overline{V} \backslash U$. The union $B^\bullet$ of the bottom horizontal sides of the rectangles of $\mathscr{L}^\bullet_0$ is equal to the legitimate boundary $\partial^\bullet_\lgt U$. The union $T^\bullet$ of their top horizontal sides is a subset of $\partial^\bullet V$.
    \item For every $i \geq 0$, the top horizontal side of every rectangle of $\mathscr{L}^\bullet_{i+1}$ is contained in the bottom horizontal side of a rectangles of $\mathscr{L}^\bullet_i$. 
    \item The closure of $\gamma_1$ contains a unique leaf $l^\bullet_i$ of $\mathscr{L}^\bullet_i$ for every $i$. The bottom endpoint of each $l^\bullet_i$ is the top endpoint of $l^\bullet_{i+1}$.
    \item For every $i \geq 0$, $\mathscr{L}^\bullet_{i+1}$ is the collection of the preimages of leaves of $\mathscr{L}^\bullet_i$ under $f: \overline{U} \to \overline{V}$ such that condition (iii) is satisfied. Hence, $f: \mathscr{L}^\bullet_{i+1} \to \mathscr{L}^\bullet_i$ is $d_\bullet$-to-$1$ away from $f^{-1}(\gamma_1)$.
\end{enumerate}
The leaves of each $\mathscr{L}^\bullet_i$ will be called \emph{external ray segments}. 
For $\bullet \in \{0,\infty\}$, an infinite concatenation of external ray segments $e_i \in \mathcal{R}^\bullet_i$, $i \geq 0$ is called an \emph{external ray} of $f$; it is called an \emph{inner} external ray if $\bullet = 0$, an \emph{outer} external ray if $\bullet \in \infty$.
For example, the arc $\gamma_1$ contains a unique inner external ray $l^0_0 \cup l^0_1 \cup \ldots$ and a unique outer external ray $l^\infty_0 \cup l^\infty_1 \cup \ldots$
For any external ray $\gamma$ of $f$, we denote the image by
\[
f(\gamma) := f(\gamma \cap \overline{U})
\]
which is also an external ray of $f$ by definition. 

Consider a laminated corona $f: U \to V$. For each $\bullet \in \{0,\infty\}$, define the map $\pi_\bullet : B^\bullet \to T^\bullet$ sending the endpoint of each leaf of $\mathscr{L}^\bullet_0$ to the starting point. Consider the partially defined $d_\bullet$-to-$1$ self map $\phi_\bullet := \pi_\bullet^{-1} \circ f$ of $B^\bullet$. Denote by $\mathcal{A}^\bullet$ the set of points in $B^\bullet$ which are invariant under $\phi_\bullet$. Let us identify the unit circle $\T$ with the quotient $\R/\Z$. There is a semiconjugacy $s_\bullet: \mathcal{A}^\bullet \to \T$
between $\phi_\bullet : \mathcal{A}^\bullet \to \mathcal{A}^\bullet$ and the multiplication map $\T \to \T$, $x \mapsto d_\bullet x$ (mod $1$), which is unique up to conjugation with addition by multiples of $\frac{1}{d_\bullet-1}$.
For an external ray $\gamma$, we define the \emph{external angle} of $\gamma$ to be the angle $s_\bullet(z)$ where $z$ is the unique point of intersection of $\gamma$ and $B^\bullet$ for some $\bullet \in \{0,\infty\}$.

\begin{definition}
    A corona $f:U \to V$ is a \emph{rotational corona} if 
    \begin{enumerate}
        \item $U$ essentially contains a Herman quasicircle $\Hq$ that passes through the unique critical point of $f$;
        \item the critical arc $\gamma_1$ intersects $\Hq$ precisely at one point $m(f)$, which we call the \emph{marked point} $m(f)$ of $f$;
        \item $f$ is laminated and $m(f)$ splits $\overline{\gamma_1}$ into an inner external ray $R^0$ and an outer external ray $R^\infty$.
    \end{enumerate}
    A pre-corona is called \emph{rotational} if it projects to a rotational corona under its renormalization change of variables.
\end{definition}

By design, if a $(d_0,d_\infty)$-critical corona is rotational, then it admits a $(d_0,d_\infty)$-critical Herman quasicircle. In this section, we will construct rotational coronas out of critical quasicircle maps and discuss a rigidity property for rotational coronas.

\subsection{Realization of rotational coronas}

 Consider the family of degree $d$ rational maps $\{F_c\}_{c \in \C^*}$ where
\begin{equation}
\label{eqn:rat-map-formula}
    F_{c}(z) := - c \,\dfrac{ \displaystyle\sum_{j=d_0}^{d} \binom{d}{j} \cdot (-z)^j}{ \displaystyle\sum_{j=0}^{d_0-1} \binom{d}{j} \cdot (-z)^j}.    
\end{equation}
By \cite[Proposition 10.1]{Lim23a}, this family is characterized by the property that $F_c$ has critical points at $0$, $\infty$, and $1$ with local degrees $d_0$, $d_\infty$, and $d$ respectively, and that $F_c(0)=0$, $F_c(\infty)=\infty$, and $F_c(1)=c$. 

\begin{theorem}[{\cite{Lim23a, Lim23b}}]
\label{thm:comb-rigidity}
    There exists a unique parameter $c=c(\theta) \in \C^*$ such that $F_c$ admits a $(d_0,d_\infty)$-critical Herman quasicircle $\Hq$ with rotation number $\theta$ which passes through $1$. 
\end{theorem}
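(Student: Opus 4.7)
The plan is to handle existence and uniqueness separately, each by invoking the earlier works \cite{Lim23a} and \cite{Lim23b}. The family \eqref{eqn:rat-map-formula} is designed so that $0$ and $\infty$ are super-attracting critical fixed points of local degrees $d_0$ and $d_\infty$, and $1$ is the unique remaining free critical point with $F_c(1)=c$; thus realizing $\theta$ in this family amounts to producing a $(d_0,d_\infty)$-critical Herman quasicircle passing through the free critical point.

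\emph{Existence.} Following the approach of \cite{Lim23a}, I would first produce a $(d_0,d_\infty)$-critical quasicircle map of rotation number $\theta$ by a Herman ring degeneration argument. Consider an auxiliary family of rational maps admitting an invariant Herman ring $H$ of rotation number $\theta$ whose two boundary quasicircles carry critical points of local degrees $d_0$ and $d_\infty$, built by Shishikura-type quasiconformal surgery from a Blaschke-product model, with Petersen's theorem guaranteeing that the boundary dynamics is quasisymmetrically conjugate to rigid rotation because $\theta\in\Theta_{bdd}$. Then apply the a priori bounds of \cite{Lim23a} to shrink $\mathrm{mod}(H)\to 0$ in a controlled way, so that the two boundary quasicircles coalesce in the limit into a single Herman quasicircle $\Hq$ of inner/outer criticality exactly $(d_0,d_\infty)$. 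The limit map still has two super-attracting critical fixed points and one critical point on $\Hq$ with the prescribed local degrees; a M\"obius normalization putting the two critical fixed points at $0$ and $\infty$ and the marked critical point at $1$ forces the limit into the normal form \eqref{eqn:rat-map-formula} for some $c=c(\theta)\in\C^*$.

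\emph{Uniqueness.} Suppose $F_{c_1}$ and $F_{c_2}$ each admit a $(d_0,d_\infty)$-critical Herman quasicircle $\Hq_i$ of rotation number $\theta$ through $1$. The combinatorial rigidity theorem \cite[Theorem B]{Lim23b} for unicritical Herman quasicircles produces a quasiconformal conjugacy $h$ between $F_{c_1}$ and $F_{c_2}$ defined on a neighborhood of $\Hq_1$, sending $\Hq_1$ to $\Hq_2$ and fixing $1$. This conjugacy extends globally by pulling back along the super-attracting basins of $0$ and $\infty$ (where both maps are conjugate to $z\mapsto z^{d_0}$ and $z\mapsto z^{d_\infty}$ via B\"ottcher coordinates) and by iterated preimage pullback. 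After post-composing with the unique M\"obius map that pins $\{0,1,\infty\}$, the Beltrami coefficient of $h$ is $F_{c_1}$-invariant; the only free critical orbit of $F_{c_1}$ lies on the Julia set $\Hq_1$, and the two Fatou components are super-attracting basins carrying no non-trivial invariant line field. Hence the Beltrami coefficient of the extended $h$ vanishes identically, so $h$ is a M\"obius map fixing $0,1,\infty$, i.e.\ the identity, and $c_1=F_{c_1}(1)=F_{c_2}(1)=c_2$.

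\emph{Main obstacle.} The substance of the theorem is packaged inside its two cited inputs. The existence part rests on the delicate analysis in \cite{Lim23a} showing that $\mathrm{mod}(H)$ can be driven all the way to $0$ without losing control of the limiting geometry or of the two critical orbits on opposite sides of $\Hq$; this is the technically hardest step and depends on a priori bounds specifically adapted to bounded type rotation numbers. The uniqueness part depends on \cite[Theorem B]{Lim23b}, combinatorial rigidity of unicritical Herman quasicircles, which itself requires the full renormalization machinery for critical quasicircle maps, in particular the exponential convergence of renormalizations. Assuming these two inputs, the theorem is essentially a matter of arranging normalizations so that the resulting maps live in the one-parameter normal form \eqref{eqn:rat-map-formula}.
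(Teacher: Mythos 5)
The paper does not prove this statement at all: it is imported verbatim from \cite{Lim23a,Lim23b}, and your reconstruction matches exactly how the paper describes those works (existence by a priori bounds and degeneration of Herman rings in \cite{Lim23a}; uniqueness by the combinatorial rigidity of unicritical Herman quasicircles in \cite{Lim23b}). So your decomposition and attribution of the load-bearing steps are the right ones.

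One local inaccuracy in your uniqueness sketch: superattracting basins do carry nontrivial invariant line fields (push forward any constant line field through the B\"ottcher coordinate), so the phrase ``the two Fatou components \ldots carry no non-trivial invariant line field'' is not the correct justification. The standard fix, which is presumably what the cited rigidity theorem implements, is to \emph{choose} the extension of $h$ to be conformal on the basins via B\"ottcher coordinates, and then to show separately that the Julia set of $F_c$ --- which is much larger than $\Hq$ (it contains all the bubbles and limbs of Section~\ref{sec:rotational-coronas}) --- supports no invariant line field, e.g.\ by a measure-theoretic attractor or renormalization-rigidity argument. With that correction, the argument is the expected one, and the genuine content remains in the two cited inputs, as you say.
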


Consider $f:= F_{c}$ and $\Hq$ from the theorem above. The map $f$ will serve as our model map. Using external rays, we will show in this subsection that $f$ is corona renormalizable.

Recall that the \emph{Fatou set} $F(R)$ of a rational map $R: \RS \to \RS$ is the set of points in $\RS$ near which the iterates of $R$ are equicontinuous, and the \emph{Julia set} $J(R)$ of $R$ is equal to $\RS \backslash F(R)$. For the rational map $f$, the Fatou set $F(f)$ is known to be the union of iterated preimages of the immediate attracting basin $A_0$ of the superattracting fixed point $0$ and the immediate attracting basin $A_\infty$ of the superattracting fixed point $\infty$. The Julia set $J(f)$ is equal to the closure of the union of the iterated preimages of the Herman quasicircle $\Hq$.

For any $n\geq 1$, we refer to the closure of a component of $f^{-n}(\Hq) \backslash f^{-(n-1)}(\Hq)$ as a \emph{bubble} of \emph{generation} $n$. Every bubble $B$ of generation $n$ is a quasicircle admitting a unique point, which we will call the \emph{root} of $B$, that lies on the pre-critical set $ f^{-(n-1)}(1)$. We call a bubble $B$ of generation $n$ an \emph{outer bubble} (resp. \emph{inner bubble}) if the bubbles $B$, $f(B),\ldots$, $f^{n-1}(B)$ all lie in the connected component of $\RS \backslash \Hq$ containing $\infty$ (resp. $0$).

A \emph{limb} of generation one is the closure of a connected component of $J(f) \backslash \{1\}$ that is disjoint from $\Hq$. In general, a limb $L$ of generation $n\geq 1$ is the connected component of the preimage under $f^{n-1}$ of a limb of generation one. A \emph{filled limb} $\hat{L}$ of generation $n$ is the hull of a limb $L$ of generation $n$, that is, $\RS \backslash \hat{L}$ is the unbounded connected component of $\RS \backslash L$. 

Every limb $L$ of generation $n$ contains a unique bubble $B_L$ of generation $n$. The \emph{root} of $L$ is the root of $B_L$. We call $L$ an \emph{outer/inner limb} if $B_L$ is an outer/inner bubble. 

\begin{lemma}
\label{lem:small-limbs}
    Consider the immediate basins $A_0$ and $A_\infty$ of $0$ and $\infty$ respectively.
    \begin{enumerate}[label=\textnormal{(\arabic*)}]
        \item The boundary of $A_\infty$ is equal to the closure of the union of $\Hq$ and all outer bubbles of $f$, whereas the boundary of $A_0$ is equal to the closure of the union of $\Hq$ and all inner bubbles of $f$. 
        \item Both $\partial A_0$ and $\partial A_\infty$ are locally connected. 
        \item For any $\varepsilon>0$, all but finitely many inner and outer limbs of $f$ have diameter at most $\varepsilon$.
    \end{enumerate}
\end{lemma}

\begin{proof}
    Denote by $Y^0$ and $Y^\infty$ the connected components of $\RS \backslash \Hq$ containing $0$ and $\infty$ respectively. Perform Douady-Ghys surgery \cite{G84,D87} (see also \cite[\S7.2]{BF14}) along $\Hq$ to replace the dynamics of $f$ in $Y^0$ with a rotation disk and obtain a degree $d_\infty$ unicritical polynomial $f_\infty$ whose critical point lies on the boundary of an invariant Siegel disk $Z_\infty$ of $f_\infty$. The maps $f|_{\overline{Y^\infty}}$ and $f_\infty|_{\RS \backslash Z_\infty}$ are quasiconformally conjugate, and this conjugacy sends $A_\infty$ onto the immediate basin of $\infty$ of $f_\infty$. In particular, the external boundary of the filled outer limbs of $f$ are quasiconformally equivalent to the limbs of $f_\infty$. The work of \cite{Pe96} (or more generally \cite{WYZZ}) guarantees that the Julia set of $f_\infty$ is locally connected, and so any infinite sequence of limbs of $f_\infty$ must shrink to a point. Therefore, for any $\varepsilon>0$, all but finitely many outer limbs of $f$ have diameter at most $\varepsilon$. By swapping the roles of $0$ and $\infty$, we obtain the same result for inner limbs.
\end{proof}

\begin{remark}
    In fact, the whole Julia set of $f$ is locally connected! In case $(d_0,d_\infty)=(2,2)$, this was proven by Petersen \cite[\S4]{Pe96}. For arbitrary criticalities $(d_0,d_\infty)$, the availability of complex bounds \cite[\S6.3]{Lim23b} facilitates a direct generalization of Petersen's proof.
\end{remark}

For $\bullet \in \{0,\infty\}$, consider the B\"ottcher conjugacy $b_\bullet : (A_\bullet,\bullet) \to (\D,\bullet)$ between $f$ and the power map $z \mapsto z^{d_\bullet}$. An \emph{external ray} in $A_\bullet$ of angle $t \in \R/\Z$ is defined by
\[
\left\{ b_\bullet^{-1}(r e^{2\pi i t}) \: : \: 0<r<1 \right\},
\]
and an \emph{equipotential} in $A_\bullet$ of level $\lambda>0$ is the analytic Jordan curve defined by
\[
\left\{ b_\bullet^{-1}(z) \: : \: |z|= e^{-\lambda} \right\}.
\]
External rays and equipotentials form a pair of $f$-invariant transverse foliations of $A_\bullet$. By Lemma \ref{lem:small-limbs}, every external ray in $A_\bullet$ lands at a point on $\partial A_\bullet$. Every point $x$ in $\partial A_\bullet$ is the landing point of exactly one external ray in $A_\bullet$, except when $x$ is a pre-critical point in which case it is the landing point of $d_\bullet$ external rays in $A_\bullet$.

Consider the operator $r_{\textnormal{prm}}$ from \S\ref{sec:sector-renorm}, which encodes how rotation number is transformed under sector renormalization.

\begin{lemma}
\label{lem:construction-of-pre-corona}
    For any point $x \in \Hq$ that is not a pre-critical point of $f$, any $\varepsilon>0$, and any sufficiently high $n \in \N$, there is a rotational pre-corona
    \[
    P = \left( f_- := f^{k_-}: U_- \to S, \: f_+ := f^{k_+}: U_+ \to S \right)
    \]
    around $x$ such that
    \begin{enumerate}[label=\textnormal{(\arabic*)}]
        \item $P$ has rotation number $r_{\textnormal{prm}}^n(\theta)$;
        \item every external ray segment of $P$ is contained in an external ray of $f$;
        \item the union $\bigcup_{\diamond \in \{-,+\}}\bigcup_{i=0}^{k_\diamond-1} f^i(U_\diamond)$ lies in the $\varepsilon$-neighborhood of $\Hq$.
    \end{enumerate}
\end{lemma}

\begin{proof}
    For every integer $i \in \Z$, let $x_i := (f|_\Hq)^i(x)$. By Lemma \ref{lem:prime-to-sector}, for all $n \geq 1$, there exist return times $\abold_n$, $\bbold_n \in \N$ such that the commuting pair 
    \[
\left(f^{\abold_n}|_{[x_{\bbold_n},x_0]},f^{\bbold_n}|_{[x_0,x_{\abold_n}]}\right)
    \]
    is a sector pre-renormalization of $f|_\Hq$ with rotation number $r_{\textnormal{prm}}^n(\theta)$. (In short, the pair above is the first return map of $f$ on the interval $[x_{\bbold_n},x_{\abold_n}] \subset \Hq$, and gluing the two ends of the interval via $f^{|\bbold_n-\abold_n|}$ projects the pair to a critical quasicircle map with rotation number $r_{\textnormal{prm}}^n(\theta)$. Refer to \S\ref{sec:sector-renorm} for more details.)
    
    Let $k_- := \abold_n$ and $k_+ := \bbold_n$, and let us pick a small constant $\lambda>0$. For $\bullet \in \{0,\infty\}$, denote by $E^\bullet$ the equipotential in $A_\bullet$ of level $\lambda$, and by $R^\bullet_+$, $R^\bullet$, and $R^\bullet_-$ the external rays in $A_\bullet$ that land at $x_{k_+}$, $x_{k_-+k_+}$, and $x_{k_-}$ respectively. Then, the union $\bigcup_{\bullet \in \{0,\infty\}} R^\bullet_\pm \cup R^\bullet \cup E^\bullet$ encloses a rectangle $S_\pm$ containing the interval $[x_{k_\pm}, x_{k_-+k_+}] \subset \Hq$.
    
    Let $I_- := [x_{k_+}, x_0]$ and $I_+ := [x_0, x_{k_-}]$. Precisely one of the two intervals, say $I_-$ without loss of generality, contains a critical point of $f^{k_-}$. The rectangle $S_\pm$ lifts under $f^{k_\pm}$ to a topological disk $\Upsilon_{\pm}$ containing $I_\pm$, where $f^{k_-}: \Upsilon_- \to S_-$ is a degree $d$ branched covering map and $f^{k_+}: \Upsilon_+ \to S_+$ is univalent. There are precisely $d-1$ disjoint disks $D_1,\ldots, D_{d-1}$ which are the lifts of $S_+$ under $f^{k_-}$ that are touching $\Upsilon_-$ on the boundary and are disjoint from $\Hq$. Set 
    \[
    U_+ := \Upsilon_+, \quad U_- := \Upsilon_- \cup \bigcup_{j=1}^{d-1} D_j, \quad S:= S_- \cup S_+.
    \]
    See Figure \ref{fig:corona-construction} for an illustration. Then, 
    \[
    \left(f^{k_-} : U_- \to S, \quad f^{k_+}: U_+ \to S\right)
    \]
    is a $(d_0, d_\infty)$-critical pre-corona with rotation number $r^n_{\textnormal{prm}}(\theta)$.

    Let us embed the restriction of external rays of $f$ in $S \backslash U$ where $U:= U_- \cup U_+$. Notice that the boundaries of $U_-$ and $U_+$ contain equipotential segments of different levels. Assume without loss of generality that the equipotential segments in $U_-$ have smaller level, or equivalently, $k_- > k_+$. In order to satisfy (2), let us truncate a pair of small topological triangles near two vertices of the rectangle $S_+$, one where $R^0_+$ meets $E^0$ and the other where $R^\infty_+$ meets $E^\infty$. Refer to Figure \ref{fig:corona-construction}. We will also truncate preimages of these triangles under $f^{k_-}$ in $U_-$. Replace $U$ and $S$ with the new truncated domains. Then, every point in the legitimate boundary of $U$ is now a landing point of an external ray segment, and (2) follows.

    We claim that (3) follows from taking $n$ to be sufficiently large and $\lambda$ to be sufficiently small. Indeed, if $z \in U_\pm$ intersects an external ray landing at a point $w \in J(f) \cap U_\pm$, then the orbits of $z$ and $w$ remain close under iteration $f^{i}$ for $i =1,\ldots,k_\pm$. Suppose $z \in U_\pm$ is outside of $A_0 \cup A_\infty$. Then, it must lie within some filled limb $\hat{L}$ rooted at some pre-critical point $c_{-j}:= \left(f|_\Hq\right)^{-j}(1)$ for some $j\geq 0$. If $c_{-j}$ is not the unique critical point of $f^{k_-}$, then the forward images $\hat{L}$, $f(\hat{L})$, $\ldots, f^{k_\pm}(\hat{L})$ must remain small due to Lemma \ref{lem:small-limbs}. If $c_{-j}$ is the critical point of $f^{k_-}$ in $U_-$, then we must have $0<j<k_-$. In the latter case, $f^j(U_-)$ must remain in a small neighborhood of the critical point $c_0=1$ as we take $\lambda$ to be small and $n$ to be large. Therefore, the forward orbit $z$, $f(z)$, \ldots, $f^j(z)$ must be close to $\Hq$.
\end{proof}

\begin{figure}
    \centering
    \begin{tikzpicture}[scale=1.08]
        \filldraw[black, ultra thin, fill=green!10!white] (4.15, -1.25) .. controls (4.16,-0.7) .. (4.25,0.05) .. controls (4.34,0.7) .. (4.35, 1.2) .. controls (5,1.15) and (6,1.15) .. (6.65, 1.2) .. controls (6.64,0.7) .. (6.8,-0.32) .. controls (6.94,-0.8) .. (6.95, -1.3) .. controls (6,-1.4) and (5,-1.25) .. (4.15, -1.25);

        \filldraw[black, ultra thin, fill=yellow!10!white] (4.16,-0.7) -- (4.25,0.05) -- (4.34,0.7) 
        .. controls (4,0.7) and (2.7,0.35) .. 
        (2.8,0.9) -- (2.47,1.1) 
        .. controls (2.36,0.95) and (2.1,1) ..
        (2.05,1.22) -- (1.7,1.3)
        .. controls (1.6,1.1) and (1.4,1.1) ..
        (1.3,1.3) -- (0.95,1.22)
        .. controls (0.9,1) and (0.62,0.95) ..
        (0.53,1.1) -- (0.2,0.9)
        .. controls (0.3,0.35) and (0,0.7) ..
        (-0.75, 0.7) .. controls (-0.73,0.5) .. (-0.6,-0.12) .. controls (-0.47,-0.5) .. (-0.45, -0.75) 
        .. controls (0,-0.75) and (0.75,-0.5) ..
        (0.8,-1.22) -- (1.2,-1.3)
        .. controls (1.35,-1.1) and (1.65,-1.1) ..
        (1.8,-1.3) -- (2.2,-1.2)
        .. controls (2.25,-0.5) and (3.5,-0.7) .. (4.16,-0.7);

        \filldraw[black, ultra thin, fill=green!15!white] (2.8,0.9) -- (2.47,1.1) -- (2.7,1.5) -- (3.1,1.27) -- (2.8,0.9);
        \filldraw[black, ultra thin, fill=green!15!white] (1.3,1.3) -- (0.95,1.22) -- (0.75,1.65) -- (1.25,1.75) -- (1.3,1.3);
        \filldraw[black, ultra thin, fill=green!15!white] (0.8,-1.22) -- (1.2,-1.3) -- (1.15,-1.75) -- (0.7,-1.65) -- (0.8,-1.22);
    
        \filldraw[black, ultra thin, fill=green!10!white] (-0.7, -4) .. controls (-0.75,-5) .. (-0.6,-6.12) .. controls (-0.45,-7) .. (-0.4, -8) .. controls (0.5,-7.9) and (1.5,-7.9) .. (1.9, -7.9) .. controls (1.9,-7) .. (2,-6.15) .. controls (2.1,-5) .. (2.1, -4.1) .. controls (1.5,-4.1) and (0.4,-4.1) ..  (-0.7, -4);
        \filldraw[black, ultra thin, fill=yellow!10!white] (2.1, -4.1) .. controls (3.5,-4.1) and (5,-4.1) .. (6.6, -4) .. controls (6.65,-5.2) .. (6.8,-6.32) .. controls (6.95,-7) .. (7, -8) .. controls (5,-7.9) and (3.5, -7.9) .. (1.9, -7.9)  .. controls (1.9,-7) .. (2,-6.15)  .. controls (2.1,-5) .. (2.1, -4.1);
        
        \draw[black, densely dotted] (-0.7,-4.8) -- (0.15,-4.05);
        \draw[black, densely dotted] (4.34,0.7) -- (5,1.15);
        \draw[black, densely dotted] (3,1.3) -- (2.97,1.15);
        \draw[black, densely dotted] (1.15,1.7) -- (1.26,1.55);
        \draw[black, densely dotted] (0.8,-1.67) -- (0.75,-1.5);
    
        \draw[ultra thick] (-3,0.25) -- (-1.5,-0.25) -- (0,0) -- (0.75,-0.25) -- (1.5,0) -- (2.66,-0.33) -- (4,0) -- (6,-0.5) -- (8,0) -- (8.5,-0.125);
        \draw[thick] (0,1.5) -- (0.8,0.4) -- (1.5,0) -- (2.2,0.4) -- (3,1.5);
        \draw[thick] (0.9,2) -- (1.2,1) -- (1.5,0) -- (1.8,1) -- (2.1,2);
        \draw[thick] (0.9,-1.9) -- (1.05,-1) -- (1.5,0) -- (1.95,-1) -- (2.1,-1.9);
        \node [black, font=\bfseries] at (-2,-0.5) {$\Hq$};

        \draw[ultra thick] (-3,-5.75) -- (-1.5,-6.25) -- (0,-6) -- (0.75,-6.25) -- (1.5,-6) -- (2.66,-6.33) -- (4,-6) -- (6,-6.5) -- (8,-6) -- (8.5,-6.125);
        \node [black, font=\bfseries] at (-2,-6.5) {$\Hq$};

        \node [red, font=\bfseries] at (-0.6,-0.12) {\large $\bullet$};
        \node [red, font=\bfseries] at (-0.69,-0.4) {\small $x_{k_+}$};
        \node [red, font=\bfseries] at (4.25,-0.07) {\large $\bullet$};
        \node [red, font=\bfseries] at (4.39,-0.35) {\small $x_{0}$};
        \node [red, font=\bfseries] at (6.8,-0.32) {\large $\bullet$};
        \node [red, font=\bfseries] at (7.1,-0.55) {\small $x_{k_-}$};

        \node [red, font=\bfseries] at (-0.6,-6.12) {\large $\bullet$};
        \node [red, font=\bfseries] at (-0.73,-6.4) {\small $x_{k_+}$};
        \node [red, font=\bfseries] at (2,-6.15) {\large $\bullet$};
        \node [red, font=\bfseries] at (1.95,-6.4) {\small $x_{k_-+k_+}$};
        \node [red, font=\bfseries] at (6.8,-6.32) {\large $\bullet$};
        \node [red, font=\bfseries] at (7.1,-6.55) {\small $x_{k_-}$};

        \node [green!50!black, font=\bfseries] at (3.3,1.2) {\small $D_1$};
        \node [green!50!black, font=\bfseries] at (0.6,1.45) {\small $D_2$};
        \node [green!50!black, font=\bfseries] at (0.55,-1.8) {\small $D_3$};
        \node [green!50!black, font=\bfseries] at (5.5,0.5) {$\Upsilon_+$};
        \node [yellow!50!black, font=\bfseries] at (2.9,0.23) {$\Upsilon_-$};
        \node [green!50!black, font=\bfseries] at (0.75,-5) {$S_+$};
        \node [yellow!50!black, font=\bfseries] at (4.5,-5) {$S_-$};
                
        \draw[-latex] (2.2,-1.45) -- (4.5,-3.95);
        \node[black, font=\bfseries] at (4.2,-3) {\small $f^{k_-}$};
        \draw[-latex] (5,-1.45) -- (1,-3.95);
        \node[black, font=\bfseries] at (2,-3) {\small $f^{k_+}$};
        
\end{tikzpicture}

    \caption{The construction of the pre-corona in the proof of Lemma \ref{lem:construction-of-pre-corona} when $(d_0,d_\infty)=(3,2)$. The triangle defined by the dotted line in $S_+$ and its preimages are to be removed. }
    \label{fig:corona-construction}
\end{figure}
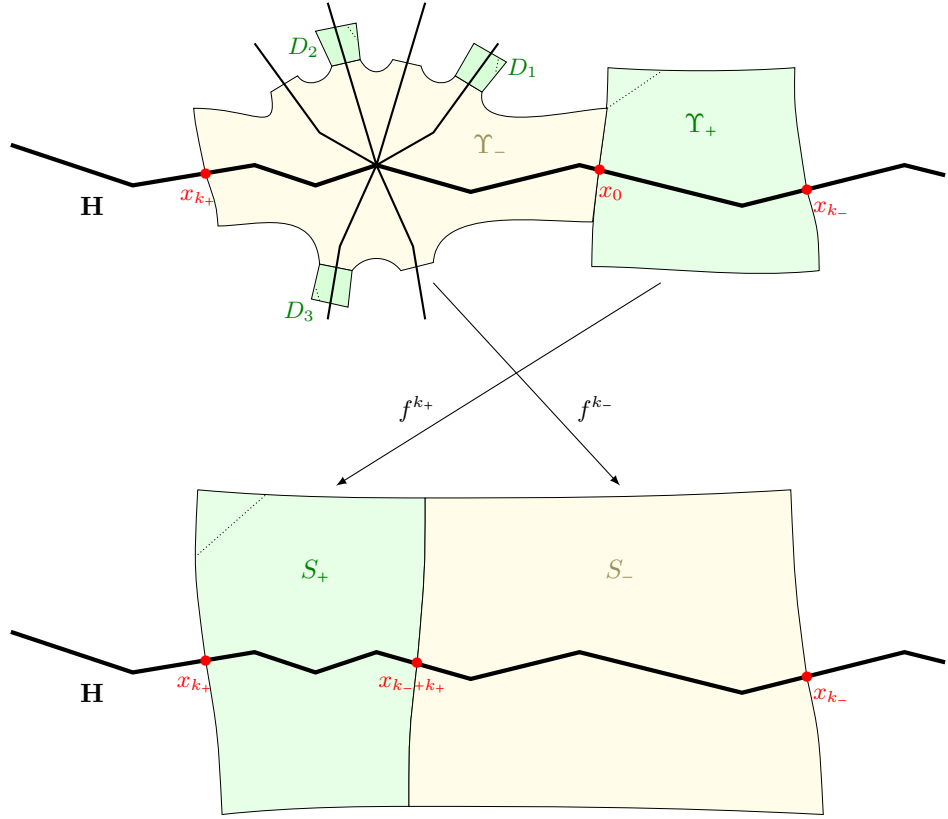

In our previous work, we proved a rigidity theorem for critical quasicircle maps.

\begin{theorem}[{\cite[Theorem F]{Lim23b}}]
\label{thm:naive-rigidity}
    Every two $(d_0,d_\infty)$-critical quasicircle maps of the same bounded type rotation number are quasiconformally conjugate on some neighborhood of their Herman curves.
\end{theorem}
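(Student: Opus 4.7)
The plan is to promote Petersen's quasisymmetric rigidity on the Herman curves to a quasiconformal conjugacy on an annular neighborhood by a Sullivan-style pullback along the dynamics, with dilatation controlled by complex a priori bounds.

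First, since $\theta \in \Theta_{bdd}$, Petersen's theorem furnishes quasisymmetric conjugacies $\phi_i : \Hq_i \to \T$ between $f_i|_{\Hq_i}$ and the rigid rotation $R_\theta$, so $h := \phi_2^{-1} \circ \phi_1 : \Hq_1 \to \Hq_2$ is a quasisymmetric conjugacy between $f_1|_{\Hq_1}$ and $f_2|_{\Hq_2}$. Because each $\Hq_i$ is a quasicircle, $h$ extends to a quasiconformal homeomorphism $H_0$ between open annular neighborhoods $W_1 \supset \Hq_1$ and $W_2 \supset \Hq_2$: transfer $h$ through quasiconformal parametrizations of the two quasicircles to a quasisymmetric self-map of $\T$ and apply the Ahlfors--Beurling extension.

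Next, I would pull $H_0$ back along the dynamics. Setting
\[
    H_n(z) := f_2^{-n} \circ H_0 \circ f_1^{n}(z)
\]
on $f_1^{-n}(W_1)$, with the branches of $f_2^{-n}$ determined by the combinatorial matching ensured by the common rotation number $\theta$ and common criticalities $(d_0,d_\infty)$, each $H_n$ carries the same Beltrami coefficient as $H_0$ since $f_1$ and $f_2$ are holomorphic. The complex a priori bounds of \cite{Lim23a} guarantee that $\bigcup_n f_1^{-n}(W_1)$ covers an annular neighborhood $W \supset \Hq_1$ of definite modulus with bounded geometry, so $\{H_n\}$ is precompact in the family of $K$-quasiconformal maps on $W$.

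A subsequential limit $H_\infty$ is quasiconformal and satisfies $H_\infty|_{\Hq_1} = h$; the functional relation $H_{n+1} \circ f_1 = f_2 \circ H_n$ passes to the limit and produces the desired conjugacy on $W$. The main obstacle I expect is the branched pullback at the critical orbit, where $f_i^{-1}$ is $(d_0+d_\infty-1)$-valued; this is resolved by the coincidence of inner and outer criticalities, which allows a consistent identification of the $d_0$ inner sheets and the $d_\infty$ outer sheets of $f_2^{-1}$ with those of $f_1^{-1}$ via the combinatorial address determined by the rotation number, so the pullback is unambiguous and continuous across the critical orbit at every generation.
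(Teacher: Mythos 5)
The paper does not actually prove this statement; it is imported verbatim from \cite{Lim23b} (Theorem F there), so there is no internal proof to compare against. Judged on its own terms, your proposal has the right general flavor (Petersen's quasisymmetric rigidity plus a pullback with dilatation control), but the pullback as written does not work. The map $H_n = f_2^{-n}\circ H_0\circ f_1^{n}$ is defined only at points whose entire forward orbit up to time $n$ remains in $W_1$; as $n\to\infty$ this domain shrinks to the local non-escaping set of $f_1$, which has empty interior (already for the model maps $F_c$, every point of $W_1\setminus\Hq_1$ eventually enters the basin of $0$ or $\infty$ and leaves any small annular neighborhood). Your claim that $\bigcup_n f_1^{-n}(W_1)$ covers a definite annulus does not repair this: for the composition to make sense you need the \emph{intersection} $\bigcap_n f_1^{-n}(W_1)$, which is nowhere dense, and complex a priori bounds say nothing of the sort. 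So no subsequential limit $H_\infty$ is obtained on any open neighborhood of $\Hq_1$.

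The standard fix is the genuine pullback $h_{n+1}=h_n$ off a fundamental piece $\hat U_1$ and $h_{n+1}=f_2^{-1}\circ h_n\circ f_1$ on $\hat U_1$, but for the lifted pieces to glue into a single continuous (hence quasiconformal) map, the initial map $H_0$ must already be \emph{equivariant on a forward-invariant set containing $\partial\hat U_1$} and separating the forbidden part of the boundary. Petersen plus Ahlfors--Beurling gives equivariance only on $\Hq_1$ itself, so consecutive lifts need not match across the cuts; the "consistent identification of sheets via the combinatorial address" that you invoke is exactly the unproven step, and it is where the real work lies. (Compare the paper's proof of the corona analogue, Theorem \ref{thm:qc-rigidity}: before pulling back, the conjugacy is first arranged to be equivariant on the local non-escaping set together with periodic bubble chains and external rays of matching angles, precisely to make the lifts coherent.) A minor additional slip: holomorphic pullback preserves the bound $\|\mu\|_\infty$ on the dilatation, not the Beltrami coefficient itself.
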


Together with Lemma \ref{lem:construction-of-pre-corona}, we have the following result.

\begin{corollary}
\label{renormalizability-of-rotational}
    Any $(d_0,d_\infty)$-critical quasicircle map $g: \Hq_g \to \Hq_g$ with bounded type rotation number is corona renormalizable, that is, there is a $(d_0,d_\infty)$-critical rotational pre-corona which is an iterate of $g$ near $\Hq_g$.
\end{corollary}

\begin{proof}
    Given any $(d_0,d_\infty)$-critical quasicircle map $g$ of bounded type rotation number, Theorem \ref{thm:naive-rigidity} asserts that there is a global quasiconformal map $\phi$ conjugating $g$ on some neighborhood $W$ of $\Hq_g$ and $f:=F_{c}$ on a neighborhood $\phi(W)$ of its Herman quasicircle. By Lemma \ref{lem:construction-of-pre-corona}, $f$ admits a pre-corona $P$ with range contained within $\phi(W)$. Then, $g$ admits a $(d_0,d_\infty)$-critical pre-corona of the form $\phi^{-1}\circ P \circ \phi$.
\end{proof}

\subsection{Quasiconformal rigidity}
\label{ss:qc-rigidity-initial}

Given a critical quasicircle map $g: \Hq_g \to \Hq_g$, there is a unique conjugacy $h_g: \Hq_g \to \T$ between $g$ and a rigid rotation such that $h_g$ sends the critical point of $g$ to $1 \in \T$. We will equip $\Hq_g$ with the \emph{combinatorial metric}, which is the pullback of the normalized Euclidean metric of $\T$ under $h_g$ and thus the unique normalized $g$-invariant metric of $\Hq_g$. For any point $z \in \Hq_g$, the \emph{combinatorial position} of $z$ is the point $h_g(z)$ on the unit circle.

We say that two $(d_0,d_\infty)$-critical rotational coronas $g_1$ and $g_2$ are \emph{combinatorially equivalent} if 
\begin{enumerate}
    \item they have the same rotation number,
    \item their marked points $m(g_1)$ and $m(g_2)$ have the same combinatorial position, and
    \item for $\bullet \in \{0,\infty\}$, the external rays $R^\bullet(g_1)$ and $R^\bullet(g_2)$ have the same external angles.
\end{enumerate}

In this subsection, we will prove the following theorem.

\begin{theorem}[Quasiconformal rigidity]
\label{thm:qc-rigidity}
    Two combinatorially equivalent $(d_0,d_\infty)$-critical rotational coronas with bounded type rotation number are quasiconformally conjugate.
\end{theorem}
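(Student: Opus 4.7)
The plan is to promote the QC rigidity of critical quasicircle maps (Theorem~\ref{thm:naive-rigidity}), which provides a QC conjugacy only on a neighborhood of the Herman curve, to a QC conjugacy on the entire corona. The argument proceeds in three stages: a QC conjugacy near the Herman quasicircle, a QC extension to a candidate on all of $V$, and a pullback argument via the Measurable Riemann Mapping Theorem.

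First, I would apply Theorem~\ref{thm:naive-rigidity} to the restrictions $f_i|_{\Hq_i}$ (both of which are $(d_0,d_\infty)$-critical quasicircle maps of the same bounded type rotation number) to obtain a QC homeomorphism $\varphi_0$ between annular neighborhoods $W_i$ of $\Hq_i$ conjugating $f_1$ to $f_2$. The matching combinatorial position of the marked points $m(f_i)$ allows $\varphi_0$ to be chosen so that $\varphi_0(m(f_1)) = m(f_2)$, pairing the two germs of critical arcs at the marked points.

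Next, I would extend $\varphi_0$ to a QC homeomorphism $\Phi \colon V_1 \to V_2$ sending external rays of $f_1$ to those of $f_2$. The matching external angles of the critical rays $R^\bullet$, together with the matching rotation number, force a combinatorial bijection between the full external ray laminations, since every external ray is an iterated preimage of the critical arc $\gamma_1$. For each $\bullet \in \{0,\infty\}$, build a QC map between the generation-zero rectangles $\mathscr{R}^\bullet$ of the two coronas: this map agrees with $\varphi_0$ on the legitimate boundary $\partial^\bullet_\lgt U_1$, respects the semiconjugacy $\theta_\bullet$ on $B^\bullet$, and sends $\partial^\bullet V_1$ to $\partial^\bullet V_2$ with the correct correspondence of ray endpoints. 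Pulling this base map back equivariantly under $f_1$ extends $\varphi_0$ to a QC map on the union of $W_1$ and the entire external ray lamination; the complementary components (``gaps'' attached to $\Hq_1$) are then filled in by QC maps whose uniform dilatation is controlled by a shrinking-of-diameters estimate analogous to Lemma~\ref{lem:small-limbs}, which in the bounded type regime follows from the complex bounds of \cite{Lim23b}.

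Finally, although $\Phi$ is a conjugacy on $\Hq_1$ and on the ray lamination, it may fail to be one on the gap interiors. A standard pullback procedure rectifies this: define $\Phi_n$ inductively by replacing $\Phi$ on $f_1^{-n}(U_1)$ with $f_2^{-n} \circ \Phi \circ f_1^n$ along the branches of $f_2^{-n}$ dictated by the combinatorial matching. Since the only critical point of each $f_i$ lies on $\Hq_i$, where a conjugacy already holds, each relevant branch of $f_i^{-n}$ is univalent on the gaps, so the complex dilatations of $\Phi_n$ remain uniformly bounded in $L^\infty$. A weak-$\ast$ limit yields an $f_1$-invariant Beltrami differential $\mu_\infty$, and integration via the Measurable Riemann Mapping Theorem produces a QC conjugacy $f_1 \to f_2$. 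The main obstacle is the uniform dilatation control: the gap components form an infinite nested family, and the bounded type hypothesis on $\theta$, through Petersen's Denjoy estimates and the complex bounds of \cite{Lim23b}, is what prevents geometric degeneration in both the extension and the pullback, much as complex bounds power McMullen's rigidity for Siegel maps.
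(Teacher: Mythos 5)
Your overall strategy (local conjugacy from Theorem~\ref{thm:naive-rigidity}, extension to $V$, then pullback) is the same as the paper's, but there is a genuine gap at the pullback stage: you never address the forbidden boundary $\partial_\frb U_1$. A corona is not a polynomial-like map; $f_1$ sends $\partial_\frb U_1$ into the interior of $V_1$ (onto $\gamma_1$ and the ``wrong'' sides of $\partial V_1$), so the prescription ``replace $\Phi$ by $f_2^{-n}\circ\Phi\circ f_1^{n}$ on $f_1^{-n}(U_1)$'' does not glue continuously with the unmodified map across $\partial U_1$: for $h_n$ to agree with $h_{n-1}$ on the cut locus you need $h_{n-1}$ to \emph{already} be a conjugacy on $g_1(\partial_\frb \hat U_1)$, and that is exactly what has to be engineered. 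The paper's proof spends most of its effort on this point: it constructs, for pre-critical points $c_{-t_l}, c_{-t_r}$ flanking the marked point, forward-invariant ``spines'' $\mathcal{T}_{t}=\mathcal{B}_t\cup R_t$ made of periodic bubble chains and the periodic external rays landing at their landing points (Lemmas~\ref{lem:rep-per-pt-landing}--\ref{lem:rational-approx-of-rays}), shows these approximate $\gamma_1(g)$ arbitrarily well, arranges the initial map $h$ to be equivariant on $Z_i = K^{loc}(g_i)\cup\bigcup_n g_i^n(R_{t_l}\cup R_{t_r})$, and then \emph{enlarges} $U_1$ to $\hat U_1$ so that $g_1(\partial_\frb\hat U_1)\subset \Hq_1\cup\mathcal{T}_{t_l}\cup\mathcal{T}_{t_r}$. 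Only then is the pullback well defined and stabilizing. Your proposal replaces all of this with ``fill in the gaps by QC maps with uniformly controlled dilatation,'' which does not produce the forward-invariant set near $\gamma_1$ on which the conjugacy must hold before the pullback can even be started.

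Two smaller points. First, your combinatorial matching of external rays is where the hypothesis on the angles of $R^\bullet(f_i)$ enters, and that part is fine; but note the paper also uses the matching of combinatorial positions of the marked points to align the choice of $t_l,t_r$ on both sides. Second, your weak-$\ast$ limit of Beltrami differentials plus MRMT is a legitimate variant, but the limit of $\Phi_n$ is a conjugacy only because $\Phi_n$ stabilizes off a nowhere dense set (here $K^{loc}(g_1)$, Lemma~\ref{lem:local-esc-set}); you should say this explicitly rather than rely on the invariant line-field formalism, which by itself only yields a conjugacy up to the usual identification step.
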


Theorem \ref{thm:naive-rigidity} tells us about the existence of a local quasiconformal conjugacy on a small neighborhood of the invariant Herman quasicircles.
In contrast, Theorem \ref{thm:qc-rigidity} tells us that there is indeed a more global quasiconformal conjugacy that preserves the whole corona structure.
The proof below is an application of the pullback argument. 

Let us make a couple of technical preparations. Recall the model rational map $f$ introduced in the previous subsection.

\begin{definition}
    A \emph{bubble chain} of $f$ of generation $n \geq 1$ is an infinite sequence of bubbles $\{B_j\}_{j\geq 1}$ of $f$ where $B_1$ has generation $n$ and for all $j\geq 1$, $B_j$ contains the root of $B_{j+1}$ and the generation of $B_j$ is strictly increasing in $j$. We say that a bubble chain $\{B_j\}_{j\geq 1}$ 
\begin{list}{$\rhd$}{}
    \item is an \emph{outer/inner} bubble chain if each $B_j$ is an outer/inner bubble,
    \item is \emph{periodic} of period $p$ if there exists some $k \geq 1$ such that $f^p(B_{j+k}) = B_j$ for all $j\geq k$, and
    \item \emph{lands} if the accumulation set $\bigcap_{j\geq 1} \overline{ \cup_{k\geq j} B_k}$ is a single point, which we call the \emph{landing point} of the bubble chain.
\end{list} 
We say that a repelling periodic point $z$ of $f$ is an \emph{outer} (resp. \emph{inner}) periodic point if its orbit is contained in the connected component of $\RS \backslash \Hq$ containing $\infty$ (resp. $0$).
\end{definition}



Let us fix a rotational pre-corona 
\[
    P=(f_\pm : U_\pm \to S)
\]
of $f$ (which exists thanks to Lemma \ref{lem:construction-of-pre-corona}).
 
\begin{definition}
    Define the \emph{non-escaping set} $K(P)$ of $P$ to be the set of points whose orbit under $f_\pm$ never escapes $\overline{U_\pm}$. By spreading around $K(P)$, we define the \emph{local non-escaping set} of $f$ relative to $P$ by
\[
    K^{loc}(f) := \bigcup_{n\geq 0} f^n(K(P)).
\]
    The set $K^{loc}(f)$ is precisely the set of points which does not escape the tiling $\Deltabold_P$ associated to $P$.

    Let us also define a \emph{partial bubble $\check{B}$ in} $K^{loc}(f)$ of generation $n\geq 1$ to be a set of the form $B \cap K^{loc}(f)$ where $B$ is a bubble of $f$ of generation $n$.
    Such a set $\check{B}$ is a always a quasiarc and it contains the root of $B$, which we will refer to as the \emph{root} of $\check{B}$.
    In a similar way, we define \emph{partial bubble chain in} $K^{loc}(f)$ to be an infinite sequence $\{\check{B}_j\}_{j\geq 1}$ of partial bubbles in $K^{loc}(g)$ with increasing generation such that each $\check{B}_j$ contains the root of $\check{B}_{j+1}$.
\end{definition}

Let $g: U \to V$ be a $(d_0,d_\infty)$-rotational corona that has the same rotation number as $f$, and let us denote the Herman quasicircle of $g$ by $\Hq_g$. By Theorem \ref{thm:naive-rigidity}, there is a quasiconformal conjugacy $\phi$ between $g$ and $f$ on some neighborhood $W_g$ of $\Hq_g$ onto a neighborhood $W$ of $\Hq$. 

By Lemma \ref{lem:construction-of-pre-corona}, we can assume that the range $S$ of $P$ is contained in $W$. 
The corona $g$ also admits a pre-corona $P_g=(g_\pm : U_{g,\pm} \to S_g)$ contained in $W_g$ and it will be selected such that it is conjugate to $P$ via $\phi$.
As such, we can define the non-escaping set $K(P_g)$ of $P_g$ in a similar way and spread it around to obtain the \emph{local non-escaping set} $K^{loc}(g)$ of $g$ relative to $P_g$. 
The quasiconformal map $\phi$ induces a conjugacy between $g|_{K^{loc}(g)}$ and $f|_{K^{loc}(f)}$.
Via $\phi$, we can also define the notion of partial bubbles and partial bubble chains in $K^{loc}(g)$.

Let $x$ be the marked point of $g$, and let $R^\infty$ and $R^0$ be the outer and inner external rays of $g$ landing at $x$. These rays make up the arc $\gamma_1(g)$.
Let $c_0(g)$ denote the critical point of $g$ and for $n \in \Z$, denote $c_{n}(g):= (g|_{\Hq_g})^n(c_0(g))$.

\begin{lemma}
\label{lem:construction-of-periodic-bubble}
    For any pre-critical point $c_{-t}(g) \in \Hq_g$ of $g$, there exist an outer periodic point $y_t^\infty$ in $K^{loc}(g)$ such that 
    \begin{enumerate}[label=\textnormal{(\arabic*)}]
        \item $y_t^\infty$ is the landing point of a unique maximal partial bubble chain $\mathcal{B}^\infty_t$ in $K^{loc}(g)$ which is rooted at $c_{-t}(g)$;
        \item $y_t^\infty$ is the landing point of a unique outer external ray $R^\infty_t$.
    \end{enumerate}
    The word ``outer`` can be replaced with ``inner``, yielding $y_t^0$, $\mathcal{B}^0_t$, and $R^0_t$.
\end{lemma}

\begin{proof}
    We will first prove (1), which can be reduced to the dynamical plane of $f$. 
    Let us denote by $I_\varepsilon \subset \Hq$ the interval of combinatorial length $\varepsilon$ centered at $c_1$. We will pick $\varepsilon>0$ to be small enough such that the full preimage under $f$ of $I_\varepsilon$ is contained in the tiling $\Deltabold_{P}$. Let us pick the first $s \in \N$ such that $c_{-t-s}$ is contained in $I_\varepsilon$. Below, we will construct an outer periodic point $\tilde{y}^\infty_t$ of $f$ in $K^{loc}(f)$, which will have period $p:=s+t+1$. 
    
    First, let us pick a small closed interval neighborhood $J_0 \subset \Hq$ of $c_{-t}$. Let us arrange that the endpoints of $J_0$ are not in the grand orbit of the critical point of $f$, so there exists a unique pair of external rays $R_-$ and $R_+$ of $f$ in the basin $A_\infty$ that land on the pair of endpoints of $J_0$ respectively. Consider the open rectangle $D_0$ cut out by the union of $J_0 \cup R_- \cup R_+$ and an arc connecting $R_-$ and $R_+$ which is contained in the equipotential in $A_\infty$ of some small level $\lambda>0$.

    Consider the intervals $J_{-j} := (f|_\Hq)^{-j}(J_0)$ for $j\geq 1$. We assume that the combinatorial length of $J_0$ is small enough such that $J_{-j}$ does not contain $c_1$ for all $j \in \{0,1,\ldots,s\}$, and in particular $J_{-s}$ is contained in $I_\varepsilon \backslash \{c_1\}$. Let us pick an outer bubble $B$ of generation one. 
    (There are $d_\infty-1$ of such bubbles.) 
    Let $D'_0$ be the unique lift of $D_0$ under $f^{s+1}$ such that $\partial D'_0 \cap f^{-1}(I_\varepsilon) = B \cap f^{-1}(I_\varepsilon)$. 
    For sufficiently small $\varepsilon>0$ and $\lambda>0$, we can guarantee that the union of $D'_0$ and the subarc of $B$ connecting $c_0$ and $D'_0$ is contained in $\Deltabold_P$.
    Therefore, $\partial D'_0 \cap f^{-1}(I_\varepsilon)$ is contained in the corresponding partial bubble $\check{B} \subset B$.

    Next, consider the outer partial bubble $\check{B}_1$ rooted at $c_{-t}$ such that $f^t(\check{B}_1)=\check{B}$. Let $D_{-1}$ be the lift of $D'_0$ under $f^t$ that is attached to $\check{B}_1$. See Figure \ref{fig:outer-repelling-periodic-point} for a reference. Since $D'_0 \subset \Deltabold_P$, then $D_{-1} \subset \Deltabold_P$ too. Also, since $D_{-1}$ is compactly contained in $D_{0}$, then the univalent map $f^{p} : D_{-1} \to D_0$ is uniformly expanding with respect to the hyperbolic metric of $D_{-1}$. Hence, this map admits a unique repelling fixed point $\tilde{y}_t^\infty$ which is contained in $K^{loc}(f)$.

    Denote $z_1 := c_{-t}$. Let $z_2$ be the lift of $z_1$ under $f^p : \overline{D_{-1}} \to \overline{D_0}$. 
    Inductively, we have an infinite sequence of pre-critical points $z_1, z_2, z_3, \ldots$ such that for $n \geq 2$, $z_n$ is contained in $\overline{D_{-1}}$ and $f^p(z_n) = z_{n-1}$.
    By the uniform expansion, $z_n$ converges to $y_t^\infty$.
    Notice that for $n\geq 2$, each $z_n$ is in $K^{loc}(f)$ and is a root of an outer partial bubble $\check{B}_n$ with $f^p(\check{B}_n) = \check{B}_{n-1}$.
    Therefore, $\{ \check{B}_j\}_{j\geq 1}$ is a maximal partial bubble chain in $K^{loc}(f)$ that is landing at $\tilde{y}_t^\infty$ and is rooted at $c_{-t}$.

    Now, let us return to the dynamical plane of $g$ via $\phi$. We have the desired outer periodic point $y_t^\infty = \phi(\tilde{y}_t^\infty)$ and the partial bubble chain $\mathcal{B}^\infty_t = \{\acute{B}_j\}_{j\geq 1}$ satisfying (1), where $\acute{B}_j = \phi(\check{B}_j)$. Now, let us prove (2).

    Consider the outer external ray $R^\infty$ of $g$ which lands at the marked point $x$. Let us pick iterated preimages $R_l$ and $R_r$ of $R^\infty$ landing at points $x_{l,0}$ and $x_{r,0}$ on $\acute{B}_1$ that are located on just slightly on the left and right of the root of $\acute{B}_2$ respectively. 
    The union $\acute{B}_1 \cup R_l \cup R_r \cup \partial V$ bounds a topological rectangle $E_0$ that contains $y$ and is disjoint from $\Hq_g$. 
    Then, $D_0$ lifts under $g^p$ to a rectangle $D_{-1}$ containing $y_t^\infty$. 
    Since the vertical sides of $E_{-1}$ are external ray segments with a much smaller external angle difference compared to $E_0$, then $E_{-1}$ is compactly contained in $E_0$. 
    By Schwarz Lemma, the univalent map $g^p: E_{-1} \to E_0$ is uniformly expanding with respect to the hyperbolic metric of $E_0$ and $y_t^\infty$ is its unique repelling fixed point. 
    
    For every $n \in \N$, let $E_{-n}$ be the lift of $E_0$ under $g^{pn}$ containing $y$. Consider the lifts $R_{l,n}$ and $R_{r,n}$ of $R_l$ and $R_r$ under $g^{pn}$ which touch the boundary of $E_{-n}$; these are external rays landing at points $x_{l,n}$ and $x_{r,n}$ respectively, which are vertices of $E_{-n}$. 
    Since $g^p: E_{-1} \to E_0$ is uniformly expanding, as $n\to \infty$, $x_{l,n}$ and $x_{r,n}$ converge to $y_t^\infty$ and the external rays $R_{l,n}$ and $R_{r,n}$ converge to a limiting external ray $R^\infty_t$, which is a $p$-periodic outer external ray landing at $y_t^\infty$. This implies (2).
\end{proof}

\begin{figure}
    \centering
    \begin{tikzpicture}[scale=0.92]
        \filldraw[gray, ultra thin, fill=green!5!white] (-4.5,0) -- (-1.5,0) -- (-1.5,3) -- (-4.5,3) -- (-4.5,0);
        \draw[black,ultra thick] (-5.5,0) -- (-0.5,0);
        \node[green!50!black] at (-3,1.7) {$D_0$};
        \node[red] at (-3,-0.02) {\large $\bullet$};
        \node[red] at (-3.03,-0.4) {$c_{-t}$};
        
        \draw[-latex] (2.5,1.5) .. controls (1.5,1.75) and (0.5,1.75) .. (-0.5,1.5);
        \node[black] at (1,2) {$f^s$};
        
        \filldraw[gray, ultra thin, fill=green!10!white] (6,0) -- (4,0) -- (4,2) -- (6,2) -- (6,0);
        \draw[black,ultra thick] (7.5,0) -- (2.5,0);
        \node[black] at (7.3,-0.4) {$I_\varepsilon$};
        \node[red] at (5,-0.02) {\large $\bullet$};
        \node[red] at (4.9,-0.4) {$c_{-t-s}$};
        \node[red] at (3,-0.02) {\large $\bullet$};
        \node[red] at (3,-0.4) {$c_1$};

        \draw[-latex] (6.5,-3.5) .. controls (7,-2.5) and (7,-2) .. (6.5,-1);
        \node[black] at (7.15,-2.25) {$f$};

        \draw[black,ultra thick] (7.5,-6) -- (2.5,-6);
        \draw[black,thick] (5.33333,-4.8) -- (4,-5.6) -- (3,-6) -- (4,-3.5);
        \filldraw[gray, ultra thin, fill=green!15!white] (4.4,-5.36) -- (4.8,-5.12) -- (5.04,-5.52) -- (4.64,-5.76) -- (4.4,-5.36); 
        \node[green!60!black] at (5.3,-5.5) {\small $D'_0$};
        \node[black] at (4.8,-4.1) {$\hat{B}$};
        \node[red] at (3,-6) {\large $\bullet$};
        \node[red] at (3,-6.4) {$c_0$};
        
        \draw[-latex] (-0.5,-4.5) .. controls (0.5,-4.25) and (1.5,-4.25) .. (2.5,-4.5);
        \node[black] at (1,-4) {$f^t$};

        \filldraw[gray, ultra thin, fill=green!5!white] (-4.5,-6) -- (-1.5,-6) -- (-1.5,-3) -- (-4.5,-3) -- (-4.5,-6);
        \draw[black] (-2.25,-4.8) -- (-2.75,-5) -- (-3,-6) -- (-2.3,-5.7) -- (-2,-5);        
        \filldraw[gray, ultra thin, fill=green!20!white] (-2.24,-5.56) -- (-2.09,-5.21) -- (-1.74,-5.36) -- (-1.89,-5.71) -- (-2.24,-5.56);
        \draw[black,ultra thick] (-5.5,-6) -- (-0.5,-6);
        \node[black] at (-2.5,-5.3) {\small $\hat{B}_1$};
        \node[green!60!black] at (-3.8,-3.8) {$D_0$};
        \draw[green!20!black,thin] (-1.9,-5.5) -- (-1.25,-5.5);
        \node[green!60!black] at (-0.9,-5.5) {$D_{-1}$};
        \node[red] at (-3,-6.02) {\large $\bullet$};
        \node[red] at (-3.03,-6.4) {$c_{-t}$};
\end{tikzpicture}

    \caption{The construction of the map $f^p: D_{-1} \to D_0$ in the first part of the proof of Lemma \ref{lem:construction-of-periodic-bubble}.}
    \label{fig:outer-repelling-periodic-point}
\end{figure}

For each pre-critical point $c_{-t}(g)$ of $g$, consider the objects $\mathcal{B}_t^\infty$, $\mathcal{B}_t^0$, $R_t^\infty$, $R_t^0$ from the previous lemma. Denote
\begin{equation}
\label{eqn:separator}
    \mathcal{T}_{t} :=   \mathcal{B}_t \cup R_t \qquad \text{where} \qquad \mathcal{B}_t:= \mathcal{B}_t^\infty \cup \mathcal{B}_t^0 \text{  and  } R_t := R_t^\infty \cup R_t^0.
\end{equation}

\begin{lemma}[Rational approximation of $\gamma_1(g)$]
\label{lem:rational-approx-of-rays}
    For every $\varepsilon>0$, there exists a pair of pre-critical points $c_{-t_l}, c_{-t_r} \in \Hq_g$ located on the left and right of $x$ respectively such that $\mathcal{T}_{t_l}$ and $\mathcal{T}_{t_r}$ are both in the $\varepsilon$-neighborhood of $\gamma_1(g)$.
\end{lemma}

\begin{proof}
    Since pre-critical points are dense on $\Hq_g$, there exists a pair of pre-critical points $c_{-t_l}$ and $c_{-t_r}$ on the left and right of $x$, where the moments $t_l$ and $t_r$ grow as we require them to be arbitrarily close to $x$. By Lemma \ref{lem:small-limbs}, the partial bubble chains within $\mathcal{T}_{t_l}$ and $\mathcal{T}_{t_r}$ shrink as we get close to $x$. The outer (resp. inner) external rays within $\mathcal{T}_{t_l}$ and $\mathcal{T}_{t_r}$ are also close to the external ray $R^\infty$ (resp. $R^0$) of $g$ landing at $x$ because their external angles are close to that of $R^\infty$.
\end{proof}

We are now ready to run the pullback argument.

\begin{proof}[proof of Theorem \ref{thm:qc-rigidity}]
    Let $g_1: U_1 \to V_1$ and $g_2: U_2 \to V_2$ be two combinatorially equivalent $(d_0,d_\infty)$-critical rotational coronas with rotation number $\theta \in \IrratBdd$. Let $f$ be the rational map from Theorem \ref{thm:comb-rigidity} which admits a $(d_0,d_\infty)$-critical Herman quasicircle $\Hq$ with rotation number $\theta$. From the previous discussion, for $i \in \{1,2\}$, there is a quasiconformal conjugacy $\phi_i$ between $g_i$ and $f$ on some neighborhood $W_i$ of the Herman quasicircle $\Hq_i$ of $g_i$ onto a neighborhood $W$ of $\Hq$. 
    
    Fix a pre-corona $P=(f_\pm : U_\pm \to S)$ of $f$ where $S$ is contained in $W$, and for $i \in \{1,2\}$, let $P_i=(g_\pm : U_{i,\pm} \to S_i)$ be the corresponding pre-corona of $g_i$ conjugate to $P$ via $\phi_i$. Consider the local non-escaping set $K^{loc}(g_i)$ of $g_i$ relative to $P_i$. The quasiconformal map $\phi_2 \circ \phi_1^{-1}:W_1 \to W_2$ restricts to a conjugacy $h: K^{loc}(g_1) \to K^{loc}(g_2)$ between $g_1$ and $g_2$.
    
    For $i \in \{1,2\}$ and $t \in \{t_l,t_r\}$, consider the sets $\mathcal{T}_{t}(g_i) = \mathcal{B}_{t}(g_i) \cup R_t(g_i)$ from Lemma \ref{lem:rational-approx-of-rays} which approximate the critical arc $\gamma_1(g)$. By design, we can arrange such that for each $t \in \{t_l,t_r\}$, $\phi_2 \circ \phi_1^{-1}$ sends $\mathcal{B}_t(g_1)$ to $\mathcal{B}_t(g_2)$, and the outer/inner rays in $R_t(g_1)$ and $R_t(g_2)$ have the same external angles. For $i\in \{1,2\}$, consider the union 
    \[
        Z_i = K^{loc}(g_i) \cup \bigcup_{n\geq 0}g_i^n(R_{t_l}(g_i) \cup R_{t_r}(g_i)).
    \]
    Clearly, $Z_i$ is forward invariant and $V_i \backslash Z_i$ consists of finitely many connected components. Since $R_t(g_1)$ and $R_t(g_2)$ have the same external angles, $h$ extends to a quasiconformal map $h: V_1 \to V_2$ that is equivariant on $Z_i \cup \partial_{\lgt} U_1$.

    Let us define a new domain $\hat{U}_1$ out of $U_1$ by replacing the forbidden boundary $\partial_\frb U_1$ with some set $\partial_\frb \hat{U}_1$ of curves slightly outside of $\partial_\frb U_1$ such that the image $g_1(\partial_\frb \hat{U}_1)$ is now contained inside of $\Hq_1 \cup \mathcal{T}_{t_l}(g_1) \cup \mathcal{T}_{t_r}(g_1)$. In the same manner, we replace $U_2$ with a slightly larger disk $\hat{U}_2$ such that $h|_{Z_1}$ lifts to a conjugacy between $g_1|_{\partial \hat{U}_1}$ and $g_2|_{\partial \hat{U}_2}$. 
    
   Set $h_0 := h$. Inductively, construct quasiconformal maps $h_n : V_1 \to V_2$ such that
    \[
        h_n(z) = \begin{cases}
            h_{n-1}(z), & \text{ if } z \not\in \hat{U}_1,\\
            g_2^{-1} \circ h_{n-1} \circ g_1(z), & \text{ if } z\in \hat{U}_1.
        \end{cases}
    \]
    Each $h_n$ has the same dilatation as $h$. Since $K^{loc}(g_1)$ is nowhere dense, $h_n$ stabilizes and converges to a quasiconformal conjugacy between $g_1$ and $g_2$.
\end{proof}


\section{Hyperbolic renormalization fixed point}
\label{sec:stab-mani}

From now on, let us fix a periodic type irrational $\theta \in \IrratPer$. We will now construct the desired corona renormalization fixed point $f_*$ and prove most of Theorem \ref{main-theorem}. 

\subsection{Renormalization of critical commuting pairs}
\label{ss:critical-commuting-pairs}

Let us consider a $(d_0,d_\infty)$-critical quasicircle map $f: \Hq \to \Hq$ with critical point $c$ and rotation number $\tau$. Let us denote by $\{p_n/q_n\}_{n\in \N}$ the best rational approximations of $\tau$. For every $n \geq 2$, denote by $I_{n}$ the shortest interval in $\Hq$ connecting $c$ and $f^{q_n}(c)$. The \emph{$n$\textsuperscript{th} pre-renormalization} of $f$ is the pair of maps
\[
(f^{q_n}|_{I_{n-1}}, f^{q_{n-1}}|_{I_n})
\]
and the \emph{$n$\textsuperscript{th}} (\emph{commuting pair}) \emph{renormalization} $\Rcp^n f$ of $f$ is the commuting pair obtained by rescaling of the $n$\textsuperscript{th} pre-renormalization by either the affine map if $n$ is even, or the anti-affine map if $n$ is odd, that sends $0$ to $c$ and $-1$ to $f^{q_{n-1}}(c)$. Each renormalization $\Rcp^n f$ is what we call a normalized $(d_0,d_\infty)$-critical commuting pair. Below, we will formally define what this terminology means.

Let $\mathbb{H}$ and $-\mathbb{H}$ denote the upper and lower half planes of $\C$ respectively.

\begin{definition}
\label{def:com-pair}
    Let $\mathbf{I} \Subset \C$ be a closed quasiarc containing $0$ on its interior. A \emph{commuting pair} $\zeta$ based on $\mathbf{I}$ is a pair of orientation preserving analytic homeomorphisms 
    \[
    \zeta = \left(f_-: I_- \to f_-(I_-), \: 
    f_+: I_+ \to f_+(I_+) \right)
    \]
    with the following properties.
    \begin{enumerate}[label = (P\textsubscript{\arabic*})]
        \item\label{P-1} $I_-$ and $I_+$ are closed subintervals of $\mathbf{I}$ of the form $[f_+(0),0]$ and $[0,f_-(0)]$ respectively such that $\mathbf{I} = I_- \cup I_+ = f_-(I_-) \cup f_+(I_+)$ and $I_- \cap I_+ = \{0\}$.
        \item\label{P-2} For all $x \in I_\pm \backslash \{0\}$, $f_\pm'(x) \neq 0$.
        \item\label{P-3} Both $f_-$ and $f_+$ admit holomorphic extensions to a neighborhood $B$ of $0$ on which $f_-$ commutes with $f_+$ and $f_- \circ f_+ (\mathbf{I} \cap B) \subset I_-$.
    \end{enumerate}
    Additionally, a commuting pair $\zeta$ is a \emph{critical commuting pair} if
    \begin{enumerate}[label = (P\textsubscript{\arabic*}), start=4]
        \item\label{P-4} $0$ is a critical point of both $f_-$ and $f_+$.
    \end{enumerate}
    We say that $\zeta$ is \emph{normalized} if $f_+(0) = -1$. A critical commuting pair $\zeta$ is called a \emph{$(d_0,d_\infty)$-critical commuting pair} if for any quasiconformal map $\phi$ mapping $I_-$ and $I_+$ to real intervals $[-1,0]$ and $[0,1]$ respectively and for any sufficiently small round disk $D$ centered at $\phi(f_+(f_-(0)))$, the number of connected components of $\phi (f_+ \circ f_-)^{-1} \phi^{-1}(D \cap -\mathbb{H})$ in $-\He$ is $d_\infty$, whereas the number of connected components of $\phi (f_+ \circ f_-)^{-1} \phi^{-1}(D \cap \He)$ in $\mathbb{H}$ is $d_0$.
\end{definition}

We say that a $(d_0,d_\infty)$-critical commuting pair $\zeta = (f_-,f_+)$ is \emph{renormalizable} if there exists a positive integer $\chi=\chi(\zeta)$ that corresponds to the first time $f_-^{\chi+1}\circ f_+(0)$ lies in the interior of $I_+$. If renormalizable, we call the $(d_\infty,d_0)$-critical commuting pair 
    \[
        p\Rcp \zeta := 
        \left(
        f_-^\chi \circ f_+|_{[0,f_-(0)]}, \:
        f_-|_{[f_-^\chi f_+(0),0]}
        \right)
    \]
    the \emph{pre-renormalization} of $\zeta$, and we call the normalized $(d_0,d_\infty)$-critical commuting pair obtained by conjugating $p\Rcp \zeta$ with the antilinear map $z \mapsto -f_-(0) \bar{z}$ the \emph{renormalization} of $\Rcp \zeta$ of $\zeta$. 
    
    If $\Rcp \zeta$ is again renormalizable, we call $\zeta$ twice renormalizable, and so on. If $\zeta$ is infinitely renormalizable, we define the \emph{rotation number} of $\zeta$ to be the irrational number
    \[
        \text{rot}(\zeta) := [0; \chi(\zeta), \chi(\Rcp \zeta), \chi(\Rcp^2 \zeta), \ldots].
    \]
The operator $\Rcp$ acts on the rotation number as follows. If $\zeta$ is $n$ times renormalizable,
\[
\text{rot}(\Rcp^n \zeta) = G^n(\text{rot}(\zeta)),
\]
where $G(\tau) = \left\{\frac{1}{\tau}\right\}$ is the Gauss map.
    
One can convert a $(d_0,d_\infty)$-critical commuting pair $\zeta = (f_-,f_+)$ into a $(d_0,d_\infty)$-critical quasicircle map by taking the dynamics modulo $f_+$.

\begin{proposition}
    \label{prop:gluing-crit-circ-maps}
    Let $\zeta= (f_-|_{I_-}, f_+|_{I_+})$ be a commuting pair. Let $G_\zeta$ be the conformal gluing map from a neighborhood of $[f_+f_-(0),f_-(0)]$ to an annulus in $\C$ that identifies $z$ with $f_+(z)$ for every point $z$ in a neighborhood of $f_-(0)$. Then, $G_\zeta$ projects the pair $(f_-|_{[f_+f_-(0),0]}, f_+f_-|_{[0,f_-(0)]})$ into a quasicircle map $f_\zeta: \Hq \to \Hq$ having the same rotation number as $\zeta$. If $\zeta$ is $(d_0,d_\infty)$-critical, then $f_\zeta: \Hq \to \Hq$ is a $(d_0,d_\infty)$-critical quasicircle map.
\end{proposition}

By studying the rigidity properties of critical commuting pairs, we have previously established the following result.

\begin{theorem}[{\cite[\S7.3-7.5]{Lim23b}}] 
\label{horseshoe}
    Let $p \geq 2$ be the smallest even period of $\theta$ under the Gauss map $G(\theta)=\left\{ \frac{1}{\theta} \right\}$.
    There is a unique normalized $(d_0,d_\infty)$-critical commuting pair $\zeta_*$ with rotation number $\theta$ with the following properties. 
    \begin{enumerate}[label=\textnormal{(\arabic*)}]
        \item Renormalization fixed point: There is a linear map $z \mapsto \mu z$, $|\mu|<1$, which conjugates $\zeta_*$ and the pre-renormalization $p\Rcp^p\zeta_*$.
        \item Exponential convergence: For any normalized $(d_0,d_\infty)$-critical commuting pair $\zeta$ of some rotation number $\tau \in \IrratPre$ where $G^k(\tau)=\theta$ for some $k \in \N$, the renormalizations $\Rcp^{k+np} \zeta$ converge exponentially to $\zeta_*$ as $n \to \infty$.
    \end{enumerate}
\end{theorem}

Let us clarify what we mean in the second item of the theorem. We say that a sequence of commuting pairs $\zeta_n = (f_{n,-}|_{I_{n,-}},f_{n,+}|_{I_{n,+}})$ converges exponentially to $\zeta_* = (f_{-}|_{I_{-}},f_{+}|_{I_{+}})$ if the following two properties are satisfied. Firstly, the Hausdorff distance between $I_{n,\pm}$ and $I_{*,\pm}$ tends to $0$ exponentially fast. Secondly, for sufficiently large $n$, both $f_{n,\pm}$ and $f_{\pm}$ extend holomorphically to a neighborhood of $I_{n,\pm} \cup I_{\pm}$ on which the sup norm of $f_{n,\pm}-f_{\pm}$ converges exponentially fast to $0$.

\subsection{Corona renormalization fixed point}

We say that a rotational corona is \emph{standard} if the arc $\gamma_0$ passes through the critical value. 
Similarly, we say that a rotational pre-corona is \emph{standard} if the corresponding gluing map projects it to a standard rotational corona.

\begin{theorem}
\label{thm:renorm-fixed-pt}
    There exists a standard $(d_0,d_\infty)$-critical rotational corona $f_* : U_* \to V_*$ with rotation number $\theta$ which admits a standard rotational pre-corona
    \[
    F_* = \left(f_*^a: U_- \to S_*, \: f_*^b: U_+ \to S_*\right)
    \]
    together with a gluing map $\psi_*: S_* \to \overline{V_*}$ projecting $F_*$ back to $f_*: U_* \to V_*$. Moreover, we have an improvement of domain: $\mathbf{\Delta}_{F_*} \Subset U_*$.
\end{theorem}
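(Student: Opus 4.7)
The strategy is to build $f_*$ as a two-dimensional realization of the critical commuting pair fixed point $\zeta_*$ supplied by Theorem \ref{horseshoe}. To begin, apply Proposition \ref{prop:gluing-crit-circ-maps} to $\zeta_*$ to obtain a $(d_0,d_\infty)$-critical quasicircle map $g_* : \Hq_* \to \Hq_*$ of rotation number $\theta$. The identity $p\renorm^p \zeta_* = L \zeta_* L^{-1}$ with the \emph{linear} rescaling $L(z) = \mu z$ promotes, via the gluing construction of Proposition \ref{prop:gluing-crit-circ-maps}, to a linear self-similarity of $g_*$: there exist return times $a, b$ (determined by one period of the continued fraction expansion of $\theta$ under the Gauss map) such that the first-return pair $(g_*^a, g_*^b)$ near the critical point is linearly conjugate through $L$ to $g_*$ itself after the appropriate identification of endpoints.

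Next, I would thicken this arc-based self-similarity to an annular one. By Corollary \ref{renormalizability-of-rotational}, $g_*$ is corona renormalizable, so the external-ray construction of Lemma \ref{lem:construction-of-pre-corona}, transferred from the model rational map of Theorem \ref{thm:comb-rigidity} via the quasiconformal conjugacy of Theorem \ref{thm:naive-rigidity}, yields a standard rotational pre-corona $F_* = (g_*^a : U_- \to S_*, \, g_*^b : U_+ \to S_*)$ around the critical value, with $S_*$ bounded by a pair of external rays and equipotential arcs pulled back from the model map. Because the self-similarity $L$ is conformal, it extends canonically from the arc $\Hq_* \cap \overline{S_*}$ to a conformal gluing map $\psi_* : \overline{S_*} \to \overline{V_*}$ identifying $\beta_-$ with $\beta_+$. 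The projection of $F_*$ under $\psi_*$ defines a standard $(d_0,d_\infty)$-critical rotational corona $f_* : U_* \to V_*$ extending $g_*$, and by construction $\psi_*$ conjugates $F_*$ back to $f_*$, i.e., $F_*$ is a pre-corona of $f_*$ whose gluing recovers $f_*$ itself.

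Finally, the improvement of domain $\mathbf{\Delta}_{F_*} \Subset U_*$ follows from the complex a priori bounds for bounded type critical quasicircle maps established in \cite{Lim23b}: these supply uniform lower bounds for the moduli of the annular collars separating the renormalization tiling from the outer boundary of $U_*$, which force the containment to be compact.

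The main obstacle is the passage from the one-dimensional self-similarity of $\zeta_*$ to a genuinely two-dimensional \emph{conformal} gluing realizing the corona-renormalization fixed-point relation. This is resolved by exploiting the linearity of the rescaling $L(z) = \mu z$ together with a canonical choice of external rays and equipotentials coming from the model rational map, so that the gluing map $\psi_*$ is conformal on a neighborhood of $S_*$ and projects $F_*$ back to the same $f_*$, rather than to a merely quasiconformally equivalent corona; should any ambiguity remain, Theorem \ref{thm:qc-rigidity} and the uniqueness clause of Theorem \ref{horseshoe} together pin down the choice up to the conformal normalization built into $L$.
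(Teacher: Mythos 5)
Your overall strategy (realize the commuting-pair fixed point $\zeta_*$ of Theorem \ref{horseshoe} as a quasicircle map $g_*$ via Proposition \ref{prop:gluing-crit-circ-maps}, thicken with Lemma \ref{lem:construction-of-pre-corona}, and use the linear self-similarity $L(z)=\mu z$) is the paper's strategy, but there is a genuine gap at the decisive step. You construct a \emph{single} pre-corona $F_*=(g_*^a:U_-\to S_*,\,g_*^b:U_+\to S_*)$ and use it for two incompatible purposes: its gluing \emph{defines} the corona $f_*:U_*\to V_*$, and you then assert it is also a pre-corona \emph{of} $f_*$ that glues back to $f_*$. The theorem requires $F_*$ to consist of iterates $f_*^a, f_*^b$ of $f_*$, i.e.\ first return maps of $f_*$ to a rectangle inside $V_*$ with $\mathbf{\Delta}_{F_*}\Subset U_*$. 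Since $f_*$ is itself the projection of the return pair of $g_*$, an iterate $f_*^k$ corresponds to a $k$-fold composition of that pair, hence to a much \emph{deeper} return map of $g_*$; the pair $(g_*^a,g_*^b)$ you glued to form $f_*$ is not, under any identification, a pair of iterates of $f_*$. The missing ingredient is a second, smaller pre-corona: one must push the self-similarity $\psi_1=\phi_1 L^n\phi_1^{-1}$ forward by one iterate of $g_*$ to get a self-similarity $\psi_2=g_*\circ\psi_1\circ g_*^{-1}$ based at the critical value (this requires checking well-definedness across the critical point, since $g_*^{-1}$ is multivalued there), then rescale the pre-corona $P$ to $P'=\psi_2^{-1}(P)$ and transport $P'$ via the gluing map $\phi_2$ of $P$ into the dynamical plane of $f_*$. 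Only there does $P'$ become a pair of iterates of $f_*$, and the gluing map is $\psi_*=\phi_2\circ\psi_2\circ\phi_2^{-1}$, not an "extension of $L$ from the arc."

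Two smaller points. First, your fallback via Theorem \ref{thm:qc-rigidity} and the uniqueness in Theorem \ref{horseshoe} cannot close this gap: quasiconformal rigidity identifies coronas only up to quasiconformal conjugacy, whereas the fixed-point relation $\renorm f_*=f_*$ must hold \emph{on the nose} (with the same codomain and critical arc), which is exactly what the explicit compatible choice $P'=\psi_2^{-1}(P)$ provides. Second, the improvement of domain does not need the a priori bounds you cite; it follows from Lemma \ref{lem:construction-of-pre-corona}(3) together with taking the self-similarity depth $n$ sufficiently large, so that $\mathbf{\Delta}_{F_*}$ lies in an arbitrarily thin neighborhood of the Herman quasicircle and hence compactly inside $U_*$ — a parameter your single-pre-corona construction does not make available.
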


\begin{proof}
    Consider the unique normalized $(d_0,d_\infty)$-critical commuting pair 
    \[
    \zeta_* = (f_-: I_- \to I, f_+: I_+ \to I)
    \]
    with rotation number $\theta$ from Theorem \ref{horseshoe}. The pair is supported on the quasiarc $I = I_- \cup I_+ = [f_+(0),0] \cup [0,f_-(0)]$. There exists some $\mu \in \D$ such that for any $n \in \N$, there is a pre-renormalization $\zeta_n = (f_{n,-} : J_- \to J, f_{n,+}: J_+ \to J)$ of $\zeta_*$ on a subinterval $J \subset I$ that is conjugate to $\zeta_*$ via the linear map $L^n(z) = \mu^n z$. We will convert this renormalization fixed point in the category of commuting pairs to that in the category of critical quasicircle maps, and then project it to that in the category of rotational coronas.

    Consider the gluing map $\phi_1:= G_{\zeta_*}$ described in Proposition \ref{prop:gluing-crit-circ-maps}. Then, $\phi_1$ projects the modified commuting pair $\zeta'_* := \left( f_-|_{ [f_+f_-(0),0] }, f_+f_-|_{[0,f_-(0)]},  \right)$ into a $(d_0,d_\infty)$-critical quasicircle map $g: \Hq \to \Hq$ having the same rotation number $\theta$. 
    
    Denote by $c_0:= \phi_1(0)$ the critical point of $g$, and let $c_k := g^k(c_0)$ for all $k \in \N$. Consider the modification of $\zeta_n$, which is $\zeta'_*$ rescaled by $L^n$, and project it to the dynamical plane of $g$ via $\phi_1$ to obtain a commuting pair $g_n = \left( g^\abold|_{[c_\bbold,c_0]}, g^\bbold|_{[c_0,c_\abold]} \right)$ for some return times $\abold$ and $\bbold$. Then, $\psi_1 := \phi_1 L^n \phi_1^{-1}$ is the gluing map projecting $g_n$ back to $g$.

    To make the resulting corona standard, we will push $g_n$ forward under one iterate of $g$. More precisely, we set $\psi_2 := g \circ \psi_1 \circ g^{-1}$. It is well-defined because for every point $z$ close to $c_1$, the preimage $g^{-1}(z)$ is a set of $d_0+d_\infty-1$ points close to $c_0$ whose images under $\psi_1$ remain close to $c_0$ and get mapped to the same point $\psi_2(z)$ under $g$. The new gluing map $\psi_2$ sends a small neighborhood of $c_1$ to a neighborhood of $\Hq$. Moreover, $\psi_2$ fixes the critical value $c_1$ and projects $\tilde{g}_n = \left( g^\abold|_{[c_{\bbold+1},c_1]}, g^\bbold|_{[c_1,c_{\abold+1}]} \right)$ back to $g$.

    By Corollary \ref{renormalizability-of-rotational}, $g$ admits a standard pre-corona $P$ defined in a small neighborhood of $c_1$. The corresponding gluing map $\phi_2$ projects $P$ onto a $(d_0,d_\infty)$-critical rotational corona $f_* : U_* \to V_*$. Since $\theta$ is periodic, we can prescribe $f_*$ to have rotation number $\theta$. The corresponding Herman quasicircle $\Hq_*$ of $f_*$ is the image of (an interval in) $\Hq$ under $\phi_2$.
    
    Let us rescale the pre-corona $P$ by $\psi_2^{-1}$ to obtain yet another pre-corona $P'$ in the dynamical plane of $g$ that is much smaller than $P$. Project $P'$ via $\phi_2$ to obtain a pre-corona $F_*$ of $f_*$. The map $\psi_* := \phi_2 \circ \psi_2 \circ \phi_2^{-1}$ will project the pre-corona $F_*$ back to $f_*$. The improvement of domain property is satisfied once we take $n$ to be sufficiently high.
\end{proof}

\begin{corollary}
\label{cor:renorm-operator}
    Let $f_*$, $F_*$, and $\psi_*$ be from the previous theorem. There exist a pair of small Banach neighborhoods $\Ustar$ and $\Bstar$ of $f_*$ and a compact analytic corona renormalization operator $\Rstar : \Ustar \to \Bstar$ such that 
    \begin{enumerate}[label=\textnormal{(\arabic*)}]
        \item $\Rstar f_* = f_*$ and $F_*$ is the pre-renormalization of $\Rstar f_*$ with gluing map $\psi_*$;
        \item for any rotational corona $f$ in $\Ustar$ with the same rotation number $\theta$, $f$ is infinitely renormalizable and $\Rstar^n f$ converges exponentially fast to $f_*$ with respect to the ambient sup norm of $\Ustar$.
    \end{enumerate}
\end{corollary}

\begin{proof}
    The existence of $\Rstar: \Ustar \to \Bstar$ satisfying (1) follows from Theorems \ref{thm:renormalization} and \ref{thm:renorm-fixed-pt}. Item (2) is guaranteed by Theorem \ref{horseshoe} provided that $\Ustar$ is a sufficiently small neighborhood of $f_*$.
\end{proof}

From now on, the corona renormalization operator 
\[
    \Rstar : (\Ustar, f_*) \to (\Bstar, f_*)
\]
constructed above will be fixed. Sometimes, we may replace $\Ustar$ with an even smaller Banach neighborhood of $f_*$. Exactly once (Lemma \ref{lem:renormalization-tiling-01}), we may replace $\Rstar$ with an iterate $\Rstar^k$.

\begin{lemma}
\label{lem:renorm-of-any-critical rotational}
    For any $(d_0,d_\infty)$-critical quasicircle map $f$ of rotation number $\tau \in \IrratPre$ where $G^k(\tau) = \theta$ for some $k \in \N$, there is a compact analytic corona renormalization operator $\renorm: \mathcal{N} \to \Ustar$ from a Banach neighborhood $\mathcal{N}$ of $f$ to the Banach neighborhood $\Ustar$ of $f_*$.
\end{lemma}

\begin{proof}
    By Theorem \ref{horseshoe}, there is a high $m \in \N$ such that $\Rcp^m f$ is a critical commuting pair with rotation number $\theta$ that is arbitrarily close to the pair $\zeta_*$. By Theorem \ref{thm:qc-rigidity}, $f$ then admits a rotational pre-corona $F$ which projects to a rotational corona $g$ with rotation number $\theta$ that is close to the corona $f_*$ in Corollary \ref{cor:renorm-operator}. By Theorem \ref{thm:renormalization}, there is a compact analytic renormalization operator $\renorm$ on a small neighborhood of $f$ such that $\renorm (f) = g$.
\end{proof}

\subsection{Renormalization tiling}
\label{ss:renorm-tiling}

Recall from Theorem \ref{thm:renormalization} that every corona $f$ in $\Ustar$ has the same codomain $V$ and critical arc $\gamma_1$ as the renormalization fixed point $f_*$.

Let us pick a positive integer $n$ and a corona $f=f_0$ in the neighborhood
\[
    \mathcal{U}_n := \bigcap_{0\leq k \leq n} \Rstar^{-k}(\Ustar)
\]
of $f_*$. In particular, $f$ is $n$ times renormalizable. For $k \leq n$, denote by 
\begin{itemize}
    \item $f_k := \Rstar^k f=[ f_k : U_k \to V]$ the $k$\textsuperscript{th} renormalization of $f$, 
    \item $\psi_k : S_k \to V$ the renormalization change of variables for $f_{k-1}$, and
    \item $\phi_k := \psi_k^{-1}$.
\end{itemize}

For $k \in \{0,1,\ldots,n\}$, let us cut the dynamical plane of $f_k$ along the critical arc $\gamma_1$ and obtain a pre-corona
\[
F_k = (f_{k,\pm} : U_{k,\pm} \to V\backslash \gamma_1 ).
\]
The map
\[
\Phi_n := \phi_1 \circ \phi_2 \circ \ldots \phi_n
\]
is well defined on $V \backslash \gamma_1$ and projects $F_n$ to the dynamical plane of $f$ as the pre-corona
\[
    F^{(0)}_n = \left( f^{(0)}_{n,\pm} : U^{(0)}_{n,\pm} \to S^{(0)}_n \right) \quad \text{ where } \quad f^{(0)}_{n,-} = f^{\abold_n}_0 \text{ and } f^{(0)}_{n,+} = f^{\bbold_n}_0
\]
for some return times $\abold_n$ and $\bbold_n$. Let us also set $\Phi_0 := \text{Id}$ on $V\backslash \gamma_1$. 

Let us divide $\overline{U_0}$ along the arcs $\gamma_0$ and $ \gamma_1$ to obtain a tiling $\Deltabold_0$ of $\overline{U_0}$ consisting of two tiles $\Delta_0(0)$ and $\Delta_0(1)$. We make the convention that $\Delta_0(0)$, $\gamma_0$, and $\Delta_0(1)$ are in counterclockwise order. The tiling $\Deltabold_0$ is called the \emph{zeroth tiling} associated to $f_0$.

Next, define the \emph{$n$\textsuperscript{th} tiling} $\Deltabold_n$ associated to $f$ by spreading around $U^{(0)}_{n,\pm}$ via $f$. It consists of $f^i \left( U^{(0)}_{n,-} \right)$ for $i\in\{0,1\ldots,\abold_n-1\}$ and $f^j \left( U^{(0)}_{n,+} \right)$ for $j\in \{0,1\ldots,\bbold_n-1\}$. Let us denote by $\Delta_n(0)$ the image of the zeroth tile $\Delta_0(0,f_n)$ of $f_n$ under $\Phi_n$, label the rest of the tiles in $\Deltabold_n$ in counterclockwise order by $\Delta_n(i)$ for $i\in \{0,1,\ldots, \abold_n + \bbold_n-1\}$. See Figure \ref{fig:construction-of-tiling}.

If $f$ is rotational, then $\Deltabold_n$ always forms an annular neighborhood of the Herman quasicircle of $f$. In general, the map $f$ always acts almost like a rotation on the tiling $\Deltabold_n$. There exists $\pbold_n \in \N_{\geq 1}$ such that $f$ maps $\Delta_n(i)$ univalently onto $\Delta_n(i+\pbold_n)$ whenever $i \not\in \{-\pbold_n, -\pbold_n+1 \}$. Moreover, $f$ maps $\Delta_n(-\pbold_n) \cup \Delta_n(-\pbold_n+1)$ back to $S^{(0)}_n$ almost as a degree $d$ covering map branched at its critical point $c_0(f)$.

\begin{figure}
\hspace*{-0.1in}
\begin{tikzpicture}[scale=0.54]
\coordinate (v1) at (1,-0.45) {};
\coordinate (v2) at (1,0.45) {};
\coordinate (v3) at (-1,0.5) {};
\coordinate (v4) at (-1,-0.5) {};

\coordinate (w1) at (4.25,-1.5) {};
\coordinate (w2) at (4.5,-0.5) {};
\coordinate (w3) at (4.5,0.5) {};
\coordinate (w4) at (4.25,1.5) {};
\coordinate (w5) at (-3,3.5) {};
\coordinate (w6) at (-3,-3.5) {};

\coordinate (a1) at (1.6,-1.1) {};
\coordinate (a2) at (1.8,-0.3) {};
\coordinate (a3) at (1.8,0.3) {};
\coordinate (a4) at (1.55,1) {};
\coordinate (a5) at (0.8,1.5) {};
\coordinate (a6) at (0.2,1.6) {};
\coordinate (a7) at (-0.9,1.55) {};
\coordinate (a8) at (-2.1,0.2) {};
\coordinate (a9) at (-1.7,-1) {};
\coordinate (a10) at (-1.3,-1.3) {};
\coordinate (a11) at (-0.3,-1.8) {};

\coordinate (b1) at (2.7,-2) {};
\coordinate (b2) at (3.2,-1) {};
\coordinate (b3) at (3.3,-0.4) {};
\coordinate (b4) at (3.3,0.4) {};
\coordinate (b5) at (3.2,1) {};
\coordinate (b6) at (2.6,2) {};
\coordinate (b7) at (1.8,2.6) {};
\coordinate (b8) at (1.1,2.8) {};
\coordinate (b9) at (0.4,2.9) {};
\coordinate (b10) at (-0.2,3) {};
\coordinate (b11) at (-1.4,2.8) {};
\coordinate (b12) at (-3.5,0.4) {};
\coordinate (b13) at (-3.4,-0.9) {};
\coordinate (b14) at (-3,-1.4) {};
\coordinate (b15) at (-2.6,-2.2) {};
\coordinate (b16) at (-2,-2.6) {};
\coordinate (b17) at (-0.6,-3) {};

\filldraw[fill=blue!10!white, shift={(13 cm,0 cm)}] (4.25,-1.5) decorate [decoration={zigzag,segment length = 2mm,amplitude = 0.7mm}] { -- (4.5,-0.5)} .. controls (4,-0.4) and (4,0.4) .. (4.5,0.5) decorate [decoration={zigzag,segment length = 1.5mm,amplitude = 0.7mm}] { -- (4.25,1.5)} .. controls (3,4) and (-1,4.5) .. (-3,3.5) .. controls (-5.5,2) and (-5.5,-2) .. (-3,-3.5) .. controls (-1,-4.5) and (3,-4) .. (4.25,-1.5);
\filldraw[fill=white, shift={(13 cm,0 cm)}] (1,-0.45) decorate [decoration={zigzag,segment length = 1.5mm,amplitude = 0.5mm}] { -- (1,0.45)} .. controls (0.5,1) and (-0.5,1) .. (-1,0.5) .. controls (-1.2,0.25) and (-1.2,-0.25) .. (-1,-0.5) .. controls (-0.5,-1) and (0.5,-1) .. (1,-0.45);
\draw[red, shift={(13 cm,0 cm)}] (-0.15,-0.9) -- (-0.3,-1.8) -- (-0.6,-3) -- (-0.75,-4);
\draw[red, shift={(13 cm,0 cm)}] (-0.6,0.77) -- (-1,1.7) -- (-1.4,2.7) -- (-1.9,3.88);

\filldraw[fill=yellow!10!white] (w1) decorate [decoration={zigzag,segment length = 2mm,amplitude = 0.7mm}] { -- (w2)} .. controls (4,-0.4) and (4,0.4) .. (w3) decorate [decoration={zigzag,segment length = 1.5mm,amplitude = 0.7mm}] { -- (w4)} .. controls (3,4) and (-1,4.5) .. (w5) .. controls (-5.5,2) and (-5.5,-2) .. (w6) .. controls (-1,-4.5) and (3,-4) .. (w1);
\filldraw[green!50!black, fill=green!10!white] (b1) -- (b2) decorate [decoration={zigzag,segment length = 1.5mm,amplitude = 0.7mm}] { -- (b3)} -- (b4) decorate [decoration={zigzag,segment length = 1.5mm,amplitude = 0.7mm}] { -- (b5)} -- (b6) -- (b7) decorate [decoration={zigzag,segment length = 1.5mm,amplitude = 0.7mm}] { -- (b8)} -- (b9) decorate [decoration={zigzag,segment length = 1.5mm,amplitude = 0.7mm}] { -- (b10)} -- (b11) .. controls (-2.2,2.6) and (-3.4,1.5) .. (b12) -- (b13) decorate [decoration={zigzag,segment length = 1.5mm,amplitude = 0.7mm}] { -- (b14)} -- (b15) decorate [decoration={zigzag,segment length = 1.5mm,amplitude = 0.7mm}] { -- (b16)} -- (b17) .. controls (0.1,-3.2) and (2.3,-2.4) .. (b1);
\filldraw[green!50!black, fill=yellow!10!white] (a1) -- (a2) decorate [decoration={zigzag,segment length = 1.5mm,amplitude = 0.7mm}] { -- (a3)} -- (a4) -- (a5) decorate [decoration={zigzag,segment length = 1.5mm,amplitude = 0.7mm}] { -- (a6)} -- (a7) .. controls (-1.55,1.4) and (-2,0.75) .. (a8) -- (a9) decorate [decoration={zigzag,segment length = 1.5mm,amplitude = 0.7mm}] { -- (a10)} -- (a11) .. controls (0.2,-1.8) and (1.5,-1.2).. (a1);
\filldraw[fill=white] (v1) decorate [decoration={zigzag,segment length = 1.5mm,amplitude = 0.5mm}] { -- (v2)} .. controls (0.5,1) and (-0.5,1) .. (v3) .. controls (-1.2,0.25) and (-1.2,-0.25) .. (v4) .. controls (-0.5,-1) and (0.5,-1) .. (v1);

\filldraw[blue!50!black, thick, fill=blue!10!white] (b6) -- (b7) decorate [decoration={zigzag,segment length = 1.5mm,amplitude = 0.7mm}] { -- (b8)} -- (b9) decorate [decoration={zigzag,segment length = 1.5mm,amplitude = 0.7mm}] { -- (b10)} -- (b11) .. controls (-2.2,2.6) and (-3.4,1.5) .. (b12) -- (a8) .. controls (-2,0.75)  and (-1.55,1.4) .. (a7) -- (a6) decorate [decoration={zigzag,segment length = 1.5mm,amplitude = 0.7mm}] { -- (a5)} -- (a4) -- (b6);

\draw[green!50!black] (a1) -- (b1);
\draw[green!50!black] (a4) -- (b6);
\draw[green!50!black] (a7) -- (b11);
\draw[green!50!black] (a8) -- (b12);
\draw[red] (-0.15,-0.9) -- (a11) -- (b17) -- (-0.75,-4);
\draw[red] (-0.6,0.77) -- (a7) -- (b11) -- (-1.9,3.88);

\node[red] at (-2, 4.4) {\small $\gamma_0(f)$};
\node[red] at (-0.85,-4.4) {\small $\gamma_1$};
\node[blue!50!black] at (0.6,2.15) {\footnotesize $\Delta_1(0)$};
\node[blue!50!black] at (-2.2,1.5) {\footnotesize $\Delta_1(1)$};
\node[green!50!black] at (-2.1,-1.6) {\footnotesize $\Delta_1(2)$};
\node[green!50!black] at (0.9,-2.1) {\footnotesize $\Delta_1(3)$};
\node[green!50!black] at (2.6,0) {\footnotesize $\Delta_1(4)$};

\node[blue!50!black] at (10,-0.5) {\small $\Delta_0(1,f_1)$};
\node[blue!50!black] at (14.8,1.5) {\small $\Delta_0(0,f_1)$};
\node[red] at (11.3, 4.4) {\small $\gamma_0(f_1)$};
\node[red] at (12.2,-4.4) {\small $\gamma_1$};

\draw[-latex] (7.5,0) -- (5,0);
\node[black] at (6.3,-0.5) {\small $\phi_1$};
\draw[-latex] (2.5,-2.75) .. controls (2.55,-5.25) and (5.25,-3.75) .. (3.75,-2.75);
\node[black] at (4.3,-4.5) {\small $f$};
\draw[-latex] (15.5,-2.75) .. controls (15.55,-5.25) and (18.25,-3.75) .. (16.75,-2.75);
\node[black] at (17.2,-4.5) {\small $f_1$};
\end{tikzpicture}
\caption{The construction of the first tiling $\Deltabold_1$ of $f$ when $(\abold_n,\bbold_n) = (3,2)$.}
\label{fig:construction-of-tiling}
\end{figure}

\begin{lemma}
\label{lem:renormalization-tiling-01}
    The operator $\Rstar: \Ustar \to \Bstar$ can be arranged such that the following holds. For $f \in \mathcal{U}_n$,
    \begin{enumerate}[label=\textnormal{(\arabic*)}]
        \item there is a holomorphic motion of $ \partial \Deltabold_0, \ldots,\partial \Deltabold_n$ over $f \in \mathcal{U}_n$ that is equivariant with respect to $f: \partial \Delta_n(i) \to \partial \Delta_n(i+\pbold_n)$ for $i \not\in \{-\pbold_n, -\pbold_n+1\}$;
        \item for every $f \in \mathcal{U}_n$ and $1\leq k \leq n$, $\Deltabold_{m} \cup f(\Deltabold_{m}) \Subset \Deltabold_{m-1}$;
        \item the tiling $\Deltabold_n(f)$ is close to the Herman quasicircle $\Hq_*$ of $f_*$ in Hausdorff topology.
    \end{enumerate}
\end{lemma}

\begin{proof}
    Let us first consider the case where $f=f_*$. By the improvement of domain property in Theorem \ref{thm:renorm-fixed-pt}, the diameters of the tiles in $\Deltabold_n(f_*)$ must shrink to $0$ as $n \to \infty$. Consider a tile $\Delta_1(i, f_*)$. There is some $t\geq 0$ and $j \in \{0,1\}$ such that $f_*^t$ sends $\Delta_1(j,f_*)$ onto $\Delta_1(i,f_*)$. By replacing $\Rstar$ with some high iterate $\Rstar^k$ if necessary, the map 
    \[
        \psi_* \circ f_*^{-t}: \Delta_1(i, f_*) \to \Delta_0(j, f_*)
    \]
    expands the Euclidean metric by some high factor $C>1$. Inductively, (2) and (3) hold for $f_*$.

    By design, it is clear that $\partial \Deltabold_0$ moves holomorphically over $f \in \Ustar$. For $1\leq k \leq n$, we push forward the holomorphic motion $\partial \Deltabold_0(f_k)$ via $\Phi_k$ and spread it around dynamically to obtain a holomorphic motion of $\partial \Deltabold_k(f)$ over $f \in \mathcal{U}_n$.

    By continuity, every $f \in \mathcal{U}_n$ also satisfies the following property. For any tile $\Delta_{n}(i, f)$ in $\Deltabold_{n}(f)$, there is some $t\geq 0$ and $j \in \{0,1\}$ such that $f^t$ sends $\Delta_{n}(j,f)$ onto $\Delta_{n}(i,f)$. We obtain a holomorphic motion of $\partial \Deltabold_n(f)$ by pulling back the holomorphic motion of $\partial \Deltabold_0(f_n)$ via maps of the form
    \begin{equation}
        \label{eqn:big-psi-tiling}
        \Psi_{n,i} := \Phi_n^{-1} \circ f^{-t} : \Delta_{n}(i, f) \to \Delta_{0}(j, f_{n})
    \end{equation}
    for each tile. This implies (1). Moreover, (2) follows from the observation that each $\Psi_{n,i}$ expands the Euclidean metric by a factor close to $C^n$. Moreover, (3) follows from (1) as well as the special case of (3) for $f=f_*$. 
\end{proof}

Let us extend the tiling $\Deltabold_n$ of a subset of $\overline{U_0}$ to a full tiling of $\overline{U_0}$ as follows. Consider
\[
\hat{\gamma}_0 := \gamma_0 \backslash f^{-1}(U_0) \quad \text{and} \quad \Gamma := \partial U_0 \cup \hat{\gamma}_0.
\]
Observe that $\hat{\gamma}_0$ is a disjoint union of two subarcs $\hat{\gamma}_0^0$ and $\hat{\gamma}_0^\infty$ of $\gamma_0$ where each $\hat{\gamma}_0^\bullet$ connects the boundary component $\partial^\bullet U_0$ to $f^{-1}(U_0)$. Consider the maps $\Psi_{n,i}$ from (\ref{eqn:big-psi-tiling}).

\begin{lemma}
\label{lem:renormalization-tiling-02}
    When $\Ustar$ is sufficiently small, the following holds for all $f \in \Ustar$.
    \begin{enumerate}[label=\textnormal{(\arabic*)}]
        \item $\Gamma(f_1)$ contains $\Psi_{1,i} \left( \partial \Deltabold_1(f) \cap \partial \Delta_1(i,f) \right)$ for every $i \in \{0,1,\ldots, \abold_n + \bbold_n-1\}$. There is some $i$ such that $\hat{\gamma}_0(f_1)$ is contained in $\Psi_{1,i} \left( \partial \Deltabold_1(f) \cap \partial \Delta_1(i,f) \right)$.
        \item $\Gamma(f)$ is disjoint from $\partial \Deltabold_1(f)$.
        \item For $\bullet \in \{0,\infty\}$, there is an arc $\xi^\bullet_0$ such that both $\xi^\bullet_0 \cup \hat{\gamma}_0^\bullet$ and $\xi^\bullet_1:= f(\xi^\bullet_0)$ connect $\partial^\bullet U_0$ and $\partial \Deltabold_1(f)$.  
    \end{enumerate}
    Moreover, $\xi_0 := \xi^0_0 \cup \xi^\infty_0 $ and $\xi_1 := \xi^0_1 \cup \xi^\infty_1$ can be chosen such that there is a holomorphic motion of 
    \[
        \Gamma \cup \xi_0 \cup \xi_1 \cup \Deltabold_1
    \]
    over $f \in \Ustar$ that is equivariant with respect to 
    \begin{itemize}
        \item $f: \xi_0(f) \to \xi_1(f)$,
        \item $f: \Delta_1(i,f) \to \Delta_1(i+\pbold_1, f)$ for $i \neq \{-\pbold_1, -\pbold_1+1\}$, and
        \item $\Psi_{1,i} : \partial \Deltabold_1(f) \cap \Delta_1(i,f) \to \Gamma(f_1)$ for all $i$.
    \end{itemize}
\end{lemma}

\begin{proof}
    Every tile $\Delta_1(i,f)$ is a rectangle. Clearly, each $\Psi_{1,i}$ maps the horizontal sides of $\Delta_1(i,f)$ into $\partial U_1$. Let us label the vertical sides of $\Delta_1(i,f)$ by $l(i)$ and $r(i)$ such that $l(i)$ intersects the side $r(i+1)$ of the next tile. Then, the intersection $\partial \Deltabold_1(f) \cap \partial \Delta_1(i,f)$ is the union of the horizontal sides of $\Delta_1(i,f)$ and the symmetric difference $l(i) \triangle r(i+1)$ between touching vertical sides across all $i$'s.

    It is clear that $l(i) \neq r(i+1)$ for at least one $i$. For such $i$, either $l(i)$ is the preimage of $\gamma_0(f_1)$ under $\Psi_{1,i}$ and $r(i+1)$ is the preimage of the arc $\gamma_1(f_1)$ under $\Psi_{1,i+1}$, or vice versa. In this case, $l(i) \triangle r(i+1)$ will be mapped by $\Phi_{1,i}$ or $\Phi_{1,i+1}$ onto $\hat{\gamma}_0(f_1)$. This implies (1).
    
    Item (2) follows directly from Lemma \ref{lem:renormalization-tiling-01}. Moreover, (2) allows us to find for each $\bullet \in \{0,\infty\}$ a proper arc $\xi^\bullet_0$ in $U_0 \backslash (\hat{\gamma}_0 \cup \Deltabold_1)$ in a small neighborhood of $\gamma_0$ that connects the tip of $\hat{\gamma}_0^\bullet$ to a point on $\partial\Delta_1(i,f)$ for some $i \neq \{-\pbold_1, -\pbold_1+1\}$. This yields (3).

    In Lemma \ref{lem:renormalization-tiling-01}, we already established the equivariant holomorphic motion of $\partial \Deltabold_0 \cup \partial \Deltabold_1$. By lifting via $\Phi_{1,i}$, this motion immediately extends to an equivariant motion of $\Gamma$. We then lift the motion of $\Deltabold_0(f_1)$ via $\Psi_{1,i}$ to obtain an equivariant motion of $\partial \Deltabold_1 \cup \Gamma$. Finally, by applying the $\lambda$-lemma, we extend this motion to $\Gamma \cup \xi_0 \cup \xi_1 \cup \Deltabold_1$.
\end{proof}

For $n \in \mathbb{N}$, we define the \emph{$n$\textsuperscript{th} full renormalization tiling} of $U_0$ to be the union of the tilings $\Deltabold_n$ and $\Abold_k$ for $k=0,1,\ldots,n-1$ where the latter is constructed as follows. Each $\Abold_k$ is a disjoint union of two tilings $\Abold_k^0$ and $\Abold_k^\infty$ where the former is closer to $\partial^0 U_0$ and the latter is closer to $\partial^\infty U_0$. For each $\bullet \in \{0,\infty\}$,
\begin{list}{$\rhd$}{}
    \item $\Abold_0^\bullet$ is the connected component of $\overline{\Deltabold_0 \backslash \Deltabold_1}$ that touches $\partial^\bullet U_0$ on the boundary, and it is split by $\hat{\gamma}_0^\bullet \cup \xi_0^\bullet \cup \xi_1^\bullet$ into two tiles $A_0^\bullet (0)$, $A_0^\bullet (1)$. Again, we make the convention that $A_0^\bullet(0)$, $\hat{\gamma}_0^\bullet \cup \xi_0^\bullet$, $A_0^\bullet(1)$ are in counterclockwise order.
    \item $\Abold_k^\bullet$ is the connected component of $\overline{\Deltabold_k \backslash \Deltabold_{k+1}}$ that touches $\partial^\bullet \Deltabold_k$ on the boundary, and it has tiles $\{A_k^\bullet(i)\}_{i=0,1,\ldots, \abold_k+\bbold_k-1}$ obtained by spreading via forward iterates of $f$ the tiles $A_k^\bullet(j, f) := \Phi_k(A_0^\bullet(j, f_k) )$ for $j \in \{0,1\}$ and labeled in counterclockwise order.
\end{list}

The first full renormalization tiling is illustrated in Figure \ref{fig:tilings}.

\begin{figure}
\begin{tikzpicture}
\coordinate (v1) at (1,-0.45) {};
\coordinate (v2) at (1,0.45) {};
\coordinate (v3) at (-1,0.5) {};
\coordinate (v4) at (-1,-0.5) {};

\coordinate (w1) at (4.25,-1.5) {};
\coordinate (w2) at (4.5,-0.5) {};
\coordinate (w3) at (4.5,0.5) {};
\coordinate (w4) at (4.25,1.5) {};
\coordinate (w5) at (-3,3.5) {};
\coordinate (w6) at (-3,-3.5) {};

\coordinate (a1) at (1.7,-1.1) {};
\coordinate (a2) at (2,-0.3) {};
\coordinate (a3) at (2,0.3) {};
\coordinate (a4) at (1.7,1.1) {};
\coordinate (a5) at (0.9,1.7) {};
\coordinate (a6) at (0.3,1.8) {};
\coordinate (a7) at (-1,1.7) {};
\coordinate (a8) at (-2.3,0.2) {};
\coordinate (a9) at (-1.7,-1) {};
\coordinate (a10) at (-1.3,-1.3) {};
\coordinate (a11) at (-0.3,-1.8) {};

\coordinate (b1) at (2.6,-2) {};
\coordinate (b2) at (3,-1) {};
\coordinate (b3) at (3.1,-0.4) {};
\coordinate (b4) at (3.1,0.4) {};
\coordinate (b5) at (3,1) {};
\coordinate (b6) at (2.6,2) {};
\coordinate (b7) at (1.8,2.5) {};
\coordinate (b8) at (1.1,2.7) {};
\coordinate (b9) at (0.4,2.8) {};
\coordinate (b10) at (-0.2,2.9) {};
\coordinate (b11) at (-1.4,2.7) {};
\coordinate (b12) at (-3.5,0.4) {};
\coordinate (b13) at (-3.2,-0.9) {};
\coordinate (b14) at (-2.9,-1.4) {};
\coordinate (b15) at (-2.5,-1.9) {};
\coordinate (b16) at (-2,-2.4) {};
\coordinate (b17) at (-0.6,-3) {};

\filldraw[fill=yellow!10!white] (w1) decorate [decoration=zigzag] { -- (w2)} .. controls (4,-0.4) and (4,0.4) .. (w3) decorate [decoration=zigzag] { -- (w4)} .. controls (3,4) and (-1,4.5) .. (w5) .. controls (-5.5,2) and (-5.5,-2) .. (w6) .. controls (-1,-4.5) and (3,-4) .. (w1);
\filldraw[green!50!black, fill=green!10!white] (b1) -- (b2) decorate [decoration=zigzag] { -- (b3)} -- (b4) decorate [decoration=zigzag] { -- (b5)} -- (b6) -- (b7) decorate [decoration=zigzag] { -- (b8)} -- (b9) decorate [decoration=zigzag] { -- (b10)} -- (b11) .. controls (-2.2,2.5) and (-3.3,1.5) .. (b12) -- (b13) decorate [decoration=zigzag] { -- (b14)} -- (b15) decorate [decoration=zigzag] { -- (b16)} -- (b17) .. controls (0,-3) and (2.3,-2.3) .. (b1);
\filldraw[green!50!black, fill=yellow!10!white] (a1) -- (a2) decorate [decoration=zigzag] { -- (a3)} -- (a4) -- (a5) decorate [decoration=zigzag] { -- (a6)} -- (a7) .. controls (-1.4,1.6) and (-2,0.8) .. (a8) -- (a9) decorate [decoration=zigzag] { -- (a10)} -- (a11) .. controls (0.2,-1.8) and (1.5,-1.2).. (a1);
\filldraw[fill=white] (v1) decorate [decoration=zigzag] { -- (v2)} .. controls (0.5,1) and (-0.5,1) .. (v3) .. controls (-1.2,0.25) and (-1.2,-0.25) .. (v4) .. controls (-0.5,-1) and (0.5,-1) .. (v1);

\draw[green!50!black] (a1) -- (b1);
\draw[green!50!black] (a4) -- (b6);
\draw[green!50!black] (a7) -- (b11);
\draw[green!50!black] (a8) -- (b12);
\draw[green!50!black] (a11) -- (b17);
\draw[thick, red] (b17) -- (-0.7,-4);
\draw[thick, red] (a11) -- (-0.15,-0.9);
\draw[thick, red] (b11) -- (-1.7,3.3);
\draw[thick, blue] (-1.7,3.3) -- (-2,3.85);
\draw[thick, red] (a7) -- (-0.83,1.2);
\draw[thick, blue] (-0.83,1.2) -- (-0.6,0.77);

\node[blue] at (-1, 0.9) {$\hat{\gamma}_0^0$};
\node[blue] at (-1.5,3.65) {$\hat{\gamma}_0^\infty$};
\node[red] at (-0.65, 1.5) {$\xi_0^0$};
\node[red] at (-1.77,2.9) {$\xi_0^\infty$};
\node[red] at (0.03,-1.3) {$\xi_1^0$};
\node[red] at (-0.85,-3.5) {$\xi_1^\infty$};
\node[yellow!50!black] at (3.5,-1.6) {$A^\infty_0(0)$};
\node[yellow!50!black] at (-3.9,-1) {$A^\infty_0(1)$};
\node[yellow!50!black] at (0.75,1.1) {$A^0_0(0)$};
\node[yellow!50!black] at (-1.65,0.05) {$A^0_0(1)$};
\node[green!50!black] at (0.8,2.25) {$\Delta_1(0)$};
\node[green!50!black] at (-2.2,1.5) {$\Delta_1(1)$};
\node[green!50!black] at (-2.2,-1.4) {$\Delta_1(2)$};
\node[green!50!black] at (0.9,-2) {$\Delta_1(3)$};
\node[green!50!black] at (2.6,0) {$\Delta_1(4)$};
\end{tikzpicture}

\caption{The first full renormalization tiling of $U_0$.}
\label{fig:tilings}
\end{figure}
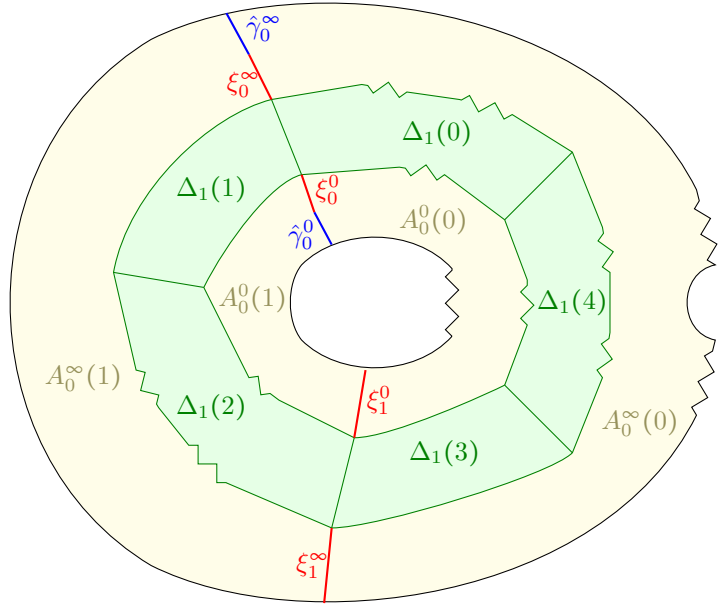

\begin{definition}
    A \emph{quasiconformal combinatorial pseudo-conjugacy of level $n$} between $f$ and $f_*$ is a quasiconformal map $h: \RS \to \RS$ that sends $\overline{U_0}$ to $\overline{U_\star}$ and preserves the $n$\textsuperscript{th} renormalization tiling as follows.
    \begin{enumerate}[label=(\arabic*)]
        \item The map $h$ sends $\Delta_n(i,f)$ to $\Delta_n(i,f_*)$ for all $i$, and is equivariant on $\Delta_n(i,f)$ for all $i \not\in \{-\pbold_n, -\pbold_n+1\}$;
        \item For all $\bullet \in \{0,\infty\}$ and $k \in \{0,1,\ldots n-1\}$, $h$ sends $A_k^\bullet(i,f)$ to $A_k^\bullet(i,f_*)$ for all $i$, and is equivariant on $A_k^\bullet(i,f)$ for all $i \not\in \{-\pbold_k, -\pbold_k+1\}$.
    \end{enumerate}
\end{definition}

\begin{theorem}[Combinatorial pseudo-conjugacy]
\label{thm:comb-pseudo-conj}
    Consider $f \in \mathcal{U}_n$ and let 
    \[
    D:= \max_{0\leq k \leq n} \dist(f_k, f_*).
    \] 
    There is a $K_D$-quasiconformal combinatorial pseudo-conjugacy $h$ of level $n$ between $f$ and $f_*$ such that 
    \[
    \sup_{z \in \Deltabold_n(f)} |h(z)-z| \leq M_D.
    \]
    Moreover, $K_D \to 1$ and $M_D \to 0$ as $D \to 0$.
\end{theorem}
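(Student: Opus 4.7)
The plan is to construct $h$ by extending an equivariant holomorphic motion of the 1-skeleton of the $n$-th full renormalization tiling via Slodkowski's $\lambda$-lemma. I proceed in three steps.

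First, I assemble the skeleton motion. By Lemma \ref{lem:renormalization-tiling-01}(1), the boundary $\partial \Deltabold_k$ admits an equivariant holomorphic motion over $\mathcal{U}_n$ for each $0 \leq k \leq n$. By Lemma \ref{lem:renormalization-tiling-02}, pullback under $\Phi_k$ and subsequent dynamical spreading extend these motions to include the arcs $\Gamma(f_k) \cup \xi_0(f_k) \cup \xi_1(f_k)$ inside each annulus $\overline{\Deltabold_{k-1} \setminus \Deltabold_k}$. Because the motion at each level is $f$-equivariant on non-critical tile boundaries, the spreads glue continuously across orbits, yielding a single holomorphic motion $\tau_f$ of the full 1-skeleton $\Sigma_n$ of the $n$-th full renormalization tiling, parametrized over a small Banach ball around $f_*$ in $\mathcal{U}_n$.

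Second, I apply Slodkowski's $\lambda$-lemma to extend $\tau_f$ to a quasiconformal holomorphic motion $h_f : \RS \to \RS$. Because $\Sigma_n$ contains the boundary of every tile in the full $n$-th renormalization tiling, any such extension automatically maps the tiles of $f$ homeomorphically onto the corresponding tiles of $f_*$. To enforce exact equivariance on non-critical tiles as in items (1) and (2) of the definition, one selects a fundamental domain for the action of $f$ among the non-critical tiles, defines $h_f$ there via the $\lambda$-lemma extension, and propagates to the remaining non-critical tiles by the conjugation relation $h_f|_{\Delta} = f_*^{-i} \circ h_f|_{f^i(\Delta)} \circ f^i$; this is consistent with the already-equivariant boundary motion $\tau_f$. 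On critical tiles, no equivariance is required and the bare $\lambda$-lemma extension is used. Set $h := h_f$.

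Third, I establish the quantitative bounds. Slodkowski's theorem yields the standard estimate $K(h_f) = 1 + O(\operatorname{dist}(f, f_*))$, where the implicit constant depends only on the size of the parameter ball. By the exponential convergence of $\renorm^k f$ to $f_*$ along the rotational slice (Corollary \ref{cor:renorm-operator} combined with Theorem \ref{horseshoe}), one arranges that $\operatorname{dist}(f, f_*)$ and $D$ are comparable uniformly in $n$, so $K_D \to 1$ as $D \to 0$. For the $C^0$-estimate, the standard displacement bound for holomorphic motions gives $|h_f(z) - z| \lesssim D \cdot \operatorname{diam}(\Delta)$ whenever $z \in \Delta \in \Deltabold_n$, and by Lemma \ref{lem:renormalization-tiling-01}(3) these diameters are uniformly bounded on the Banach ball. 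Hence $M_D = O(D) \to 0$.

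The main obstacle lies in step one: at each level $k$, two a priori distinct motions of $\partial \Deltabold_k$ must be reconciled, namely the intrinsic motion from Lemma \ref{lem:renormalization-tiling-01}(1) and the motion inherited by pullback under $\Phi_k$ from the level $k{-}1$ skeleton. Their compatibility follows from the fact that the renormalization change of variables $\psi_k$ depends holomorphically on $f$ and projects the pre-corona $F^{(0)}_k$ to $F_k$ exactly; consequently $\Phi_k$ intertwines the two candidate motions, and consistency of the dynamical spreads across $f$-orbits is then automatic from equivariance on non-critical tile boundaries.
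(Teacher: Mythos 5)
Your Steps 1 and 2 reproduce the paper's construction: the motion of the full $n$\textsuperscript{th} tiling is obtained by pulling back the level-one motion of Lemma \ref{lem:renormalization-tiling-02} under the conformal maps $\Psi_{k,i}=\Phi_k^{-1}\circ f^{-t}$, and the $\lambda$-lemma together with equivariant spreading over a fundamental domain produces $h$. The reconciliation issue you flag at the end is real but is resolved exactly as you say.

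The gap is in Step 3. You assert $K(h_f)=1+O(\dist(f,f_*))$ and then argue that $\dist(f,f_*)$ and $D$ are comparable ``by the exponential convergence of $\renorm^k f$ to $f_*$ along the rotational slice.'' The theorem, however, is stated for arbitrary $f\in\mathcal{U}_n$, and its principal application (Corollary \ref{cor:rotational-criterion}, hence Theorem \ref{thm:weak-hyperbolicity}) is precisely to \emph{prove} that such $f$ are rotational; invoking rotationality here is circular. Worse, the estimate $K(h_f)=1+O(\dist(f,f_*))$ with a constant uniform in $n$ is false: the motion of a tile $\Delta_n(i,f)$ is the pullback, under the conformal map $\Psi_{n,i}$ (which does not alter pointwise dilatation), of the level-one motion evaluated at the \emph{parameter} $f_k=\renorm^k f$, so its dilatation is governed by $\dist(f_k,f_*)$, not by $\dist(f,f_*)$; since $f_*$ has expanding directions, $f$ can be arbitrarily close to $f_*$ while $f_n$ sits near the boundary of $\mathcal{U}$. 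This is exactly why the statement is phrased in terms of $D=\max_{0\le k\le n}\dist(f_k,f_*)$. The repair is direct and is what the paper does: bound the dilatation of $h$ by the maximum over $0\le k\le n$ of the dilatation of the level-one motion at $f_k$, which depends only on $D$ and tends to $1$ as $D\to 0$; the $C^0$ bound $M_D$ then follows from the continuity of the base motion together with compactness of normalized $K_D$-quasiconformal maps, rather than from a displacement estimate linear in $\dist(f,f_*)$.
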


\begin{proof}
    Recall that each tile $A_k^\bullet(i,f)$ admits some $t \in \N$ and $j \in \{0,1\}$ such that $\Psi_{k,i}:= \Phi_k^{-1} \circ f^{-t}$ univalently maps $A_k^\bullet(i,f)$ onto $A_0^\bullet(j,f_k)$. By Lemma \ref{lem:renormalization-tiling-02}, we have a holomorphic motion of the first full renormalization tiling over $\mathcal{U_1}$. Let us pull back this motion via maps of the form $\Psi_{n,i}$ to obtain a holomorphic motion of the full $n$\textsuperscript{th} renormalization tiling. By equivariance and $\lambda$-lemma, this holomorphic motion induces the desired quasiconformal map $h$. The dilatation $K_D$ of $h$ is bounded by the dilatation of the motion at $f_0,f_1,\ldots f_n$, which depends only on $D$, where $K_D \to 1$ as $D \to 0$. The upper bound $M_D$ follows from the continuity of the holomorphic motion and the compactness of quasiconformal maps.
\end{proof}

\begin{corollary}
\label{cor:rotational-criterion}
    There is some $\varepsilon>0$ such that the following holds. Suppose $f \in \Ustar$ is infinitely renormalizable and $\Rstar^n f$ is in the $\varepsilon$-neighborhood of $f_*$ for all $n \in \N$. Then, $f$ is a rotational corona with rotation number $\theta$.
\end{corollary}

\begin{proof}
    By Theorem \ref{thm:comb-pseudo-conj}, we have a $K(\varepsilon)$-quasiconformal combinatorial pseudo-conjugacy $h_n$ of level $n$ between $f$ and $f_*$ for all $n \in \N$. By the compactness of $K$-quasiconformal maps, $h_n$ converges in subsequence to a quasiconformal map $h: \RS \to \RS$, and $h^{-1}$ must be a conjugacy on the Herman quasicircle $\Hq_*$ of $f_*$. The image $h^{-1}(\Hq_*)$ is a Herman quasicircle of $f$ containing the critical point $c_0(f)$ and separating the boundaries of the domain of $f$. It follows that $f$ must be a rotational corona with rotation number $\theta$.
\end{proof}

\subsection{Towards hyperbolicity}
\label{ss:weak-hyperbolicity}

\begin{theorem}
\label{thm:weak-hyperbolicity}
    The renormalization operator $\Rstar: \Ustar \to \Bstar$ is hyperbolic at the fixed point $f_*$ and its local unstable manifold $\unstloc$ has a finite positive dimension. If $\Ustar$ is sufficiently small, the local stable manifold $\mani^s_{\textnormal{loc}}$ of $f_*$ consists of the set of $(d_0,d_\infty)$-critical rotational coronas in $\Ustar$ with rotation number $\theta$.
\end{theorem}

\begin{proof}  
    Consider a corona $f$ near $f_*$ lying on the local stable manifold $\mani^s_{\textnormal{loc}}$. For sufficiently small $\Ustar$, $\Rstar^n f$ is in the $\varepsilon$-neighborhood of $f_*$ for all $n \in \N$. By Corollary \ref{cor:rotational-criterion}, $f$ must be a rotational corona with rotation number $\theta$.
    
    Let us consider the derivative $(D\Rstar)_{f_*}$ of the renormalization operator at the fixed point $f_*$. By the compactness of $\Rstar$, the number of neutral and repelling eigenvalues is finite. We claim that neutral eigenvalues do not exist and repelling eigenvalues must exist.
    
    Suppose for a contradiction that there are neutral eigenvalues. By Small Orbits Theorem \ref{thm:small-orbits}, there exists an infinitely renormalizable corona $f$ such that its forward orbit lies entirely in the $\varepsilon$-neighborhood of $f_*$ and it satisfies
    \begin{equation}
    \label{eqn-slow-convergence}
        \lim_{n\to \infty} \frac{1}{n} \log \dist( \Rstar^n f, f_* ) = 0.
    \end{equation}
    By Corollary \ref{cor:rotational-criterion}, $f$ must be a rotational corona with the same rotation number $\theta$ as $f_*$. By Corollary \ref{cor:renorm-operator}, renormalizations $\Rstar^n f$ converge to $f_*$ exponentially fast, which contradicts (\ref{eqn-slow-convergence}). Hence, neutral eigenvalues do not exist.

    Consider the family of rational maps $F_c$ from (\ref{eqn:rat-map-formula}). By Theorem \ref{thm:comb-rigidity}, there is a unique parameter $c_*$ such that $F_{c_*}$ admits a Herman quasicircle with the same rotation number as $f_*$. By Lemma \ref{lem:renorm-of-any-critical rotational}, there is an analytic renormalization operator $\renorm$ on a neighborhood of $F_{c_*}$ such that $\renorm F_{c_*}$ is a rotational corona with rotation number $\theta$ that is sufficiently close to $f_*$. For any parameter $c \neq c_*$ sufficiently close to $c_*$, $\renorm F_c$ is also sufficiently close to $f_*$. By the uniqueness of $c_*$, the parameter $c$ can be picked such that $F_c$ is postcritically finite, and so $\renorm F_c$ is not a rotational corona. 
    
    Suppose for a contradiction that $(D\Rstar)_{f_*}$ has no repelling eigenvalues. With this assumption, $\mani^s_{\textnormal{loc}}$ would be an open neighborhood of $f_*$ and contains $\renorm F_c$. However, the non-rotationality of $\renorm F_c$ would contradict Corollary \ref{cor:rotational-criterion}.
\end{proof}


\part{From coronas to cascades}

From now on, we will consider the corona renormalization operator $\Rstar: \Ustar \to \Bstar$ on a small Banach neighborhood $\Ustar$ of its hyperbolic fixed point $f_*: U_* \to V_*$ constructed in \S\ref{sec:stab-mani}. 
Theorem \ref{thm:weak-hyperbolicity} tells us that $\Rstar$ satisfies our main Theorem \ref{main-theorem} with the exception of item (4) on the dimension of the local unstable manifold. 
The rest of this paper is dedicated to proving item (4).

\section{Transcendental extension}
\label{sec:trans-ext}

\begin{definition}
    A map $g: A \to B$ is said to be \emph{$\sigma$-proper} if there exist exhaustions $A_n$, $B_n$ of $A$, $B$ respectively such that for all $n$, $g: A_n \to B_n$ is a proper map; equivalently, every connected component of the preimage of a compact set under $g$ is compact.
\end{definition}

In \cite{McM98}, McMullen proved the existence of maximal $\sigma$-proper extensions of holomorphic commuting pairs associated to renormalizations of quadratic Siegel disks. This is generalized in \cite[Theorem 5.5]{DLS} where pre-pacmen on the local unstable manifold are shown to admit maximal $\sigma$-proper extension. In this section, we will show that our case is no different. We will study coronas in the local unstable manifold $\unstloc$ of $f_*$, which we will identify as a parameter space (of unknown dimension) of transcendental holomorphic maps onto $\C$.

\subsection{Maximal $\sigma$-proper extension}
\label{ss:maximal-extension}

Consider a corona $f : U \to V$ lying in the local unstable manifold $\unstloc$ of $f_*$. Since $f$ is infinitely anti-renormalizable, it comes with a backward tower of corona renormalizations $\{f_k: U_k \to V\}_{k \leq 0}$, where each $f_k$ embeds into $U_{k-1}$ as a pre-corona $F_k = \left(f_{k,\pm} : U_{k,\pm} \to S_k\right)$ consisting of iterates of $f_{k-1}$. Let $\psi_{f_k} : S_k \to V$ be the renormalization change of variables realizing the renormalization of $f_{k-1}$ and let $\phi_{f_k} := \psi_{f_k}^{-1} : V \to S_k$.

Let us normalize our coronas such that they have a critical value at $0$. For each $k \leq 0$, consider the translation $T_{f_k}(z) = z- c_1(f_k)$ and denote
\[
U_k^\natural = T_{f_k}(U_k), \quad V_k^\natural = T_{f_k}(V), \quad U_{k,\pm}^\natural = T_{f_{k-1}}(U_{k,\pm}), \quad S_k^\natural = T_{f_{k-1}}(S_k).
\]
The translations $T_{f_k}$'s normalize our maps $f_k$, $F_k$, and $\phi_{f_k}$ into
\[
    f_k^\natural : U_k^\natural \to V_k^\natural, \quad F_k^\natural := (f_{k,\pm}^\natural : U_{k,\pm}^\natural \to S_k^\natural), \quad \phi_{f_k}^\natural : V_k^\natural \to S_k^\natural
\]
respectively. Consider the linear map 
\[
A_*(z):= \mu_* z
\]
where $\mu_* := (\phi_{f_*}^\natural)'(0) \in \D$ is the self-similarity factor of $f_*$.

\begin{lemma}
    There exists an open disk neighborhood $D$ of $0$ such that for every backward orbit of renormalizations $f_0, f_{-1}, f_{-2},\ldots$ in $\unstloc$, the limit 
    \[
        h_{f_0}^\natural(z) := \lim_{k\to-\infty} A_*^k \circ \phi_{f_{k+1}}^\natural \circ \ldots \circ \phi_{f_{-1}}^\natural \circ \phi_{f_0}^\natural(z)
    \]
    exists and defines a univalent map on $D$.
\end{lemma}

\begin{proof}
    As $\phi_{f_k}^\natural \to \phi_{f_*}^\natural$ exponentially fast, so is the derivative $\mu_k := (\phi_{f_k}^\natural)'(0)$ towards $\mu_*$. There are positive constants $\varepsilon$ and $\delta$ such that $\varepsilon < 1 - |\mu_*| $ and for all $|z|<\delta$ and $k\leq 0$, we have $|\phi_{f_k}^\natural(z)| \leq (|\mu_*|+\varepsilon)|z|$. 
    Therefore, for all $|z|<\delta$ and $k <0$,
    \[
        | \phi_{f_{k+1}}^\natural \circ \ldots \circ \phi_{f_0}^\natural (z) | \leq (|\mu_*|+\varepsilon)^{-k} |z|.
    \]
    The sequence $h^{(k)}(z) := A_*^k \circ \phi_{f_{k+1}}^\natural \circ \ldots \circ \phi_{f_{-1}}^\natural \circ \phi_{f_0}^\natural(z)$ indeed converges to a univalent map on $\{ |z|<\delta\}$ since
    \[
        \frac{h^{(k-1)}(z)}{h^{(k)}(z)} = \frac{ \phi_{f_k}^\natural 
        \left( \phi_{f_{k+1}}^\natural \circ \ldots \circ \phi_{f_0}^\natural (z) \right) 
        }{ 
        \mu_* \, \phi_{f_{k+1}}^\natural \circ \ldots \circ \phi_{f_0}^\natural (z) } 
        = \frac{\mu_k}{\mu_*} + O(|\phi_{f_{k+1}}^\natural \circ \ldots \circ \phi_{f_0}^\natural (z)|) \to 1
    \]
    exponentially fast as $k \to -\infty$.
\end{proof}

From now on, for every $k \leq 0$, we will denote
\[
    h_{k}^\natural := h_{f_k}^\natural \quad \text{ and } \quad h_{k}^\# := A_*^k \circ h_{k}^\natural.
\]
The following properties are easy to verify.

\begin{lemma}
    For $k \leq 0$, 
    \[
    h_{k-1}^\natural \circ \phi_{f_k}^\natural = A_* \circ h_{k}^\natural \qquad \text{and} \qquad h_{f_0}^\natural = h_{k}^\# \circ \phi_{f_{k+1}}^\natural \circ \ldots \circ \phi_{f_0}^\natural.
    \]
    Moreover, $h_{0}^\natural$ extends to a univalent map on the interior of $V_0^\natural \backslash \gamma_1^\natural$.
\end{lemma}

The maps $h_k^\natural$ act as linearizing coordinates under which renormalization change of variables are simply linear maps. Objects in linearizing coordinates will be written in bold:
\[
    \Ubold_{k,\pm} := h_{f_k}^\natural (U_{k,\pm}^\natural), \quad \Sbold_k := h_k^\natural(S_k^\natural), \quad \Fbold_k := (\fbold_{k,\pm} : \Ubold_{k,\pm} \to \Sbold_{k}).
\]
Often, we will also work with the rescaled linearizing coordinates $h_{f_k}^\#$ in which we add the symbol ``$\#$`` as follows:
\[
    \Ubold_{k,\pm}^\# := h_k^\# (U_{k,\pm}^\natural), \quad \Sbold_k^\# := h_k^\#(S_k^\natural), \quad \Fbold_k^\# := \left( \fbold_{k,\pm}^\# : \Ubold_{k,\pm}^\# \to \Sbold_{k}^\# \right).
\]
By design, it is clear that for all $k\leq 0$,
\begin{equation}
\label{eqn:rescaling-fk's}
    \fbold_{k,\pm}^\# = A_*^k \circ \fbold_{k,\pm} \circ A_*^{-k}.
\end{equation}

\begin{lemma}
\label{lem:renorm-matrix}
    There is a matrix of positive integers $\Mbold = \begin{pmatrix}
        m_{11} & m_{12} \\
        m_{21} & m_{22}
    \end{pmatrix}$
    such that for every negative integer $k$,
    \[
        \fbold_{k+1,-}^\# = \left(\fbold_{k,-}^\#\right)^{m_{11}} \circ \left(\fbold_{k,+}^\#\right)^{m_{12}} 
        \quad \text{and} \quad 
        \fbold_{k+1,+}^\# = \left(\fbold_{k,-}^\#\right)^{m_{21}} \circ \left(\fbold_{k,+}^\#\right)^{m_{22}}.
    \]
\end{lemma}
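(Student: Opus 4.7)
The plan is to extract $\Mbold$ from the periodic combinatorics of the renormalization fixed point $f_*$ and propagate it along the backward tower via the combinatorial stability of $\renorm$. By Theorem \ref{thm:renorm-fixed-pt}, the pre-corona of $f_*$ is $F_* = (f_*^a, f_*^b)$ with specific return times $a, b \in \N$. By Theorem \ref{thm:renormalization}, the pre-renormalization combinatorics, in particular the return times, are locally constant near $f_*$; so for every $k \leq 0$ in the backward tower of an $f \in \unstloc$, one has $f_{k+1,-} = f_k^a$ and $f_{k+1,+} = f_k^b$ as iterates of $f_k$ on the appropriate tiles of its domain.

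Next I express each iterate of $f_k$ as a word in $f_{k,-}^\natural$ and $f_{k,+}^\natural$. Since $f_k^\natural$ is obtained by gluing the pre-corona $F_k^\natural$ through the renormalization change of variables $\psi_k^\natural$, which identifies the two vertical sides $\beta_\pm$ across $\beta_0^\natural$, one iterate of $f_k^\natural$ at a point in its domain corresponds, after lifting through $\phi_k^\natural = (\psi_k^\natural)^{-1}$, to a single application of either $f_{k,-}^\natural$ or $f_{k,+}^\natural$ depending on the side of $\beta_0^\natural$ in which the lift lies. Iterating $a$ (resp.\ $b$) times therefore produces a word of length $a$ (resp.\ $b$) in these two letters, and the word depends only on the orbit combinatorics of the fixed-point pre-corona $F_*$, not on $f$ or $k$.

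The main point, and what I expect to be the principal obstacle, is to show that these words take the canonical forms $(\fbold_{k,-}^\natural)^{m_{11}}\circ (\fbold_{k,+}^\natural)^{m_{12}}$ and $(\fbold_{k,-}^\natural)^{m_{21}}\circ (\fbold_{k,+}^\natural)^{m_{22}}$; that is, all $-$ letters precede all $+$ letters. I would argue this by induction over the $p$ prime sector renormalizations that compose $\renorm$ (Appendix \ref{appendix-B}, using the action of $r_{\textnormal{prm}}$ for one Gauss step of the period-$p$ rotation number $\theta$). A single prime step sends the pair $(\fbold_-, \fbold_+)$ to $(\fbold_-^\chi\circ \fbold_+,\, \fbold_-)$, or its mirror, which is already in canonical form with $2\times 2$ matrix $\begin{pmatrix}\chi & 1 \\ 1 & 0\end{pmatrix}$; composing the $p$ such matrices over one Gauss period yields $\Mbold$, with all entries strictly positive because each partial quotient satisfies $\chi \geq 1$. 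The canonical ordering of letters survives the composition because at each prime step the $+$ side of $\beta_0$ is strictly mapped into the $-$ side, forcing the orbit of a point in the new $+$-domain to complete all of its $+$-applications before entering the $-$-side.

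Finally I translate to the rescaled $\#$ coordinates. Using $\fbold_{k,\pm}^\# = A_*^k \circ \fbold_{k,\pm}\circ A_*^{-k}$ together with the linearization identity $h_{k-1}^\natural\circ \phi_k^\natural = A_* \circ h_k^\natural$, which intertwines the renormalization change of variables with multiplication by $\mu_*$ on the linear coordinate, the identity $\fbold_{k+1,-} = (\fbold_{k,-})^{m_{11}}\circ (\fbold_{k,+})^{m_{12}}$ lifts cleanly to $\fbold_{k+1,-}^\# = (\fbold_{k,-}^\#)^{m_{11}}\circ (\fbold_{k,+}^\#)^{m_{12}}$: the extra factor of $A_*$ coming from the different rescaling at level $k+1$ versus level $k$ is absorbed precisely by the linearization identity. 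The formula for $\fbold_{k+1,+}^\#$ is obtained identically, and in particular the matrix $\Mbold$ depends only on $\theta$ and $(d_0,d_\infty)$, so it is the same for all $k \leq 0$ and all $f \in \unstloc$.
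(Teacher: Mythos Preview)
Your overall route matches the paper's: reduce to the sector-renormalization combinatorics on the invariant curve, read off the integer matrix, and check positivity. The paper's two-sentence proof just points to Appendix~\ref{appendix-B}, where $\Mbold$ is realized as a product $I_1\cdots I_m$ of the elementary prime matrices $I^\pm \in \left\{\begin{pmatrix}1&1\\0&1\end{pmatrix},\ \begin{pmatrix}1&0\\1&1\end{pmatrix}\right\}$, one per prime step in $r_{\textnormal{prm}}^m(\theta)=\theta$.

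Two places where you over-work or mis-speak. First, the worry about ordering of the $-$ and $+$ letters is unnecessary: $\fbold_{k,-}^\#$ and $\fbold_{k,+}^\#$ commute, since after undoing the linearization they are both iterates of the single corona $f_{k-1}$. Hence any word with the prescribed letter counts gives the same map, and the ``canonical form'' written in the statement comes for free; your inductive argument about which side of $\beta_0$ the orbit visits first is not needed. Second, you conflate prime renormalization with a Gauss step. A single prime step has matrix $I^+$ or $I^-$ (see \eqref{eqn:sector-renorm}), not $\begin{pmatrix}\chi & 1\\ 1 & 0\end{pmatrix}$; the latter is the matrix of one classical Gauss step (Example~\ref{eg:standard-renormalization}), itself a word in several prime steps, and has determinant $-1$. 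The corona operator $\renorm$ built in Theorem~\ref{thm:renorm-fixed-pt} is a specific iterate of prime renormalization fixing $\theta$, so $\Mbold\in\mathrm{SL}_2(\Z)$; positivity of all entries then follows because a hyperbolic positive word in $I^+,I^-$ must use both letters. None of this breaks your argument, but it does mean your proposed factorization of $\Mbold$ is not the one the paper uses, and the number of factors is $m$ (the $r_{\textnormal{prm}}$-period), not the Gauss period $p$.
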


\begin{proof}
    The action of renormalization restricted to the Herman quasicircle of $f_*$ is a sector renormalization, and in particular an iterate of prime renormalization. See \S\ref{ss:renorm-rotation}. The existence of such a matrix $\Mbold$ follows from \S\ref{ss:cascade}.
\end{proof}

\begin{theorem}[Maximal extension]
\label{thm:max-extension}
    Assume $\Ustar$ is a sufficiently small Banach neighborhood of $f_*$. For every $f \in \unstloc$ and every $k \leq 0$, the maps $\fbold_{k,\pm}^\#$ described above extend to $\sigma$-proper branched coverings $\mathbf{X}_{k,\pm}^\# \to \C$, where $\mathbf{X}_{k,\pm}^\#$ are simply connected domains in $\C$.
\end{theorem}

\begin{remark}
    Actually, $\mathbf{X}_{k,\pm}^\#$ are dense subsets of $\C$. For the renormalization fixed point $f_*$, this will follow from Corollary \ref{cor:triviality-of-comb-class} (2). For general $f \in \unstloc$, this property will be apparent after we establish Theorem \ref{thm:finite-esc-set} on the holomorphic motion of $\partial \mathbf{X}_{k,\pm}^\#$.
\end{remark}

\begin{proof}
    For every $k\leq 0$, the composition $\phi_{k+1} \circ \ldots \circ \phi_0$ embeds the pre-corona $F_0 = (f_{0,\pm}: U_{0,\pm} \to V\backslash \gamma_1)$ to the dynamical plane of $f_k$ as a pair of iterates
    \begin{equation}
    \label{eqn:pair-max-ext}
        \left( 
        f_k^{\abold_k}:  U_{0,-}^{(k)} \to V_0^{(k)}, \: f_k^{\bbold_k}:  U_{0,+}^{(k)} \to V_0^{(k)} 
        \right).
    \end{equation}
    Since $\phi_k$ is contracting at the critical value, the diameter of $U_{0,\pm}^{(k)} \to V_0^{(k)}$ shrinks to $0$ as $k \to -\infty$.

    To proceed, we need the following technical lemma.

\begin{lemma}
\label{key-lemma}
    Assume $\Ustar$ is a sufficiently small Banach neighborhood of $f_*$. There exists an open disk $D$ around the critical value $c_1(f_*)$ of $f_*$ such that for all sufficiently large $n \in \N$, $t \in \{\abold_n, \bbold_n\}$, and $f \in \Rstar^{-n}(\Ustar)$, the point $f^t(c_{1}(f))$ is contained in $D$ and $D$ can be pulled back by $f^t$ to a disk $D_0 \subset U_f \backslash \gamma_1$ containing $c_1(f)$ on which $f^t : D_0 \to D$ is a branched covering.
\end{lemma}

This lemma initially appears in \cite[Key Lemma 4.8]{DLS} in the context of quadratic Siegel pacmen. Due to its length, the proof will be supplied separately in \S\ref{ss:key-lemma}. The lemma tells us that for sufficiently large negative number $k \ll 0$, the disk $D$ contains the set $\{c_{1+\abold_k}(f_k), c_{1+\bbold_k}(f_k)\}$ and the pair in (\ref{eqn:pair-max-ext}) extends to a commuting pair of branched coverings
\begin{align}
    \label{eqn:from-w-to-d}
    \left( 
        f_k^{\abold_k}:  W_{-}^{(k)} \to D,  \: f_k^{\bbold_k}:  W_{+}^{(k)} \to D 
    \right),
\end{align}
where $W_\pm^{(k)} \cup D$ are disks in $V \backslash \gamma_1$ containing $c_1(f_k)$. By conjugating with $h_k^\# \circ T_k$, we transform this pair into the commuting pair of branched coverings
    \[
        \fbold_{0,\pm}: \mathbf{W}_\pm^{(k)} \to \mathbf{D}^{(k)}
    \]
    where 
    \[
    \mathbf{W}_{\pm}^{(k)} := h_k^\# \circ T_k \left( W_{\pm}^{(k)} \right) \qquad \text{and} \qquad \mathbf{D}^{(k)} := h_k^\# \circ T_k(D).
    \]
    For all sufficiently large $t$ and $m \leq 0$, $\mathbf{D}^{(tm)}$ is compactly contained in $\mathbf{D}^{(tm-t)}$, and
    \[
        \modu\left( \mathbf{D}^{(tm-t)} \backslash \overline{\mathbf{D}^{(tm)}} \right) \succ 1.
    \]
    As such,
    \[
        \bigcup_{k<0}^\infty \mathbf{D}^{(k)}= \C.
    \]
    Therefore, the maps $\fbold_{0,\pm}$ extend to $\sigma$-proper branched coverings from $\mathbf{X}_{0,\pm}:= \bigcup_{k<0} \mathbf{W}_\pm^{(k)}$ onto $\C$. It is clear from the construction that $\mathbf{X}_{0,\pm}$ is a simply connected domain.
\end{proof}

The proof of the theorem above actually gives us the following stronger property, which we will use later in \S\ref{ss:rigidity-finite-esc-set}.

\begin{lemma}[Stability of $\sigma$-branched structure]
\label{lem:stability-of-branched-covering}
    Assume $\Ustar$ is a sufficiently small Banach neighborhood of $f_*$. For every $f \in \unstloc$, there are sequences of nested disks 
    \[
        \mathbf{W}_\pm^{(-1)} \subset \mathbf{W}_\pm^{(-2)} \subset \mathbf{W}_\pm^{(-3)} \subset \ldots
        \qquad \text{and} \qquad 
        \mathbf{D}^{(-1)} \subset \mathbf{D}^{(-2)} \subset \mathbf{D}^{(-3)} \subset \ldots
    \]
    where
    \[
        \bigcup_{k<0} \mathbf{W}_\pm^{(k)} = \mathbf{X}_{0,\pm}
        \qquad \text{and} \qquad 
        \bigcup_{k<0} \mathbf{D}^{(k)} = \C, 
    \]
    such that for every $k<0$,
    \begin{enumerate}[label=\textnormal{(\arabic*)}]
        \item each of $\mathbf{W}_\pm^{(k)}$ and $\mathbf{D}^{(k)}$  depends continuously on $f$;
        \item the maps $\fbold_{0,\pm}: \mathbf{W}_\pm^{(k)} \to \mathbf{D}^{(k)}$ are proper branched coverings of fixed finite degree;
        \item critical points of $\fbold_{0,\pm}: \mathbf{W}_\pm^{(k)} \to \mathbf{D}^{(k)}$ move holomorphically over $f \in \Ustar$.
    \end{enumerate}
\end{lemma}

\begin{proof}
    The construction of such disks is similar to the proof of the previous theorem. We add the following modification. By Theorem \ref{thm:comb-pseudo-conj}, we can replace the disk $D$ with a slightly smaller disk $D(f_0,k)$ depending continuously on $f_0$ such that for all $i \leq \max\{\abold_k, \bbold_k\}$,
    \[
        c_i(f_*) \in D(f_*,k) \quad \text{if and only if} \quad c_i(f_k) \in D(f_0,k).
    \]
    Under this replacement, the domains of branched coverings $ \left( f_k^{\abold_k}, f_k^{\bbold_k} \right)$ from (\ref{eqn:from-w-to-d}) become 
    \[
        \fbold_{0,\pm}: W_\pm(f_0,k) \to D(f_0,k),
    \]
    which depend continuously on $f_0$. By conjugating with $h_k^\# \circ T_k$, we obtain the commuting pair $\fbold_{0,\pm} : \mathbf{W}_\pm^{(k)} \to \mathbf{D}^{(k)}$ with the desired property.
\end{proof}

\subsection{Key lemma for transcendental extension}
\label{ss:key-lemma}

The remainder of the section is dedicated to the proof of Lemma \ref{key-lemma}. 

Fix a small neighborhood $D$ of the critical value $c_1(f_*)$ of the renormalization fixed point $f_*$. Fix $n \in \N$ and a large constant $s \in \N$. We will denote by $\abold_n$ and $\bbold_n$ the $n$\textsuperscript{th} renormalization return times. Consider a corona $f$ that is $m:= n+s$ times renormalizable such that $f_i := \Rstar^i f$ is close to $f_*$ for all $i \in \{1,\ldots,m\}$. We will denote the critical orbit by 
\[
c_j(f):=f^j(c_0(f)).
\]
Our goal is to show that for $t \in \{\abold_n, \bbold_n\}$, $c_{1+t}(f)$ is contained in $D$ and there is a branched covering map $f^t: (D_0, c_1(f)) \to (D, c_{1+t}(f))$. The proof we present below is similar to the Key Lemma in \cite{DLS}, which is to ensure that pullbacks of $D$ must avoid the forbidden boundary.

Let $h$ be a level $m$ combinatorial pseudo-conjugacy between $f$ and $f_*$, and consider the renormalization tiling $\Deltabold_m(f):= h^{-1}(\Deltabold_m(f_*))$ defined in \S\ref{ss:renorm-tiling}. Recall that $f$ maps $\Delta_m(f,i)$ univalently onto $\Delta_m(f,i+\pbold_n)$ whenever $i \not\in \{-\pbold_m, -\pbold_m+1 \}$, and on $\Delta_m(f,-\pbold_m) \cup \Delta_m(f,-\pbold_m+1)$, $f$ is almost a degree $d$ covering map branched at its critical point $c_0(f)$ onto its image, which contains $\Delta_m(f,0) \cup \Delta_m(f,1)$. By Theorem \ref{thm:comb-pseudo-conj}, $h$ is close to the identity map and $\Deltabold_m(f)$ approximates the Herman quasicircle $\Hq_*$ of $f_*$. 
    
In the dynamical plane of $f_*$, for sufficiently large $n\gg 0$, both $c_{1+\abold_n}(f_*)$ and $c_{1+\bbold_n}(f_*)$ are contained in $D$ because it is sufficiently close to $c_1(f_*)$. Let us fix $t \in \{\abold_n, \bbold_n\}$. Since $s$ is picked to be large,
    \[
        t \leq \max\{\abold_n, \bbold_n\} < \min\{\abold_m,\bbold_m\} -1.
    \]
Therefore, the orbit $\{c_j(f_*)\}_{j=1,2,\ldots t+1}$ avoids both $\Delta_m(-\pbold_m, f_*)$ and $\Delta_m(-\pbold_m+1, f_*)$. Since $h$ is close to the identity, it follows that $c_{1+t}(f)$ is also contained in $D$.

Let 
    \[
    D_1, \enspace D_2, \enspace \ldots, \enspace D_{1+t}:= D
    \]
denote the lift of $D$ along the orbit $c_1(f)$, $c_2(f)$, \ldots, $c_{1+t}(f)$. We would like to show that for $i \in \{1,2,\ldots, t\}$, the disk $D_i$ does not intersect the forbidden boundary $\partial_\frb U_f$ so that $f:D_i \to D_{i+1}$ is a branched covering.

\subsubsection{A new tiling $\Lambdabold_m$}

We say that a subset $I$ of $\Z / \qbold_m \Z$ is an \emph{interval} if it is a sequence of consecutive elements of $\Z/\qbold_m\Z$ of cardinality less than $\pbold_m$. For any interval $I$ in $\Z/\qbold_m \Z$, we will use the notation 
    \[
    \Delta_m(I) := \bigcup_{i \in I} \Delta_m(i)
    \]
and
    \[
        f^{-1}I := \begin{cases}
            I-\pbold_m & \text{ if } I \cap \{\pbold_m, \pbold_m+1, 0, 1\} = \emptyset,\\
            (I-\pbold_m) \cup \{-\pbold_m, -\pbold_m+1\} & \text{ if } I \cap \{0, 1\} \neq \emptyset,\\
            (I-\pbold_m) \cup \{0, 1\} & \text{ if } I \cap \{\pbold_m, \pbold_m+1\} \neq \emptyset.
        \end{cases}
    \]
The following property holds.

\begin{claim1}
    For any interval $I$ in $\Z/\qbold_m \Z$, the lift of $\Delta_m(I)$ under $f|_{\Deltabold_m}$ is contained in $\Delta_m(f^{-1}I)$.
\end{claim1}

First, consider the dynamical plane of $f_m := \Rstar^m f: U_m \to V$. Let us define the tiling $\Lambdabold_0(f_m) := \{ \Lambda_0(i,f_m)\}_{i\in\{0,1\}}$, which is a skinnier version of $\Deltabold_0(f_m)$, as follows. For $i \in \{0,1\}$, we define $\Lambda_0(i,f_m)$ to be the closure of the connected component of $f_m^{-1}(U_m) \backslash \left(\gamma_0(f_m) \cup \gamma_1\right)$ contained in $\Delta_0(i,f_m)$. Let us embed it via $\Phi_m$ to the dynamical plane of $f$ and spread it around via iterates of $f$ to obtain the tiling $\Lambdabold_m =\Lambdabold_m(f)$.

Similar to Claim 1, we have:

\begin{claim2}
    For any interval $I$ in $\Z/\qbold_m \Z$, we have 
    \[
    \Lambda_m(I) = \Lambdabold_m \cap \Delta_m(I)
    \]
    and the lift of $\Lambda_m(I)$ under $f|_{\Lambdabold_m}$ is contained in $\Lambda_m(f^{-1}I)$.
\end{claim2}
    
The problem with the tiling $\Deltabold_m$ is that for $j \in \{1,\ldots,t\}$, even when $D_{j+1} \cap \Deltabold_m$ is contained in $\Delta_m(I)$ for some interval $I$, it is possible that $D_j \cap \Deltabold_m$ is not contained in $\Delta_m(f^{-1}I)$. However, this issue does not occur for the tiling $\Lambdabold_m$.

\begin{claim3}
    For any interval $I$ in $\Z/\qbold_m \Z$, any $j \in \left\{ 0,1,\ldots,\min\{\abold_m, \bbold_m\}-1\right\}$, and any subset $T \subset V$,
    \[
        T \cap \Deltabold_m \subset \Delta_m(I) \enspace \Longrightarrow \enspace f^{-j}(T) \cap \Lambdabold_m \subset \Lambda_m(f^{-j}I).
    \]
\end{claim3}

\begin{proof}
    By construction, the tiling $\Lambdabold$ has the property that $f^j (\Lambdabold_m) \subset \Deltabold_m$ for all $j < \min\{\abold_m, \bbold_m\}$. Let $I$, $j$, and $T$ be as in the hypothesis and suppose $T \cap \Deltabold_m$ is contained in $\Delta_m(I)$. Consider a point $z$ in $\Lambdabold_m$ such that $f^j(z)$ is contained in $T$. Clearly, $f^j(z)$ is in $\Delta_m(I)$, and by Claim 1, $z$ is contained in $\Deltabold_m(f^{-j}I)$. By Claim 2, the point $z$ is indeed contained in $\Lambda_m(f^{-j}I)$.
\end{proof}

Consider the smallest interval $I_{t+1}$ in $\Z / \qbold_m \Z$ such that 
    \[
    \{0,1\} \subset I_{t+1} \quad \text{and} \quad D_{t+1} \cap \Deltabold_m(f) \subset \Delta_m(I_{t+1}).
    \]
For $j\in \{1,\ldots t\}$, let $I_j:= f^{-(t+1-j)}I_t$. It is assumed that $D\cap \Hq_*$ is roughly a level less than $n$ combinatorial interval, so, since $m \succ n$, $|I_j|$ is large for all $j$. 

Let us fix some positive integer $\eta$ where $\eta \ll t$. This will be taken to be the maximum of the periods $\eta^\bullet_k$ introduced in the next subsection.

\begin{claim4}
    For $j \in \{1,2,\ldots t+1\}$,
        \begin{enumerate}[label=(\arabic*)]
            \item $|I_j|/\qbold_m$ is small and $\Delta_m(I_j,f_*) \cap \Hq_*$ has a small combinatorial length;
            \item if $j \leq t-2-\eta$, the intervals $I_j$, $I_{j+1}$, \ldots, $I_{j+\eta+3}$ are pairwise disjoint;
            \item if $1\leq j \leq \eta+2$, then $I_j$ is disjoint from $\{-\pbold_m, -\pbold_m+1\}$.
        \end{enumerate}
\end{claim4}

\begin{proof}
    Since the rotation number is of bounded type, the combinatorial length of the intersection of $\Hq_*$ with every tile of $\Deltabold_m(f_*)$ is comparable. Since $D$ is assumed to be small and $s$ is taken to be sufficiently large, (1) follows.

    Since (2) is combinatorial in nature, it suffices to prove (2) in the dynamical plane of $f_*$, which is obvious from the irrational rotational action of $f_*$ on $\Hq_*$. If (3) does not hold, then for some integer $j \in [2,\eta+3]$, the interval $I_j$ intersects $\{0,1\}$, but this contradicts (2) and the fact that $I_1$ must intersect $\{0,1\}$.
\end{proof}

\subsubsection{Spines and pseudo-spines}

    Let us first consider the dynamical plane of $f_*:U_* \to V_*$. Recall that the preimage of $f_*^{-1}(\gamma_1) \backslash \gamma_0$ consists of arcs
    \[
        \gamma^0_1, \ldots, \gamma^0_{2(d_0-1)} \subset \partial^{0} U_*, \quad \gamma^\infty_1, \ldots, \gamma^\infty_{2(d_\infty-1)} \subset \partial^{\infty} U_*.
    \]
    The strict preimage $f^{-1}(\Hq_*) \backslash \Hq_*$ is a bouquet of pairwise disjoint arcs
    \[
        \sigma^0_1, \enspace \ldots, \enspace \sigma^0_{2(d_0-1)}, \quad \sigma^\infty_1, \enspace \ldots, \enspace  \sigma^\infty_{2(d_\infty-1)}
    \]
    where each $\sigma^\bullet_i$ connects $c_0(f_*)$ to a point on $\gamma^\bullet_i$. We call each of $\sigma^\bullet_i$ a \emph{spine} of $f_*$ of generation one. In general, a \emph{spine} of generation $g\geq 1$ is a lift under $f_*^{g-1}$ of a spine of generation one, and its \emph{root} is the endpoint that is a critical point of $f_*^g$.

\begin{figure}
\vspace{-0.2in}

\begin{tikzpicture}
\coordinate (v4) at (1,1) {};
\coordinate (v3) at (1.5,0.5) {};
\coordinate (v2) at (1.5,-0.5) {};
\coordinate (v1) at (1,-1) {};
\coordinate (w1) at (2.7,-1.9) {};
\coordinate (w2) at (3.3,-1.7) {};
\coordinate (w3) at (3.8,-1.15) {};
\coordinate (w4) at (3.9,-0.5) {};
\coordinate (w5) at (3.9,0.5) {};
\coordinate (w6) at (3.8,1.15) {};
\coordinate (w7) at (3.3,1.7) {};
\coordinate (w8) at (2.7,1.9) {};

\draw[gray!40!white, fill=gray!10!white] (0.15,0) ellipse (5.25 and 3.25);
\draw[gray!40!white, fill=gray!20!white] (w1) .. controls (2.8,-1.56) .. (w2) .. controls (3.34,-1.28) .. (w3) .. controls (3.6,-0.76) .. (w4) .. controls (3.4,-0.4) and (3.4,0.4) .. (w5) .. controls (3.6,0.76) .. (w6) .. controls (3.34,1.28) .. (w7) .. controls (2.8,1.56) .. (w8) .. controls (-5.2,4.5) and (-5.2,-4.5) .. (w1); 
\draw[gray!40!white, fill=gray!10!white] (v1) .. controls (1.4,-1.2) and (1.7,-0.8) .. (v2) .. controls (1.8,-0.2) and (1.8,0.2) .. (v3) .. controls (1.7,0.8) and (1.4,1.2) .. (v4) .. controls (-2.5,3) and (-2.5,-3) .. (v1);
\draw[gray!40!white, fill=white] (0,0) ellipse (0.5 and 0.4);

\draw[ultra thick, red] (v1) .. controls (1.4,-1.2) and (1.7,-0.8) .. (v2); 
\draw[ultra thick, gray!40!white] (v2) .. controls (1.8,-0.2) and (1.8,0.2) .. (v3); 
\draw[ultra thick, red] (v3) .. controls (1.7,0.8) and (1.4,1.2) .. (v4); 
\draw[ultra thick, gray!40!white] (v4) .. controls (-2.5,3) and (-2.5,-3) .. (v1);
\draw[ultra thick, red] (w1) .. controls (2.8,-1.56) .. (w2);
\draw[ultra thick, gray!40!white] (w2) .. controls (3.34,-1.28) .. (w3); 
\draw[ultra thick, red] (w3) .. controls (3.6,-0.76) .. (w4); 
\draw[ultra thick, gray!40!white] (w4) .. controls (3.4,-0.4) and (3.4,0.4) .. (w5); 
\draw[ultra thick, red] (w5) .. controls (3.6,0.76) .. (w6); 
\draw[ultra thick, gray!40!white] (w6) .. controls (3.34,1.28) .. (w7); 
\draw[ultra thick, red] (w7) .. controls (2.8,1.56) .. (w8);
\draw[ultra thick, gray!40!white] (w8) .. controls (-5.2,4.5) and (-5.2,-4.5) .. (w1); 
\draw[ultra thick, green!90!black] (2.5,0) .. controls (2,2.5) and (-2.5,2.5) .. (-2.5,0) .. controls (-2.5,-2.5) and (2,-2.5) .. (2.5,0);
\draw[blue!80!black] (2.5,0) .. controls (2.2,0.13) .. (1.58,0.7);
\draw[blue!80!black] (2.5,0) .. controls (3,0.2) .. (3.65,0.75);
\draw[blue!80!black] (2.5,0) .. controls (2.8,0.6) .. (3.1,1.62);
\draw[blue!80!black] (2.5,0) .. controls (2.2,-0.13) .. (1.58,-0.7);
\draw[blue!80!black] (2.5,0) .. controls (3,-0.2) .. (3.65,-0.75);
\draw[blue!80!black] (2.5,0) .. controls (2.8,-0.6) .. (3.1,-1.62);

\node[blue!80!black] at (2.05,0.55) {\small $\sigma^0_1$};
\node[blue!80!black] at (1.9,-0.65) {\small $\sigma^0_2$};
\node[blue!80!black] at (2.75,1.3) {\small $\sigma^\infty_4$};
\node[blue!80!black] at (3.25,0.7) {\small $\sigma^\infty_3$};
\node[blue!80!black] at (3.4,-0.2) {\small $\sigma^\infty_2$};
\node[blue!80!black] at (2.7,-1.3) {\small 
 $\sigma^\infty_1$};
\node[green!80!black] at (-0.1,2.1) {$\Hq_*$};
\node[red] at (1.2,-0.7) {\small $\gamma^0_2$};
\node[red] at (1.2,0.7) {\small $\gamma^0_1$};
\node[red] at (3.05,-2) {\small $\gamma^\infty_1$};
\node[red] at (4.05,-0.8) {\small $\gamma^\infty_2$};
\node[red] at (4.05,0.8) {\small $\gamma^\infty_3$};
\node[red] at (3.05,2) {\small $\gamma^\infty_4$};
\node[red] at (-1.7,1.1) {\small $\gamma_0$};
\node[red] at (-2.95,-1.9) {\small $\gamma_1$};
\draw[red] (-1.27,1) -- (-2.2,1.8);
\draw[red] (-0.42,-0.2) -- (-3.8,-2.14);
\node at (2.5,0) {$\bullet$};
\node at (2.45,-0.3) {\small $c_0$};
\node[gray] at (-3.48,0) {\small $U_*$};
\node[gray] at (-5.33,0) {\small $V_*$};
\end{tikzpicture}
\vspace{-0.4in}

\caption{The spines of $f_*$ of generation one when $(d_0,d_\infty)=(2,3)$.}
\label{fig:strict-preimage-H}
\end{figure}

    A \emph{spine chain} of generation $g$ is an infinite sequence of spines 
    \[
    \Sigma = (S_1,S_2,S_3,\ldots)
    \]
    of increasing generation such that $S_1$ has generation $g$ and for all $i\geq 1$, the root of $S_{i+1}$ is contained in $S_i$. We say that a spine chain $\mathcal{S}$ is \emph{periodic} with period $p$ if for all $i\geq 1$, $f_*^p(S_{i+1})=S_i$.

    The following is a direct consequence of Lemma \ref{lem:small-limbs} and Theorem \ref{thm:qc-rigidity}. 

    \begin{proposition}
    \label{prop:local-connectivity}
        Every spine chain of $f_*$ lands at a unique point. Different spine chains admit different landing points. The landing point of a periodic spine chain of some period $p$ is a repelling periodic point of period $p$, and it is also the landing point of exactly one periodic external ray of period $p$.
    \end{proposition}

    When $f$ is rotational with bounded type rotation number, the notion of spines of $f$ can be formulated analogously and the proposition above holds. Below, we will formulate an analog of bubbles for arbitrary coronas $f$ which are sufficiently close to $f_*$. This is achieved by replacing $\Hq_*$ with $\Lambdabold_m(f)$.

    For $f$, $\bullet \in \{0,1\}$, and $i \in \{1,\ldots, 2(d_\bullet-1)\}$, we define the \emph{pseudo-spine} $\mathbb{S}^\bullet_i$ of generation one to be the closure of the connected component of $f^{-1}(\Lambdabold_m) \backslash \Lambdabold_m$ that intersects with the spine $\sigma^\bullet_i$ of $f_*$. Each $\mathbb{S}^\bullet_i$ is connected and
    \[
        \mathbb{S}^\bullet_i \cap \Lambdabold_m \subset \Lambda_m(\{-\pbold_m, -\pbold_m+1\}), \qquad f(\mathbb{S}^\bullet_i) \subset \Lambdabold_m.
    \]
    We say that every pseudo-spine of generation one is attached to $\Lambdabold(\{-\pbold_m,-\pbold_m+1\})$. In general, a \emph{pseudo-spine} of generation $g\geq 1$ is a lift under $f^{g-1}$ of a pseudo-spine of generation one.

    Let us fix a large integer $M \gg 1$. We will assume that $f$ is sufficiently close to $f_*$ depending on $M$. 
    
    \begin{claim5}
        Every spine $S$ of $f_*$ of generation up to $M$ is approximated by a pseudo-spine $\mathbb{S}$ of $f$ such that
        \begin{enumerate}
            \item $\mathbb{S}$ is close to $S$ and $f|_{\mathbb{S}}$ is close to $f_*|_{S}$,
            \item if $S$ is attached to another spine $S'$, then $\mathbb{S}$ is attached to the pseudo-spine corresponding to $S'$;
            \item if $S$ is attached to $\Hq_*$, then $\mathbb{S}$ is attached to $\Lambda_m(I)$ for some interval $I$ disjoint from $\{0,1\}$.
        \end{enumerate}
    \end{claim5}

    \begin{proof}
        This is because $\Lambdabold_m$ compactly contains and well approximates $\Hq_*$.
    \end{proof}

    Let us fix $\bullet \in \{0,1\}$ and $k \in \{1,\ldots,2(d_\bullet-1)\}$. Let us construct a periodic spine chain
    \[
        \Sigma^\bullet_k = (S_1, S_2, S_3, \ldots)
    \]
    for $f_*$ that is very close to $\gamma^\bullet_k \cup \sigma^\bullet_k$. First, we set $S_1 := \sigma^\bullet_k$. Let us pick $\eta^\bullet_k \geq 1$ such that the pre-critical point $c_{-\eta^\bullet_k+1}(f)$ is close to the critical arc $\gamma_1$. Let $c^\bullet_k$ be the preimage of $c_{-\eta^\bullet_k+1}(f)$ located on $\sigma^\bullet_k$ close to $\gamma^\bullet_k$. Then, we set $S_2$ to be the unique spine rooted at $c^\bullet_k$ that is the lift of $S_1$ under $f^{\eta^\bullet_k}$. The other spines are then defined by induction, forming a periodic spine chain of period $\eta^\bullet_k$.

    Let $x^\bullet_k(f_*)$ be the landing point of $\Sigma^\bullet_k$. It is a repelling periodic point of period $\eta^\bullet_k$ and it is also the landing point of a periodic external ray $R^\bullet_k(f_*)$. Since $f$ is close to $f_*$, periodic rays $R^\bullet_k(f)$ and repelling periodic points $x^\bullet_k(f)$ exist in the dynamical plane of $f$.

    Let us define a periodic pseudo-spine chain
    \begin{equation}
    \label{eqn:pseudo-spine-chain}
        \mathbb{\Sigma}^\bullet_k = (\mathbb{S}_1, \mathbb{S}_2, \mathbb{S}_3, \ldots)
    \end{equation}
    for $f$ landing at $x^\bullet_k(f)$ as follows. Assume $M \gg \eta^\bullet_k$ and let $M' \in \N$ satisfy $\eta^\bullet_k M' \leq M$. For $2\leq j \leq M'$, we set $\mathbb{S}_j$ to be the pseudo-spine approximating $S_j$. This can be arranged so that $\mathbb{S}_{M'}$ is within the linearization domain of the repelling periodic point $x^\bullet_k(f)$, and so inductively we define $\mathbb{S}_{M'+j+1}$ to be the unique lift of $\mathbb{S}_{M'+j}$ under $f^{\eta^\bullet_k}$ that is even closer to $x^\bullet_k(f)$.

\subsubsection{Enlargements of $D_j$}

Let us inductively define enlargements $\mathcal{D}_j$ and $\mathcal{D}'_j$ of $D_j$ as follows. First, we set $\mathcal{D}_{t+1} = \mathcal{D}'_{t+1} := D$. For $j\leq t$, we set 
\begin{list}{$\rhd$}{}
    \item $\mathcal{D}'_j =$ the connected component of $f^{-1}(\mathcal{D}_{j+1})$ containing $D_j$;
    \item $\mathcal{D}_j =$ the smallest topological disk containing $\mathcal{D}'_j$ and the interior of $\Lambda_m(I_j)$.
\end{list}
For all $j$, we have $D_j \subset \mathcal{D}'_j \subset \mathcal{D}_j$.

\begin{claim6}
    For $j \in \{1,2,\ldots,t+1\}$, $\mathcal{D}_j \cap \Lambdabold_m$ is connected and its closure is $\Lambda_m(I_j)$.
\end{claim6}

\begin{proof}
    This follows from Claim 2 and the observation that, due to Claim 3, $D_j \cap \Lambdabold_m \subset \Lambda_m(I_j)$ for all $j$.
\end{proof}

    We will assume $D$ to be small enough such that it is disjoint from the periodic rays $f^i(R^\bullet_k)$ for all $i \in \{0,\ldots,t\}$, $\bullet \in \{0,\infty\}$, and $k\in \{1,2,\ldots,2(d_\bullet-1)\}$.

\begin{claim7}
    For $j \in \{1,2,\ldots,t\}$, the disk $\mathcal{D}_j$ is disjoint from all the periodic rays of the form $f^i(R^\bullet_k)$ for all $i \in \{0,\ldots,j-1\}$.
\end{claim7}

\begin{proof}
    This claim follows from induction. If $\mathcal{D}_j$ intersects $f^i(R^\bullet_k)$, then $\mathcal{D}'_j$ intersects $f^i(R^\bullet_k)$ and so $\mathcal{D}_{j+1}$ intersects $f^{i+1}(R^\bullet_k)$.
\end{proof}

\subsubsection{Proof of Lemma \ref{key-lemma}}

Let $\Lambdabold'_m$ denote the union of all pseudo-spines of $f$ of generation one. Recall that the constant $\eta$ is set to be the maximum of the periods $\eta_k^\bullet$ of the pseudo-spine chains $\mathbb{\Sigma}^\bullet_k$. To finally show that $f^t: D_1 \to D$ is a branched covering, we will prove by induction the following statements for $j=1,\ldots, t+1$.
\begin{enumerate}[label=(\alph*)]
    \item $\mathcal{D}_j$ intersects $\Lambdabold'_m$ if and only if $I_j$ contains $\{-\pbold_m, -\pbold_{m+1}\}$;
    \item If $\mathcal{D}_j$ intersects $\Lambdabold'_m$, then the intersection is in a small neighborhood of $c_0$;
    \item If $\mathcal{D}_j$ intersects $\Lambdabold'_m$ for $j<t$, then $j<t-\eta$ and $\mathcal{D}_{j+1}$, \ldots $\mathcal{D}_{j+\eta+1}$ are all disjoint from $\Lambdabold'_m$;
    \item If $\mathcal{D}_j$ intersects a pseudo-spine chain $\mathbb{\Sigma}^\bullet_k$ from (\ref{eqn:pseudo-spine-chain}), then the intersection is within $\Lambdabold'_m$;
    \item $\mathcal{D}_j$ is an open disk disjoint from the forbidden boundary $\partial_\frb U_f$.
\end{enumerate}

Suppose (a)--(e) hold for $j+1$, $j+2$, \ldots $t+1$. Let us show that they hold for $j$.
    
Suppose $I_j$ contains $\{-\pbold_m, -\pbold_{m+1}\}$. Then, $\mathcal{D}_{j+1}$ contains either $\Lambda_m(\{-1,0,1\})$ or $\Lambda_m(\{0,1,2\})$, and so the lift $\mathcal{D}'_j$ of $\mathcal{D}_{j+1}$ contains the critical point $c_0(f)$ and intersects $\Lambdabold'_m$.

Suppose $I_j$ is disjoint from $\{-\pbold_m, -\pbold_{m+1}\}$. Then, $\mathcal{D}_{j+1}$ does not contain the critical value $c_1(f)$ and every point in $\mathcal{D}_{j+1}$ has at most one preimage under $f$ in $\mathcal{D}'_j$. By Claim 6, the preimage of $\mathcal{D}_{j+1} \cap \Lambdabold_m$ under $f|_{\mathcal{D}'_j}$ must be contained in $\Lambdabold_m$. It follows that $\mathcal{D}'_j$ is disjoint from $\Lambdabold'_m$. Since $\mathcal{D}'_j \cup \Lambda_m(I_j)$ does not surround $\Lambdabold'_m$, then $\mathcal{D}_j$ is also disjoint from $\Lambdabold'_m$.

We just proved (a). Item (b) follows from Claim 6 and the fact that $\Lambda_m(I_{j+1})$ is a small neighborhood of $c_1(f)$ as a result of Claim 4 (i). Item (c) then follows from Claim 4 (2). 

Item (e) follows from (b) and (d). Indeed, if $\mathcal{D}_j$ were to intersect $\partial_\frb U_f$, then by Claim 7, it must intersect some pseudo-spine chain $\mathbb{\Sigma}^\bullet_k$ from (\ref{eqn:pseudo-spine-chain}) and because of (d), its intersection is contained in $\Lambdabold'_m$. In particular, $\mathcal{D}_k$ can only intersect $\Lambdabold'_m$ in a small neighborhood of $c_0$, which implies that $\mathcal{D}_k$ cannot intersect $\partial_\frb U_f$.

It remains to prove (d). By continuity, we can assume that (d) holds whenever $j\geq t-\eta$. Let us assume that $j<t-\eta$ and suppose for a contradiction that (d) fails, that is, there is a pseudo-spine chain $\mathbb{\Sigma}^\bullet_k = (\mathbb{S}_1, \mathbb{S}_2,\ldots)$ such that $\mathcal{D}_j$ intersects $\mathbb{S}_i$ where $i \geq 2$. 

We claim that $\mathcal{D}_j$ intersects $\mathbb{S}_2$. Indeed, suppose otherwise that the smallest possible $i\geq 2$ such that $\mathcal{D}_j$ intersects $\mathbb{S}_i$ satisfies $i>2$. Since $\mathcal{D}'_j \cap \Lambda_m(I_j)$ is disjoint from the ray $R^\bullet_k$, then the subchain $\mathbb{\Sigma}^{(i)} = (\mathbb{S}_i, \mathbb{S}_{i+1},\ldots)$ intersects $\mathcal{D}'_j$ and its image $f(\mathbb{\Sigma}^{(i)})$ intersects $\mathcal{D}_{j+1}$. By periodicity of $\mathbb{\Sigma}^\bullet_k$, the chain $\mathbb{\Sigma}^{(i-1)}$ intersects $\mathcal{D}_{j+\eta^\bullet_k}$, which is a contradiction to (d) for index $j+\eta^\bullet_k$.

The argument from the previous paragraph results in the intersection of $\mathcal{D}_{j+\eta^\bullet_k}$ and $\mathbb{S}_1$ being non-empty. By (a), the interval $I_{j+\eta^\bullet_k}$ contains $\{-\pbold_m, -\pbold_{m+1}\}$, so for $l\in\{1,2,\ldots,\eta^\bullet_k\}$, $f^l(\mathbb{S}_2)$ is attached to $\Lambda_m(I_{j+l})$. Moreover, since the critical value $c_1(f)$ is not contained in $\mathcal{D}_{j+l} \cap \Lambdabold_m$, every point in $\mathcal{D}_{j+l}$ has at most one preimage in $\mathcal{D}'_{j+l-1}$.
    
Consider the lift $\mathbb{S}'_2$ of $f(\mathbb{S}_2)$ under $f$ that is attached to $\Lambda_m(I_j)$. Since $c_1(f)$ is not contained nor surrounded by $\mathcal{D}_{j+1} \cap f(\mathbb{S}_2)$, the lift $E$ of $f(\mathcal{D}_j \cap \mathbb{S}_2)$ under $f|_{\mathcal{D}'_j}$ agrees with the lift under $f|_{\mathbb{S}'_2}$. Therefore, $E$ would be contained in $\mathbb{S}'_2$, not $\mathbb{S}_2$, which is impossible. This concludes the proof of (d).


\section{Renormalization cascades}
\label{sec:cascades-intro}

In \S\ref{sec:trans-ext}, we established that the anti-renormalization tower associated to a corona $f$ on the local unstable manifold $\unstloc$ of the fixed point $f_*$ of $\Rstar: \Ustar \to \Bstar$ induces the sequence $\{\Fbold_n^\# = (\fbold_{n,\pm}^\#: \Xbold^\#_{n,\pm} \to \C)\}_{n \leq 0}$ of pairs of $\sigma$-proper branched coverings. We will reinterpret such a sequence as a global transcendental cascade.

\subsection{Cascades}
\label{ss:trans-cascades}

Consider the anti-renormalization matrix $\Mbold$ from Lemma \ref{lem:renorm-matrix}. Let us denote by $\tbold>1$ and $1/\tbold$ the eigenvalues of $\Mbold$.

Let us extend the backward sequence $\{\Fbold_n^\#\}_{n\leq 0}$ to a bi-infinite one as follows. For every positive integer $n$, we define $\Fbold_n^\# = (\fbold_{n,\pm}^\#)$ inductively by the relation
\begin{equation}
    \label{eqn:renorm-matrix-rel}
    \left( \fbold_{n,-}^\# \right)^a \circ \left( \fbold_{n,+}^\# \right)^b = \left( \fbold_{n-1,-}^\# \right)^{a'} \circ \left( \fbold_{n-1,+}^\# \right)^{b'}
\end{equation}
for any $a,b,a',b' \in \Z_{\geq 0}$ satisfying $(a' \ b') = (a \ b) \Mbold$. Then, $\left\{ \Fbold_n^\#\right\}_{n\in\Z}$ forms a sequence of commuting $\sigma$-proper holomorphic maps acting on the same dynamical plane.

Let us identify the local unstable manifold $\unstloc$ with the space $\manibold_{\textnormal{loc}}$ of pairs of $\sigma$-proper maps $\Fbold = (\fbold_{0,\pm})$ associated to each $f \in \unstloc$.
On $\manibold_{\textnormal{loc}}$, the renormalization operator will be denoted by $\Rboldstar$.
Assuming that $\unstloc$ lives in a sufficiently small neighborhood of $f_*$, the action of $\Rboldstar$ on $\manibold_{\textnormal{loc}}$ is topologically conjugate to an expanding linear map on some Euclidean space; in particular, it is injective.
We extend $\Rboldstar$ beyond $\manibold_{\textnormal{loc}}$ by setting
\[
    \Rboldstar^n \Fbold_0 = \Fbold_n := A_*^{-n} \Fbold_n^\# A_*^n,
\]
(compare with (\ref{eqn:rescaling-fk's})) and extend $\manibold_{\textnormal{loc}}$ to a ``global unstable manifold`` $\manibold$ by adding $\Fbold_n$ for all $n \geq 0$ and $\Fbold \in \manibold_{\textnormal{loc}}$. 
From this construction, $\manibold$ is just an abstract set.
The complex manifold structure of $\manibold_{\textnormal{loc}}$ naturally and uniquely extends to $\manibold$ such that the renormalization operator $\Rboldstar$ acts on $\manibold$ as a biholomorphism with a unique fixed point $\Fbold_*$ that is expanding.

\begin{definition}
    We define the space $\Tbold$ of \emph{power-triples} to be the quotient of the semigroup $\Z\times \Z_{\geq 0}^2$ under the equivalence relation $\sim$ where $(n,a,b) \sim (n-1,a',b')$ if and only if $(a' \ b') = (a \ b) \Mbold$.
\end{definition}

We will equip $\Tbold$ with the binary operation $+$ defined by
\[
    (n,a,b) + (n,a',b') = (n, a+a', b+b').
\]
With respect to $+$, $\Tbold$ has a unique identity element $0 := (n,0,0)$. For $P, Q \in \Tbold$, let us denote by $P \geq Q$ if for all sufficiently large $n \ll 0$, there exist $a,b,a',b' \in \N$ such that $P=(n,a,b)$, $Q=(n,a',b')$, $a\geq a'$, and $b\geq b'$.

By Lemma \ref{lem:power-triple-embedding}, $(\Tbold, +, \geq)$ can be identified with a sub-semigroup of $(\R_{\geq 0}, +, \geq)$. Moreover, $\Tbold$ inherits a well-defined scalar multiplication by powers of $\tbold$ as follows. For every $(n,a,b) \in \Tbold$ and integer $k$,
\[
    \tbold^k(n,a,b) = (n+k,a,b).
\]
For every $\Fbold \in \manibold$ and every power-triple $P = (n,a,b)$, we will use the notation
\[
    \Fbold^P := \left( \fbold_{n,-}^\# \right)^a \circ \left( \fbold_{n,+}^\# \right)^b.
\]
Each $\Fbold^P$ is a $\sigma$-proper map from its domain $\Dom\left(\Fbold^P\right)$ onto $\C$. We denote by $\Fbold^{\geq 0}$ the cascade $\left(\Fbold^P\right)_{P \in \Tbold}$ associated to $\Fbold$.

\begin{lemma}
    For every $\Fbold \in \manibold$, $P \in \Tbold$, and $n \in \Z$, 
    \[
        \Fbold_0^P = \left(\Fbold_{-n}^\#\right)^{\tbold^n P}.
    \]
    In particular, when $\Fbold = \Fbold_*$,
    \begin{equation}
    \label{eqn:self-sim-cascade}
        \Fbold_*^P = A_*^{-n} \circ \Fbold_*^{\tbold^n P} \circ A_*^n.
    \end{equation}
\end{lemma}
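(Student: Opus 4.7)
The plan is to unravel the definitions; both identities are bookkeeping statements about how the cascade indexing interacts with the rescaling $\Fbold_n = A_*^{-n}\Fbold_n^\#A_*^n$, and neither requires any substantial analysis.

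For the first identity I would fix a representative $P = (m, a, b)$ of its class, so that by definition $\Fbold_0^P = (\fbold_{m,-}^\#)^a \circ (\fbold_{m,+}^\#)^b$. Interpret $(\Fbold_{-n}^\#)^{\tbold^n P}$ by treating $\Fbold_{-n}^\#$ as the $0$-th pair of its own cascade, generated inductively via the matrix relation (\ref{eqn:renorm-matrix-rel}). The key observation is that this cascade is governed by the same matrix $\Mbold$ regardless of the chosen base, so by induction on $|k|$ the $k$-th pair produced from $\Fbold_{-n}^\#$ is precisely $\Fbold_{-n+k}^\#$. Since $\tbold^n P = (m+n, a, b)$, taking $k = m+n$ yields $-n+k = m$, and the expression collapses to $\Fbold_0^P$.

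For the second identity I would specialize $\Fbold = \Fbold_*$. Because $\Fbold_*$ is the renormalization fixed point, $\Fbold_{-n} = \Fbold_*$ for every $n \in \Z$, and the rescaling rule gives $\Fbold_{-n}^\# = A_*^{-n}\Fbold_* A_*^n$. Conjugation by $A_*^{-n}$ commutes with composition, so the cascade generated by $A_*^{-n}\Fbold_* A_*^n$ is the termwise conjugate (by $A_*^{-n}$) of the one generated by $\Fbold_*$; therefore $(\Fbold_{-n}^\#)^R = A_*^{-n}\Fbold_*^R A_*^n$ for every $R \in \Tbold$. Setting $R = \tbold^n P$ and substituting into the first identity produces the claimed self-similarity.

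I do not anticipate any serious obstacle. The one point worth checking carefully is that the equivalence relation defining $\Tbold$ is preserved under conjugation by $A_*^{-n}$; this is immediate from $A_*^{-n}(g\circ h)A_*^n = (A_*^{-n}gA_*^n)\circ(A_*^{-n}h A_*^n)$ applied termwise to the matrix relation (\ref{eqn:renorm-matrix-rel}).
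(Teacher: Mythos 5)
Your argument is correct and is exactly the definitional unwinding the paper leaves implicit (the lemma is stated without proof there): the first identity reduces to the fact that the cascade generated from $\Fbold_{-n}^\#$ via the fixed matrix $\Mbold$ reproduces $\{\Fbold_{-n+k}^\#\}_k$, and the second follows from $\Fbold_{-n}^\# = A_*^{-n}\circ\Fbold_*\circ A_*^n$ at the fixed point together with the fact that conjugation commutes with the compositions defining $\Fbold^R$. Your closing remark that the relation (\ref{eqn:renorm-matrix-rel}) is preserved under conjugation by $A_*^{-n}$ is the right point to check, and it holds for the reason you give.
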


\subsection{Critical points and periodic points}
\label{ss:crit-periodic-points}

Let us pick
\[
\Fbold = [\fbold_\pm: \Ubold_\pm \to \Sbold] := [\fbold_{0,\pm}: \Ubold_{0,\pm} \to \Sbold_0] \in \manibold_{\textnormal{loc}}
\]
and let $\Fbold_n := \Rboldstar^n\Fbold$ for all $n\in\Z$. Within the cascade $\Fbold^{\geq 0}$, $\fbold_\pm$ is the first return map of points in $\Ubold_\pm$ back to $\Sbold$. In particular, $\Ubold_- \cup \Ubold_+$ is disjoint from $\Fbold^P(\Ubold_-)$ for all $P < (0,1,0)$ and $\Fbold^P(\Ubold_+)$ for all $P < (0,0,1)$.

\begin{definition}
    We define the \emph{zeroth renormalization tiling} $\Deltabold_0 = \Deltabold_0(\Fbold)$ associated to $\Fbold^{\geq 0}$ to be the tiling consisting of $\Delta_0(0) := \overline{\Ubold_+}$ and $\Delta_0(1) := \overline{\Ubold_-}$, as well as $\Fbold^P(\Delta_0(0))$ for all $P<(0,0,1)$ and $\Fbold^P(\Delta_0(1))$ for all $P<(0,1,0)$. We label the tiles in left-to-right order as $\Delta_0(i)$ for $i\in \Z$.
    For all $n \in \Z_{<0}$, we define the \emph{$n$\textsuperscript{th} renormalization tiling} to be the rescaling of the zeroth tiling for $\Fbold_n$, namely
    \[
        \Deltabold_n(\Fbold) := A_*^n \left(\Deltabold_0(\Fbold_n)\right).
    \]
\end{definition}

In $\manibold_{\textnormal{loc}}$, for $\Fbold$ sufficiently close to $\Fbold_*$, the tiling $\Deltabold_0(\Fbold)$ moves holomorphically in $\Fbold$. (Compare with Lemma \ref{lem:renormalization-tiling-01}.) In general, for $\Fbold \in \manibold$, the $n$\textsuperscript{th} tiling $\Deltabold_n(\Fbold)$ is well-defined for all sufficiently large $n\ll 0$. Each tile $\Delta_n(i)$ is a compact disk in $\C$, and the union $\bigcup_{i \in \Z} \Delta_n(i)$ is a closed bi-infinite strip in $\C$, that is, its complement consists of two unbounded disks each having a single access to infinity.

\begin{definition}
\label{def:full-lift}
    Consider $[f: U_f \to V] \in \unstloc$ and the associated pre-corona $\Fbold = [\fbold_\pm: \Ubold_\pm \to \Sbold] \in \manibold_{\textnormal{loc}}$ in linearizing coordinates. Given a subset $Z$ of $U_f$, the \emph{full lift} $\mathbf{Z}$ of $Z$ to the dynamical plane of $\Fbold$ is defined as
    \[
        \mathbf{Z} := \bigcup_{0\leq P< (0,0,1)} \Fbold^P \left(\mathbf{Z}_0\right) \cup \bigcup_{0\leq P< (0,1,0)} \Fbold^P \left(\mathbf{Z}_1\right),
    \]
    where $\mathbf{Z}_0$ and $\mathbf{Z}_1$ are the embedding of $Z \cap \Delta_0(0,f)$ and $Z \cap \Delta_0(1,f)$ to the dynamical plane of $\Fbold$ respectively.
\end{definition}

For the rest of the section, let us fix $\Fbold$ in $\manibold$. For every $x \in \C$ and $T \in \Tbold$, we denote the finite orbit of $x$ up to time $T$ by 
\[
    \text{orb}_x^T(\Fbold) := \left\{ \Fbold^P(x) \: : \: 0\leq P \leq T \right\}.
\]

\begin{definition}
    For $P \in \Tbold_{>0}$, let us denote by $\CP\left(\Fbold^P\right)$ the set of critical points of $\Fbold^P$ and by $\CV\left(\Fbold^P\right)$ the set of critical values of $\Fbold^P$. We say that a point $x$ is 
    \begin{list}{$\rhd$}{}
        \item a \emph{critical point} of $\Fbold^{\geq 0}$ if it is in $\CP\left(\Fbold^P\right)$ for some $P\in\Tbold_{>0}$, 
        \item a \emph{critical value} of $\Fbold^{\geq 0}$ if it is in $\CV\left(\Fbold^P\right)$ for some $P \in \Tbold_{>0}$, and
        \item a \emph{periodic point} of $\Fbold^{\geq 0}$ if there is some $P\in\Tbold_{>0}$ such that $\Fbold^P(x)=x$.
    \end{list}
\end{definition}

\begin{lemma}
\label{lem:critical-pts}
    Critical points of $\Fbold^{\geq 0}$ satisfy the following properties.
    \begin{enumerate}[label=\textnormal{(\arabic*)}]
        \item For $P \in \Tbold_{>0}$,
        \[
            \CP\left(\Fbold^P\right) = \bigcup_{0<S\leq P} \Fbold^{-S}\{0\} \quad \text{and} \quad \CV\left(\Fbold^P\right) = \{ \Fbold^S(0) \: : \: 0 \leq S<P\}.
        \]
        \item There is some $K_\Fbold \in \Tbold_{>0}$ such that for every power-triple $P < K_\Fbold$, every critical point of $\Fbold^P$ has local degree $d$. If $0$ is not periodic, this is still true for $P \geq K_\Fbold$. In general, for every $P \in \Tbold$, there is some $k \in \N$ such that the local degree of every critical point of $\Fbold^P$ is at most $k$.
    \end{enumerate}
    Let $T:= \min\{ (0,1,0), (0,0,1) \}$. If $\Fbold \in \manibold_{\textnormal{loc}}$, then for every $P<T$,
    \begin{enumerate}[label=\textnormal{(\arabic*)},start=4]
        \item $\CV\left(\Fbold^P\right)$ is a subset of $\Deltabold_0(\Fbold) \backslash \Sbold \cup \{0\}$ which moves holomorphically with $\Fbold$, and
        \item every critical point of $\Fbold^P$ has local degree $d$.
    \end{enumerate}
\end{lemma}

\begin{proof}
    Pick a bounded domain $\Dbold \Subset \C$ and select a connected component $\Dbold'$ of $\Fbold^{-P}(\Dbold)$. Suppose $\Fbold$ represents a corona $f$ in $\unstloc$, and recall that the anti-renormalizations of $f$ when the critical value is normalized to be at $0$ are denoted by $f_n^\natural$, $n\leq 0$. Recall that for sufficiently large negative number $n\ll 0$, the map $\Fbold^P: \Dbold' \to \Dbold$ can be identified via $h_n^\#$ with $(f_n^\natural)^{s_n} : D' \to D$ for some domains $D', D \Subset \C$ and some $s_n \geq 0$. Therefore, $x$ is a critical point of $\Fbold^P$ if and only if $(h_n^\#)^{-1}(x)$ is a critical point of $(f_n^\natural)^{s_n}$, which happens precisely when $\Fbold^S(x) = 0$ for some $S \leq P$. This leads to (1).

    Suppose $[\Fbold: \Ubold_\pm \to \Sbold]$ is in $ \manibold_{\textnormal{loc}}$ and fix $P \leq T$. For all $S < P$, $\Fbold^S(0)$ is contained in some tile $\Delta_0(i,\Fbold)$ that is disjoint from $\Sbold$. This implies (3). Also, (4) follows from the fact that for every critical point $x$ of $\Fbold^P$, $\textnormal{orb}_x^P(\Fbold)$ passes through the critical value $0$ exactly once.

    If $\Fbold$ is not close to $\Fbold_*$, then we can take some $n \ll 0$ such that $\Rboldstar^n \Fbold$ is in $\manibold_{\textnormal{loc}}$. Then, (2) follows from (3) and (4) by taking $K_\Fbold$ to be $\tbold^n T$ and $k$ to be such that $P < (k-1) K_{\Fbold}$.
\end{proof}

\begin{lemma}[Discreteness]
\label{lem:discreteness}
    For any bounded open subset $D$ of $\C$, there is some $Q \in \Tbold_{>0}$ such that for all $\Gbold \in \manibold$ sufficiently close to $\Fbold$ and whenever $P' < P < Q$,
    \begin{enumerate}[label=\textnormal{(\arabic*)}]
        \item $\Gbold^P$ is well-defined and univalent on $D$, and
        \item $\Gbold^P(D)$ is disjoint from $\Gbold^{P'}(D)$.
    \end{enumerate}
    For every $x \in \C$ and $T \in \Tbold$, $\textnormal{orb}_x^T(\Fbold)$ is discrete in $\C$.
\end{lemma}

\begin{proof}
    There exist some integers $m \leq 0$ and $j \in \{0,1\}$ such that $D$ is compactly contained in some level $m$ tile $\Delta_m(j, \Gbold)$ for all $\Gbold$ close to $\Fbold$. Let us set $Q := \tbold^m \min\{(0,1,0),(0,0,1)\}$. For $P < Q$, the tile $\Delta_m(j, \Gbold)$ is mapped by $\Gbold^P$ univalently onto to some other tile $\Delta_m(i,\Gbold)$ disjoint from $\Delta_m(0,\Gbold) \cup \Delta_m(1,\Gbold)$. This implies (1) and (2).

    Let us fix $x \in \C$ and $T \in \Tbold$. Let us pick any point $y$ in the closure of $\textnormal{orb}_x^T(\Fbold)$, and pick a small open neighborhood $D$ of $y$. From the first part, $\Fbold^P(D)$ is disjoint from $D$ for all sufficiently small $P \in \Tbold_{>0}$. This implies that only finitely many points in $\textnormal{orb}_x^T(\Fbold)$ are contained in $D$.
\end{proof}

By a straightforward compactness argument, the lemma above has the following consequence.

\begin{corollary}[Proper discontinuity]
\label{cor:discreteness}
    For any $P \in \Tbold$, any compact subset $\mathbf{Y}$ of $\Dom\left(\Fbold^P\right)$, and any bounded subset $\mathbf{X}$ of $\C$, there are at most finitely many power-triples $T \leq P$ such that $\Fbold^T(\mathbf{Y})$ intersects $\mathbf{X}$. 
\end{corollary}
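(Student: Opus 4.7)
The plan is to argue by contradiction, with Lemma~\ref{lem:discreteness} supplying the essential obstruction. Suppose the set $S := \{T \in \Tbold : T \leq P,\ \Fbold^T(\mathbf{Y}) \cap \mathbf{X} \neq \emptyset\}$ is infinite, and for each $T_n \in S$ choose a witness $y_n \in \mathbf{Y}$ with $z_n := \Fbold^{T_n}(y_n) \in \mathbf{X}$. Compactness of $\mathbf{Y}$, boundedness of $\mathbf{X}$, and boundedness of the $T_n$ in $[0, P_\R]$ (via the embedding $\Tbold \hookrightarrow \R_{\geq 0}$) let me pass to subsequences so that $y_n \to y_* \in \mathbf{Y}$, $z_n \to z_* \in \overline{\mathbf{X}}$, and $T_n \to T_* \in [0, P_\R]$. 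A pigeonhole step lets me further assume the $T_n$ are distinct and strictly monotonic; by symmetry I treat only the decreasing case $T_n \searrow T_*$.

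Because $\Tbold$ is a totally ordered subsemigroup of $(\R_{\geq 0},+)$ that is closed under differences of comparable elements, the quantities $S_n := T_n - T_{n+1}$ lie in $\Tbold_{>0}$ and satisfy $S_n \to 0$. Apply Lemma~\ref{lem:discreteness} to a small open disk $B$ around $z_*$ to obtain $Q \in \Tbold_{>0}$ such that $\Fbold^R(B) \cap B = \emptyset$ whenever $0 < R < Q$ in $\Tbold$. For $n$ sufficiently large, $z_n, z_{n+1} \in B$ and $S_n < Q$. The algebraic identity
\[
    \Fbold^{S_n}(z_{n+1}) \;=\; \Fbold^{T_n - T_{n+1}}\!\bigl(\Fbold^{T_{n+1}}(y_{n+1})\bigr) \;=\; \Fbold^{T_n}(y_{n+1})
\]
shows that once $\Fbold^{T_n}(y_{n+1}) \in B$ is secured for some large $n$, one obtains $\Fbold^{S_n}(B) \cap B \ni \Fbold^{T_n}(y_{n+1})$, the desired contradiction.

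The crux is therefore this last membership. Since $\Fbold^{T_n}(y_n) = z_n$ already lies in the open set $B$ and $|y_{n+1} - y_n| \to 0$, I would conclude by establishing a uniform modulus of continuity for the family $\{\Fbold^{T_n}\}$ near $y_*$. Picking an open neighborhood $U$ of $y_*$ with $\overline{U} \Subset \Dom(\Fbold^P)$, the factorization $\Fbold^P = \Fbold^{P - T_n} \circ \Fbold^{T_n}$ combined with $\sigma$-properness of $\Fbold^{P - T_n}$ confines $\Fbold^{T_n}(\overline{U})$ to a compact connected component $C_n$ of $(\Fbold^{P - T_n})^{-1}(\Fbold^P(\overline{U}))$; the convergences $z_n \to z_*$ and $\Fbold^P(y_n) \to \Fbold^P(y_*)$ are used to keep these $C_n$ uniformly bounded in $n$. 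Koebe distortion on the univalent branches of $\Fbold^{T_n}$ near $y_n$ furnished by Lemma~\ref{lem:discreteness} then bounds $\sup_U |(\Fbold^{T_n})'|$ uniformly, giving $|\Fbold^{T_n}(y_{n+1}) - z_n| \to 0$ and placing $\Fbold^{T_n}(y_{n+1})$ in $B$ for large $n$. The main obstacle is precisely the uniform boundedness of the compact components $C_n$; this is the only step where genuine dynamical input is needed, and everything else is a formal consequence of Lemma~\ref{lem:discreteness} together with compactness.
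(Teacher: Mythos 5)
Your reduction is the right skeleton: extract monotone $T_n\to T_*$, $y_n\to y_*$, $z_n\to z_*$, apply Lemma~\ref{lem:discreteness} to a small disk $B\ni z_*$, and derive a contradiction from $\Fbold^{S_n}(z_{n+1})=\Fbold^{T_n}(y_{n+1})$ once $\Fbold^{T_n}(y_{n+1})\in B$ is known. But that last membership is exactly where all the content of the corollary sits, and your proposal does not establish it --- as you yourself concede ("the main obstacle is precisely the uniform boundedness of the compact components $C_n$"). The obstacle is genuine, not a technicality: the maps $\Fbold^{P-T_n}$ are pairwise different $\sigma$-proper maps with no uniform control, and knowing that each component $C_n$ of $(\Fbold^{P-T_n})^{-1}(\Fbold^P(\overline{U}))$ is compact and contains the bounded point $z_n$ gives no bound on the diameter of $C_n$ uniform in $n$; a priori arbitrarily small neighborhoods of $y_*$ could be blown up by $\Fbold^{T_n}$ onto large sets. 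So what you have is an honest reduction of the corollary to an unproved equicontinuity statement, not a proof.

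There is also a concrete misstep in the way you propose to close the gap: Lemma~\ref{lem:discreteness} furnishes univalence of $\Fbold^R$ on a given bounded set only for $R<Q$ with $Q$ \emph{small}, whereas your $T_n$ accumulate at $T_*$, which need not be small. By Lemma~\ref{lem:critical-pts}, the critical points of $\Fbold^{S}$ for $S\le P$ are the preimages $\bigcup_{0<R\le S}\Fbold^{-R}(0)$, and by Corollary~\ref{cor:accumulation-to-bdry} these accumulate on $\partial\Esc_{\le R}$; there is no reason a fixed neighborhood of $y_*$ avoids all of them, so ``univalent branches of $\Fbold^{T_n}$ near $y_n$'' need not exist and Koebe cannot be invoked as stated. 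Any repair has to work at the target instead: shrink $B$ so that $B\setminus\{z_*\}$ misses $\CV\left(\Fbold^P\right)$, use the uniform local-degree bound of Lemma~\ref{lem:critical-pts}(3) to make each component of $\Fbold^{-T_n}(B)$ a bounded-degree disk over $B$, and then supply genuine dynamical input (the renormalization tilings of \S\ref{ss:crit-periodic-points} and the stability statement of Lemma~\ref{lem:stability-of-branched-covering}, as in the proof of Lemma~\ref{lem:discreteness} itself) to rule out the degenerating case. The paper offers no written proof to compare against --- it dismisses the corollary as a ``straightforward compactness argument'' --- but the step you have left open is precisely the one that is not formal.
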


\begin{corollary}
    Every critical point $x$ of $\Fbold^{\geq 0}$ admits a minimal $P \in \Tbold_{>0}$, called the generation of $x$, such that $\Fbold^P(x) = 0$.
\end{corollary}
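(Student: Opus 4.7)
My plan is to deduce this directly from the discreteness/proper discontinuity results already established, together with the embedding $\Tbold \hookrightarrow (\R_{\geq 0},+,\geq)$ provided by Lemma \ref{lem:power-triple-embedding}. The point is simply that the set
\[
    S_x := \left\{ P \in \Tbold_{>0} \: : \: \Fbold^P(x) = 0 \right\}
\]
is nonempty and, after bounding above, finite, hence admits a minimum once we transport it to $\R_{\geq 0}$.

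First, since $x$ is a critical point of $\Fbold^{\geq 0}$, Lemma \ref{lem:critical-pts}\,(1) yields some $Q \in \Tbold_{>0}$ with $\Fbold^Q(x)=0$, so $S_x \ni Q$. In particular, $x$ lies in $\Dom(\Fbold^Q)$, so $\mathbf{Y}:=\{x\}$ is a compact subset of $\Dom\left(\Fbold^Q\right)$, and $\mathbf{X}:=\{0\}$ is certainly a bounded subset of $\C$.

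Next, apply Corollary \ref{cor:discreteness} (proper discontinuity) with this choice of $P=Q$, $\mathbf{Y}$, and $\mathbf{X}$: there are only finitely many power-triples $T \leq Q$ for which $\Fbold^T(\{x\})$ meets $\{0\}$, that is, for which $\Fbold^T(x)=0$. Thus $S_x \cap (0,Q]$ is a finite nonempty subset of $\Tbold_{>0}$.

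Finally, using Lemma \ref{lem:power-triple-embedding} we regard $S_x \cap (0,Q]$ as a finite nonempty subset of $(\R_{>0},\leq)$, which therefore has a least element $P_0$. Any $P \in S_x$ with $P < P_0$ would in particular satisfy $P < Q$, hence $P \in S_x \cap (0,Q]$, contradicting minimality of $P_0$ in that finite set. Hence $P_0 = \min S_x$ is the required generation of $x$. There is no real obstacle here; the only thing to be careful about is invoking Corollary \ref{cor:discreteness} with the correct choice of compact set (the singleton $\{x\}$) so that finiteness of preimages of $0$ up to time $Q$ is immediate.
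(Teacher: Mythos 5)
Your proof is correct and follows essentially the same route as the paper, which reduces the claim to the finiteness of the set of times $S \le Q$ with $\Fbold^S(x)=0$ via the discreteness/proper discontinuity of the cascade (the paper cites Lemma \ref{lem:discreteness}, you cite its Corollary \ref{cor:discreteness} with $\mathbf{Y}=\{x\}$ and $\mathbf{X}=\{0\}$). If anything, invoking the proper discontinuity corollary directly is the cleaner justification, since it literally asserts finiteness of the relevant set of power-triples.
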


\begin{proof}
    By definition, there is some $P \in \Tbold_{>0}$ such that $\Fbold^P(x) = 0$. By Lemma \ref{lem:discreteness}, $\textnormal{orb}_x^P(\Fbold)$ is discrete, so there are at most finitely many power-triples $S$ such that $S<P$ and $\Fbold^S(x) = 0$.
\end{proof}

\begin{corollary}
\label{cor:minimal-period}
    Every periodic point of $\Fbold^{\geq 0}$ has a minimal period.
\end{corollary}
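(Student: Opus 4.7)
The plan is to extract this as a direct consequence of proper discontinuity (Corollary \ref{cor:discreteness}). Given a periodic point $x$ with $\Fbold^{P_0}(x) = x$ for some $P_0 \in \Tbold_{>0}$, I would apply that corollary with $\mathbf{Y} := \{x\}$, which is a compact subset of $\Dom\bigl(\Fbold^{P_0}\bigr)$ since $\Fbold^{P_0}$ is defined at $x$; with $\mathbf{X}$ any bounded open neighborhood of $x$ in $\C$; and with $P := P_0$. The conclusion is that only finitely many power-triples $T \leq P_0$ can satisfy $\Fbold^T(x) \in \mathbf{X}$, and so in particular the set
\[
\bigl\{\, T \in \Tbold_{>0} \cap (0, P_0] \: : \: \Fbold^T(x) = x \,\bigr\}
\]
is finite. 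Since it contains $P_0$, it is non-empty, hence admits a minimum, which is the desired minimal period of $x$.

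As the argument is essentially a one-line application of proper discontinuity, I do not foresee any real obstacle. In particular, no further information about the arithmetic structure of the semigroup $\Tbold$ (density in $\R_{\geq 0}$, non-closure under subtraction, etc.) is needed: the only input is that orbits are proper with respect to bounded targets, which is precisely what Corollary \ref{cor:discreteness} encodes.
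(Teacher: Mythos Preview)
Your argument is correct and is essentially the same as the paper's: both use the discreteness/proper discontinuity of orbits (Lemma \ref{lem:discreteness} and its Corollary \ref{cor:discreteness}) to rule out periods accumulating at $0$. The only minor difference is that the paper applies Lemma \ref{lem:discreteness} directly to get a lower bound $Q>0$ on all periods and then invokes the sub-semigroup structure of $\Tbold_x$ to conclude it is cyclic, whereas you go straight through Corollary \ref{cor:discreteness} to get finiteness of $\Tbold_x \cap (0,P_0]$; your route is slightly more direct for the bare existence of a minimal period.
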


\begin{proof}
    Suppose $x$ is a periodic point of $\Fbold^{\geq 0}$. The set $\Tbold_x := \{ P \in \Tbold \: : \: \Fbold^P(x) = x \}$ of periods of $x$ is a sub-semigroup of $\Tbold$. Pick a small neighborhood $D$ of $x$. By Lemma \ref{lem:discreteness}, there is some $Q \in \Tbold_{>0}$ such that for all $0<P<Q$, $\Fbold^P(D)$ is disjoint from $D$ and thus $P \not\in \Tbold_x$. This implies that $\Tbold_x$ is finitely generated, and in particular, of the form $\{nS\}_{n\in \N}$, where $S \in \Tbold_{>0}$ is the minimal period.
\end{proof}

\subsection{Dynamical sets}
\label{ss:esc-set}

Consider $\Fbold \in \manibold$. 

\begin{definition}
    Given $P \in \Tbold$, the \emph{$P$\textsuperscript{th} escaping set} of $\Fbold$ is
    \[
        \Esc_{\leq P}(\Fbold) := \C \backslash \Dom\left(\Fbold^P\right).
    \]
    The \emph{finite-time escaping set} of $\Fbold$ is the union 
    \[
        \Esc_{<\infty}(\Fbold) := \bigcup_{P \in \Tbold} \Esc_{\leq P}(\Fbold),
    \]
     the \emph{infinite-time escaping set} of $\Fbold$ is
    \[
        \Esc_{\infty}(\Fbold) := \{ z \in \C \backslash \Esc_{<\infty}(\Fbold) \: : \: \Fbold^P(z) \to \infty \text{ as } P \to \infty\},
    \]
    and the \emph{full escaping set} of $\Fbold$ is
    \[
        \Esc(\Fbold) := \Esc_{<\infty}(\Fbold) \cup \Esc_{\infty}(\Fbold).
    \]
\end{definition}

\begin{lemma}
\label{lem:unboundedness-of-esc}
    Every connected component of $\Esc_{\leq P}(\Fbold)$ is unbounded.
\end{lemma}

\begin{proof}
    There exists some $n \leq 0$ such that $\Fbold_n := \Rboldstar^n \Fbold$ is in $\manibold_{\textnormal{loc}}$. Since the domains of $\fbold_{n,\pm}$ are simply connected, then $\Dom\left(\Fbold_n^P\right)$ is simply connected for all $P \in \Tbold$ and so the claim is true for $\Fbold_n$. Since $\Fbold$ is just a rescaling of $\Fbold_n$, the claim is also true for $\Fbold$.
\end{proof}

In \S\ref{sec:external}, we will thoroughly study the structure of the finite-time escaping set of the fixed point $\Fbold_*$. In \S\ref{sec:escaping-sets}, we will show that when $\Fbold$ is hyperbolic, the finite and infinite-time escaping sets do not carry any invariant line field. This will imply that the unstable manifold indeed has codimension one.

It is clear from the definition that the boundary of $\Esc_{\leq P}(\Fbold)$ coincides with the boundary of $\Dom\left(\Fbold^P\right)$. Points on $\partial \Esc_{\leq P}(\Fbold)$ can be regarded as essential singularities of $\Fbold^P$. The following lemma is an analog of Picard's theorem.

\begin{lemma}
\label{lem:transitivity}
    For every $P \in \Tbold_{>0}$ and any sufficiently small Euclidean disk $D$ centered at a point in $\partial \Esc_{\leq P}(\Fbold)$, the image $\Fbold^P(D')$ of any connected component $D'$ of $D \cap \Dom\left(\Fbold^P\right)$ is dense in $\C$. 
\end{lemma}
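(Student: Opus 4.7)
The plan is to mimic the classical big Picard argument that an essential singularity yields a dense image on every neighborhood, adapted to our $\sigma$-proper setting. I proceed in two stages: first I show $\Fbold^P(D')$ is unbounded using $\sigma$-properness, and then I upgrade unboundedness to density via a Zalcman-style rescaling argument.

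For the unboundedness, suppose $\Fbold^P(D')$ is contained in some compact set $K \subset \C$. Then $D'$ lies in a single connected component $C$ of $(\Fbold^P)^{-1}(K)$, which is compact in $\Dom(\Fbold^P)$ by $\sigma$-properness, and hence lies at positive Euclidean distance from $\partial \Esc_{\leq P}(\Fbold) \ni x$. But $x \in \partial D' \subset \overline{C}$, a contradiction. Running the same argument on the shrinking subdisks $D \cap B(x, \varepsilon)$ as $\varepsilon \to 0$ yields a sequence $z_n \in D'$ with $z_n \to x$ and $|\Fbold^P(z_n)| \to \infty$.

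For the density, suppose for contradiction that $\Fbold^P(D')$ omits an open disk $B \subset \C$. If $\Fbold^P$ is not spherically normal at $x$ within $D'$, Zalcman's lemma extracts $z_n \to x$ in $D'$ and $\rho_n \to 0^+$ such that $G_n(\zeta) := \Fbold^P(z_n + \rho_n \zeta) \to G(\zeta)$ spherically uniformly on compact subsets of $\C$, with $G : \C \to \widehat{\C}$ a non-constant meromorphic map. Since each $G_n$ omits $B$, so does $G$; and since $\widehat{\C} \setminus \overline{B}$ is conformally equivalent to $\D$, $G$ corresponds to a bounded entire function, hence is constant by Liouville, contradicting non-constancy. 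The remaining subcase is that $\Fbold^P$ is spherically normal at $x$ in $D'$, which by Marty's criterion combined with Stage 1 would force $\Fbold^P$ to extend spherically continuously to $x$ with value $\infty$, yielding a meromorphic pole-type extension through $x$ along $D'$. Ruling out this subcase is the main obstacle; to this end I would invoke the maximality of the $\sigma$-proper construction of Theorem \ref{thm:max-extension} and Lemma \ref{lem:stability-of-branched-covering}: the existence of a meromorphic extension across $x$ would allow one to strictly enlarge the exhausting domains $\mathbf{W}_\pm^{(k)}$ while preserving the proper branched covering structure onto $\mathbf{D}^{(k)} \uparrow \C$, contradicting the maximal construction of $\Dom(\Fbold^P)$.
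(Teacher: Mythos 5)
Your Stage 1 is correct: containment of $\Fbold^P(D')$ in a compact set would trap $D'$ in a single compact component of the preimage, contradicting the fact that $\overline{D'}$ meets $\partial\Dom\left(\Fbold^P\right)$ (note only that the relevant boundary point need not be the center $x$ of $D$ itself, but every component $D'$ of $D\cap\Dom\left(\Fbold^P\right)$ does accumulate somewhere on $\partial\Esc_{\leq P}(\Fbold)\cap D$, which suffices). This is exactly the ``direct consequence of $\sigma$-properness'' that the paper alludes to; the paper gives no further argument and defers the density upgrade entirely to \cite[Lemma 6.5]{DL23}, whose proof runs through the covering structure of $\sigma$-proper maps rather than through normal families.

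The gap is in the ``spherically normal'' subcase of your Stage 2, and it is a genuine one, for two reasons. First, that subcase is not vacuous: near a boundary point at which $\Fbold^P$ behaves like a conformal map onto an unbounded region (this is precisely what happens at an alpha-point, where lakes are mapped conformally onto oceans, cf.\ Lemma \ref{alpha-points}), the spherical derivative stays \emph{bounded} — compare $z\mapsto 1/z$ near $0$, whose spherical derivative tends to $0$ — so you cannot expect Zalcman rescaling to apply. Second, the proposed contradiction does not exist. By Lemma \ref{lem:unboundedness-of-esc} every component of $\Esc_{\leq P}(\Fbold)$ is unbounded, so $x$ lies on a nondegenerate continuum of the complement of $\Dom\left(\Fbold^P\right)$; a spherically continuous extension to the single boundary point $x$ with value $\infty$ is therefore not a ``meromorphic pole-type extension through $x$'' in any sense that produces analytic continuation, and it cannot be used to enlarge the domains $\mathbf{W}_\pm^{(k)}$ of Theorem \ref{thm:max-extension}: maximality there concerns the open set on which the map is defined, not boundary regularity. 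Worse, such boundary behavior is exactly what the construction predicts — the finite-time escaping set is by design the locus where $\Fbold^P$ ``reaches $\infty$ at time $\leq P$,'' and Section \ref{sec:esc-set} parametrizes points of $\Esc_{<\infty}$ by their escaping time and exhibits alpha-points as landing points of coasts, i.e.\ as preimages of $\infty$. So in the normal subcase you have derived a true statement, not a contradiction, and the density claim remains unproven there. To close the argument you need to exploit the quantitative covering structure (e.g.\ that every component of $\Fbold^{-P}$ of a bounded disk is compactly contained in $\Dom\left(\Fbold^P\right)$ and maps properly onto it, so that arbitrarily close to any boundary point of $D'$ one finds full preimage components of large disks), which is the route taken in \cite{DL23}.
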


This lemma is a direct consequence of $\sigma$-properness of $\Fbold^P$. The keen reader may refer to {\cite[Lemma 6.5]{DL23}} for a detailed proof.

\begin{corollary}
\label{cor:accumulation-to-bdry}
    For every $\Fbold \in \manibold$, $P \in \Tbold_{>0}$, and $x \in \C$, the boundary of $\Esc_{\leq P}(\Fbold)$ is the set of accumulation points of $\Fbold^{-P}(x)$.
\end{corollary}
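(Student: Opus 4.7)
I would prove both inclusions. For the easy one: any accumulation point $z \in \C$ of $\Fbold^{-P}(x)$ cannot lie in $\Dom(\Fbold^P)$, since continuity of $\Fbold^P$ (non-constant holomorphic on its domain) would otherwise give $\Fbold^P(z) = x$, contradicting the discreteness of the fibres of a non-constant holomorphic map. Hence $z \in \Esc_{\leq P}(\Fbold)$, and as a limit of points of the open set $\Dom(\Fbold^P)$, it lies on $\partial\Esc_{\leq P}(\Fbold)$.

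For the reverse inclusion, fix $y \in \partial \Esc_{\leq P}(\Fbold)$ and an arbitrarily small round disk $D \ni y$. I would select a connected component $D'$ of $D \cap \Dom(\Fbold^P)$ accumulating at $y$; Lemma \ref{lem:transitivity} then supplies $w_n \in D'$ with $\Fbold^P(w_n) \to x$. By Lemma \ref{lem:critical-pts}(2) together with Corollary \ref{cor:discreteness}, the critical-value set $\CV(\Fbold^P)$ is discrete in $\C$, so I can choose a small round disk $\Delta$ around $x$ whose closure meets $\CV(\Fbold^P)$ in at most $\{x\}$; every connected component of $\Fbold^{-P}(\Delta)$ is then a proper, bounded-degree branched cover over $\Delta$ by Lemma \ref{lem:critical-pts}(3). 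For $n$ large, $w_n$ belongs to such a component $V_n$ which carries a preimage $y_n$ of $x$.

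The main obstacle is upgrading this to $y_n \to y$, equivalently $\mathrm{diam}(V_n) \to 0$. The plan is a normal-family / Koebe distortion argument on the univalent inverse branches $h_n := (\Fbold^P|_{V_n})^{-1} : \Delta \to V_n$, normalised by $h_n(\Fbold^P(w_n)) = w_n \to y$. Passing to a subsequence, $h_n \to h$ locally uniformly on $\Delta$, with $h(x) = y$ and $h(\Delta) \subset \overline{\Dom(\Fbold^P)}$. If $h$ were univalent, its inverse $h^{-1} : h(\Delta) \to \Delta$ would provide a bounded holomorphic extension of $\Fbold^P$ through $y$, contradicting Lemma \ref{lem:transitivity}---which forces $\Fbold^P$ to be unbounded on every component of $\Dom(\Fbold^P) \cap h(\Delta)$ accumulating at $y$. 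Hence $h$ must be the constant map equal to $y$, which gives $V_n = h_n(\Delta) \to \{y\}$ in the Hausdorff sense and therefore $y_n \to y$, as desired.
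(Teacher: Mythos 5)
Your overall strategy for the hard inclusion is the same as the paper's: use Lemma \ref{lem:transitivity} to find, near a given boundary point $y$, a preimage component of a small disk $\Delta$ around $x$, and then show that the preimage of $x$ it carries is close to $y$. The easy inclusion is fine. The genuine gap is in the shrinking step: the inverse branches $h_n = \left(\Fbold^P|_{V_n}\right)^{-1} : \Delta \to V_n$ need not exist, because $\Fbold^P : V_n \to \Delta$ is only a proper \emph{branched} cover of bounded degree. It is univalent exactly when $V_n$ contains no critical point of $\Fbold^P$, and choosing $\overline{\Delta} \cap \CV\left(\Fbold^P\right) \subset \{x\}$ does not rule out ramification over $x$ itself. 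Since the corollary quantifies over every $x \in \C$, including the critical values $\Fbold^S(0)$, $S < P$ (Lemma \ref{lem:critical-pts}(2)), this is a case your argument cannot start on. Two smaller issues: the normalisation ``$w_n \to y$'' is unjustified ($w_n$ merely lies in the small disk $D$, so you must shrink $D$ and diagonalise); and locally uniform convergence of $h_n$ to a constant does \emph{not} give $h_n(\Delta) \to \{y\}$ in the Hausdorff sense (consider $z \mapsto z^n$ on $\D$). Fortunately the weaker conclusion $y_n = h_n(x) \to h(x)$ is all you need, which also makes the univalent-versus-constant dichotomy and the contradiction you extract from Lemma \ref{lem:transitivity} unnecessary.

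The paper closes exactly this gap with a modulus argument that is insensitive to ramification: fix a smaller disk $\Omega \Subset B$ around $x$ with $\modu\left(B \setminus \overline{\Omega}\right) \asymp 1$. Each component $B'$ of $\Fbold^{-P}(B)$ is a proper branched cover of $B$ of uniformly bounded degree (Lemmas \ref{lem:discreteness} and \ref{lem:critical-pts}), so the lifted annulus $B' \setminus \overline{\Omega'}$ still has definite modulus. This annulus lies in $\Dom\left(\Fbold^P\right)$ and hence separates $\Omega'$ from $y$; since Lemma \ref{lem:transitivity} places such an $\Omega'$ within $\varepsilon$ of $y$, the standard separation estimate bounds $\diam(\Omega')$ by a constant multiple of $\varepsilon$, and $\Omega'$ contains a point of $\Fbold^{-P}(x)$. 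If you want to keep a normal-family framework you would have to work with inverse branches over $\Delta$ slit at $x$ and track the bounded degree by hand; the annulus argument is shorter and uniform.
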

    
We will later show that $\Esc_{<\infty}(\Fbold)$ is non-empty, has no interior, and its closure coincides with the Julia set, which we will define below. This corollary is an analog of the basic result in holomorphic dynamics which states that iterated preimages are dense in the Julia set. The proof below is similar to \cite[Corollary 6.7]{DL23}.

\begin{proof}
    By Lemma \ref{lem:discreteness}, there exists a disk neighborhood $B$ of $x$ such that $B\backslash\{x\}$ is disjoint from $\CV\left(\Fbold^P\right)$. Then, every connected component $B'$ of $\Fbold^{-P}(B)$ contains at most one critical point and the degree of $\Fbold^P : B' \to B$ is at most some uniform constant. Let $\Omega \subset B$ be an even smaller disk neighborhood of $x$ such that $\modu\left(B\backslash \overline{\Omega}\right) \asymp 1$. Any lift $\Omega' \subset B'$ of $\Omega$ under $\Fbold^P$ is also a disk with $\modu(B' \backslash \overline{\Omega'}) \asymp 1$.

    Consider a connected component $D$ of $\Dom\left(\Fbold^P\right)$, a point $y \in \partial D$, and a small $\varepsilon>0$. By Lemma \ref{lem:transitivity}, there is a connected component $\Omega' \subset D$ of $\Fbold^{-P}(\Omega)$ that is of distance at most $\varepsilon$ away from $y$. Since $\modu\left(B'\backslash\overline{\Omega'}\right)\asymp 1$, then $\Omega'$ has a small diameter depending on $\varepsilon$. Since $\Omega'$ contains a point in $\Fbold^{-P}(x)$, the assertion follows.
\end{proof}

\begin{definition}
    The \emph{Fatou set} $\Fcas(\Fbold)$ of $\Fbold$ is the set of points $z$ which admit a small open neighborhood $D$ disjoint from $\Esc_{<\infty}(\Fbold)$ such that $\{ \Fbold^P|_{D} \}_{P\in \Tbold}$ forms a normal family. The \emph{Julia set} $\Jcas(\Fbold)$ of $\Fbold$ is the complement $\C \backslash \Fcas(\Fbold)$.
\end{definition}

From the definition, it is clear that $\Jcas(\Fbold)$ contains the closure of $\Esc_{<\infty}(\Fbold)$. 

\begin{proposition}[Invariance]
    For every $P \in T$,
    \begin{enumerate}[label=\textnormal{(\arabic*)}]
        \item both $\Fcas(\Fbold)$ and $\Jcas(\Fbold)$ are completely invariant under $\Fbold^P$;
        \item the Fatou and Julia sets of $\Fbold^P$ are equal to $\Fcas(\Fbold)$ and $\Jcas(\Fbold)$ respectively.
    \end{enumerate} 
\end{proposition}

We say that a connected component $D$ of $\Fcas(\Fbold)$ is \emph{periodic} if there is some $P \in \Tbold_{>0}$ such that $\Fbold^P(D)=D$. The smallest such $P$ is called the \emph{period} of $D$. Moreover, we say that $D$ is \emph{pre-periodic} if there is some $Q \in \Tbold$ such that $\Fbold^Q(D)$ is periodic. The smallest such $Q$ is called the \emph{pre-period} of $D$. (These quantities exist due to Lemma \ref{lem:discreteness}. Compare with Corollary \ref{cor:minimal-period}.) We will show later in \S\ref{ss:no-wandering} that every Fatou component is pre-periodic.

\begin{proposition}
\label{prop:simply-conn}
    Every connected component of $\Fcas(\Fbold)$ is simply connected.
\end{proposition}

In particular, $\Fbold$ does not admit any Herman rings.

\begin{proof}
    This is a standard application of the maximum modulus principle. Pick any Jordan domain $D$ such that $\partial D$ is contained in $\Fcas(\Fbold)$. For all $P \in \Tbold$, $\Fbold^P|_D$ attains maximum on $\partial D$, thus $\{\Fbold^P|_D\}_P$ forms a normal family.
\end{proof}

In the case of $\Fbold = \Fbold_*$, equation (\ref{eqn:self-sim-cascade}) again implies self-similarity of the corresponding dynamical sets.

\begin{lemma}
\label{lem:self-similarity}
    The linear map $A_*$ preserves $\Esc_{<\infty}(\Fbold_*)$, $\Esc_{\infty}(\Fbold_*)$, $\Fcas(\Fbold_*)$, and $\Jcas(\Fbold_*)$. For all $P \in \Tbold_{>0}$, $A_*(\Esc_{\leq P}(\Fbold_*)) = \Esc_{\leq \tbold P}(\Fbold_*)$.
\end{lemma}

From now on, we will reserve the notation $\Hq$ to denote the \emph{Herman curve} of $\Fbold_*$, that is, the full lift of the Herman quasicircle of $f_*$ (see Definition \ref{def:full-lift}). Clearly, $\Hq$ is an $A_*$-invariant quasiarc.

\begin{lemma}
\label{lem:jcas-and-H}
    The set $\bigcup_{P \in \Tbold} \Fbold_*^{-P}(\Hq)$ of iterated preimages of $\Hq$ is dense in $\C$. In particular, the Julia set of $\Fbold_*$ is the whole plane: $\Jcas(\Fbold_*)=\C$.
\end{lemma}

\begin{proof}
    By \cite[Theorem 5.3]{Lim23b} and Theorem \ref{thm:qc-rigidity}, the critical value $c_1(f_*)$ of $f_*$ is a deep point of the non-escaping set of the corona $f_*$. 
    (Roughly speaking, magnifications of the iterated preimages of the Herman quasicircle of $f_*$ about $c_1(f_*)$ converge to the whole plane exponentially fast in the Hausdorff metric.) 
    As we pass to the corresponding dynamical plane of the transcendental extension, $0$ is a deep point of the iterated preimages of the Herman curve $\Hq$ of $\Fbold_*$ under $\Fbold_*^{(0,1,0)}$ and $\Fbold_*^{(0,0,1)}$. 
    By self-similarity, the iterated preimages of $\Hq$ under $\Fbold_*^{\geq 0}$ must be dense in $\C$, so its closure $\Jcas(\Fbold_*)$ is equal to $\C$.
\end{proof}

Further properties of the escaping sets, the Fatou set, and the Julia set will be discussed in Part 4.

\subsection{Invariant line field}
\label{ss:qc-deformation}

For $\Fbold \in \manibold$, let us denote by $\mathcal{B}(\Fbold)$ the set of $L^\infty$ Beltrami differentials $\mu(z) \frac{d \bar{z}}{dz}$ on $\C$ such that $\left(\Fbold^P\right)^*\boldsymbol{\mu}=\boldsymbol{\mu}$ almost everywhere on $\Dom\left(\Fbold^P\right)$ for all $P \in \Tbold$. An \emph{invariant line field} of $\Fbold$ is a non-zero element $\boldsymbol{\mu}$ of $\mathcal{B}\mathcal(\Fbold)$ such that $|\boldsymbol{\mu}(z)| = 1$ for all $z$ in the support of $\boldsymbol{\mu}$.

Given a corona $f: U \to V$, an \emph{invariant line field} of $f$ supported on a completely invariant set $E \subset V$ is a $L^\infty$ Beltrami differential $\mu(z) \frac{d \bar{z}}{dz}$ such that $f^*\mu=\mu$ almost everywhere on $U$, $|\mu|=1$ on a positive measure subset of $E$, and $\mu=0$ elsewhere. Given a corona $f$ in $\unstloc$ and its associated cascade $\Fbold$, a invariant line field $\boldsymbol{\mu}$ of $\Fbold$ induces a sequence of line fields $\mu_{-n}$ invariant under $\Rstar^{-n}f$ for all $n \geq 0$.

In classical holomorphic dynamics, the absence of invariant line fields is equivalent to the triviality of deformation space associated to a single holomorphic map. This principle remains valid for cascades in the unstable manifold.

For $r>0$, we again denote the open disk $\{ z \in \C \: : \: |z| < r \}$ by $\D_r$.

\begin{proposition}
\label{prop:integration-of-ILF}
    Consider $\Fbold \in \manibold$ and a non-zero $\boldsymbol{\mu} \in \mathcal{B}(\Fbold)$. 
    \begin{enumerate}[label=\textnormal{(\arabic*)}]
        \item There exist a constant $\varepsilon \in (0,\|\boldsymbol{\mu}\|_\infty^{-1})$, a unique holomorphic family $\{\Gbold_t\}_{t \in \D_\varepsilon}$ in $\manibold$, and a unique holomorphic family $\{\phi_t\}_{t \in \D_\varepsilon}$ of quasiconformal maps on $\C$ such that for every $t \in \D_\varepsilon$, 
        \begin{enumerate}[label=\textnormal{(\alph*)}]
            \item $\Gbold_0 = \Fbold$ and $\Fbold^{\geq 0}$ is quasiconformally conjugate to $\Gbold^{\geq 0}_t$ via $\phi_t$;
            \item $t\boldsymbol{\mu}$ is the Beltrami coefficient of $\phi_t$, i.e. $\bar{\partial} \phi_t = t\boldsymbol{\mu} \cdot \partial \phi_t$.
        \end{enumerate}
        \item Suppose $\Fbold$ is in $\manibold_{\textnormal{loc}}$. Let $f \in \unstloc$ be the associated corona, and let $f_{-n} := \Rstar^{-n}(f)$ for $n\geq 0$. For each $n\geq 0$, there exist a unique holomorphic family $\{g_{-n,t}\}_{t \in \D_\varepsilon}$ in $\unstloc$ and a holomorphic family of quasiconformal maps $\{\phi_{-n,t}: V \to V\}_{t \in \D_\varepsilon}$ such that 
        \begin{enumerate}[label=\textnormal{(\alph*)}]
            \item $g_{-n,0} = f_{-n}$ and $f_{-n}$ is quasiconformally conjugate to $g_{-n,t}$ via $\phi_{-n,t}$;
            \item $\Rstar g_{-n-1,t} = g_{-n,t}$ for all $t \in \D_\varepsilon$,
            \item $\Gbold_t \equiv \Fbold$ for all $t$ if and only if $g_{-n,t} \equiv f_{-n}$ for all $t$.
        \end{enumerate}
    \end{enumerate} 
\end{proposition}

\begin{proof}
    A standard application of the measurable Riemann mapping theorem gives us the appropriate holomorphic families $\Gbold_t$ and $\phi_t$, but a priori we do not know whether the cascade $\Gbold_t$ lives in $\manibold$. To deal with this issue, we shall descend to the realm of coronas. To simplify the exposition, let us assume $\mu$ is a line field.
    
    By anti-renormalizing, let us assume without loss of generality that $\Fbold$ is in $ \manibold_{\textnormal{loc}}$. Pick a pair of integers $m,n \geq 0$. Let us project $\boldsymbol{\mu}$ to the dynamical plane of $f_{-m-n}$ and obtain an invariant line field $\mu_{-m-n}$ of $f_{-m-n}$. Then, we integrate $t\mu_{-m-n}$ for every $t \in \D$ to obtain a Beltrami path $\{f_{-m-n,t}\}_{t \in \D}$ of coronas in a neighborhood of $f_*$. Let us renormalize $m$ times to obtain a new path $f^{(m)}_{-n,t} := \Rstar^{m}f_{-m-n,t}$ about $f^{(m)}_{-n,0} \equiv f_{-n}$. When $|t|<\frac{1}{2}$, $f^{(m)}_{-n,t}$ is quasiconformally conjugate to $f_{-n}$ with uniformly bounded dilatation. Therefore, we can take a limit as $m \to \infty$ and obtain a holomorphic path $g_{-n,t}$ of infinitely anti-renormalizable coronas. 
    
    For sufficiently small $\varepsilon>0$, the limiting path $\{g_{-n,t}\}_{|t|<\varepsilon}$ lies in the local unstable manifold $\mani^u_{\textnormal{loc}}$, and it satisfies the relation $\Rstar g_{-n-1,t} = g_{-n,t}$ for all $n\geq 0$ and $t$. In particular, $\{g_{0,t}\}$ corresponds to the desired holomorphic path $\{\Gbold_t\}$ in $\manibold_{\textnormal{loc}}$. Let us elaborate on the property (2) (c). Recall from \S\ref{ss:maximal-extension} that $\Gbold_t$ is constructed as the ``union`` of analytic extensions of rescalings of $g_{-n,t}$ across all $n$. Thus, $\Gbold_t$ is a trivial family if and only if $g_{-n,t}$ is trivial for all $n$, but by (2) (b), this occurs if and only if $g_{-n,t}$ is trivial for some $n$.

    It remains for us to prove uniqueness. Suppose there are two families of quasiconformal maps $\phi_t$ and $\tilde{\phi_t}$ integrating $t \boldsymbol{\mu}$ and giving two holomorphic curves $\Gbold_t = \phi_t \circ \Fbold \circ \phi_t^{-1}$ and $\tilde{\Gbold}_t = \tilde{\phi}_t \circ \Fbold \circ \tilde{\phi}_t^{-1}$ about $\Fbold$ in $\manibold$. Clearly, for all $t$, both $\phi_t$ and $\tilde{\phi}_t$ fix $\infty$ as well as the critical value $0$. By Weyl's Lemma, $\tilde{\phi}_t \circ \phi_t^{-1}$ defines a non-trivial family of linear maps conjugating $\Gbold_t$ and $\tilde{\Gbold}_t$. By Lemma \ref{lem:linear-conjugacy} below, $\tilde{\phi}_t = \phi_t$  and $\tilde{\Gbold}_t = \Gbold_t$ for all $t$.
\end{proof}

\begin{lemma}
\label{lem:linear-conjugacy}
    Consider two continuous families $\{\Fbold_t\}_{-1\leq t \leq 1}$ and $\{\Gbold_t\}_{-1 \leq t \leq 1}$ of cascades in $\manibold$. Suppose there exists a continuous family of linear maps $\{L_t\}_{-1 \leq t \leq 1}$ of the plane such that $L_0$ is the identity map and for all $t$, $\Gbold_t = L_t \circ \Fbold_t \circ L_t^{-1}$. Then, $L_t$ must be the identity map for all $t$.
\end{lemma}

\begin{proof}
    Let $\Fbold_{t,-n}$ and $\Gbold_{t,-n}$ denote the $n$\textsuperscript{th} anti-renormalization of $\Fbold_{t}$ and $\Gbold_{t}$ respectively. Then, for all $n$ and $t \in [-1,1]$,
    \[
        \Gbold_{t,-n} = L_t \circ \Fbold_{t,-n} \circ L_t^{-1}.
    \]
    As $n \to \infty$,
    \[
        \Fbold_* = \lim_{n \to \infty} \Gbold_{t,-n} = \lim_{n\to\infty} L_t \circ \Gbold_{t,-n} \circ L_t^{-1} = L_t \circ \Fbold_* \circ L_t^{-1}.
    \]
    Now, suppose for a contradiction that $\{L_t\}_t$ is a non-trivial family. Pick a critical point $C$ of $\Fbold_*$. From the equation above, $L_t(C)$ must also be a critical point of $\Fbold_*$ of the same generation for all $t$, but this is impossible because critical points of a fixed generation are discrete.
\end{proof}

\begin{corollary}
\label{cor:no-ilf-fixed-pt}
    The renormalization fixed point $\Fbold_*$ admits no invariant line field.
\end{corollary}

\begin{proof}
    Suppose for a contradiction that $\Fbold_*$ admits an invariant line field $\boldsymbol{\mu}$. By Proposition \ref{prop:integration-of-ILF}, $\boldsymbol{\mu}$ induces a family $\{\Gbold_t\}_t$ in $\manibold$ where $\Gbold_0 \equiv \Fbold_*$, together with quasiconformal maps $\phi_t: \C \to \C$ conjugating $\Fbold_*$ with $\Gbold_t$ for all $t$. Each of $\Gbold_t$ induces a rotational corona $g_t$ with rotation number $\theta$, which, by Theorem \ref{thm:weak-hyperbolicity}, implies that $g_t$ must also be on the local stable manifold. Therefore, $g_t \equiv f_*$ and the family $\Gbold_t$ is trivial. For every $t$, $\phi_t$ commutes with $\Fbold_*$ along the Herman curve $\Hq$ of $\Fbold_*$. As such, $\phi_t$ is the identity on $\Hq$, and so on the iterated preimages $\bigcup_{P} \Fbold_*^{-P} (\Hq)$ of $\Hq$ as well. By Lemma \ref{lem:jcas-and-H}, $\phi_t$ must be the identity map on the whole plane. This contradicts the assumption that the support of $\boldsymbol{\mu}$ has positive measure.
\end{proof}


\part{Dynamical properties of cascades}

\section{The external structure of \texorpdfstring{$\Fbold_*$}{F*}}
\label{sec:external}

Consider the dynamics of the cascade $\Fbold = \Fbold_*$ corresponding to the fixed point $f_*$ of the renormalization operator. We denote by $\Hq$ the Herman curve of $\Fbold$, which is defined to be the full lift of the Herman curve of $f_*$. The action of $\Fbold$ along $\Hq$ can be described as follows. For $a \in \C$, we denote the translation map by $a$ by $T_a(z):= z+a$.

\begin{lemma}
\label{lem:dynamics-on-H}
    There is a quasisymmetric map $h: (\Hq,0) \to (\R,0)$ that conjugates $\left(\Fbold^P|_\Hq\right)_{P \in \Tbold}$ with the cascade of translations $\left(T^P\right)_{P \in \Tbold}$ defined by $T^{(n,a,b)} := T_{\tbold^{-n}(b\vbold-a\ubold)}$, where $\ubold,\vbold>0$ and $\theta= \frac{\ubold}{\ubold+\vbold}$.
\end{lemma}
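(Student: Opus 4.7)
The plan is to construct $h$ by first linearizing $f_*$ on its Herman quasicircle $\Hq_{f_*}$, and then propagating that linearization through the cascade dynamics via the full-lift construction of $\Hq$. By Petersen's theorem \cite{Pe96}, there is a quasisymmetric conjugacy $h_{f_*} : \Hq_{f_*} \to \T = \R/\Z$ between $f_*|_{\Hq_{f_*}}$ and the rigid rotation $R_\theta$, which I would normalize to send the critical value of $f_*$ to $0$. The initial cascade pair $\Fbold_* = (\fbold_{0,-}, \fbold_{0,+})$ is realized on $f_*$ as an iterate $(f_*^{\abold_0}, f_*^{\bbold_0})$ where $\abold_0, \bbold_0$ are return times coming from consecutive denominators of the best rational approximations to $\theta$ (cf.\ Appendix \ref{appendix-B}). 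Choosing the lifts to $\R$ of the circle rotations $R_{\abold_0\theta}$ and $R_{\bbold_0\theta}$ that yield negative and positive translation lengths respectively, we obtain translations by $-\ubold$ and $+\vbold$ with $\ubold = m_0 - \abold_0\theta > 0$ and $\vbold = \bbold_0\theta - n_0 > 0$ for suitable integers $m_0, n_0$. The identity $\theta = \ubold/(\ubold + \vbold)$ will then follow from the Bezout relation $|\bbold_0 m_0 - \abold_0 n_0| = 1$ between consecutive convergents together with the quadratic equation satisfied by $\theta \in \Theta_{per}$.

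Next, I would define $h$ on $\Hq \cap (\overline{\Ubold_-} \cup \overline{\Ubold_+})$ by pulling back $h_{f_*}$ through the linear coordinate $h_0^\natural$, which identifies the base tiles with appropriate subarcs of $\Hq_{f_*}$ shifted so the critical value sits at $0$, and picking the lift to $\R$ with $h(0) = 0$. Extending to the rest of $\Hq$ is then done by spreading: for every $z = \Fbold_*^P(z_0)$ with $z_0$ in one of the initial pieces, set $h(z) := h(z_0) + \ell(P)$, where $\ell((n,a,b)) := \tbold^{-n}(b\vbold - a\ubold)$. Well-definedness under the equivalence relation $(n,a,b) \sim (n-1, (a,b)\Mbold)$ on $\Tbold$ reduces to showing that $(\ubold, \vbold)^T$ is an eigenvector of $\Mbold$ with eigenvalue $\tbold^{-1}$, which should hold because $\Mbold$ is the sector renormalization matrix from Appendix \ref{appendix-B}, and $(\ubold, \vbold)$ are precisely its naturally associated real coordinates transforming via $\Mbold^{-1}$ under renormalization. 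Consistency with the self-similarity (\ref{eqn:self-sim-cascade}) is built into the identity $\ell(\tbold P) = \tbold^{-1}\ell(P)$, and forces $h$ to conjugate $A_*|_\Hq$ with the real scaling $x \mapsto \tbold^{-1} x$ on $\R$.

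Quasisymmetry of $h$ then propagates tile-by-tile from quasisymmetry of $h_{f_*}$: on each tile $\Delta_n(i)$ of the renormalization tiling, $h$ restricts to an affine reparametrization of $h_{f_*}$ composed with a cascade power, and the scaling constants are controlled uniformly by the bounded-geometry properties of $\Deltabold_n(\Fbold_*)$ from Lemma \ref{lem:renormalization-tiling-01}. The main obstacle will be that $A_*$ acts on $\Hq$ as a spiral contraction by the complex factor $\mu_*$ whose argument $\arg\mu_*$ is generally nontrivial, while its $h$-image on $\R$ must be the pure real scaling by $\tbold^{-1}$; the quasisymmetric compatibility between the complex self-similar structure of $\Hq$ at $0$ and the linear structure on $\R$ will be handled by the standard argument for qs linearization of a self-similar quasiarc through a fixed point with rotational multiplier, in the spirit of the analogous pacman construction in \cite[\S4]{DLS}.
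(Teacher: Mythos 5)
Your proposal is correct and follows essentially the same route as the paper's proof: quasisymmetrically linearize the base commuting pair $\fbold_{0,\pm}$ on the invariant quasiarc $\Hq_0$ (the paper quotes this directly as a conjugacy of the pair to $\left(T_{-\theta}|_{[0,1-\theta]},\, T_{1-\theta}|_{[-\theta,0]}\right)$, so that $(\ubold,\vbold)$ is proportional to $(\theta,1-\theta)$ by normalization rather than by your convergent arithmetic), and then extend over all of $\Hq$ using the self-similarity (\ref{eqn:self-sim-cascade}) together with the eigenvector identity $\Mbold(-\ubold,\vbold)^{T}=\tbold^{-1}(-\ubold,\vbold)^{T}$ from Appendix \ref{ss:cascade}. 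Your closing concern about $\arg\mu_*$ is harmless: a quasiarc is allowed to spiral into its $A_*$-fixed point, and the uniform quasisymmetry constant on the rescaled fundamental arcs $A_*^n(\Hq_0)$ already yields global quasisymmetry of $h$.
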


\begin{proof}
    The pre-corona $F_*$ associated to $f_*$ admits an invariant quasiarc which projects to the Herman curve of $f_*$. In linearizing coordinates, this corresponds to an invariant quasiarc $\Hq_0$ of $\fbold_{0,\pm} : \Ubold_\pm \to \Sbold)$ which passes through $0$ and connects $\fbold_{0,+}(0)$ and $\fbold_{0,-}(0)$. The dynamics of $\fbold_{0,\pm}$ along $\Hq_0$ is quasisymetrically conjugate to a pair of translations $\left(T_{-\theta}|_{[0,1-\theta]}, T_{1-\theta}|_{[-\theta,0]} \right)$ on the real interval $[-\theta,1-\theta]$. Set $\ubold= - \theta$ and $\vbold=1-\theta$. As we extend $\fbold_{0,\pm}$ to its maximal $\sigma$-proper extension via $A_*$, the quasisymmetric conjugacy $h$ between $(\fbold_{0,-},\fbold_{0,+})$ and $(T_{-\ubold}, T_{-\vbold})$ extends to the whole lift $\Hq$ of $\Hq_0$. The claim holds because the pairs $(\fbold_{0,-},\fbold_{0,+})$ and $(T_{-\ubold}, T_{-\vbold})$ generate the cascades $\Fbold^{\geq 0}|_\Hq$ and $T^{(n,a,b)} := T_{\tbold^{-n}(b\vbold-a\ubold)}$ via iteration and rescaling according to (\ref{eqn:self-sim-cascade}) and \S\ref{ss:cascade}.
\end{proof}

In this section, we will comprehensively describe the dynamics of $\Fbold$ beyond $\Hq$. We study the structure of preimages of $\Hq$ in \S\ref{ss:lakes}--\ref{ss:limbs}, then the structure of the finite-time escaping set $\Esc_{<\infty}:=\Esc_{<\infty}(\Fbold)$ in \S\ref{ss:alpha}--\ref{ss:external-chains}, and lastly the dynamical puzzles cut out by subsets of $\Esc_{<\infty}$ in \S\ref{ss:wakes}.

\subsection{Lakes}
\label{ss:lakes}

Let us label the components of $\C \backslash \Hq$ by $\Obold^0$ and $\Obold^\infty$, which we will refer to as the \emph{oceans} of $\Fbold$. The two oceans will be distinguished as follows. For $\bullet \in \{0,\infty\}$ and for any point $x$ in $\Sbold \cap \Obold^\bullet$ close to $0$, we assume that there are $d_\bullet$ preimages of $x$ under $\fbold_{0,\pm}: \Ubold_{\pm} \to \Sbold$ that are located near the critical point and inside of $\Obold^\bullet$.

\begin{definition}
\label{def:lakes}
    A \emph{lake} $\Obold$ \emph{of generation} $P \in \Tbold$ is a connected component of $\Fbold^{-P}(\Obold^\bullet)$ for some $\bullet \in \{0,\infty\}$. Its \emph{coast} is defined by $\partial^c\Obold := \partial \Obold \cap \Dom\left(\Fbold^P\right)$.
\end{definition}

\begin{lemma}[Chessboard rule]
\label{chessboard-rule}
    For every $P \in \Tbold_{>0}$ and $\bullet \in \{0,\infty\}$, the preimage $\Fbold^{-P}(\Hq)$ is a tree in $\Dom\left(\Fbold^P\right)$ and $\Fbold^{-P}(\Obold^\bullet)$ is disjoint union of lakes $\bigcup_{i \in \N} \Obold_i$ of generation $P$ such that 
    \begin{enumerate}[label=\textnormal{(\arabic*)}]
        \item each lake $\Obold_i$ is an unbounded non-separating disk in $\Dom\left(\Fbold^P\right)$;
        \item for $j \neq i$, the intersection $\partial^c \Obold_i \cap \partial^c \Obold_j$ is either empty or a singleton consisting of a critical point of $\Fbold^P$.
    \end{enumerate}
\end{lemma}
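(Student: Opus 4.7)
The plan is to exploit the $\sigma$-proper branched covering $\Fbold^P : \Dom(\Fbold^P) \to \C$ from Theorem \ref{thm:max-extension}, combined with Lemma \ref{lem:critical-pts}, which localizes the critical values of $\Fbold^P$ to the discrete set $\{\Fbold^S(0) : 0 \leq S < P\}$. By Lemma \ref{lem:dynamics-on-H} this set lies entirely on $\Hq$, so each ocean $\Obold^\bullet$ avoids the critical values, and $\Fbold^P$ restricts to an unbranched $\sigma$-proper covering $\Fbold^{-P}(\Obold^\bullet) \to \Obold^\bullet$.

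From this, condition (1) is almost immediate. Adjoining $\infty$, $\Hq \cup \{\infty\}$ is a Jordan curve in $\hat{\C}$ whose two complementary topological disks are the oceans, so each $\Obold^\bullet$ is simply connected. Since an unbranched covering of a simply connected space is trivial on each component, every lake $\Obold_i$ maps biholomorphically onto $\Obold^\bullet$ and is therefore a simply connected topological disk. Non-precompactness of $\Obold_i$ in $\Dom(\Fbold^P)$ is forced by $\sigma$-properness: otherwise $\overline{\Obold_i}$ would be compact in $\Dom(\Fbold^P)$ and $\Fbold^P(\overline{\Obold_i}) \supset \Obold^\bullet$ would be relatively compact in $\C$, contradicting unboundedness of $\Obold^\bullet$. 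The non-separating property then follows once we know the complementary graph $\Fbold^{-P}(\Hq)$ is a tree.

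For the tree statement and condition (2), I first analyze $\Fbold^{-P}(\Hq)$ locally. At a non-critical point $q$, $\Fbold^P$ is a local biholomorphism, so $\Fbold^{-P}(\Hq)$ is locally a smooth arc touched by exactly one lake from each ocean; hence distinct $\bullet$-lakes can meet only at critical points of $\Fbold^P$. At a critical point of local degree $k$, the map is locally conjugate to $z \mapsto z^k$, so $\Fbold^{-P}(\Hq)$ forms a $k$-pronged star cutting a neighborhood into $2k$ sectors that alternate between the two ocean types; in this local picture, distinct $\bullet$-lakes share precisely that one critical point. To globalize, both the singleton intersection claim and the absence of loops in $\Fbold^{-P}(\Hq)$ reduce to non-precompactness of lakes: any cycle in $\Fbold^{-P}(\Hq)$, or any two distinct $\bullet$-lakes sharing two boundary critical points, would enclose a bounded region of $\Dom(\Fbold^P)$ consisting of lakes and arcs of $\Fbold^{-P}(\Hq)$, forcing at least one of those lakes to be relatively compact --- a contradiction with the previous paragraph.

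The main technical obstacle is this final global step, especially making the ``encloses a bounded region'' argument fully rigorous given that lakes are non-compact and $\partial \Obold_i$ may accumulate on $\partial \Dom(\Fbold^P)$. The cleanest route is via a $\sigma$-proper exhaustion $\Fbold^P : E_n \to D_n$ by proper finite-degree branched covers over an exhausting sequence of disks $D_n \nearrow \C$: at each finite level the Riemann--Hurwitz formula produces the tree-and-lakes structure directly on $E_n$, and one then passes to the direct limit, tracking how the structure at level $n$ embeds in that at level $n+1$ to recover the full statement for $\Fbold^P$.
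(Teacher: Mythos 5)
Your argument is essentially the paper's: the published proof is a one-line appeal to the $\sigma$-properness of the cascade and the inclusion $\CV\left(\Fbold^P\right)\subset\Hq$ (deferring to \cite[Lemma 5.1]{DL23}), which is exactly the combination you describe of an unbranched covering over the simply connected oceans together with an exhaustion by proper finite-degree branched covers; your reading of ``unbounded in $\Dom\left(\Fbold^P\right)$'' as non-precompactness in the domain is also the correct one, since middle lakes are bounded petals in $\C$ whose closures meet $\Esc_{\leq P}$ only at an alpha-point. The global obstacle you flag --- ruling out enclosed bounded regions when lake closures may accumulate on $\partial\Dom\left(\Fbold^P\right)$ --- is closed by Lemma \ref{lem:unboundedness-of-esc}: any component of $\Esc_{\leq P}$ trapped inside such a region would be bounded, while a lake whose closure avoids $\Esc_{\leq P}$ would be precompact in $\Dom\left(\Fbold^P\right)$, so either alternative yields the contradiction you need.
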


\begin{proof}
    The whole lemma follows immediately from $\sigma$-properness of the cascade, e.g. \cite[Lemma 5.1]{DL23}, and the fact that $\CV(\Fbold)$ is contained in $\Hq$.
\end{proof}

Given any lake $\Obold$ of some generation $P\in \Tbold_{>0}$, the map $\Fbold^P$ sends $\Obold$ univalently onto an ocean, and its coast homeomorphically onto $\Hq$. In general, when $0<P<Q$, a lake of generation $Q$ is contained in a lake of generation $P$, and $\Fbold^{Q-P}$ conformally sends any lake of generation $Q$ onto a lake of generation $P$.

\begin{lemma}
    For every $P \in \Tbold_{>0}$, there is a unique critical point $C_P \in \Hq$ of $\Fbold^{\geq 0}$ of generation $P$ and a pairwise disjoint collection of lakes
    \begin{equation}
    \label{list-of-lakes-at-CP}
        \prescript{}{1}{\Obold}^0_P, \enspace \ldots, \enspace \prescript{}{2d_0-3}{\Obold}^0_P, \enspace 
        \prescript{}{1}{\Obold}^\infty_P, \enspace \ldots, \enspace \prescript{}{2d_\infty-3}{\Obold}^\infty_P, 
    \end{equation}
    of generation $P$ together with a bouquet of pairwise-disjoint open quasiarcs
    \begin{equation}
    \label{eqn:spines}
        \prescript{}{1}{\Hq}^0_P, \enspace \ldots, \enspace
        \prescript{}{2d_0-2}{\Hq}^0_P, \enspace
        \prescript{}{1}{\Hq}^\infty_P, \enspace \ldots, \enspace \prescript{}{2d_\infty-2}{\Hq}^\infty_P, 
    \end{equation}
    rooted at $C_P$ such that for each $\bullet \in \{0,\infty\}$ and $j \in \{1,\ldots, 2d_\bullet-3\}$,
    \begin{enumerate}[label=\textnormal{(\arabic*)}]
        \item the coast of $\prescript{}{j}{\Obold}^\bullet_P$ is $\prescript{}{j}{\Hq}^\bullet_P \cup \{C_P\} \cup \prescript{}{j+1}{\Hq}^\bullet_P$;
        \item $\prescript{}{j}{\Obold}^\bullet_P$ is contained in $\Obold^\bullet$;
        \item $\prescript{}{j}{\Obold}^\bullet_P$ is mapped conformally by $\Fbold^P$ onto $\Obold^\bullet$ if $j$ is even, and onto $\C \backslash \overline{\Obold^\bullet}$ if $j$ is odd.
    \end{enumerate}
\end{lemma}

\begin{proof}
    The existence and uniqueness of $C_P$ is due to the fact that $\Fbold^P$ restricts to a homeomorphism on $\Hq$. From the previous lemma, $\Fbold^{-P}(\Hq)$ is a tree. The quasiarcs $\prescript{}{j}{\Hq}^\bullet_P$'s are precisely the components of $\Fbold^{-P}(\Hq) \backslash \{C_P\}$, and the lakes $\prescript{}{j}{\Obold}^\bullet_P$'s in (\ref{eqn:spines}) are precisely the connected components of $\Dom\left(\Fbold^P\right) \backslash \Fbold^{-P}(\Hq)$ which touch $\Hq$ at exactly one point, which is $C_P$. For all $S<P$, the image of each quasiarc $\prescript{}{j}{\Hq}^\bullet_P$ under $\Fbold^S$ is disjoint from $0$. Therefore, $\Fbold^P$ maps each of $\prescript{}{j}{\Hq}^\bullet_P$ onto a component of $\Hq \backslash \{0\}$ homeomorphically. They can be enumerated such that the three claims above hold because $C_P$ has inner and outer criticalities $d_0$ and $d_\infty$ respectively. 
\end{proof}

Each quasiarc in (\ref{eqn:spines}) is called a \emph{spine} of $C_P$. The spines in (\ref{eqn:spines}) are labelled in counterclockwise order about $C_P$.

\begin{figure}
    \centering
    \begin{tikzpicture}[scale=1.25]
        \draw[ultra thick] (-2.4,-1) -- (0,0) -- (3,-1.25) -- (5.4,-0.25) -- (7.2,-1);
        \node [black, font=\bfseries] at (-2.2,-1.2) {$\Hq$};

        \filldraw[gray, fill=yellow!05!white] (0,0) .. controls (1.5,0.2) and (2,1.5) .. (2,2) .. controls (1.8,3) and (0.5,3.8) .. (0,3.5) .. controls (-0.5,3.8) and (-1.8,3) .. (-2,2) .. controls (-2,1.5) and (-1.5,0.2) .. (0,0); 
        \filldraw[gray, fill=green!05!white] (0,0) .. controls (1, 2.2) .. (0,3.5) .. controls (-1,2.2) .. (0,0); 
        \filldraw[gray, fill=green!05!white] (0,0) .. controls (0.9,-1.8) .. (0,-2.5) .. controls (-0.9,-1.8) .. (0,0);
        \node [red, font=\bfseries] at (0,-0.02) {\large $\bullet$};
        \node [red, font=\bfseries] at (-1,0) {\small $C_P$};
        \draw[red] (-0.7,0) -- (-0.15,0);
        \node [white, font=\bfseries] at (0, -2.5) {$\bullet$};
        \node [white, font=\bfseries] at (0, 3.5) {$\bullet$};
        \node [yellow!50!black, font=\bfseries] at (1.1,1.1) {\small $\prescript{}{1}{\Obold}^0_P$};
        \node [green!50!black, font=\bfseries] at (0,2) {\small $\prescript{}{2}{\Obold}^0_P$};
        \node [yellow!50!black, font=\bfseries] at (-1.1,1.1) {\small $\prescript{}{3}{\Obold}^0_P$};
        \node [green!50!black, font=\bfseries] at (0,-1.5) {\small $\prescript{}{1}{\Obold}^\infty_P$};

        \filldraw[gray, fill=yellow!10!white] (5.4,-0.25) .. controls (6.3,-0.13) and (6.6,0.65) .. (6.6,0.95) .. controls (6.48,1.55) and (5.7,2.03) .. (5.4,1.85) .. controls (5.1,2.03) and (4.32,1.55) .. (4.2,0.95) .. controls (4.2,0.65) and (4.5,-0.13) .. (5.4,-0.25);
        \filldraw[gray, fill=green!10!white] (5.4,-0.25) .. controls (6, 1.07) .. (5.4,1.85) .. controls (4.8,1.07) .. (5.4,-0.25);
        \filldraw[gray, fill=green!10!white] (5.4,-0.25) .. controls (5.94,-1.33) .. (5.4,-1.75) .. controls (4.86,-1.33) .. (5.4,-0.25);
        \node [red, font=\bfseries] at (5.4,-0.26) {\large $\bullet$};
        \node [red, font=\bfseries] at (6.4,-0.25) {\small $C_Q$};
        \draw[red] (6.2,-0.25) -- (5.55,-0.25);
        \node [white, font=\bfseries] at (5.4, 1.85) {$\bullet$};
        \node [white, font=\bfseries] at (5.4, -1.75) {$\bullet$};
        \node [yellow!50!black, font=\bfseries] at (6.2,0.9) {\small $\prescript{}{1}{\Obold}^0_Q$};
        \node [green!50!black, font=\bfseries] at (5.4,1) {\small $\prescript{}{2}{\Obold}^0_Q$};
        \node [yellow!50!black, font=\bfseries] at (4.6,0.9) {\small $\prescript{}{3}{\Obold}^0_Q$};
        \node [green!50!black, font=\bfseries] at (5.4,-1.1) {\small $\prescript{}{1}{\Obold}^\infty_Q$};

        \filldraw[gray, fill=yellow!10!white] (2,2) .. controls (2.05,2.375) and (2.375,2.5) .. (2.5,2.5) .. controls (2.75,2.45) and (2.95,2.125) .. (2.875,2) .. controls (2.95,1.875) and (2.75,1.55) .. (2.5,1.5) .. controls (2.375,1.5) and (2.05,1.625) .. (2,2); 
        \filldraw[gray, fill=green!10!white] (2,2) .. controls (2.55, 2.25) .. (2.875, 2) .. controls (2.55, 1.75) .. (2,2); 
        \filldraw[gray, fill=green!10!white] (2,2) .. controls (1.55, 2.225) .. (1.375,2) .. controls (1.55, 1.785) .. (2,2);
        \node [red, font=\bfseries] at (2,2) {$\bullet$};
        \node [white, font=\bfseries] at (2.875,2) {$\bullet$};
        \node [white, font=\bfseries] at (1.375,2) {$\bullet$};
        \node [red, font=\bfseries] at (2.1,1) {\small $\prescript{}{1}C^0_{P,Q}$};
        \draw[red] (2.1,1.25) -- (2.02,1.85);
        \draw[line width=0.5pt,-latex] (2.8,2.4) .. controls (3.5,2.6) and (4.5,2.3) .. (5,2);
        \node [black, font=\bfseries] at (3.8,2.7) {$\Fbold^P$};
\end{tikzpicture}

    \caption{The structure of lakes attached to critical points $C_P$, $C_Q$, and $\prescript{}{1}C^0_{P,Q}$ when $(d_0,d_\infty)=(3,2)$.}
    \label{fig:lakes}
\end{figure}

Let us pick a pair of power-triples $P,Q \in \Tbold_{>0}$. For any $\bullet \in \{0,\infty\}$ and any $j \in \{1,\ldots, d_\bullet-1\}$, the union of two consecutive spines $\prescript{}{2j-1}{\Hq}^\bullet_P \cup \prescript{}{2j}{\Hq}^\bullet_P$ are mapped homeomorphically by $\Fbold^P$ onto $\Hq \backslash \{0\}$ and so it contains a unique critical point $\prescript{}{j}{C}^\bullet_{P,Q}$ of generation $P+Q$. Attached to this critical point is a bouquet of lakes
\[
    \prescript{}{j,1}{\Obold}^{\bullet,0}_{P,Q}, \enspace \ldots, \enspace
    \prescript{}{j,2d_0-3}{\Obold}^{\bullet,0}_{P,Q}, \enspace
    \prescript{}{j,1}{\Obold}^{\bullet,\infty}_{P,Q}, \enspace \ldots, \enspace \prescript{}{j,2d_\infty-3}{\Obold}^{\bullet,\infty}_{P,Q}, 
\]
of generation $P+Q$ together with spines
\[
    \prescript{}{j,1}{\Hq}^{\bullet,0}_{P,Q}, \enspace \ldots, \enspace \prescript{}{j,2d_0-2}{\Hq}^{\bullet,0}_{P,Q}, \; 
    \prescript{}{j,1}{\Hq}^{\bullet,\infty}_{P,Q}, \enspace \ldots, \enspace \prescript{}{j,2d_\infty-2}{\Hq}^{\bullet,\infty}_{P,Q}, 
\]
meeting at $\prescript{}{j}{C}^\bullet_{P,Q}$ such that $\prescript{}{j,k}{\Obold}^{\bullet,\circ}_{P,Q}$ has coast
\[
\partial^c \prescript{}{j,k}{\Obold}^{\bullet,\circ}_{P,Q} = \prescript{}{j,k}{\Hq}^{\bullet,\circ}_{P,Q} \cup \left\{\prescript{}{j}{C}^\bullet_{P,Q}\right\} \cup \prescript{}{j,k+1}{\Hq}^{\bullet,\circ}_{P,Q}
\]
and is mapped univalently by $\Fbold^P$ onto $\prescript{}{k}{\Obold}^\circ_{Q}$.

Consider a tuple $S = (P_1,\ldots, P_{m+1}) \in \Tbold^{m+1}_{>0}$ of $m+1$ power-triples for some $m \in \N$. We denote the sum by
\[
    |S| := \sum_{i=1}^{m+1} P_i.
\]
Given $\blacksquare = (\bullet_1,\ldots, \bullet_m) \in \{0,\infty\}^m$ and $J = (j_1,\ldots j_m)$ where $j_i \in \{1,\ldots, d_{\bullet_i}-1\}$ for all $i$, we inductively define a critical point $\prescript{}{J}{C}^\blacksquare_S$ of generation $|S|$. Attached to this critical point are lakes $\prescript{}{J,i}{\Obold}^{\blacksquare,\bullet}_S$ for $\bullet \in \{0,\infty\}$ and $i \in \{1,\ldots,2d_\bullet-3\}$, and spines $\prescript{}{J,j}{\Hq}^{\blacksquare,\bullet}_S$ for $\bullet \in \{0,\infty\}$ and $j \in \{1,\ldots,2d_\bullet-2\}$.

\begin{definition}
\label{def:middle-lake}
    We say that a lake $\Obold$ is a \emph{middle lake} if it is of the form $\prescript{}{J,j}{\Obold}^{\blacksquare,\bullet}_S$ described above. The finite tuple $S$ is called the \emph{itinerary} of $\Obold$.
\end{definition}

Consider a lake $\Obold$ of generation $P\in \Tbold_{>0}$. Let $Q \in \Tbold$ be the smallest power-triple such that the coast of $\Obold$ touches $\Fbold^{-Q}(\Hq)$. 

\begin{lemma}[Left and right coasts]
\label{lem:empty-coast}
    The intersection between $\partial^c \Obold$ and $\Fbold^{-Q}(\Hq)$ is either a singleton or a closed quasiarc, and the complement $\partial^c \Obold \backslash \Fbold^{-Q}(\Hq)$ consists of two non-empty open quasiarcs $\partial^c_l \Obold$ and $\partial^c_r \Obold$.
\end{lemma}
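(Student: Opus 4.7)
The strategy combines two facts from the chessboard rule (Lemma~\ref{chessboard-rule}): $\Fbold^P$ restricts to a homeomorphism $\partial^c\Obold \to \Hq$, so the coast is an injective quasiarc; and the preimage tree $\Fbold^{-R}(\Hq)$ is $\Hq$ together with subtrees attached at critical points of $\Fbold^R$ lying on $\Hq$, each subtree meeting $\Hq$ only at its single attaching critical point.

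First, I reduce to the case $Q=0$. Set $\Obold' := \Fbold^Q(\Obold)$, a lake of generation $P':=P-Q$ in some ocean $\Obold^\bullet$; by minimality of $Q$ its coast $\partial^c\Obold' = \Fbold^Q(\partial^c\Obold)$ meets $\Hq$. The homeomorphism $\Fbold^Q|_{\partial^c\Obold}$ transports any decomposition of $\partial^c\Obold'\cap\Hq$ (as a singleton or closed quasiarc together with two complementary open arcs) to the desired decomposition of $\partial^c\Obold\cap\Fbold^{-Q}(\Hq)$. Moreover $P'>0$: a positive-generation coast must traverse at least one critical point of $\Fbold^P$, since no single edge of the tree joins two terminals; that critical point, having generation strictly less than $P$ (or lying directly on $\Hq$), has first $\Hq$-entry time strictly less than $P$, forcing $Q<P$.

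Next, I establish connectedness of $\partial^c\Obold'\cap\Hq$. Parametrize $\partial^c\Obold'$ by $\R$ via its homeomorphism with $\Hq$. Were the intersection disconnected, there would exist parameters $t_1<t_2<t_3$ with $\partial^c\Obold'(t_1),\partial^c\Obold'(t_3)\in\Hq$ and $\partial^c\Obold'(t_2)$ in a subtree $T_c$ rooted at a critical point $c\in\Hq$; every tree path from $\Hq$ into $T_c\setminus\{c\}$ passes through $c$, so the coast would visit $c$ at least twice, contradicting injectivity. Hence $\partial^c\Obold'\cap\Hq$ is connected.

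Finally, I exhibit two non-empty open complementary arcs. The coast cannot continue along $\Hq$ through any critical point $c$ of $\Fbold^{P'}$ lying on $\Hq$: near $c$, the ocean $\Obold^\bullet$ is subdivided by the spines at $c$ into sub-sectors each bounded by at least one spine, and since the lake $\Obold'$ is a connected component of the complement of the tree it occupies exactly one such sub-sector at $c$, so the coast must exit $c$ into a spine. Therefore $\partial^c\Obold'\cap\Hq$ is either a singleton or a single $\Hq$-edge between two consecutive critical points on $\Hq$; in either case the coast leaves the intersection through two distinct spines, so $\partial^c\Obold'\setminus\Hq$ consists of two non-empty open quasiarcs. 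Pulling back by $(\Fbold^Q|_{\partial^c\Obold})^{-1}$ yields $\partial^c_l\Obold$ and $\partial^c_r\Obold$. The main obstacle is this local argument at $c$: it requires carefully unpacking the subdivision of $\Obold^\bullet$ by spines at $c$ to conclude that no sub-sector is bounded by two consecutive $\Hq$-branches alone.
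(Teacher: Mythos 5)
Your proof is correct and follows essentially the same route as the paper's: reduce to $Q=0$ by pushing forward, get connectedness of $\partial^c\Obold\cap\Hq$ from the tree structure, and then use the branch points of $\Fbold^{-P}(\Hq)$ on $\Hq$ to force the intersection to be compact, so that the complement in the coast (an embedded copy of $\R$) is two non-empty open rays. Your sector count at a critical point (each sub-sector of $\Obold^\bullet$ at $c$ is bounded by at least one of the $2d_\bullet-2\geq 2$ spines, so the coast cannot pass through $c$ along $\Hq$) is exactly the step the paper compresses into ``$\partial^c\Obold\cap\Hq$ does not contain such branch points,'' and your version makes it precise.
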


\begin{proof}
    It is sufficient to consider the case when $Q=0$. The intersection between $\partial^c \Obold$ and $\Fbold^{-Q}(\Hq)$ is connected because of the tree structure of $\Fbold^{-P}(\Hq)$. For any point $z$ in $\partial^c \Obold \cap \Hq$, every component of $\Hq \backslash \{z \}$ contains infinitely many critical points of generation at most $P$, and each of these points is a branch point of the tree $\Fbold^{-P}(\Hq)$. Since $\partial^c \Obold \cap \Hq$ does not contain such branch points, the claim follows.
\end{proof}

We call $\partial^c_l \Obold$ and $\partial^c_r \Obold$ the \emph{left and right coasts} of $\Obold$ respectively, and we always assume that $\partial^c_l \Obold$, $\partial^c \Obold \cap \Fbold^{-Q}(\Hq)$, and $\partial^c_r \Obold$ are oriented counterclockwise relative to $\Obold$.

The closure of the left coast of $\Obold$ admits a maximal sequence of critical points $c_{l,1}$, $c_{l,2}$, $c_{l,3}$, $\ldots$ of $\Fbold^P$, labelled in increasing order of generation. We define the \emph{left itinerary} of $\Obold$ to be the sequence $I_l := (P_{l,1}, P_{l,2},\ldots)$ where each $P_{l,i}$ is the generation of $c_{l,i}$. Similarly, we define the \emph{right itinerary} $I_r$ of $\Obold$.

\begin{lemma}
\label{lem:finite-or-infinite}
    Consider a lake $\Obold$ of generation $P \in \Tbold_{>0}$ with left and right itineraries $I_l = (P_{l,1}, P_{l,2},\ldots)$ and $I_r = (P_{r,1}, P_{r,2},\ldots)$ respectively. Then,
    \begin{enumerate}[label=\textnormal{(\arabic*)}]
        \item $\sup_i P_{l,i} = \sup_j P_{r,j} = P$;
        \item if $I_l$ (resp. $I_r$) is finite, then the left (resp. right) coast of $\Obold$ contains a spine attached a critical point $\prescript{}{J}C^\blacksquare_S$ of generation $|S|=P$;
        \item if both $I_l$ and $I_r$ are finite, then $\Obold$ is a middle lake attached to the critical point $\prescript{}{J}C^\blacksquare_S$;
        \item either $I_l$ or $I_r$ is a finite sequence.
    \end{enumerate}
\end{lemma}

\begin{proof}
    Suppose for a contradiction that $\sup_i P_{l,i}< P$, so then there is some $P' \in \Tbold$ such that $\sup_i P_{l,i} < P' < P$. Then, $\Fbold^{P'}(\Obold)$ is a lake of positive generation with an empty left coast, which is impossible due to Lemma \ref{lem:empty-coast}. Therefore, (1) holds.

    Suppose $I_l$ is finite. By (1), there exists a critical point $c_l$ of generation $P$ on $\overline{\partial^c_l \Obold}$. Removing $c_l$ splits the coast into two open quasiarcs, one of which contains no critical points of $\Fbold^P$ and is thus a spine attached to $c_l$. This implies (2). Suppose $I_r$ is also finite, so there also exists a critical point $c_r$ of generation $P$ on $\overline{\partial^c_r \Obold}$. The complement of the interval $[c_l,c_r] \subset \partial^c \Obold$ is now a pair of spines of generation $P$ attached to $c_l$ and $c_r$ respectively. Recall that $\Fbold^P$ sends each of these spines to a component of $\Hq \backslash \{0\}$. However, since $\Fbold^P : \partial^c \Obold \to \Hq$ is a homeomorphism, we see that $c_l=c_r$ and $\Obold$ is a middle lake. Hence, (3) holds.

    Let us now prove (4). We will again assume without loss of generality that the coast of $\Obold$ touches $\Hq$. Let us pick a point $y$ in $\partial^c \Obold \cap \Hq$. If the open interval $(y,C_P) \subset \Hq$ does not contain any critical point of generation $\leq P$, then either $\partial^c_l \Obold$ or $\partial^c_r \Obold$ is rooted at $C_P$ and contains no other critical points of generation $\leq P$. Otherwise, by Lemma \ref{lem:critical-pts}, there are only finitely many critical points of generation $\leq P$ within $(y, C_P)$, and they have some maximum generation $R<P$. We then apply the previous argument to the lake $\Fbold^R(\Obold)$ and the interval $\left(\Fbold^R(y), C_{P-R}\right) \subset \Hq$.
\end{proof}

Consider a critical point $\prescript{}{J}C^\blacksquare_S$ of $\Fbold^{\geq 0}$. There exist lakes
\begin{equation}
\label{eqn:extra-lakes}
    \prescript{}{J,l}\Obold^{\blacksquare,0}_S, \enspace \prescript{}{J,r}\Obold^{\blacksquare,0}_S, \enspace
    \prescript{}{J,l}\Obold^{\blacksquare,\infty}_S, \enspace
    \prescript{}{J,r}\Obold^{\blacksquare,\infty}_S
\end{equation}
of generation $|S|$ such that 

\begin{enumerate}[label=\textnormal{(\roman*)}]
    \item they are disjoint from all the middle lakes rooted at $\prescript{}{J}C_S^\blacksquare$;
    \item for $\bullet \in \{0,\infty\}$, the right coast of $\prescript{}{J,l}\Obold^{\blacksquare,\bullet}_S$ contains the spine $\prescript{}{J,2d_\bullet-2}\Hq^{\blacksquare,\bullet}_S$ and the left coast of $\prescript{}{J,r}\Obold^{\blacksquare,\bullet}_S$ contains the spine $\prescript{}{J,1}\Hq^{\blacksquare,\bullet}_S$;
    \item if $j,j' \in \{l,r\}$ and $j \neq j'$, the coasts of $\prescript{}{J,j}\Obold^{\blacksquare,0}_S$ and $\prescript{}{J,j'}\Obold^{\blacksquare,\infty}_S$ intersect on a non-degenerate closed interval in $\Fbold^{-|S|}(\Hq)$ with endpoint $\prescript{}{J}C^\blacksquare_S$.
\end{enumerate} 
We will call the lakes in (\ref{eqn:extra-lakes}) the \emph{left/right side lakes} of $\prescript{}{J}C^\blacksquare_S$. 

Observe that by (ii),
\[
    \prescript{}{J,r}\Obold^{\blacksquare,0}_S, \:
    \prescript{}{J,1}\Obold^{\blacksquare,0}_S, \:
    \ldots, \:
    \prescript{}{J, 2d_0-3}\Obold^{\blacksquare,0}_S, \:
    \prescript{}{J,l}\Obold^{\blacksquare,0}_S, 
    \prescript{}{J,r}\Obold^{\blacksquare,\infty}_S, \:
    \prescript{}{J,1}\Obold^{\blacksquare,\infty}_S, \:
    \ldots, \:
    \prescript{}{J, 2d_\infty-3}\Obold^{\blacksquare,\infty}_S, \:
    \prescript{}{J,l}\Obold^{\blacksquare,\infty}_S
\]
are in counterclockwise order about $\prescript{\blacksquare}{J}C_S$ and the closure of their union is a neighborhood of $\prescript{\blacksquare}{J}C_S$. By Lemma \ref{lem:finite-or-infinite} (4), the left itinerary of $\prescript{}{J,l}\Obold^{\blacksquare,\bullet}_S$ and the right itinerary of $\prescript{}{J,r}\Obold^{\blacksquare,\bullet}_S$ are infinite. The following is a consequence of Lemma \ref{lem:finite-or-infinite} (2)--(4).

\begin{corollary}
\label{cor:middle-or-side}
    Every lake $\Obold$ is either a middle lake or a side lake of a critical point $\prescript{}{J}C^{\blacksquare}_S$. In other words, $\Obold$ is of the form $\prescript{}{J,j}\Obold^{\blacksquare,\bullet}_S$ where $j \in \{l,1,\ldots,2d_\bullet-3,r\}$.
\end{corollary}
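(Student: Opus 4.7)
The plan is a simple case split on the left/right itineraries $I_l$, $I_r$ of $\Obold$, using Lemma~\ref{lem:finite-or-infinite} as the primary engine. By part (4) of that lemma at least one of the itineraries is finite, so two cases arise. If both itineraries are finite, part (3) directly identifies $\Obold = \prescript{}{J,j}{\Obold}^{\blacksquare,\bullet}_S$ as a middle lake, with $j \in \{1,\ldots, 2d_\bullet - 3\}$, and we are done. So the work is concentrated in the remaining case where exactly one itinerary is finite; by the left-right symmetry of the definitions it suffices to handle $I_l$ finite and $I_r$ infinite.

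In this case, part (2) of Lemma~\ref{lem:finite-or-infinite} produces a spine on the closure of $\partial^c_l \Obold$ rooted at some critical point $\prescript{}{J}{C}^{\blacksquare}_S$ of generation $|S|$ equal to that of $\Obold$. Around $\prescript{}{J}{C}^{\blacksquare}_S$ we have already enumerated a full cyclic arrangement of lakes of generation $|S|$ — the middle lakes together with the four side lakes — whose closures cover a neighborhood of $\prescript{}{J}{C}^{\blacksquare}_S$. The first key step is to observe that $\Obold$ must coincide with one of these listed lakes: since $\prescript{}{J}{C}^{\blacksquare}_S \in \overline{\partial^c_l \Obold}$, the lake $\Obold$ meets the chosen neighborhood, and the chessboard rule (Lemma~\ref{chessboard-rule}) forces distinct lakes of the same generation to be disjoint, so $\Obold$ can only equal one of them. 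A middle lake is excluded because it has both itineraries finite whereas $I_r$ is infinite; the left side lake $\prescript{}{J,l}{\Obold}^{\blacksquare,\bullet}_S$ is excluded because its distinguished spine sits on its \emph{right} coast, not its left. This forces $\Obold = \prescript{}{J,r}{\Obold}^{\blacksquare,\bullet}_S$, with $\bullet$ read off from the ocean type of the spine found on $\partial^c_l \Obold$.

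The step I expect to require the most care is the cyclic-ordering bookkeeping: verifying that the spine appearing on $\partial^c_l \Obold$ is precisely $\prescript{}{J,1}{\Hq}^{\blacksquare,\bullet}_S$, the one distinguished by property (ii) of the right side lake, so that the matching $\Obold = \prescript{}{J,r}{\Obold}^{\blacksquare,\bullet}_S$ is unambiguous. This should follow by combining the equality $|S|$ equals the generation of $\Obold$ from Lemma~\ref{lem:finite-or-infinite}(2) (so the spine in question is genuinely attached to $\prescript{}{J}{C}^{\blacksquare}_S$ and not to a lower-generation ancestor) with the explicit counterclockwise enumeration of the bouquet at $\prescript{}{J}{C}^{\blacksquare}_S$ recorded just before the statement.
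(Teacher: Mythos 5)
Your proof is correct and takes exactly the route the paper intends: the paper gives no written argument, merely asserting the corollary as a consequence of Lemma~\ref{lem:finite-or-infinite}(2)--(4), and your case split on the finiteness of the itineraries --- with part (3) handling the doubly-finite case and part (2) plus the chessboard rule identifying $\Obold$ among the lakes listed at $\prescript{}{J}C^{\blacksquare}_S$ in the mixed case --- is precisely that deduction, carried out correctly. The itinerary-based exclusion of the middle lakes and of the wrong side lake is sound, so the final identification does not even require the cyclic-ordering bookkeeping you flag as delicate.
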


Given some tuple $S = (P_1,\ldots,P_k) \in \Tbold_{>0}^k$, we can perform scalar multiplication by $\tbold$ and denote $\tbold S := (\tbold P_1,\ldots, \tbold P_k)$. The following is a direct consequence of (\ref{eqn:self-sim-cascade}).

\begin{lemma}
\label{lem:self-sim-lakes}
    For any middle or side lake $\prescript{}{J,j}\Obold^{\blacksquare,\bullet}_S$ rooted at a critical point $\prescript{}{J}C^{\blacksquare}_S$,
    \[ 
    A_* \left( \prescript{}{J}C^{\blacksquare}_S \right) = \prescript{}{J}C^{\blacksquare}_{\tbold S}
    \quad \text{and} \quad
    A_* \left(\prescript{}{J,j}\Obold^{\blacksquare,\bullet}_S \right) = \prescript{}{J,j}\Obold^{\blacksquare,\bullet}_{\tbold S}.
    \]
\end{lemma}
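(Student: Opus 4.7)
The strategy is an induction on the length $m$ of the tuple $\blacksquare$ (equivalently, on $|S|-1$), driven entirely by the conjugacy relation
\[
    \Fbold_*^{\tbold P} = A_* \circ \Fbold_*^{P} \circ A_*^{-1}
\]
obtained by setting $n=1$ in equation (\ref{eqn:self-sim-cascade}). The three consequences I will use are: (i) $A_*$ sends critical points of $\Fbold_*^P$ bijectively to critical points of $\Fbold_*^{\tbold P}$, so generation is multiplied by $\tbold$; (ii) $A_*$ sends components of $\Fbold_*^{-P}(\Hq)$ to components of $\Fbold_*^{-\tbold P}(\Hq)$, using $A_*(\Hq)=\Hq$ from Lemma \ref{lem:self-similarity}; and (iii) $A_*$ preserves each ocean $\Obold^0$, $\Obold^\infty$ individually, because it is orientation preserving and restricts to a homeomorphism of $\Hq$ fixing $0$, and the inner/outer distinction was defined via local preimage counting that is transported by the conjugacy above.

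For the base case $m=0$, the critical point $C_P$ is characterized as the unique critical point of $\Fbold_*^P$ lying on $\Hq$; by (i) and $A_*(\Hq)=\Hq$, uniqueness forces $A_*(C_P) = C_{\tbold P}$. The bouquet $\{\prescript{}{j}{\Obold}^\bullet_P\}$ of lakes at $C_P$ was characterized by generation, attached ocean, and counterclockwise cyclic position around $C_P$ via the dynamics of $\Fbold_*^P$ on its coast. Each of these invariants is preserved by $A_*$ (using (i)--(iii) together with the fact that a holomorphic map preserves cyclic orientation), giving $A_*(\prescript{}{j}{\Obold}^\bullet_P) = \prescript{}{j}{\Obold}^\bullet_{\tbold P}$.

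For the inductive step, write $S=(S',P_{m+1})$, $\blacksquare=(\blacksquare',\bullet_m)$, $J=(J',j_m)$. The induction hypothesis applied to $(J',\blacksquare',S')$ sends the parent critical point $\prescript{}{J'}C^{\blacksquare'}_{S'}$, together with its entire bouquet of spines $\prescript{}{J',k}{\Hq}^{\blacksquare',\bullet}_{S'}$, to the analogous structure at $\prescript{}{J'}C^{\blacksquare'}_{\tbold S'}$ in a label-preserving way. Since $\prescript{}{J}C^\blacksquare_S$ was defined as the unique critical point of generation $|S|$ lying on the designated pair of consecutive spines $\prescript{}{J',2j_m-1}{\Hq}^{\blacksquare',\bullet_m}_{S'} \cup \prescript{}{J',2j_m}{\Hq}^{\blacksquare',\bullet_m}_{S'}$, applying $A_*$ lands it on the corresponding spine pair at $\prescript{}{J'}C^{\blacksquare'}_{\tbold S'}$, of generation $\tbold|S|$, which by uniqueness must be $\prescript{}{J}C^\blacksquare_{\tbold S}$. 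The lake statement then follows by repeating the base-case argument locally at $\prescript{}{J}C^\blacksquare_S$, and the same reasoning (using characterizations (i)--(iii) from right before the lemma) handles the four side lakes $\prescript{}{J,l/r}{\Obold}^{\blacksquare,\bullet}_S$.

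The only subtle point, which I flag as the main bookkeeping obstacle rather than a substantial difficulty, is verifying that $A_*$ respects the counterclockwise cyclic labeling conventions around every critical point and the inner/outer classification of oceans. Both reduce to the observation that $A_*$ is holomorphic (hence orientation preserving), fixes $0$, and conjugates $\Fbold_*^P$ to $\Fbold_*^{\tbold P}$; the labels were defined purely in terms of orientation and the dynamics, so they transport automatically.
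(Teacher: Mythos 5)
Your proof is correct and follows essentially the same route as the paper's: both arguments rest on the conjugacy $\Fbold_*^{\tbold P} = A_* \circ \Fbold_*^{P} \circ A_*^{-1}$ from (\ref{eqn:self-sim-cascade}) together with $A_*(\Hq)=\Hq$ to get $A_*(C_P)=C_{\tbold P}$ and the matching of lakes at the base level, and then induct using the characterization of $\prescript{}{J}C^{\blacksquare}_S$ as the unique critical point of its generation on the designated spine pair. Your explicit attention to orientation and the inner/outer labeling is a harmless elaboration of points the paper leaves implicit.
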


\begin{proof}
    Recall from (\ref{eqn:self-sim-cascade}) that $A_*$ conjugates $\Fbold^P$ and $\Fbold^{\tbold P}$ for any $P \in \Tbold_{>0}$. Since $A_*$ preserves $\Hq$, then $A_*(C_P) = C_{\tbold P}$ and thus $A_*\left(\prescript{}{j}\Obold^{\bullet}_{P} \right) = \prescript{}{j}\Obold^{\bullet}_{\tbold P} $ for all $\bullet \in \{0,\infty\}$ and $j \in \{l,1,\ldots,2 d_\bullet-3,r\}$.

    Suppose a spine $\prescript{}{j}\Hq^\bullet_P$ attached to $C_P$ contains some critical point $\prescript{}{i}C^\bullet_{P,Q}$ where $i = \lceil \frac{j}{2}\rceil$. Since $A_*(\prescript{}{i}C^\bullet_{P,Q})$ is contained in $\prescript{}{j}\Hq^\bullet_{\tbold P}$ and is a critical point of generation $\tbold(P+Q)$, then it is equal to $\prescript{}{i}C^\bullet_{\tbold P,\tbold Q}$. The rest follows by induction.
\end{proof}

\subsection{Limbs}
\label{ss:limbs}

\begin{definition}
    A \emph{limb} $\prescript{}{J}{\Lbold}^\blacksquare_S$ is the union of the spine $\prescript{}{J}{\Hq}^\blacksquare_S$ together with all spines of the form $\prescript{}{J, j_1,\ldots, j_k}{\Hq}^{\blacksquare,\bullet_1,\ldots, \bullet_k}_{S, P_1,\ldots, P_k}$. The \emph{generation} of $\prescript{}{J}{\Lbold}^\blacksquare_S$ is $|S|$.
\end{definition}

By Lemma \ref{lem:self-sim-lakes}, the linear map $A_*$ sends each limb $\prescript{}{J}{\Lbold}^{\blacksquare}_S$ onto another limb $\prescript{}{J}{\Lbold}^{\blacksquare}_{\tbold S}$.

\begin{lemma}
\label{lem:bdd-limbs}
    Every limb is bounded in $\C$.
\end{lemma}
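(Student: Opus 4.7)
The plan is to use the self-similarity provided by Lemma \ref{lem:self-sim-lakes} to reduce to a base case, then establish boundedness there by confining the limb inside a single tile of the renormalization tiling.

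By Lemma \ref{lem:self-sim-lakes}, the linear self-conjugacy satisfies $A_*^n\bigl(\prescript{}{J}{\Lbold}^\blacksquare_S\bigr) = \prescript{}{J}{\Lbold}^\blacksquare_{\tbold^n S}$ for every $n \in \Z$: the map $A_*$ sends the root critical point to the root critical point of the scaled limb, and it bijectively transports sub-spines to sub-spines since $\tbold$ acts as a bijection on $\Tbold_{>0}$. Because $A_*$ is a $\C$-linear automorphism, boundedness is preserved along the $A_*$-orbit, so it suffices to exhibit one $n \geq 0$ for which $\prescript{}{J}{\Lbold}^\blacksquare_{\tbold^n S}$ is bounded. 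For $n$ large, the root $A_*^n\,\prescript{}{J}C^\blacksquare_S$ lies arbitrarily close to $0$, inside a tile $\Delta_m(i)$ of the $m$-th renormalization tiling from §\ref{ss:crit-periodic-points} for some sufficiently negative $m$. Since every such tile is a compact disk in $\C$, it is enough to prove that the whole limb $\prescript{}{J}{\Lbold}^\blacksquare_{\tbold^n S}$ is contained in $\overline{\Delta_m(i)}$.

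To establish this confinement I would induct on the depth of the tree defining the limb. The main spine $\prescript{}{J}{\Hq}^\blacksquare_{\tbold^n S}$ is a connected component of $\Fbold^{-\tbold^n|S|}(\Hq) \setminus \{\prescript{}{J}C^\blacksquare_{\tbold^n S}\}$, and by $\sigma$-properness of the extended cascade (Theorem \ref{thm:max-extension}, together with Corollary \ref{cor:accumulation-to-bdry}) this component accumulates at essential singularities on $\partial\Dom\bigl(\Fbold^{\tbold^n|S|}\bigr) \subset \Esc_{<\infty}(\Fbold)$ rather than escaping to $\infty$ in $\C$. Each sub-spine is rooted at a critical point of strictly higher generation; by the same argument applied one level deeper, it is confined to a strictly smaller sub-tile of a deeper renormalization level, nested inside $\overline{\Delta_m(i)}$ by the improvement of domain in Lemma \ref{lem:renormalization-tiling-01}(2). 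Summing over the countable tree of sub-spines and using the geometric contraction rate of deeper tilings, the union of all spines stays inside the bounded closed tile $\overline{\Delta_m(i)}$.

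The main obstacle is rigorously ruling out the possibility that a spine exits the ambient tile. The boundary of $\Delta_m(i)$ decomposes into arcs coming from the corona's legitimate and forbidden boundaries and arcs lying in $\Fbold^{-k}(\Hq)$ for some $k \leq \tbold^n|S|$. By the tree structure of $\Fbold^{-P}(\Hq)$ in Lemma \ref{chessboard-rule}, no spine can cross the latter kind transversally; by $\sigma$-properness, it can only approach the former in the sense of accumulation at essential singularities. The delicate step is to check that this combinatorial description of $\partial\Delta_m(i)$, which is transparent for $\Fbold_*$ from the pre-corona structure of Theorem \ref{thm:renorm-fixed-pt}, is compatible with the cascade's $\sigma$-proper extension throughout the whole dynamical plane, after which the bounded inclusion of the limb into $\overline{\Delta_m(i)}$, and hence boundedness of $\prescript{}{J}{\Lbold}^\blacksquare_S$, follows.
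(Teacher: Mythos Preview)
Your proposal has a genuine gap at its core. The self-similarity reduction in your first paragraph buys nothing: $A_*$ is linear, so boundedness of $\prescript{}{J}{\Lbold}^\blacksquare_S$ and of $\prescript{}{J}{\Lbold}^\blacksquare_{\tbold^n S}$ are equivalent for every $n$, and moving the root close to $0$ does not shrink the limb relative to the plane. The entire content is therefore your Step~2, and there the argument breaks down. You claim that a spine, being a component of $\Fbold^{-|S|}(\Hq)$, ``accumulates at essential singularities on $\partial\Dom(\Fbold^{|S|})$ rather than escaping to $\infty$''. But $\sigma$-properness only says that a path whose image under $\Fbold^{|S|}$ tends to $\infty$ must leave every compact subset of $\Dom(\Fbold^{|S|})$; it does \emph{not} rule out that the path itself goes to $\infty$ in $\C$. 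That alternative is exactly what the lemma is asserting does not happen, so invoking $\sigma$-properness here is circular. Likewise, your tile-confinement step is not an argument: the boundary of $\Delta_m(i)$ contains arcs coming from $\partial U$ (lifted legitimate/forbidden boundary), and nothing you have written prevents a spine from crossing those. You correctly flag this as ``the delicate step'' but do not resolve it.

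The paper's proof takes a completely different route. It descends to the dynamical plane of the corona $f_*$ via the gluing maps $\rho_n:\Sbold_n^\# \to V$, where the Herman curve is compact and, crucially, local connectivity (Lemma~\ref{lem:small-limbs}) and external-ray geometry are available. Claim~1 there shows that univalent pullbacks of a half-neighborhood $W$ of $\Hq_*$ along long orbits remain in a fixed window $X_0$; the mechanism is that the pullbacks are sandwiched between external rays whose angular separation shrinks under $d_\bullet$-fold iteration, and local connectivity then forces small diameter. Claims~2 and~3 lift this control to the transcendental plane across all scales $\Sbold_n^\#$, using self-similarity to telescope the estimates. The outcome is that a limb rooted at a critical point in $\Jbold_0$ is entirely contained in $\Sbold_0^\#$, hence bounded. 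The essential ingredient you are missing is this passage through the corona plane, where the limbs are \emph{known} to be tame by the rational-map model; there is no evident way to see boundedness purely in the transcendental plane with the tools available at this point in the paper.
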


The proof we present below is identical to \cite[Lemma 5.10]{DL23}.

\begin{proof}
    Recall the rescaled pre-corona $\Fbold^\#_n = \left( \fbold_{n,\pm}^\# : \Ubold_{n,\pm}^\# \to \Sbold_n^\# \right)$ where $\Sbold^\#_n := A_*^n(\Sbold)$ for all $n \in \Z$. 
    Each $\Sbold_n^\#$ is compactly contained in $\Sbold_{n-1}^\#$ and the union of $ \Sbold_n^\#$ across all $n \in \Z$ is the whole complex plane. 
    For every $n \in \Z$, there is a gluing map 
    \[
    \rho_n : \Sbold_n^\# \to V
    \]
    projecting $\Fbold_n^\#$ to the corona $f: U \to V$. 
    
    Let us fix a large $n \ll 0$. 
    Denote by $\Hq_*$ the Herman curve of $f$, and consider the interval $I := \rho_n(\Sbold_0^\#) \cap \Hq_*$ and pick a slightly smaller interval $J \subset I$.

    \begin{claim1}
        There is some $M \in \N$ such that the following holds. For any connected component $W$ of $\rho_n(\Sbold_{-1}^\#) \backslash \Hq_*$, any $m \geq M$, and any point $x \in J$ with $f^m(x) \in \partial W$, the univalent lift $W_{-m}$ of $W$ under $f^m$ along the orbit $x, \ldots, f^m(x)$ is contained in $\rho_n(\Sbold_0^\#)$.
    \end{claim1}

    \begin{proof}
        Let us assume without loss of generality that $W$ is contained in the outer component $Y^\infty$ of $\C \backslash \Hq_*$. Since $f^i(x) \in \Hq_*$ for all $i\geq 0$, then the lift $W_{-m}$ is also contained in $Y^\infty$. We will first claim that $W_{-m}$ is well-defined and $f^m:W_{-m} \to W$ is univalent by ensuring that $W_{-k}$ is disjoint from $\partial_\frb U$ for all $k\geq 0$.

        Let us pick two outer external rays $R_l$ and $R_r$ landing at a pair of points on $\Hq_*$ such that $R_l$ is slightly on the left of $W$ and $R_r$ is slightly on the right of $W$. Since $n\ll 0$, the difference $\delta$ between the external angles of $R_l$ and $R_r$ is small. For $k=1,\ldots,m$, let $R_{l,-k}$ and $R_{r,-k}$ be the preimages of $R_l$ and $R_r$ under $f^{k}$ such that they are slightly on the left and right of $W_{-k}$ respectively.
        
        By definition, for every arc $\gamma^\infty_j$ on the forbidden boundary $\partial_\frb U$ of $U$, the part that gets mapped to $\gamma_1 \cap Y^\infty$ is an external ray of some definite distance from $\Hq_*$. The difference between the external angles of $R_{l,-k}$ and $R_{r,-k}$ is $\delta/d_\infty^k$, which is much smaller than $\delta$. Therefore, $W_{-k}$ is disjoint from $\partial_\frb U$ for all $k$ and so $f^m : W_{-m} \to W$ is univalent.

        For sufficiently large $m$, $W_{-m}$ is within a small neighborhood of $\Hq_*$ and it is sandwiched between the rays $R_{l,-m}$ and $R_{r,-m}$, whose external angles differ by a small constant. By local connectivity (Lemma \ref{lem:small-limbs}), $W_{-m}$ must be contained in a small neighborhood of $J$, and thus $W_{-m}$ is contained in $\rho_n(\Sbold_0^\#)$.
    \end{proof}

    The composition $\rho_n \circ A_*^{-n}$ identifies $\Sbold_n^\#$ with $\rho_n(\Sbold_0^\#)$. Let $\Jbold_n := A_*^n \circ \rho_n^{-1}(J)$.
    
    \begin{claim2}
        There is a power-triple $R \in \Tbold_{>0}$ such that $\Fbold^R(\Jbold_0) \subset \Jbold_{-1}$ and for every point $x$ on $\Jbold_0$, if $\Fbold^P(x) \in \Sbold_{-1}^\#$ for some $P \geq R$, then there is an open subset $\mathbf{\Omega}_P$ of $\Sbold_0^\# \backslash \Hq$ such that $x \in \partial \mathbf{\Omega}_P$ and $\Fbold^P$ maps $\mathbf{\Omega}_P$ conformally to $\Sbold_{-1}^\# \backslash \Hq$.
    \end{claim2}

    \begin{proof}
        Since the action of $\Fbold^{\geq 0}$ on $\Hq$ is combinatorially modelled by the cascade of translations $(T^P)_{P \in \Tbold}$ on $\R$, there is an arbitrarily large $R \in \Tbold$ such that $\Fbold^R(\Jbold_0) \subset \Jbold_{-1}$. Suppose $x \in \Jbold_0$ and $\Fbold^P(x) \in \Sbold_{-1}^\#$ for some $P \geq R$. Since $\fbold_{-1,\pm}^\#$ is the first return map of the cascade $\Fbold^{\geq 0}$ back to $\Sbold_{-1}^\#$, then $\Fbold^P$ is the $m$\textsuperscript{th} iterate of the pair $\fbold_{-1,\pm}^\#$ for some $m \in \N$. If $R$ is chosen to be large enough, then $m \geq M$ and the claim now follows from Claim 1.
    \end{proof}

    By self-similarity, Claim 2 also holds if we replace $\Jbold_0$, $\Jbold_{-1}$, $P$, and $R$ with $\Jbold_n$, $\Jbold_{n-1}$, $\tbold^n P$, and $\tbold^n R$ respectively.  

    \begin{claim3}
        There is a power-triple $Q \in \Tbold_{>0}$ such that for every $n \ll 0$ and every point $x \in \Jbold_0$, if $\Fbold^P(x) \in \Sbold_n^\#$ for some $P\geq Q$, then there is an open subset $\mathbf{\Omega}_0$ of $\Sbold_0^\# \backslash \Hq$ such that $x \in \partial \mathbf{\Omega}_0$ and $\Fbold^P$ maps $\mathbf{\Omega}_0$ conformally to $\Sbold_n^\# \backslash \Hq$.
    \end{claim3}

    \begin{proof}
        Let us fix a large negative integer $n$ and choose $Q \in \Tbold_{>0}$ such that 
        \[
        Q > R + R/\tbold + R/\tbold^2 + R/\tbold^3 + \ldots.
        \] 
        Consider a point $x_0:= x \in \Jbold_0$ such that $\Fbold^P(x) \in \Sbold_n^\#$ for some $P \geq Q$. For $j \in \{0,-1,-2,\ldots, n+2\}$, set $P_j:= \tbold^j R$ and $x_{j-1}:= \Fbold^{P_j}(x_j)$ inductively. Then, we set
    \[
        P_{n+1}:= P - P_0 - P_{-1} - \ldots - P_{n+2} \quad \text{ and } x_n:= \Fbold^{P_{n+1}}(x_{n+1}).
    \]
        Clearly, $P_{n+1} \geq \tbold^{n+1} R$. By Claim 2, there exists an open set $\mathbf{\Omega}_{n+1} \subset \Sbold_{n+1}^\# \backslash \Hq$ such that $x_{n+1} \in \partial \mathbf{\Omega}_{n+1}$ and $\Fbold^{P_{n+1}}$ maps $\mathbf{\Omega}_{n+1}$ conformally to $\Sbold_n^\# \backslash \Hq$. Inductively, for $j \in \{0,-1,\ldots,n+2\}$, we construct open sets $\mathbf{\Omega}_j \subset \Sbold_j^\# \backslash \Hq$ such that $x_j \in \partial \mathbf{\Omega}_j$ and $\Fbold^{P_j}$ maps $\mathbf{\Omega}_j$ conformally to $\mathbf{\Omega}_{j-1}$. Therefore, $\Fbold^P$ maps $\mathbf{\Omega}_0$ conformally to $\Sbold_n^\# \backslash \Hq$.
    \end{proof}

    Now, we are ready to prove the lemma. By $\sigma$-properness, it suffices to prove the boundedness of limbs rooted at critical points on $\Hq$. Hence, let us consider a limb $L$ of some generation $K \in \Tbold_{>0}$ rooted at the critical point $C_K$ on $\Hq$. 
    
    Consider the power-triple $Q$ from Claim 3. Choose a large $m \in \N$ such that $T := \tbold^m K $ is larger than $Q+K$ and that the critical point $C_T$ is on $\Jbold_0$. By Lemma \ref{lem:self-sim-lakes}, the limb $L' :=A_*^m(L)$ must be rooted at $C_T$ and $\Fbold^{T-K}(L')=L$. Since $T-K \geq Q$, for $n \ll 0$, the connected component of $\Sbold_n^\# \cap \overline{L}$ containing $C_K$ can be lifted by $\Fbold^{T-K}$ into $\Sbold_0^\#$. Since the lifts of $\Sbold_n^\# \cap \overline{L}$ exhaust $L'$, then $L'$ must be contained in $\Sbold_0^\#$. Therefore, $L$ must also be bounded.
\end{proof}

\subsection{Alpha-points}
\label{ss:alpha}

For $P \in \Tbold_{>0}$, let $\Esc_{\leq P} := \Esc_{\leq P}(\Fbold)$ be the $P$\textsuperscript{th} escaping set of $\Fbold$.

\begin{lemma}
\label{alpha-points}
    Every critical point $\prescript{}{J}C^\blacksquare_S$ admits a pair of points $\prescript{}{J}\alpha^{\blacksquare,0}_S$ and $\prescript{}{J}\alpha^{\blacksquare,\infty}_S$ with the following properties. For any $\bullet \in \{0,\infty\}$ and $j \in \{l,1,\ldots,2d_\bullet-3,r\}$, both the left and the right coasts of $\prescript{}{J,j} \Obold^{\blacksquare,\bullet}_S$ land at $\prescript{}{J}\alpha^{\blacksquare,\bullet}_S$ and
    \[
        \partial \prescript{}{J,j} \Obold^{\blacksquare,\bullet}_S \, \big\backslash \, \partial^c \prescript{}{J,j} \Obold^{\blacksquare,\bullet}_S = \left\{ \prescript{}{J}\alpha^{\blacksquare,\bullet}_S \right\}.
    \]
\end{lemma}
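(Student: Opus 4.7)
I will construct $\prescript{}{J}\alpha^{\blacksquare,\bullet}_S$ in three stages: a reduction to critical points on $\Hq$, a prime-end construction via conformal uniformization with the ocean, and a coincidence argument across all adjacent lakes.

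First (reduction). Any $\prescript{}{J}C^\blacksquare_S$ with $S=(P_1,\ldots,P_{m+1})$ and $m\geq 1$ is, by the inductive definition, the unique $\Fbold^{P_1+\cdots+P_m}$-preimage of $C_{P_{m+1}}\in\Hq$ along a branch that traverses only spines (never crossing the critical value $0$ of any intermediate iterate), hence this branch is locally univalent near $\prescript{}{J}C^\blacksquare_S$. The full local bouquet of lakes and spines there is therefore the univalent pullback of the bouquet at $C_{P_{m+1}}$, so $\alpha^\bullet_{P_{m+1}}$ pulls back to $\prescript{}{J}\alpha^{\blacksquare,\bullet}_S$ with all the required properties. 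By the self-similarity $A_*(\prescript{}{j}\Obold^\bullet_P)=\prescript{}{j}\Obold^\bullet_{\tbold P}$ of Lemma \ref{lem:self-sim-lakes}, setting $\alpha^\bullet_{\tbold^k P}:=A_*^k(\alpha^\bullet_P)$ reduces me further to constructing $\alpha^\bullet_P$ only for $P$ in a fundamental domain of the $A_*$-action on $\Tbold_{>0}$.

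Second (prime-end construction for middle lakes). Fix $C_P\in\Hq$ and a middle lake $\Obold=\prescript{}{j}\Obold^\bullet_P$, with $\Fbold^P\colon\Obold\to\Obold^\bullet$ conformal. Since $\Hq$ is asymptotic to $\infty$ from both ends by Lemma \ref{lem:dynamics-on-H}, the set $\Hq\cup\{\infty\}$ closes into a quasicircle in $\RS$, so the closure of $\Obold^\bullet$ in $\RS$ is a Jordan domain carrying a single distinguished prime end at $\infty$. By Caratheodory's extension theorem, $\Fbold^P$ extends to a homeomorphism between the prime-end compactifications of $\Obold$ and $\Obold^\bullet$, pulling back this distinguished prime end to a single prime end of $\Obold$. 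To see that this prime end is realized by a finite point of $\C$, I invoke Lemma \ref{lem:bdd-limbs}: each of the two boundary spines $\prescript{}{j}\Hq^\bullet_P$ and $\prescript{}{j+1}\Hq^\bullet_P$ is contained in a bounded limb, hence is a bounded quasi-arc with a well-defined far endpoint in $\C$; by the Caratheodory identification both far endpoints coincide with the single pulled-back prime end. This common finite point, which lies in $\partial\Esc_{\leq P}\subset\C$, is our $\alpha^\bullet_P$, and we obtain $\partial\Obold\setminus\partial^c\Obold=\{\alpha^\bullet_P\}$ with the left and right coasts both landing at $\alpha^\bullet_P$.

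Third (coincidence across all adjacent lakes). I must show that all $2d_\bullet-1$ lakes attached to $C_P$ on side $\bullet$ share the same $\alpha^\bullet_P$. For consecutive middle lakes $\prescript{}{j}\Obold^\bullet_P$ and $\prescript{}{j+1}\Obold^\bullet_P$ this is immediate: they share the spine $\prescript{}{j+1}\Hq^\bullet_P$, whose single far endpoint (in $\C$, by the preceding paragraph) must serve as the alpha-prime-end for both. The main obstacle is the two side lakes $\prescript{}{l,r}\Obold^\bullet_P$: their non-spine coasts have infinite itineraries, consisting of sequences $(c_{l,k})$ of critical points of strictly increasing generations on $\partial^c_l\Obold$ with attached sub-bouquets, and I must show such an infinite sequence converges to the adjacent middle lake's alpha-point rather than escaping to $\infty$ or accumulating on a larger set. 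I will resolve this via an exponential shrinking argument: the anti-renormalization matrix $\Mbold$ of Lemma \ref{lem:renorm-matrix} together with the contraction $|\mu_*|<1$ implies that the $k$-th sub-bouquet along the infinite itinerary is a univalent rescaling of a bounded model bouquet by a factor decaying geometrically in $k$, so the $c_{l,k}$ have a unique limit in $\C$. Once this limit is established, the side lake itself becomes a bounded Jordan domain to which Caratheodory applies, and its alpha-prime-end must coincide with the shared-spine far endpoint, giving global coincidence and completing the proof.
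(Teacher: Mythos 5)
Your reduction to critical points on $\Hq$ matches the paper's first step (the paper does it via Corollary \ref{cor:middle-or-side}), but the core of your argument has a genuine gap: you never actually prove that the escaping part of a lake's boundary is a single point. Caratheodory's theorem does let you pull back the prime end at $\infty$ of $\Obold^\bullet$ to a single prime end of the middle lake, but a single prime end can have a nondegenerate impression, and conformal maps do not preserve impressions, so ``single prime end'' does not yield ``single landing point.'' Your attempt to bypass this --- ``each boundary spine is contained in a bounded limb, hence is a bounded quasi-arc with a well-defined far endpoint'' --- is circular: a bounded open arc need not converge at its far end (it can accumulate on a continuum), and the landing of the spines is precisely the content of the lemma (the paper only records the spines as arcs joining $\prescript{}{J}C^{\blacksquare}_S$ to a landing point \emph{as a consequence of} this lemma). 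The same conflation reappears in your side-lake step; moreover the claimed geometric decay of the $k$-th sub-bouquet along an infinite itinerary is asserted without proof, and the shrinking estimates of that kind (Lemmas \ref{lem:primary-wakes-shrink}, \ref{lem:uniform-expansion}, \ref{lem:nested-wakes-shrink}) appear only later in the paper and themselves rely on the alpha-point structure being established here.

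The mechanism your proposal is missing is dynamical expansion. In the paper, a middle lake $\prescript{}{j}\Obold^\bullet_P$ sits inside a side lake $\prescript{}{k}\Obold^\bullet_{P-P/\tbold}$, and the conformal map $\Gbold=\Fbold^{\tbold P-P}\circ A_*$ carries that side lake into the ocean, fixes the middle lake, and expands the hyperbolic metric of $\Obold^\bullet$; since $\overline{\prescript{}{j}\Obold^\bullet_P}\cap\Esc_{\leq P}$ is a $\Gbold$-invariant compact subset of $\Obold^\bullet$, expansion forces it to be the single repelling fixed point of $\Gbold$, which is $\alpha^\bullet_P$. For the side lakes one additionally needs that $\overline{\partial^c_{l}\prescript{}{l}\Obold^\bullet_P}\cap\Esc_{\leq P}$ is compactly contained in the ocean, which the paper obtains by a separation argument using the unboundedness of the components of $\Esc_{\leq R}$ (Lemma \ref{lem:unboundedness-of-esc}); $\Gbold$-invariance then identifies it with the same fixed point. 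To close your proof you would need to supply both the expansion (or an equivalent contraction/local-connectivity) input and this compactness step; the prime-end formalism alone cannot deliver either.
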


In particular, every lake is a disk and each of the spines $\prescript{}{J,j} \Hq^{\blacksquare,\bullet}_S$ attached to $\prescript{}{J}C^{\blacksquare}_S$ is a quasiarc connecting its common root $\prescript{}{J}C^{\blacksquare}_S$ to a common landing point $\prescript{}{J}\alpha^{\blacksquare,\bullet}_S$. See Figure \ref{fig:alpha-points} for an illustration. We call $\prescript{}{J}\alpha^{\blacksquare,0}_S$ and $\prescript{}{J}\alpha^{\blacksquare,\infty}_S$ the \emph{inner} and \emph{outer alpha-points} corresponding to $\prescript{}{J}C^{\blacksquare}_S$. Moreover, we say that $\prescript{}{J}\alpha^{\blacksquare,\bullet}_S$ is \emph{the alpha-point} of any lake of the form $\prescript{}{J,j}\Obold^{\blacksquare,\bullet}_S$.

\begin{remark}
    In the pacman case, alpha-points can be interpreted as limiting preimages of the $\alpha$-fixed point of an infinitely anti-renormalizable pacman. For the classical Feigenbaum renormalization fixed point, the existence of alpha points dates back to early 1990's in the work of H. Epstein \cite{Ep92}.
\end{remark}

\begin{figure}
    \centering
    \begin{tikzpicture}[scale=1.04]
        \filldraw[gray, fill=green!05!white] (-4.8,0) .. controls (-3.75, 1.75) .. (-4,3.5) .. controls (-3,4) .. (-2.5,4.5) .. controls (-1.75, 4.25) .. (-1,4.3) .. controls (-0.7,3.9) .. (-0.3,3.8) .. controls (-0.2,3.63) .. (0,3.5) .. controls (0.2,3.63) .. (0.3,3.8) .. controls (0.7,3.9) .. (1,4.3) .. controls (1.75, 4.25) .. (2.5,4.5) .. controls (3,4) ..  (4,3.5) .. controls (3.75, 1.75) .. (4.8,0) -- (2.4,-0.8) -- (0,0) -- (-2.4,-0.8) -- (-4.8,0); 
        \filldraw[gray, fill=yellow!05!white] (-4.8,0) .. controls (-3.9, -1.5) .. (-4,-3) .. controls (-3,-3.2) .. (-2.5,-3.5) .. controls (-1.75,-3.25) .. (-1,-3.3) .. controls (-0.7,-2.9) .. (-0.3,-2.8) .. controls (-0.2,-2.63) .. (0,-2.5) .. controls (0.2,-2.63) .. (0.3,-2.8) .. controls (0.7,-2.9) .. (1,-3.3) .. controls (1.75,-3.25) .. (2.5,-3.5) .. controls (3,-3.2) ..  (4,-3) .. controls (3.9, -1.5) .. (4.8,0) -- (2.4,-0.8) -- (0,0) -- (-2.4,-0.8) -- (-4.8,0); 
        
        \draw[ultra thick]  (-6,-0.4) -- (-4.8,0) -- (-2.4,-0.8) -- (0,0) -- (2.4,-0.8) -- (4.8,0) -- (6,-0.4);
        \node [black, font=\bfseries] at (-5.6,0) {$\Hq$};

        \filldraw[gray, fill=yellow!05!white] (0,0) .. controls (1.5,0.2) and (2,1.5) .. (2,2) .. controls (1.8,3) and (0.5,3.8) .. (0,3.5) .. controls (-0.5,3.8) and (-1.8,3) .. (-2,2) .. controls (-2,1.5) and (-1.5,0.2) .. (0,0); 
        \filldraw[gray, fill=green!05!white] (0,0) .. controls (1, 2.2) .. (0,3.5) .. controls (-1,2.2) .. (0,0); 
        \filldraw[gray, fill=green!05!white] (0,0) .. controls (0.9,-1.8) .. (0,-2.5) .. controls (-0.9,-1.8) .. (0,0);

        \node [red, font=\bfseries] at (0,-0.02) {\large $\bullet$};
        \node [red, font=\bfseries] at (-1,0) {\small $C_P$};
        \draw[red] (-0.7,0) -- (-0.15,0);
        \node [red, font=\bfseries] at (0, -2.5) {$\bullet$};
        \node [red, font=\bfseries] at (0,-2.8) {\small $\alpha_P^\infty$};
        \node [red, font=\bfseries] at (0,3.5) {$\bullet$};
        \node [red, font=\bfseries] at (0,3.8) {\small $\alpha_P^0$};
        \node [yellow!50!black, font=\bfseries] at (1.1,1.1) {\small $\prescript{}{1}{\Obold}^0_P$};
        \node [green!50!black, font=\bfseries] at (0,2) {\small $\prescript{}{2}{\Obold}^0_P$};
        \node [yellow!50!black, font=\bfseries] at (-1.1,1.1) {\small $\prescript{}{3}{\Obold}^0_P$};
        \node [green!50!black, font=\bfseries] at (0,-1.5) {\small $\prescript{}{1}{\Obold}^\infty_P$};
        \node [yellow!50!black, font=\bfseries] at (2.5,-2.2) {\small $\prescript{}{l}{\Obold}^\infty_P$};
        \node [yellow!50!black, font=\bfseries] at (-2.5,-2.2) {\small $\prescript{}{r}{\Obold}^\infty_P$};
        \node [green!50!black, font=\bfseries] at (-2.8,1.5) {\small $\prescript{}{l}{\Obold}^0_P$};
        \node [green!50!black, font=\bfseries] at (2.8,1.5) {\small $\prescript{}{r}{\Obold}^0_P$};
\end{tikzpicture}

    \caption{The configuration of middle and side lakes rooted at $C_P$ when $(d_0,d_\infty)=(3,2)$. Their coasts land at $\alpha_P^0$ and $\alpha_P^\infty$.}
    \label{fig:alpha-points}
\end{figure}

\begin{proof}
    By Corollary \ref{cor:middle-or-side}, for every lake $\Obold$, there is some $Q \in \Tbold$ such that $\Fbold^Q(\Obold)$ is either a side lake or a middle lake attached to some critical point on $\Hq$. Therefore, it is sufficient to prove the lemma for lakes of the form $\prescript{}{j}\Obold^\bullet_P$ where $\bullet \in \{0,\infty\}$ and $j \in \{l,1,\ldots,2d_\bullet-3,r\}$. 
    
    Suppose $\prescript{}{j}\Obold_P^\bullet$ is a middle lake. Then, it is contained in some side lake $\prescript{}{k}\Obold^{\bullet}_{P-P/\tbold}$ of generation $P-P/\tbold$ where $k \in \{l,r\}$. Consider the conformal map
    \[
        \Gbold := \Fbold^{\tbold P-P} \circ A_* : \prescript{}{k}\Obold^{\bullet}_{P-P/\tbold} \to \Obold^\bullet.
    \]
    Observe that $\Gbold$ expands the hyperbolic metric of the ocean $\Obold^\bullet$, and $\Gbold$ sends $\prescript{}{j}\Obold_P^\bullet$ onto itself. Since $\overline{\prescript{}{j}\Obold_P^\bullet} \cap \Esc_{\leq P}$ is a $\Gbold$-invariant compact subset of $\Obold^\bullet$, then it must be a singleton $\left\{\alpha_P^\bullet\right\}$ consisting of the unique repelling fixed point of $\Gbold$.

    It remains to show that for $j \in \{l,r\}$, the intersection $\overline{ \partial^c_j \prescript{}{j}\Obold^\bullet_P} \cap \Esc_{\leq P}$ is also a compact subset of $\Obold^\bullet$. By invariance under $\Gbold$, this will again imply that $\overline{ \partial^c_j \prescript{}{j}\Obold^\bullet_P} \cap \Esc_{\leq P}$ is the same singleton $\left\{\alpha_P^\bullet\right\}$, and we are done.
    
    Let us assume without loss of generality that $j=l$. Denote the left itinerary of $\prescript{}{l}\Obold^\bullet_P$ by $(Q_1,Q_2,Q_3,\ldots)$. The left coast of $\prescript{}{l}\Obold^\bullet_P$ starts with a segment of the spine $\prescript{}{1}\Hq^\bullet_{Q_1}$ connecting $C_{Q_1}$ and $\prescript{}{1}C^\bullet_{Q_1,Q_2}$. Let us pick a pair of power-triples $R_-, R_+ \in \Tbold_{>0}$ such that the critical points $\prescript{}{1}C^\bullet_{Q_1,R_-}$ and $\prescript{}{1}C^\bullet_{Q_1,R_+}$ form a small open interval neighborhood $J \subset \prescript{}{1}\Hq^\bullet_{Q_1}$ of $\prescript{}{1}C^\bullet_{Q_1,Q_2}$. Let $B_\pm$ be the spines of generation $Q_1 + R_\pm$ attached to $\prescript{}{1}C^\bullet_{Q_1,R_\pm}$ that are combinatorially closest to $\prescript{}{1,1}\Hq^{\bullet,\bullet}_{Q_1,Q_2}$. Let $R := Q_1+\max\{R_+,R_-\}$. By Lemma \ref{lem:unboundedness-of-esc}, every connected component of $\Esc_{\leq R}$ is unbounded and thus the union $J \cup B_+ \cup B_- \cup \Esc_{\leq R}$ separates $\partial^c_l \prescript{}{l}\Obold^\bullet_P \backslash \prescript{}{i}\Hq^\bullet_{Q_1}$ from $\Hq$. This observation implies that $\overline{ \partial^c_l \prescript{}{l}\Obold^\bullet_P} \cap \Esc_{\leq P}$ is indeed compactly contained in $\Obold^\bullet$.
\end{proof}

The alpha-points $\prescript{}{J}\alpha^{\blacksquare,\bullet}_S$ can be viewed as preimages of infinity under the map $\Fbold^{|S|}$. They are unique in the following sense.

\begin{lemma}
    Two alpha-points $\prescript{}{J}\alpha^{\blacksquare,\bullet}_S$ and $\prescript{}{J'}\alpha^{\square,\circ}_{S'}$ coincide if and only if $J=J'$, $\blacksquare = \square$, $\bullet = \circ$, and $S=S'$.
\end{lemma}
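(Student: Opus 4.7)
The ``if'' direction is immediate from the inductive construction of the indexing. For the converse, assume $\alpha := \prescript{}{J}\alpha^{\blacksquare,\bullet}_S = \prescript{}{J'}\alpha^{\square,\circ}_{S'}$. My plan is to recover the tuple $(J,\blacksquare,\bullet,S)$ from the point $\alpha$ alone and then invoke injectivity of the indexing.

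\emph{Step 1: Generations agree.} I claim $|S|$ equals the minimum $P \in \Tbold_{>0}$ with $\alpha \in \Esc_{\leq P}$. That $\alpha \in \Esc_{\leq |S|}$ follows from Lemma~\ref{alpha-points}, since $\alpha$ is the unique boundary point of each lake $\prescript{}{J,k}\Obold^{\blacksquare,\bullet}_S$ lying outside its coast, and coasts lie in $\Dom(\Fbold^{|S|})$. For the reverse inequality, fix $Q < |S|$. The middle and side lakes together with the spines attached to $c := \prescript{}{J}C^\blacksquare_S$ cover a full punctured neighborhood of $\alpha$ contained in $\Dom(\Fbold^{|S|}) \subset \Dom(\Fbold^Q)$. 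On this neighborhood, $\Fbold^Q$ carries each lake univalently onto a lake of generation $|S|-Q$, so it is bounded (its values accumulate at the alpha-point of generation $|S|-Q$ attached to the corresponding lower-generation critical point). By Riemann's removable singularity theorem, $\Fbold^Q$ extends analytically to $\alpha$, so $\alpha \notin \Esc_{\leq Q}$. Hence $|S| = |S'| =: P$.

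\emph{Step 2: Types and critical points agree.} For $\bullet = \circ$: the punctured neighborhood of $\alpha$ constructed above is tiled modulo $\Fbold^{-P}(\Hq)$ by $\Fbold^P$-preimages of $\Obold^\bullet$ from the first description, and simultaneously by $\Fbold^P$-preimages of $\Obold^\circ$ from the second; since $\Obold^0$ and $\Obold^\infty$ are disjoint oceans, so are their $\Fbold^P$-preimages, forcing $\bullet = \circ$. For $c = c' := \prescript{}{J'}C^\square_{S'}$: among all arcs of $\Fbold^{-P}(\Hq)$ landing at $\alpha$, the spines from $c$ are characterized as the $2d_\bullet - 2$ arcs reaching a critical point of $\Fbold^P$ without first crossing a branching critical point of generation $<P$, and their common far endpoint is $c$. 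The identical characterization from the second description recovers $c'$, so $c = c'$. Combined with the injectivity of the indexing $(J,\blacksquare,S) \mapsto \prescript{}{J}C^\blacksquare_S$—built into the inductive construction, since $C_P$ is the unique critical point of $\Fbold^P$ on $\Hq$ and each successive $\prescript{}{J,j}C^{\blacksquare,\bullet}_{S,Q}$ is uniquely specified as the critical point of a given generation on a prescribed spine segment—this yields $(J,\blacksquare,S) = (J',\square,S')$, completing the proof.

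The main technical subtlety is the removable-singularity argument in Step 1. A priori, Lemma~\ref{lem:transitivity} shows that points of $\partial\Esc_{\leq Q}$ can be essential singularities of $\Fbold^Q$ with dense image, and one has to rule out that $\alpha$ is such a singularity. The resolution is the bouquet structure supplied by Lemma~\ref{alpha-points}: the lakes and spines attached to $c$ furnish a genuine punctured disk neighborhood of $\alpha$ on which $\Fbold^Q$ is defined, univalent on each sector, and uniformly bounded, so the potential singularity at $\alpha$ is removable.
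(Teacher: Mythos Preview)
Your overall strategy—recover the data $(|S|,\bullet,c)$ intrinsically from the point $\alpha$—is reasonable, but Step~2 contains a genuine error and Step~1 leans on an unproved topological claim.

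\textbf{Step 2, the argument for $\bullet=\circ$, is wrong as written.} You claim the punctured neighborhood of $\alpha$ is tiled by $\Fbold^P$-preimages of the single ocean $\Obold^\bullet$. This is false: by the lemma defining the middle lakes, $\prescript{}{J,j}\Obold^{\blacksquare,\bullet}_S$ is mapped by $\Fbold^{|S|}$ onto $\Obold^\bullet$ when $j$ is even and onto the \emph{other} ocean when $j$ is odd. So the lakes surrounding $\alpha$ are preimages of \emph{both} oceans, and the disjointness of $\Obold^0$ and $\Obold^\infty$ gives no contradiction. The paper sidesteps this by pushing forward: choose $R$ with $P_1+\cdots+P_{m-1}<R<|S|$ (and likewise for $S'$), so that $\Fbold^R(\alpha)$ is a \emph{primary} alpha-point $\alpha^\bullet_{|S|-R}=\alpha^\circ_{|S'|-R}$. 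Primary alpha-points lie in their labeled ocean (this is explicit in the proof of Lemma~\ref{alpha-points}), so $\bullet=\circ$ follows immediately. Your characterization of $c$ via ``spines reaching a critical point without crossing one of lower generation'' is also not justified at this stage, since you have not yet ruled out two distinct critical points of generation $P$ having spines landing at the same $\alpha$—that is essentially the content of the lemma and of Corollary~\ref{cor:spine-structure}, which comes afterward.

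\textbf{Step 1 has a gap.} The removable-singularity argument needs the union of closures of the middle and side lakes attached to $\alpha$ to be a full neighborhood of $\alpha$; this is plausible from the picture but not established in the text at this point, and it is awkward to prove without already knowing that all lakes of generation $|S|$ whose closure meets $\alpha$ share the same root $c$. A cleaner route to $|S|=|S'|$ is to observe that the escape time of $\alpha$ equals $|S|$: for primary $\alpha^\bullet_P$ this follows from the fixed-point equation $\Gbold(\alpha^\bullet_P)=\alpha^\bullet_P$ with $\Gbold=\Fbold^{\tbold P-P}\circ A_*$ (so $T(\alpha^\bullet_P)=\tbold\,T(\alpha^\bullet_P)-(\tbold-1)P$, hence $T=P$), and then inductively for higher levels since each such alpha-point lies strictly inside a lake of lower generation and is mapped by the corresponding $\Fbold^{P_1}$ to an alpha-point of one lower level.
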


\begin{proof}
    Suppose $\prescript{}{J}\alpha^{\blacksquare,\bullet}_S = \prescript{}{J'}\alpha^{\square,\circ}_{S'}$. Clearly, $|S|=|S'|$. Let us write $S=(P_1,\ldots,P_m)$ and $S'=(Q_1,\ldots, Q_k)$, and pick a power-triple $R \in \Tbold$ such that 
    \[
    \max\{P_1+\ldots+P_{m-1}, Q_1+\ldots+Q_{k-1}\}<R<|S|.
    \]
    Pushing forward by $\Fbold^R$ yields a pair of alpha-points $\alpha^\bullet_{|S|-R}$ and $\alpha^\circ_{|S'|-R}$ where, since they are equal, $\bullet = \circ$. If $(J,\blacksquare,S) \neq (J',\square,S')$, then this would imply that $\alpha^\bullet_{|S|-R}$ is a critical point of $\Fbold^R$, which is not the case.
\end{proof}

Consequently, if two disjoint spines touch at a common alpha-point, then they are rooted at a common critical point. This guarantees a more precise tree structure of $\Fbold^{-P}(\Hq)$ in terms of spines. For convenience, we will call $\Hq$ the unique spine of generation $0$.

\begin{corollary}
\label{cor:spine-structure}
    Consider two distinct spines $\prescript{}{J,j}\Hq_S^{\blacksquare,\bullet}$ and $\prescript{}{J',j'}\Hq_{S'}^{\square,\circ}$ with $|S|\geq |S'|$.
    \begin{enumerate}[label=\textnormal{(\arabic*)}]
        \item If the intersection $\overline{\prescript{}{J,j}\Hq_S^{\blacksquare,\bullet}} \cap \overline{\prescript{}{J',j'}\Hq_{S'}^{\square,\circ}}$ is non-empty, then it is either the singleton $\left\{\prescript{}{J}C_S^\blacksquare \right\}$ or the set $\left\{\prescript{}{J}C_S^\blacksquare, \prescript{}{J}\alpha_S^{\blacksquare,\bullet} \right\}$. The former case happens if and only if $\prescript{}{J',j'}\Hq_{S'}^{\square,\circ}$ contains $\prescript{}{J}C_S^\blacksquare$, and the latter case happens if and only if $(J,\blacksquare,\bullet,S) = (J',\square,\circ,S')$.
        \item There is a unique sequence of pairwise different spines 
        \[
        B_1 = \prescript{}{J,j}\Hq_S^{\blacksquare,\bullet}, \enspace B_2, \enspace \ldots, \enspace B_{n-1}, \enspace B_n= \prescript{}{J',j'}\Hq_{S'}^{\square,\circ}
        \]
        such that $\overline{B_{i}}$ intersects $\overline{B_{i'}}$ if and only if $|i-i'|\leq 1$.
    \end{enumerate}
\end{corollary}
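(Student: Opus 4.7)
The plan is to combine the tree structure of $\Fbold^{-P}(\Hq)$ from Lemma \ref{chessboard-rule}, the closed quasiarc description of spine closures from Lemma \ref{alpha-points}, and the uniqueness of alpha-points established just above. The structural observation is that distinct spines have pairwise disjoint open interiors, and each spine closure differs from the spine itself by exactly two points, namely its root critical point and its alpha-point. Crucially, alpha-points lie inside the oceans $\Obold^\bullet$ and do not belong to $\Fbold^{-Q}(\Hq)$ for any $Q$, so they are disjoint from the interior of every spine.

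For part (1), write $B = \prescript{}{J,j}\Hq_S^{\blacksquare,\bullet}$ and $B' = \prescript{}{J',j'}\Hq_{S'}^{\square,\circ}$ with $|S|\geq |S'|$. Interior disjointness forces any $x \in \overline{B}\cap\overline{B'}$ to be an endpoint of at least one of the spines. If $x$ is the alpha-point of $B$, it cannot lie in the interior of $B'$ (oceans versus iterated preimages of $\Hq$) nor at the critical endpoint of $B'$ (domain versus escaping set), so $x$ must equal the alpha-point of $B'$; the uniqueness lemma above then forces $(J,\blacksquare,\bullet,S)=(J',\square,\circ,S')$, giving the latter-case intersection $\{\prescript{}{J}C_S^\blacksquare,\prescript{}{J}\alpha_S^{\blacksquare,\bullet}\}$. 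If instead $x$ is the critical root of $B$, it either coincides with the critical root of $B'$ (so $|S|=|S'|$ and the data match save for the spine index and possibly the ocean direction) or it sits in the interior of $B'$ as a critical point of higher generation, possible precisely when $|S|>|S'|$; in either subcase the alpha-point of $B$ fails to belong to $\overline{B'}$ by uniqueness combined with the oceans argument, so the intersection reduces to the former-case singleton $\{\prescript{}{J}C_S^\blacksquare\}$. The iff statements follow by retracing these implications.

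For part (2), fix $P \in \Tbold$ large enough that $\overline{B_1}$ and $\overline{B_n}$ both lie in $\Fbold^{-P}(\Hq)$ together with their adjoined alpha-points. The plan is to consider the unique simple arc in this enlarged tree from a point of $\overline{B_1}$ to a point of $\overline{B_n}$, record the finite ordered list of distinct spines that the arc crosses, and designate this list as $B_1,B_2,\ldots,B_n$. Part (1) ensures that consecutive terms share an endpoint, while non-adjacent terms have disjoint closures, since any extra intersection would close up a loop in the simply connected tree, contradicting Lemma \ref{chessboard-rule}. Uniqueness of the sequence is inherited from the uniqueness of the simple arc.

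The main obstacle I anticipate is the multi-edge phenomenon of the spine graph: each bouquet at a critical point $\prescript{}{J}C_S^\blacksquare$ contains $2d_\bullet - 2$ spines (indexed by $j$) that share the same root and the same alpha-point, so the graph whose vertices are critical points and alpha-points and whose edges are spines has parallel edges. Precisely these parallel spines realize the latter-case intersection $\{C,\alpha\}$ in the dichotomy of part (1), and in part (2) they must be distinguished by their index $j$ while still being treated as graph-theoretic parallels. Carrying out this bookkeeping is what allows the ``unique sequence'' statement to go through once the traversal of the simple arc is set up.
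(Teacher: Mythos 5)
Your part (1) is essentially the argument the paper intends: the corollary is stated without proof as a direct consequence of the alpha-point uniqueness lemma and the tree structure from Lemma \ref{chessboard-rule}, and your reduction (every intersection point is forced to be an endpoint of the higher-generation spine; an alpha-point in the intersection forces equality of alpha-points and hence of the data $(J,\blacksquare,\bullet,S)$) is correct. One slip in the bookkeeping: in your root-endpoint subcase where ``the data match save for the spine index and possibly the ocean direction,'' the alpha-points \emph{do} coincide when only the spine index differs, so the intersection there is the pair, not the singleton. This is harmless because your alpha-point case already captures it, but you should not assert that $\alpha(B)\notin\overline{B'}$ in that subcase.

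The genuine gap is in part (2). You propose to fix $P$ so large that $\overline{B_1}$ and $\overline{B_n}$ lie in $\Fbold^{-P}(\Hq)$ ``together with their adjoined alpha-points,'' and then to use ``the unique simple arc in this enlarged tree.'' No such $P$ exists: an alpha-point of generation $Q$ lies in $\Esc_{\leq Q}=\C\setminus\Dom\left(\Fbold^Q\right)$, and its image under $\Fbold^R$ for $R<Q$ is again an alpha-point inside an ocean, so alpha-points never belong to $\Fbold^{-P}(\Hq)$ for any $P$. Worse, the set obtained by adjoining them is not a tree: the closures of two consecutive spines $\prescript{}{2j-1}\Hq^\bullet_P$ and $\prescript{}{2j}\Hq^\bullet_P$ form the Jordan curve bounding a middle lake, so the enlarged object contains loops and ``the unique simple arc'' is undefined; your appeal to simple connectivity ``contradicting Lemma \ref{chessboard-rule}'' does not apply to it. This is precisely the parallel-edge phenomenon you flag in your final paragraph, but you defer its resolution instead of carrying it out, and as written both the existence and the uniqueness of the chain rest on an object that does not exist. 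The repair is to run the arc argument inside the honest tree $\Fbold^{-P}(\Hq)$ (where the chessboard rule does apply), extract the chain of spines traversed by the unique arc there, and then use part (1) to observe that two spines whose closures meet only at a common alpha-point necessarily share their root as well, so that any adjacency between non-consecutive members of the chain still forces a genuine cycle in $\Fbold^{-P}(\Hq)$. Until that step is written, part (2) is not proved.
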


Given an alpha-point $\alpha = \prescript{}{J}\alpha^{\blacksquare,\bullet}_S$, we define
\begin{list}{$\rhd$}{}
    \item a \emph{finite skeleton landing at} $\alpha$ to be the union of a spine $\prescript{}{J,j}\Hq^{\blacksquare,\bullet}_S$ together with the unique closed quasiarc in $\Fbold^{-|S|}(\Hq)$ connecting $\prescript{}{J}C^{\blacksquare}_S$ to $0$;
    \item an \emph{infinite skeleton landing at} $\alpha$ to be the union of $\partial^c_k \prescript{}{J,k}\Obold^{\blacksquare,\bullet}_S$ for some $k \in \{l,r\}$ together with the unique closed quasiarc in $\Fbold^{-|S|}(\Hq)$ connecting the root of $\partial^c_k \prescript{}{J,k}\Obold^{\blacksquare,\bullet}_S$ to $0$.
\end{list}
In short, skeletons landing at $\alpha$ are the shortest paths from $0$ to $\alpha$ within the tree of preimages of $\Hq$. There are exactly $2 d_\bullet$ skeletons landing at $\alpha$, and precisely two of them are infinite. 

The set of skeletons admit a total order ``$<$`` defined as follows. Let us fix a ray $\gamma$ in $\Hq$ connecting $0$ to $\infty$. Given two distinct skeletons $\mathfrak{S}$ and $\mathfrak{S}'$, 
\begin{list}{$\rhd$}{}
    \item we write $\mathfrak{S} < \mathfrak{S}'$ if $\gamma$, $\mathfrak{S}$, and $\mathfrak{S}'$ have a counterclockwise orientation around the quasiarc $\mathfrak{S} \cap \mathfrak{S}'$, and
    \item we say that $\mathfrak{S}$ and $\mathfrak{S}'$ are \emph{$<$-separated} if there is another skeleton $\mathfrak{S}''$ such that either $\mathfrak{S} < \mathfrak{S}'' < \mathfrak{S}'$ or $\mathfrak{S}' < \mathfrak{S}'' < \mathfrak{S}$.
\end{list}
We say that two alpha-points $\alpha$ and $ \alpha'$ in the same ocean $\Obold^\bullet$ are \emph{$<$-separated} if 
\begin{list}{$\rhd$}{}
    \item there exists an alpha point $\alpha'' \in \Obold^\bullet$ with generation lower than that of $\alpha$ and $\alpha'$, and
    \item there exist skeletons $\mathfrak{S}$, $\mathfrak{S}'$ , $\mathfrak{S}''$ landing at $\alpha$, $\alpha'$, $\alpha''$ respectively such that $\mathfrak{S}$ and $\mathfrak{S}'$ are $<$-separated by $\mathfrak{S}''$.
\end{list}

Let us introduce another partial order on the set of alpha-points. Given two alpha-points $\alpha$ and $\alpha'$ in the same ocean, 
\begin{list}{$\rhd$}{}
    \item we write $\alpha \prec \alpha'$ if $\alpha'$ is contained in the closure of a lake attached to $\alpha$, and
    \item we say that $\alpha$ and $\alpha'$ are \emph{$\prec$-separated} if $\alpha$ and $\alpha'$ are contained in two distinct lakes with a common alpha-point.
\end{list}

The following proposition describes the relation between ``$\prec$`` and ``$<$``.

\begin{proposition}
\label{comparing-alphas}
    Consider two distinct alpha-points $\alpha$ and $\alpha'$ of generations $P$ and $P'$ inside of the ocean $\Obold^\bullet$ for some $\bullet \in \{0,\infty\}$. Assume $P\leq P'$. The following are equivalent.
    \begin{enumerate}[label=\textnormal{(\arabic*)}]
        \item $\alpha \prec \alpha'$;
        \item $\alpha$ and $\alpha'$ are not $<$-separated by another alpha-point $\alpha''$ of generation $<P$;
        \item $\alpha$ and $\alpha'$ are not $\prec$-separated.
    \end{enumerate}
\end{proposition}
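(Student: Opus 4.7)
My plan is to prove the cyclic implications $(1)\Rightarrow(2)\Rightarrow(3)\Rightarrow(1)$, treating each condition as a statement about the tree of preimages $\Fbold^{-P'}(\Hq)$ from Lemma~\ref{chessboard-rule} and the uniqueness of the spine-chain established in Corollary~\ref{cor:spine-structure}(2). The preliminary observation I would record first is a \emph{monotonicity of generations along $\prec$}: if $\beta\neq\alpha''$ satisfy $\alpha''\prec\beta$, then $\textnormal{gen}(\alpha'')<\textnormal{gen}(\beta)$. Indeed, $\beta$ must lie in the interior of a lake $\Obold$ attached to $\alpha''$, and since the coast $\partial^c\Obold\subset\Dom(\Fbold^{\textnormal{gen}(\alpha'')})$ contains no essential singularities of generation $\leq\textnormal{gen}(\alpha'')$, $\beta$ can only appear as an interior alpha-point, whose existence requires deeper preimage structure. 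Consequently, if $\alpha,\alpha'$ are $\prec$-separated by $\alpha''$, then $\textnormal{gen}(\alpha'')<\min(P,P')=P$. Combined with Lemma~\ref{alpha-points}, this monotonicity also implies that the alpha-points in $\Obold^\bullet$ form a rooted tree under $\prec$.

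For $(3)\Rightarrow(1)$: in a rooted tree, two distinct nodes are either in ancestor-descendant relation or diverge at a common ancestor. Under the $\prec$-tree identification the former reads ``$\alpha\prec\alpha'$ or $\alpha'\prec\alpha$'', while the latter corresponds exactly to $\prec$-separation (the children of $\alpha''$ index the lakes attached to $\alpha''$). Thus ``not $\prec$-separated'' forces $\alpha\prec\alpha'$ or $\alpha'\prec\alpha$, and the monotonicity together with $P\leq P'$ selects $\alpha\prec\alpha'$.

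For $(1)\Rightarrow(2)$: assume $\alpha\prec\alpha'$, so $\alpha'$ is an interior alpha-point of some lake $\Obold$ attached to the critical point $C=\prescript{}{J}C^\blacksquare_S$ of generation $P$. Suppose for contradiction there are skeletons $\mathfrak{S},\mathfrak{S}',\mathfrak{S}''$ at $\alpha,\alpha',\alpha''$, with $\textnormal{gen}(\alpha'')<P$, realising $\mathfrak{S}<\mathfrak{S}''<\mathfrak{S}'$ in the cyclic order. By Corollary~\ref{cor:spine-structure}(2), the unique chain of consecutively-touching spines from $\mathfrak{S}$ to $\mathfrak{S}'$ must traverse a generation-$P$ spine rooted at $C$, since every access into $\Obold$ passes through $C$. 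Any skeleton wedged between $\mathfrak{S}$ and $\mathfrak{S}'$ must then share a spine of generation $\geq P$ with this chain, contradicting the fact that every spine in $\mathfrak{S}''$ has generation $\leq\textnormal{gen}(\alpha'')<P$. For $(2)\Rightarrow(3)$ I take the contrapositive: if $\alpha,\alpha'$ are $\prec$-separated by $\alpha''$ via distinct lakes $\Obold_1,\Obold_2$ attached to $\alpha''$ with $\alpha\in\overline{\Obold_1}$, $\alpha'\in\overline{\Obold_2}$, choose $\mathfrak{S},\mathfrak{S}'$ at $\alpha,\alpha'$ with terminal spines inside $\Obold_1,\Obold_2$, and $\mathfrak{S}''$ at $\alpha''$ ending at a spine lying cyclically between $\Obold_1$ and $\Obold_2$ in the arrangement of lakes around $\alpha''$; this realises $\mathfrak{S}<\mathfrak{S}''<\mathfrak{S}'$ up to orientation.

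The main obstacle is $(1)\Rightarrow(2)$: one must carefully translate the geometric containment ``$\alpha'\in\Obold$'' into the combinatorial statement that any skeleton wedged between $\mathfrak{S}$ and $\mathfrak{S}'$ in the cyclic order must share a high-generation spine with the unique chain, and then use the generation bound to exclude $\mathfrak{S}''$. The other implications are essentially bookkeeping once the $\prec$-tree identification and the generation monotonicity are in place; $(2)\Rightarrow(3)$ requires a small amount of cyclic-order care but no deeper input.
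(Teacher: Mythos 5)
Your preliminary monotonicity observation is correct and your implications $(1)\Rightarrow(2)$ and $(2)\Rightarrow(3)$ are in reasonable shape (the former is close in spirit to the paper's argument; the latter, done by contrapositive, is a sensible piece of bookkeeping the paper does not need because it routes the equivalences differently). The problem is $(3)\Rightarrow(1)$, which as written is circular.

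You deduce $(3)\Rightarrow(1)$ from the assertion that the alpha-points of $\Obold^\bullet$ ``form a rooted tree under $\prec$,'' so that two incomparable nodes must diverge at a common ancestor, i.e.\ be $\prec$-separated. But the statement ``any two $\prec$-incomparable alpha-points are $\prec$-separated'' \emph{is} the contrapositive of $(3)\Rightarrow(\text{comparable})$ — it is precisely the nontrivial content of the proposition, not something that follows from generation monotonicity plus Lemma~\ref{alpha-points}. Those inputs only give the local picture around a single critical point; they do not rule out a pair $\alpha,\alpha'$ that are incomparable yet fail to sit in two distinct lakes attached to any common alpha-point. (Even granting that common $\prec$-lower bounds exist via the exhaustion $\bigcup_n\Obold^\bullet(\tbold^nP)=\Obold^\bullet$, one must show the predecessor sets are chains, that a maximal common lower bound $\alpha''$ exists, and — the delicate point — that $\alpha$ and $\alpha'$ then lie in \emph{distinct} lakes attached to $\alpha''$ rather than the same one.) The paper handles exactly this in the last part of its proof: assuming $(1)$ and $(2)$ both fail, it locates the unique spine $B$ separating the skeletons $\mathfrak{S}$ and $\mathfrak{S}'$, shows its generation $Q$ satisfies $Q<P$, and then splits into cases according to whether the two lakes $\hat{\Obold}$, $\hat{\Obold}'$ adjacent to $B$ coincide; in the coinciding case it extracts the critical point $c''$ of minimal generation $P''$ in the interval $[c,c']\subset B$, checks $Q<P''<P$ (the verification $P''\neq P$ uses that otherwise $(1)$ would hold), and shows the corresponding alpha-point $\prec$-separates. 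Some argument of this kind is unavoidable, and your proposal supplies none of it.

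A structural remark: the paper does not use your cycle. It proves $(1)\Rightarrow(3)$ and $(1)\Rightarrow(2)$ directly, then proves $\neg(1)\Rightarrow\neg(2)$ by an explicit construction (the separating alpha-points $\alpha_{r,i}$ of generation $P_{r,i}<P$ found along the infinite skeleton $\mathfrak{S}_r$ of $\alpha$), and finally proves $\neg(1)\wedge\neg(2)\Rightarrow\neg(3)$ as sketched above. Your route would let you skip the $\neg(1)\Rightarrow\neg(2)$ construction, but only if you replace the tree assertion in $(3)\Rightarrow(1)$ with an actual proof along the lines of the paper's final case analysis.
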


\begin{proof}
    Suppose (1) holds. Then, $\alpha$ is the alpha-point of a lake $\Obold$ containing $\alpha'$, which implies (3). Meanwhile, (2) follows from the observation that any alpha-point $\alpha''$ $<$-separating $\alpha$ and $\alpha'$ must be contained in a proper sub-lake of $\Obold$, which necessarily has generation higher than $P$.
    
    Suppose (1) does not hold, so $\alpha'$ is located outside of every lake with alpha-point $\alpha$. Let us pick any skeleton $\mathfrak{S}'$ landing at $\alpha'$, and let $\mathfrak{S}_l$ and $\mathfrak{S}_r$ denote the left and right infinite skeletons landing at $\alpha$ respectively. The assumption implies that either $\mathfrak{S}_l$ $<$-separates $\mathfrak{S}_r$ and $\mathfrak{S}'$, or $\mathfrak{S}_r$ $<$-separates $\mathfrak{S}_l$ and $\mathfrak{S}'$. Without loss of generality, let us assume the latter.
    
    Denote by $(c_{r,1},c_{r,2},\ldots)$ the infinite sequence of critical points of $\Fbold^P$ of increasing generation that is found along $\mathfrak{S}_r$. Let $\alpha_{r,i}$ denote the alpha-point that is the landing point of the unique spine attached to $c_{r,i}$ that intersects $\mathfrak{S}_k$. It has generation $P_{r,i}$ where $P_{r,i}<P$ and $P_{r,i} \to P$ as $i \to \infty$. Since the intersection $\mathfrak{S}_r \cap \mathfrak{S}'$ is a compact subset of $\Dom\left(\Fbold^P\right)$, then for any sufficiently large $i \gg 0$ and any skeleton $\mathfrak{S}_{r,i}$ landing at $\alpha_{r,i}$, $\mathfrak{S}_r \cap \mathfrak{S}'$ is a proper subset of $\mathfrak{S}_r \cap \mathfrak{S}_{r,i}$. Therefore, $\mathfrak{S}_{r,i}$ $<$-separates $\mathfrak{S}_r$ and $\mathfrak{S}'$, and so $\alpha$ and $\alpha'$ are $<$-separated by $\alpha_{r,i}$.
    
    We have just shown that (1) and (2) are equivalent. Suppose (1) and (2) do not hold. We will now prove that (3) also does not hold.

    Let us consider the unique spine $B$ such that $\alpha$ and $\alpha'$ are contained in the closure of different components of $(\mathfrak{S} \cup \mathfrak{S}') \backslash B$. We claim that the generation $Q$ of $B$ is less than $P$. Indeed, if $Q=P$, then $\alpha$ is the landing point of $B$ and so there exists a lake with alpha-point $\alpha$ which contains both $\mathfrak{S}' \backslash \mathfrak{S}$ and $\alpha'$. However, this would instead imply (1).

    Let $\hat{\Obold}$ and $\hat{\Obold}'$ denote the pair of lakes of generation $Q$ such that their coast contains $B$ and $\mathfrak{S}\backslash \mathfrak{S}' \subset \hat{\Obold}$ and $\mathfrak{S}' \backslash \mathfrak{S} \subset \hat{\Obold}'$. If $\hat{\Obold}$ and $\hat{\Obold}'$ are distinct, they lie on different sides of $B$ and so $\alpha$ and $\alpha'$ are $\prec$-separated by the landing point of $B$.
    
    Now, suppose instead that $\hat{\Obold} = \hat{\Obold}'$. Consider the roots $c$ and $c'$ of $\mathfrak{S}\backslash B$ and $\mathfrak{S}'\backslash B$ respectively. Within the closed interval $[c,c'] \subset B$ (possibly degenerate if $c=c'$), we can find a unique critical point $c''$ of the smallest generation $P''$ such that $Q < P'' \leq P$. In fact, $P'' \not\equal P$ because if otherwise, $\mathfrak{S}'\backslash\mathfrak{S}$ would have been contained in a lake attached to $c$, and so $\alpha \prec \alpha'$ instead. Since $[c,c']$ does not contain any critical point of generation lower than $P''$, then $\mathfrak{S}\backslash \mathfrak{S}'$ and $\mathfrak{S}' \backslash \mathfrak{S}$ are contained in distinct lakes of generation $P''$ attached to $c''$. Thus, the alpha-point $\alpha'' \in \hat{\Obold}$ corresponding to $c''$ must $\prec$-separate $\alpha$ and $\alpha'$.
\end{proof}

\subsection{External chains}
\label{ss:external-chains}

Let us pick a power-triple $P \in \Tbold_{>0}$ and $\bullet \in \{0,\infty\}$. Let $\Obold^\bullet(P)$ denote the unique lake of generation $P$ inside of the ocean $\Obold^\bullet$ that contains $0$ on its boundary. Then, the coast of $\Obold^\bullet(P)$ intersects $\Hq$ on some interval $J \subset \Hq$ containing $0$ on its interior. (In fact, $J$ is independent of $\bullet$.) Let us denote by $\alpha^\bullet(P)$ the unique alpha-point in $\partial \Obold^\bullet(P)$. By self-similarity, 
\[
\Obold^\bullet(\tbold^n P) = A_*^n (\Obold^\bullet(P)) \qquad \text{ for all } n \in \Z
\]
and
\begin{equation}
\label{eqn:lakes-exhaust-ocean}
    \bigcup_{n<0} \Obold^\bullet(\tbold^n P) = \Obold^\bullet.
\end{equation}

Let us denote by $\Esc^\bullet_{\leq P}$ the intersection $\Esc_{\leq P} \cap \Obold^\bullet$ for $\bullet \in \{0,\infty\}$.

\begin{lemma}
    For every $\bullet \in \{0,\infty\}$ and $P, Q\in \Tbold_{>0}$ with $P<Q$, 
    \begin{enumerate}[label=\textnormal{(\arabic*)}]
        \item $\Esc^\bullet_{\leq P}$ is connected;
        \item $\Esc^\bullet_{\leq Q} \backslash \Esc^\bullet_{\leq P}$ is bounded;
        \item every connected component of $\Esc^\bullet_{\leq Q} \backslash \Esc^\bullet_{\leq P}$ is a lift of a component of $\Esc_{\leq Q-P}$ under $\Fbold^P$; it is contained in a unique lake $\Obold$ of generation $P$ and its boundary contains the alpha-point of $\Obold$.
    \end{enumerate}
\end{lemma}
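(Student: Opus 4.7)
The three items are best proven in the order (3), (2), (1), all resting on the identity
\[
    \Esc_{\leq Q} \setminus \Esc_{\leq P} = \Dom(\Fbold^P) \setminus \Dom(\Fbold^Q) = (\Fbold^P)^{-1}(\Esc_{\leq Q-P}),
\]
which follows from the cascade relation $\Fbold^Q = \Fbold^{Q-P} \circ \Fbold^P$. For (3), let $E$ be a connected component of $\Esc^\bullet_{\leq Q} \setminus \Esc^\bullet_{\leq P}$. Since $\Hq \subset \Dom(\Fbold^{Q-P})$ by Lemma \ref{lem:dynamics-on-H}, the tree $\Fbold^{-P}(\Hq)$ is contained in $\Dom(\Fbold^Q)$, so $E$ is disjoint from it and hence sits in a single lake $\Obold$ of generation $P$ with $\Obold \subset \Obold^\bullet$ (Lemma \ref{chessboard-rule}). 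The conformal bijection $\Fbold^P : \Obold \to \Obold^\star$ matches components of $(\Fbold^P|_\Obold)^{-1}(\Esc^\star_{\leq Q-P})$ bijectively with components of $\Esc^\star_{\leq Q-P}$, so $E = (\Fbold^P|_\Obold)^{-1}(E')$ for a unique component $E'$ of $\Esc_{\leq Q-P}$. The continuous extension of $\Fbold^P$ to the Jordan boundary of $\Obold$ in $\RS$ identifies the alpha-point $\alpha$ of $\Obold$ with the point $\infty \in \partial \Obold^\star$; since $E'$ is unbounded by Lemma \ref{lem:unboundedness-of-esc} and accumulates at $\infty$, the lift $E$ accumulates at $\alpha$, giving $\alpha \in \partial E$.

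Item (2) follows from (3) by two boundedness observations. First, the correspondence $\alpha \leftrightarrow \infty$ in the conformal extension above makes each component $E$ bounded in $\C$: a neighborhood of $\infty$ in $\Obold^\star$ lifts to a neighborhood of $\alpha$ in $\Obold$, so $E$ stays away from $\infty \in \RS$. Second, the set of alpha-points of all lakes of generation $\leq P$ contained in $\Obold^\bullet$ is itself bounded in $\C$: every critical point $\prescript{}{J}{C}^{\blacksquare}_{S}$ with $|S| \leq P$ lying on $\Hq$ occupies a bounded interval of $\Hq$ under the quasisymmetric conjugacy of Lemma \ref{lem:dynamics-on-H}, and each alpha-point lies on a limb attached to such a critical point (or to an internal critical point of such a limb), with all limbs bounded by Lemma \ref{lem:bdd-limbs}.

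For (1), I would use an inductive argument over $P \in \Tbold_{\geq 0}$. Let $A := \{P \in \Tbold_{\geq 0} : \Esc^\bullet_{\leq P} \text{ is connected}\}$. Upward closure of $A$ follows directly from (3): given $P \in A$ and $Q > P$, each component of $\Esc^\bullet_{\leq Q} \setminus \Esc^\bullet_{\leq P}$ contains in its closure the alpha-point of its lake, a point lying in $\Esc^\bullet_{\leq P}$ since it has generation $P$ and sits in the same ocean $\Obold^\bullet$ as the lake; thus $\Esc^\bullet_{\leq Q}$ is a union of connected sets all meeting the connected set $\Esc^\bullet_{\leq P}$, hence connected. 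For the base case I would verify (1) directly for $P_0 = (0,1,0)$ by exploiting the simply connected $\sigma$-proper structure of $\Dom(\Fbold^{P_0}) = \mathbf{X}_{0,-}$ supplied by Theorem \ref{thm:max-extension}; then self-similarity (Lemma \ref{lem:self-similarity-esc}) yields connectedness for $\tbold^n P_0$ for every $n \in \Z$, and since $\tbold^n P_0 \to 0$ as $n \to -\infty$, upward closure extends the property to all of $\Tbold_{\geq 0}$. The main obstacle is precisely this base case: one must show that the complement of $\mathbf{X}_{0,-}$ in each ocean is a single connected piece, which requires a careful topological analysis of how the boundary of the $\sigma$-proper domain accumulates at both the alpha-points and the point at infinity in $\RS$.
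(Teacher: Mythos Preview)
Your argument for (3) is correct and matches the paper's. The real problems lie in (2) and, more seriously, in (1).

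For (2), your second observation contains a concrete error. You claim that the critical points $\prescript{}{J}{C}^{\blacksquare}_{S}$ with $|S|\le P$ lying on $\Hq$ occupy a bounded interval of $\Hq$. This is false: the critical points on $\Hq$ of generation at most $P$ are exactly the $C_{P'}$ for $P'\in\Tbold\cap(0,P]$, and under the conjugacy of Lemma~\ref{lem:dynamics-on-H} one has $h(C_{P'})=-T^{P'}(0)$, which tends to $\pm\infty$ as $P'\to 0^+$ by proper discontinuity (Lemma~\ref{lem:proper-discontinuity-cascades}). Hence the limbs carrying alpha-points of generation $P$ are rooted at points marching off to infinity along $\Hq$, and the collection of such alpha-points in $\Obold^\bullet$ is unbounded. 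Your first observation, that each individual component is bounded because the conformal map $\Fbold^P:\Obold\to\Obold^\star$ identifies $\infty$ with the alpha-point, is correct and is exactly what the paper's proof establishes.

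For (1), your inductive scheme reduces everything to a base case at $P_0=(0,1,0)$ which you concede is unresolved --- and it is not in fact any easier than the general case. The paper bypasses this entirely by exploiting self-similarity in a sharper way. The key observation you are missing is that each lake $\Obold^\bullet(\tbold^n P)=A_*^n\bigl(\Obold^\bullet(P)\bigr)$ is a \emph{bounded} Jordan domain (its boundary is the compact Jordan curve consisting of the coast together with the single alpha-point, by Lemma~\ref{alpha-points}), these lakes are nested, and they exhaust $\Obold^\bullet$ as $n\to-\infty$. Now take any component $I$ of $\Esc^\bullet_{\le P}$: it is unbounded by Lemma~\ref{lem:unboundedness-of-esc}, so it must exit each bounded lake $\Obold^\bullet(\tbold^n P)$ for $n<0$. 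Since $I\subset\Esc_{\le P}$ is disjoint from the coast (which lies in $\Fbold^{-\tbold^n P}(\Hq)\subset\Dom(\Fbold^P)$), the only exit is through the alpha-point $\alpha^\bullet(\tbold^n P)$. Thus every component of $\Esc^\bullet_{\le P}$ contains the same sequence of alpha-points $\alpha^\bullet(\tbold^n P)$ for $n\ll 0$, forcing there to be just one component. No induction, no base case.
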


\begin{proof}
    Consider a component $I$ of $\Esc^\bullet_{\leq P}$. 
    Let $k = k(I) \in \Z$ be the largest integer such that $I$ intersects the lake $\Obold^\bullet(\tbold^k P)$. 
    For every integer $n$, the only escaping point on the boundary of $\Obold^\bullet(\tbold^n P)$ is the alpha-point $\alpha^\bullet(\tbold^n P)$.
    Since $I$ is unbounded (Lemma \ref{lem:unboundedness-of-esc}), 
    $I$ must contain $\alpha^\bullet(\tbold^n P)$ for all $n \leq k$. 
    This implies that $\Esc^\bullet_{\leq P}$ is connected.

    Next, consider a connected component $X$ of $\Esc^\bullet_{\leq Q} \backslash \Esc^\bullet_{\leq P}$. 
    From the previous paragraph, there exists some integer $k$ such that $X$ avoids $\alpha^\bullet(\tbold^n P)$ for all $n \leq k$. 
    Therefore, for $n \leq k$, $\Esc^\bullet_{\leq Q} \backslash \Esc^\bullet_{\leq P}$ must be contained inside of the lake $\Obold^\bullet(\tbold^n P)$, which is bounded. Since $X$ avoids $\Fbold^{-P}(\Hq)$ and alpha-points of generation $P$, $X$ is contained in a unique lake $\Obold$ of generation $P$. 
    The map $\Fbold^P$ sends $\Obold$ conformally onto an ocean $\Obold^\circ$ for some $\circ \in \{0,\infty\}$, hence $\Fbold^P (X) = \Esc^\circ_{\leq Q-P}$. By unboundedness, $X$ must be attached to the alpha-point of $\Obold$. 
\end{proof}

\begin{definition}
    Consider two alpha-points $\alpha$ and $\alpha'$ in the same ocean $\Obold^\bullet$ with generation $P$ and $P'$ respectively, and suppose $P < P'$ and $\alpha \prec \alpha'$. We define the \emph{external chain} $[\alpha,\alpha']$ to be the set of points in $\Esc_{\leq P'}^\bullet$ that are inside the closure of the lakes attached to $\alpha$ and outside of any lake that does not contain $\alpha'$.
\end{definition}

\begin{lemma}
\label{lem:ext-chain}
    For any triplet of alpha-points $\alpha, \alpha',\alpha''$ with $\alpha \prec \alpha' \prec \alpha''$,
    \[
        [\alpha, \alpha'] \cap [\alpha', \alpha''] = \{\alpha'\}\quad \text{and} \quad [\alpha, \alpha'] \cup [\alpha', \alpha''] = [\alpha, \alpha''].
    \]
\end{lemma}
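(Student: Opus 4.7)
The plan is to view the finite-time escaping set $\Esc^\bullet_{\leq P''}$ in the ocean $\Obold^\bullet$ as an object with tree-like structure whose branch points are the alpha-points and whose external chains play the role of edges. Under this picture, the lemma asserts that concatenation of edges along a monotone chain $\alpha \prec \alpha' \prec \alpha''$ behaves as it should. To make this rigorous I will systematically use the tree structure of $\Fbold^{-P}(\Hq)$ from Lemma \ref{chessboard-rule} and Corollary \ref{cor:spine-structure}, together with the comparison of $\prec$ and $<$ in Proposition \ref{comparing-alphas}.

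First I would set up notation. Let $P, P', P''$ denote the generations of $\alpha, \alpha', \alpha''$. Since lakes attached to a common alpha-point are pairwise disjoint away from that alpha-point (Lemma \ref{alpha-points}), the hypothesis $\alpha \prec \alpha'$ distinguishes a unique lake $\Obold$ of generation $P$ with alpha-point $\alpha$ whose closure contains $\alpha'$, and similarly $\alpha' \prec \alpha''$ yields a unique lake $\Obold'$ of generation $P'$ with alpha-point $\alpha'$ whose closure contains $\alpha''$. A short argument using the uniqueness of these distinguished lakes together with Proposition \ref{comparing-alphas} shows $\Obold' \subseteq \overline{\Obold}$ and in particular $\alpha \prec \alpha''$. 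The key consequence I would extract is the \emph{transitivity observation}: every lake of generation at most $P'$ whose closure contains $\alpha''$ necessarily contains $\alpha'$ as well. This is the combinatorial input driving both identities.

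For the first identity, the inclusion $\alpha' \in [\alpha,\alpha'] \cap [\alpha',\alpha'']$ is immediate, since $\alpha' \in \partial^c \Obold \cap \partial \Obold'$ belongs to $\Esc^\bullet_{\leq P'} \subseteq \Esc^\bullet_{\leq P''}$ and lies in the interior of no lake. For the reverse inclusion, suppose $x \neq \alpha'$ lies in both chains; then $x$ lies simultaneously in the closure of a lake attached to $\alpha$ and in the closure of a lake attached to $\alpha'$. Corollary \ref{cor:spine-structure} forces the closures of lakes of different generations to meet only at critical points and alpha-points, and a direct inspection shows that any common point other than $\alpha'$ must sit inside a lake violating one of the two chain-membership conditions.

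For the second identity I would argue the two inclusions separately. The forward inclusion $[\alpha,\alpha'] \cup [\alpha',\alpha''] \subseteq [\alpha,\alpha'']$ reduces to checking the defining conditions, and the transitivity observation makes this automatic: any lake failing to contain $\alpha''$ cannot contain $\alpha'$ either. For the reverse, let $x \in [\alpha,\alpha'']$; if $x \in \Esc^\bullet_{\leq P'}$, the same transitivity argument shows $x \in [\alpha,\alpha']$, while if $x \notin \Esc^\bullet_{\leq P'}$, then $x$ sits inside some lake of generation at most $P'$ whose closure contains $\alpha''$, and by transitivity this lake is contained in $\overline{\Obold}$ and must have alpha-point $\alpha'$, placing $x \in [\alpha',\alpha'']$. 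The principal obstacle throughout is establishing the transitivity observation itself: the conceptual picture in the $\prec$-tree is clear, but the rigorous verification requires carefully tracking how lakes of different generations nest and using Proposition \ref{comparing-alphas} to translate between the $\prec$-order and the geometric configuration of spines and coasts. Once this is pinned down, both identities follow by routine unraveling of the definition of external chains.
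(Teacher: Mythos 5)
Your overall route coincides with the paper's: both identities are obtained by unwinding the definition of external chains against the nesting of lakes, with the reverse inclusion of the second identity split according to whether $x$ escapes by time $P'$. However, two of your intermediate claims are false as stated, and one of them leaves a genuine hole. For the first identity you justify the reverse inclusion by asserting that Corollary \ref{cor:spine-structure} ``forces the closures of lakes of different generations to meet only at critical points and alpha-points.'' That corollary concerns intersections of spines, not lakes, and the assertion itself fails: lakes of different generations are nested (a lake of generation $Q$ lies inside a lake of generation $P$ whenever $P<Q$), so their closures can share open sets, and even non-nested lakes attached to a common alpha-point share entire spines along their coasts. Since the subsequent ``direct inspection'' is not carried out, the equality $[\alpha,\alpha']\cap[\alpha',\alpha'']=\{\alpha'\}$ is not actually proved. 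The fact you need is the cut-point property already contained in the proof of Lemma \ref{alpha-points}: for any lake $\Obold''$ of generation $P'$ attached to $\alpha'$ one has $\overline{\Obold''}\cap\Esc_{\leq P'}=\{\alpha'\}$, because the coast of $\Obold''$ consists of non-escaping points while the open lake lies in $\Dom\left(\Fbold^{P'}\right)$; hence a point of $[\alpha',\alpha'']$ that also belongs to $\Esc_{\leq P'}$ must be $\alpha'$.

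The second problematic step is the converse you invoke for the forward inclusion, namely ``any lake failing to contain $\alpha''$ cannot contain $\alpha'$ either.'' This is false for the sibling lakes of $\Obold'$ attached to $\alpha'$: each has $\alpha'$ on its boundary but its closure misses $\alpha''$ (which sits in the interior of $\Obold'$ alone among lakes of generation $P'$). The inclusion nevertheless holds because a point $x\in[\alpha,\alpha']$ lies in $\Esc_{\leq P'}$ and therefore cannot lie in the interior of any lake of generation $\geq P'$, and for generations strictly below $P'$ there is exactly one lake per generation whose closure contains $\alpha'$, and that lake does contain $\alpha''$. You should either restrict the converse to this range of generations or note explicitly that the escape-time constraint makes the exceptional lakes irrelevant. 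Your ``transitivity observation'' itself and the case analysis for the reverse inclusion of the second identity are correct and agree with the paper's argument.
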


\begin{proof}
    The first equation follows from the fact that $\alpha'$ is a cut point with respect to the ``$\prec$`` ordering. The inclusion $[\alpha, \alpha'] \cup [\alpha', \alpha''] \subset [\alpha, \alpha'']$ is obvious. Consider a point $x$ in $[\alpha,\alpha''] \backslash [\alpha,\alpha']$. We know that $x$ is within a lake attached to $\alpha$. If $x$ is inside of a lake that does not contain $\alpha'$, then this lake avoids all lakes attached to $\alpha'$ and in particular does not contain $\alpha''$ as well, which is a contradiction. Therefore, $x \in [\alpha',\alpha'']$.
\end{proof}

For $P \in \Tbold_{>0}$, we say that the critical point $C_P$ on $\Hq$ is \emph{dominant} if the interval $[0,C_P] \subset \Hq$ does not contain any critical point of generation less than $P$. We will enumerate dominant critical points by $\{C_{P_n}\}_{n \in \Z}$ where $\{P_n\}_{n \in \Z}$ is monotonically increasing in $n$.

\begin{lemma}
\label{dominant-alphas}
    For $\bullet \in \{0,\infty\}$, $\ldots \prec \alpha^\bullet_{P_{-2}} \prec \alpha^\bullet_{P_{-1}} \prec \alpha^\bullet_{P_0} \prec \alpha^\bullet_{P_1} \prec \alpha^\bullet_{P_2} \prec\ldots$.
\end{lemma}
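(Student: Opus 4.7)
The plan is to establish $\alpha^\bullet_{P_n}\prec\alpha^\bullet_{P_{n+1}}$ for every $n$, which then assembles into the bi-infinite chain. I will apply Proposition \ref{comparing-alphas}, whose equivalence $(1)\Leftrightarrow(2)$ reduces the assertion to showing that no alpha-point $\alpha''\in\Obold^\bullet$ of generation $Q<P_n$ $<$-separates $\alpha^\bullet_{P_n}$ and $\alpha^\bullet_{P_{n+1}}$. The key structural input is Lemma \ref{lem:critical-pts}(2): the branch points of the tree $\Fbold^{-Q}(\Hq)$ are exactly the critical points of generation $\leq Q$, so dominance of $C_{P_n}$ (respectively $C_{P_{n+1}}$) forces the arc $[0,C_{P_n}]\subset\Hq$ (resp.\ $[0,C_{P_{n+1}}]$) to carry no interior branching of $\Fbold^{-Q}(\Hq)$ whenever $Q<P_n$ (resp.\ $Q<P_{n+1}$).

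Assuming toward a contradiction that such an $\alpha''$ together with separating skeletons $\mathfrak{S}_n,\mathfrak{S}_{n+1},\mathfrak{S}''$ exist, I would first take $\mathfrak{S}_n$ and $\mathfrak{S}_{n+1}$ to be the finite skeletons that travel along $\Hq$ via the arcs $[0,C_{P_n}]$ and $[0,C_{P_{n+1}}]$ and then branch into $\Obold^\bullet$ along a spine at $C_{P_n}$ and $C_{P_{n+1}}$. If $C_{P_n}$ and $C_{P_{n+1}}$ lie on the same component of $\Hq\setminus\{0\}$, the two arcs nest and the skeletons share a nontrivial initial segment; a separating $\mathfrak{S}''$ would have to branch off this common segment at an interior point, contradicting the absence of branching noted above. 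If $C_{P_n}$ and $C_{P_{n+1}}$ instead lie on opposite components, the two skeletons meet only at $0$, and I will analyze the cyclic order of branches of $\Fbold^{-Q}(\Hq)$ at $0$ and along the two initial $\Hq$-arcs to show that no skeleton based at an alpha-point of $\Obold^\bullet$ can be interleaved strictly between them. Corollary \ref{cor:spine-structure} will provide the combinatorial bookkeeping of how distinct spines meet, and Lemma \ref{alpha-points} will ensure that switching from finite to infinite skeletons at $\alpha^\bullet_{P_n}$ or $\alpha^\bullet_{P_{n+1}}$ (running along side-lake coasts) does not alter the conclusion, since those coasts limit onto the alpha-points without being crossed by any skeleton of generation $<P_n$.

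The main obstacle I expect is the opposite-sides case, particularly in the non-generic situation where $0$ is itself periodic and hence a critical point of $\Fbold^{\geq 0}$. In that event $0$ becomes a genuine branch vertex of $\Fbold^{-Q}(\Hq)$ for every $Q$ that is a multiple of its period, so new spines emanating from $0$ could a priori accommodate a separating skeleton. To rule these out I will invoke the translation model from Lemma \ref{lem:dynamics-on-H} to pin down the cyclic order of branches at $0$ relative to the best-return positions of $C_{P_n}$ and $C_{P_{n+1}}$ on $\Hq$, and then argue that any branch at $0$ leading into $\Obold^\bullet$ belongs combinatorially to the cyclic sector of $\mathfrak{S}_n$ or of $\mathfrak{S}_{n+1}$, rather than to one strictly between them.
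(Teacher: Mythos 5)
Your reduction is the same as the paper's: invoke the equivalence $(1)\Leftrightarrow(2)$ of Proposition \ref{comparing-alphas} and then rule out a $<$-separating alpha-point of generation $P<P_n$ using dominance. Where you diverge is in the endgame, and the paper's is considerably shorter: any alpha-point $\alpha''\in\Obold^\bullet$ of generation $P$ hangs off a unique critical point $C_Q\in\Hq$ with $Q\leq P$ (the first entry of its itinerary), every skeleton landing at $\alpha''$ leaves $\Hq$ only through limbs rooted at such points, and $<$-separation forces $C_Q$ into the open interval $(C_{P_n},C_{P_{n+1}})\subset[0,C_{P_n}]\cup[0,C_{P_{n+1}}]$, which contradicts dominance of $C_{P_n}$ or $C_{P_{n+1}}$ in one stroke — no same-side/opposite-side case split is needed. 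Two points in your write-up deserve correction. First, the "obstacle" of $0$ being periodic is vacuous: Section \ref{sec:esc-set} concerns $\Fbold=\Fbold_*$, whose restriction to $\Hq$ is conjugate (Lemma \ref{lem:dynamics-on-H}) to a cascade of translations with irrational ratio $\ubold/\vbold$, so $\Fbold^P(0)\neq 0$ for all $P\in\Tbold_{>0}$; hence $0$ is never a critical point and never a branch vertex of any $\Fbold^{-Q}(\Hq)$. Second, in the nested case the separating skeleton need not branch off the \emph{common} segment $[0,C_{P_{n+1}}]$ — it may depart $\Hq$ anywhere in $(C_{P_{n+1}},C_{P_n})$ — so the no-branching statement must be applied to each of the two arcs $[0,C_{P_n}]$ and $[0,C_{P_{n+1}}]$ separately, which your dominance observation fortunately already covers. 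With these adjustments your argument closes, but I would recommend replacing the case analysis by the single "departure point" observation above.
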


\begin{proof}
    Suppose for a contradiction that $\alpha^\bullet_{P_n} \not\prec \alpha^\bullet_{P_{n+1}}$ for some $\bullet \in \{0,\infty\}$ and $n \in \Z$. By Propositon \ref{comparing-alphas}, there is an alpha-point $\alpha \in \Obold^\bullet$ of some generation $P$ less than $P_n$ which $<$-separates $\alpha^\bullet_{P_n}$ and $\alpha^\bullet_{P_{n+1}}$. Then, $\alpha$ is contained in the closure of a lake attached to a critical point $C_Q \in \Hq$ of some generation $Q\leq P$. By $<$-separation, $C_Q$ is contained in the interval $(C_{P_n}, C_{P_{n+1}}) \subset \Hq$. However, this would contradict the assumption that $C_{P_n}$ and $C_{P_{n+1}}$ are dominant.
\end{proof}

Consider the concatenations of external chains
\[
    \Rbold^0 = \bigcup_{n \in \Z} \left[ \alpha^0_{P_n}, \alpha^0_{P_{n+1}} \right]
    \quad \text{ and } \quad
    \Rbold^\infty = \bigcup_{n \in \Z} \left[ \alpha^\infty_{P_n}, \alpha^\infty_{P_{n+1}} \right],
\]
which we will refer to as the \emph{inner} and \emph{outer zero chains} respectively.

\begin{proposition}
\label{prop:zero-chains}
    For $\bullet \in \{0,\infty\}$,
    \begin{enumerate}[label=\textnormal{(\arabic*)}]
        \item $\Rbold^\bullet$ is $A_*$-invariant;
        \item $\Rbold^\bullet$ is an infinite arc in $\Obold^\bullet$ landing at $0$;
        \item alpha-points are dense on $\Rbold^\bullet$;
        \item points on $\Rbold^\bullet$ are continuously parametrized by their escaping time ranging from $0$ (near $\infty$) to $+\infty$ (near $0$).
    \end{enumerate}
\end{proposition}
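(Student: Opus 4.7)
The plan is to use Lemma \ref{lem:ext-chain} to concatenate external chains into a single arc, Lemma \ref{lem:self-sim-lakes} for self-similarity under $A_*$, and Proposition \ref{comparing-alphas} to control the $\prec$-ordering.

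For (1), by Lemma \ref{lem:self-sim-lakes}, $A_*$ sends every lake $\prescript{}{J,j}\Obold^{\blacksquare,\bullet}_S$ onto $\prescript{}{J,j}\Obold^{\blacksquare,\bullet}_{\tbold S}$ and hence maps alpha-points to alpha-points with $\tbold$-scaled generation, while preserving the partial order $\prec$. The maximal $\prec$-chain of dominant alpha-points in $\Obold^\bullet$ from Lemma \ref{dominant-alphas} is therefore mapped onto itself setwise. Since $A_*$ is a homeomorphism preserving the orientation of $\Hq$, its action on this $\prec$-chain is a monotone bijection, necessarily an index-shift $n \mapsto n+s$ for some fixed $s \in \Z$. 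Consequently $A_*$ carries each external chain $[\alpha^\bullet_{P_n},\alpha^\bullet_{P_{n+1}}]$ onto $[\alpha^\bullet_{P_{n+s}},\alpha^\bullet_{P_{n+s+1}}]$, so $A_*(\Rbold^\bullet) = \Rbold^\bullet$.

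For (2) and (3), the first task is to verify that each individual external chain $[\alpha,\alpha']$ with $\alpha \prec \alpha'$ is a topological arc with endpoints $\alpha$ and $\alpha'$. Using the structural description of the components of $\Esc^\bullet_{\leq Q}\setminus\Esc^\bullet_{\leq P}$ (each attached to a unique alpha-point on a generation-$P$ lake), $[\alpha,\alpha']$ assembles from countably many such boundary pieces glued at alpha-points of intermediate generations along the $\prec$-chain from $\alpha$ to $\alpha'$; Proposition \ref{comparing-alphas} ensures pairwise intersections occur only at these isolated gluing alpha-points. Granting arc structure, Lemma \ref{lem:ext-chain} yields inductively that $\bigcup_{|n|\leq N}[\alpha^\bullet_{P_n},\alpha^\bullet_{P_{n+1}}] = [\alpha^\bullet_{P_{-N}},\alpha^\bullet_{P_N}]$ is an arc, so the limit $\Rbold^\bullet$ is also an arc. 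Self-similarity provides the endpoint behaviour: iterating $A_*$ forward sends each $\alpha^\bullet_{P_n}$ to $\alpha^\bullet_{\tbold^k P_n} \in \Rbold^\bullet$, which contracts linearly toward $0$; combined with the shrinking of the nested lakes $\Obold^\bullet(P_n)$ containing the tail of $\Rbold^\bullet$, the whole sequence $\alpha^\bullet_{P_n}$ lands at $0$. Iterating $A_*^{-1}$ pushes alpha-points to $\infty$ in $\C$. Density of alpha-points on $\Rbold^\bullet$ follows because between $\alpha^\bullet_{P_n}$ and $\alpha^\bullet_{P_{n+1}}$ the arc $[\alpha^\bullet_{P_n},\alpha^\bullet_{P_{n+1}}]$ contains the gluing alpha-points of intermediate generation; repeated subdivision produces a dense set.

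For (4), parametrize $\Rbold^\bullet$ by the escape-time function $\tau(x) := \sup\{P \in \Tbold_{>0} : x \in \Esc^\bullet_{\leq P}\}$. At dominant alpha-points $\tau(\alpha^\bullet_{P_n}) = P_n$, which decreases to $0$ at the end near $\infty$ and increases to $+\infty$ at the end near $0$. Monotonicity of $\tau$ along the arc follows from Proposition \ref{comparing-alphas}: moving in the $\prec$-direction strictly increases the generation of the current alpha-point. Continuity follows from density of alpha-points in (3) together with the density of $\Tbold_{>0}$ in $\R_{>0}$, which guarantees that the image of alpha-points is dense in $\R_{\geq 0}$; a monotone function with dense image extends continuously. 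The main obstacle is the rigorous verification of arc structure for each individual external chain: ruling out loops and extraneous branching in the gluing of countably many boundary pieces of lakes at alpha-points of intermediate generation requires the tree structure of $\Fbold^{-P}(\Hq)$ (Lemma \ref{chessboard-rule} and Corollary \ref{cor:spine-structure}) combined with the $\prec$-dichotomy of Proposition \ref{comparing-alphas}.
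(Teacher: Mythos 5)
Your treatment of item (1) matches the paper's: $A_*$ permutes dominant critical points, hence acts on the sequence $\{\alpha^\bullet_{P_n}\}$ as an index shift ($\tbold P_n = P_{n+k}$), so $\Rbold^\bullet$ is invariant and the whole statement reduces by self-similarity to a single chain $[\alpha^\bullet_{P_0},\alpha^\bullet_{P_k}]$. The landing at $0$ via forward iteration of the contraction $A_*$ is also as in the paper.

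However, there is a genuine gap in (2)--(4), and you have in fact located it yourself: you never establish that an external chain is an arc, that alpha-points are topologically dense on it, or that the escape time varies continuously. The combinatorial tools you invoke (the tree structure of $\Fbold^{-P}(\Hq)$, Corollary \ref{cor:spine-structure}, and the $\prec$-dichotomy of Proposition \ref{comparing-alphas}) only control the \emph{order} structure: they rule out branching and loops at the level of which pieces touch which. They give no metric information, and the order type of the alpha-points on a chain is dense, so the chain is not a countable end-to-end concatenation of arcs in a discrete order; without a diameter estimate the limit object could a priori be a non--locally-connected continuum, alpha-points need not be dense in the topology of $\C$, and a monotone escape-time function could have jumps. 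The missing ingredient is the paper's Markov tiling: by Lemma \ref{lem:dominant-push}, each chain $J^\bullet_i=[\alpha^\bullet_{P_i},\alpha^\bullet_{P_{i+1}}]$ is mapped homeomorphically by some $\Fbold^{Q_i}$ onto a union of lower-index chains, and the renormalized return maps $\chi_I=A_*^{m_I}\circ\Fbold^{Q_i}:\Obold_i\to\Obold^\bullet$ uniformly expand the hyperbolic metric of the ocean. Iterating this subdivision produces nested tilings of $J^\bullet$ by finitely many linearly ordered tiles whose diameters shrink uniformly exponentially; this simultaneously yields the arc structure, the density of alpha-points, and (via the estimate that the escape-time oscillation on a level-$n$ tile is at most $\tbold^{-m(n,i)}(P_k-P_0)\to 0$) the continuity of the escape-time parametrization. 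Your argument that ``a monotone function with dense image extends continuously'' also does not by itself identify that extension with the actual escape time of non-alpha points; the tile-by-tile oscillation bound is what closes that loop.
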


Let us clarify the last statement. For $P \in \R_{>0} \backslash \Tbold$, we can define the $P$\textsuperscript{th} escaping set to be 
\[
\displaystyle{\Esc_{\leq P} := \bigcap_{Q\in \Tbold, Q>P} \Esc_{\leq Q}}.
\]
The \emph{escaping time} of a point $x$ in $\Esc_{<\infty}$ is the minimum time $P \in \R_{>0}$ such that $x$ is in $\Esc_{\leq P}$.

\begin{proof}
    To lighten the notation, we will denote $\alpha_n^\bullet := \alpha_{P_n}^\bullet$ and $J_n^\bullet := [\alpha_n^\bullet, \alpha_{n+1}^\bullet]$ for all $\bullet \in \{0,\infty\}$ and $n \in \Z$.
 
    By definition, $C_P$ is dominant if and only if $C_{\tbold P} = A_*(C_P)$ is dominant, so there is some integer $k \geq 1$ such that $\tbold P_n = P_{n+k}$ for all $n \in \Z$. Therefore, $A_*$ maps each of $[\alpha^\bullet_{(n-1)k}, \alpha^\bullet_{nk}]$ onto $[\alpha^\bullet_{nk}, \alpha^\bullet_{(n+1)k}]$. This immediately implies items (1) and (2). 
    
    Due to self-similarity, it remains for us to show that the external chain $J^\bullet := [\alpha^\bullet_0, \alpha^\bullet_k]$ is an arc that can be continuously parametrized by their escaping time, and that alpha-points are dense on $J^\bullet$. We will do so by constructing nested Markov tilings $\mathcal{P}_r$ for $r \geq 0$ on $J^\bullet$.

    Firstly, we set the tiling $\mathcal{P}_0$ of level 0 to be $\{ J^\bullet_i\}_{0\leq i \leq k-1}$. The tiling $\mathcal{P}_1$ of level 1 is constructed as follows. 
    By Lemma \ref{lem:dominant-push}, for every chain $J^\bullet_i \in \mathcal{P}_0$, there exist some $Q_i \in \Tbold_{>0}$ and a pair of integers $l_i$ and $r_i$ such that $0<l_i<r_i\leq i$ and $\Fbold^{Q_i}$ maps $J^\bullet_i$ homeomorphically onto the chain $[\alpha_{l_i}^\bullet, \alpha_{r_i}^\bullet]$. 
    A tile of level 1 in $\mathcal{P}_1$ is the lift of a chain $J_j^\bullet \subset [\alpha_{l_i}^\bullet, \alpha_{r_i}^\bullet]$ under the map $\Fbold^{Q_i} : J_i^\bullet \to [\alpha_{l_i}^\bullet, \alpha_{r_i}^\bullet]$.
    
    For each tile $I \in \mathcal{P}_1$ in $J_i^\bullet$, there exists some $m_I \in \N$ such that $A_*^{m_I}$ sends $\Fbold^{Q_i}(I)$ back to a tile of level $0$. Let $\Obold_i$ denote the lake of generation $Q_i$ which contains $[\alpha_{l_i}^\bullet, \alpha_{r_i}^\bullet]$. The composition
    \begin{equation}
        \label{eqn:tiling-map}
        \chi_I := A_*^{m_I} \circ \Fbold^{Q_i} : \Obold_i \to \Obold^\bullet
    \end{equation}
    expands the hyperbolic metric of $\Obold^\bullet$.

    Inductively, we define tiles in $\mathcal{P}_{n+1}$ of level $n+1$ to be the preimages of tiles of level $n$ under maps of the form (\ref{eqn:tiling-map}). Since each map $\chi_I$ is expanding, the diameter of every tile of level $n$ uniformly exponentially shrinks to zero. Since each tile in $\mathcal{P}_n$ is an external chain containing alpha-points, alpha-points are dense on $J$.

    By Lemma \ref{lem:ext-chain}, we can enumerate our level $n$ tiles by $I_1^n, I_2^n, \ldots, I_{s_n}^n \in \mathcal{P}_n$ in increasing order of generation such that $I_i^n$ and $I_l^n$ touch if and only if $|l-i|\leq 1$. As tiles shrink, we can extend the ``$\prec$`` order to a total order on $J^\bullet$ by defining $x \prec y$ when $x \in I_i^n$ and $y \in I_j^n$ for sufficiently high $n$ and some indices $i$, $j$ with $i<j$.

    Consider a tile $I_i^n$ in $\mathcal{P}_n$ of some high level $n$, and a composition $\chi:= \chi_1 \circ \chi_2 \circ \ldots \circ \chi_n$ of $n$ maps of the form (\ref{eqn:tiling-map}) sending $I_i^n$ onto a tile in $\mathcal{P}_n$. By (\ref{eqn:self-sim-cascade}), we can write $\chi$ as $A_*^{m(n,i)} \circ \Fbold^{Q(n,i)}$ for some $m(n,i) \in \N$ and $Q(n,i) \in \Tbold_{>0}$. Therefore, the difference in the escaping time between the endpoints of $I_i^n$ is at most
    \begin{equation}
    \label{eqn:difference-esc-times}
        \tbold^{-m(r,i)}(P_k - P_0).
    \end{equation}
    Since $Q_i >0$ for all $i \in \{0,\ldots,k-1\}$, there exists some integer $M \geq 1$ independent of $n$ such that every sequence of $M$ consecutive integers between $1$ and $n$ contains an element $j_*$ such that $\chi_{j_*}$ has the scaling factor $A_*$ in (\ref{eqn:tiling-map}). Consequently, as $n \to \infty$, $\displaystyle{\min_{1\leq i \leq s_n} m(n,i)} \to \infty$ and thus the quantity in (\ref{eqn:difference-esc-times}) tends to zero. Therefore, the escaping time continuously parametrizes points on $J$.
\end{proof}

In general, for every alpha-point $\alpha$, there is an infinite sequence of alpha-points $\alpha_0 = \alpha$, $\alpha_{-1}$, $\alpha_{-2}$, $\ldots$ of generation decreasing to $0$ such that $\ldots \prec \alpha_{-2} \prec \alpha_{-1} \prec \alpha_0$. This allows us to generate the chain
\[
    (\infty,\alpha] := \bigcup_{n \leq 0} [\alpha_{n-1}, \alpha_n].
\]

\begin{corollary}
\label{cor:description-of-chains}
    Consider any alpha-point $\alpha$ of some generation $P>0$. The chain $(\infty,\alpha]$ is an infinite arc continuously parametrized by the escape time from $|P|$ to $0$. Moreover, alpha-points are dense in $(\infty, \alpha]$.
\end{corollary}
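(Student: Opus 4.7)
The plan is to reduce to Proposition~\ref{prop:zero-chains} by an iterative peeling-off procedure, using the conformal isomorphisms between sub-lakes and oceans. Assume $\alpha$ lies in the ocean $\Obold^\bullet$. By Lemma~\ref{dominant-alphas} and Proposition~\ref{comparing-alphas}, there is a largest index $m$ with $\alpha^\bullet_{P_m} \preceq \alpha$. If $\alpha = \alpha^\bullet_{P_m}$, then $(\infty, \alpha]$ is a sub-arc of the zero chain $\Rbold^\bullet$ and Proposition~\ref{prop:zero-chains} applies directly. Otherwise $\alpha$ lies strictly inside the unique lake $\Obold'$ of generation $P_m$ attached to $\alpha^\bullet_{P_m}$ that contains $\alpha$, and Lemma~\ref{lem:ext-chain} gives
\[
    (\infty,\alpha] \;=\; (\infty,\alpha^\bullet_{P_m}] \;\cup\; [\alpha^\bullet_{P_m},\alpha],
\]
where the first piece is a sub-arc of $\Rbold^\bullet$ already handled by Proposition~\ref{prop:zero-chains}.

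The key observation for the second piece is that $\Fbold^{P_m}$ restricts to a conformal isomorphism $\Obold' \to \Obold^{\bullet'}$ onto a fresh ocean, sending $\alpha$ to an alpha-point $\alpha'$ of strictly smaller generation $P - P_m$ and biholomorphically identifying $[\alpha^\bullet_{P_m}, \alpha] \setminus \{\alpha^\bullet_{P_m}\}$ with $(\infty,\alpha'] \setminus \{\infty\}$ while shifting the escape time by $+P_m$. Iterating this reduction yields a sequence of alpha-points $\alpha = \alpha^{(0)}, \alpha^{(1)}, \alpha^{(2)}, \ldots$ of strictly decreasing generations $P = P^{(0)} > P^{(1)} > P^{(2)} > \ldots$. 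Because the dominant generations satisfy $\tbold P_n = P_{n+k}$ (as recorded in the proof of Proposition~\ref{prop:zero-chains}), at each step the largest dominant generation below $P^{(i)}$ satisfies $P_{m_i} \geq P^{(i)}/\tbold$, so $P^{(i+1)} \leq (1 - 1/\tbold) P^{(i)}$ and $P^{(i)} \to 0$ geometrically. Assembling the pulled-back zero-chain sub-arcs then expresses $(\infty,\alpha]$ as a countable concatenation of arcs, each continuously parametrized by escape time on a sub-interval of $[0,P]$ and containing alpha-points densely, glued at the intermediate $\alpha^{(i)}$ where escape times match from both sides.

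The hard part will be verifying joint continuity of the escape-time parametrization at the accumulation point $\alpha$, which amounts to showing that the $i$-th pulled-back zero-chain sub-arc collapses in Euclidean diameter as $i \to \infty$. Since the escape-time interval of the $i$-th piece has length at most $P^{(i)}$ and decays geometrically, one transfers the Markov-tiling expansion estimate from the proof of Proposition~\ref{prop:zero-chains} through the successive conformal isomorphisms to obtain uniform hyperbolic-metric control on the nested diameters. Combined with the fact that each pulled-back piece sits inside a nested sub-lake containing $\alpha$, this yields the required collapse, finishing the proof.
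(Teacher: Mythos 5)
Your route diverges from the paper's in a crucial way: the paper's proof is a \emph{finite} reduction, while yours is an infinite one, and the infinite version has two genuine gaps. The paper writes $\alpha = \prescript{}{J}\alpha^{\blacksquare,\bullet}_S$ with its finite itinerary $S=(P_1,\ldots,P_k)$ and splits $(\infty,\alpha]$ into exactly $k$ arcs $J_1,\ldots,J_k$ along the intermediate alpha-points with truncated itineraries; each $J_i$ is carried homeomorphically by $\Fbold^{P_1+\cdots+P_{i-1}}$ onto a chain $(\infty,\alpha^\circ_{P_i}]$ ending at a \emph{primary} alpha-point, and the primary case is settled in one stroke by pushing forward a terminal subarc of the zero chain from a dominant generation $P_n\geq P$ \emph{above} $P$ (the unique point $x$ of generation $P_n-P$ on $(\infty,\alpha^\bullet_{P_n}]$ exists by Proposition \ref{prop:zero-chains}, and $\Fbold^{P_n-P}$ maps $(x,\alpha^\bullet_{P_n}]$ onto $(\infty,\alpha^\bullet_P]$). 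In that scheme $\alpha$ is the honest endpoint of the last of finitely many arcs, so no limiting argument is needed. Your first gap is the decay estimate $P^{(i+1)}\leq(1-1/\tbold)P^{(i)}$: it presumes that the $\prec$-maximal dominant alpha-point below $\alpha^{(i)}$ realizes the largest dominant generation $\leq P^{(i)}$, but $\prec$ is governed by combinatorial position, not by generation alone. If $\alpha$ sits inside a \emph{middle} lake attached to a low-generation dominant critical point $C_{P_1}$, then no $\alpha^\bullet_{P_m}$ with $P_m>P_1$ satisfies $\alpha^\bullet_{P_m}\prec\alpha$ (lakes attached to $C_{P_m}$ touch $\Hq$ at $C_{P_m}$, hence are disjoint from the middle lakes of $C_{P_1}$), so only $P_1$, which may be tiny compared to $P$, can be peeled off. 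Without geometric decay you have not shown $P^{(i)}\to 0$; the breakpoints' escape times could converge to something strictly less than $P$, and the concatenation would then fail to exhaust $(\infty,\alpha]$.

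The second gap is the continuity of the escape-time parametrization at the accumulation point $\alpha$. You propose to control the tails via ``nested sub-lakes containing $\alpha$'', but every lake is an \emph{unbounded} disk (Lemma \ref{chessboard-rule}), so nesting inside lakes gives no diameter bound whatsoever. The shrinking statements available in the paper are for nested \emph{wakes} (Lemma \ref{lem:nested-wakes-shrink}) and for the Markov tiling inside the proof of Proposition \ref{prop:zero-chains}; transferring either to your infinite concatenation of pulled-back zero-chain subarcs is exactly the hard analytic content, and it is asserted rather than carried out. Since the whole point of the corollary is to upgrade Proposition \ref{prop:zero-chains} to arbitrary alpha-points, the fix is to abandon the infinite peeling and use the finite itinerary decomposition, which makes both problems disappear.
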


\begin{proof}
    Suppose first that $\alpha$ is of the form $\alpha^\bullet_P$ for some $P \in \Tbold_{>0}$ and $\bullet \in \{0,\infty\}$. Let us pick a dominant $\alpha^\bullet_{P_n}$ for some $n \in \Z$ such that $P_n \geq P$. There is a unique point $x \in \left(\infty,\alpha^\bullet_{P_n}\right]$ of generation $P_n - P$. Then, $\Fbold^{P_n - P}$ maps the arc $\left(x,\alpha^\bullet_{P_n}\right]$ onto $\left(\infty, \alpha^\bullet_P\right]$, which implies the claim.

    In general, let $\alpha = \prescript{}{J}\alpha^{\blacksquare,\bullet}_{S}$ where $S=(P_1,P_2,\ldots, P_k)$ is the corresponding itinerary. There exist alpha-points $\alpha_1$, $\alpha_2$, $\ldots$, $\alpha_k = \alpha$ such that $\alpha_1 \prec \alpha_2 \prec \ldots \prec \alpha_k$ and for every $i$, $\alpha_i$ has itinerary $S_i:=(P_1,\ldots,P_i)$. Therefore, we can split $(\infty,\alpha]$ into 
    \[
    J_1 = (\infty,\alpha_1], \quad J_2 = (\alpha_1,\alpha_2], \quad \ldots \quad J_k = (\alpha_{k-1},\alpha_k].
    \]
    When $2\leq i\leq k$, the map $\Fbold^{P_1 + \ldots + P_{i-1}}$ sends $J_i$ homeomorphically onto the chain $\left(\infty,\alpha_{P_i}^\circ\right]$ for some $\circ \in \{0,\infty\}$. By the previous paragraph, each $J_i$ is an arc continuously parametrized by the landing time.
\end{proof}

As a consequence, whenever $\alpha \prec \alpha'$, the chain $[\alpha, \alpha']$ is a simple arc.

\begin{definition}
    An \emph{external ray} is an infinite arc of the form $\Rbold = \bigcup_{n\in\Z} [\alpha_{n},\alpha_{n+1}]$ for some sequence of alpha-points $\{\alpha_n\}_{n\in\Z}$ where
    \begin{list}{$\rhd$}{}
        \item $\alpha_{n} \prec \alpha_{n+1}$ for all $n$;
        \item the generation of $\alpha_n$ decreases to $0$ as $n \to -\infty$;
        \item there is no alpha-point $\alpha$ such that $\alpha_n \prec \alpha$ for all $n \in \Z$.
    \end{list}
    The \emph{generation} of $\Rbold$ is the limit of the generation of $\alpha_n$ as $n \to +\infty$. For any $P \in \Tbold_{>0}$, we define the image of an external ray $\Rbold$ under $\Fbold^P$ by
\[
    \Fbold^P(\Rbold) := \Fbold^P \left( \Rbold \cap \Dom\left(\Fbold^P\right) \right).
\]
    We say that $\Rbold$ is \emph{periodic} if $\Fbold^P(\Rbold) = \Rbold$ for some $P \in \Tbold_{>0}$.
\end{definition}

The zero chains $\Rbold^0$ and $\Rbold^\infty$ are indeed external rays, which from now on will be referred to as \emph{zero rays}.

The following corollary is an immediate consequence of Proposition \ref{comparing-alphas}.

\begin{corollary}
\label{cor:two-external-rays}
    The intersection of any two external rays in the same ocean is non-empty and of the form $(\infty,\alpha]$ for some alpha-point $\alpha$.
\end{corollary}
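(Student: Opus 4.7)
The plan is to exploit the tree structure on $\Esc^\bullet_{<\infty}\cap\Obold^\bullet$ induced by the $\prec$-ordering on alpha-points (Proposition \ref{comparing-alphas}) together with the chain description in Corollary \ref{cor:description-of-chains}. A key observation I would rely on throughout is that for every alpha-point $\gamma$ lying on an external ray $\Rbold=\bigcup_{n}[\alpha_n,\alpha_{n+1}]$, the entire chain $(\infty,\gamma]$ is contained in $\Rbold$: the sub-arc of $\Rbold$ joining $\infty$ to $\gamma$ is a concatenation of external chains whose union coincides as a set with $(\infty,\gamma]$. First I would show non-emptiness of $\Rbold\cap\Rbold'$ by picking alpha-points $\beta\in\Rbold$, $\beta'\in\Rbold'$ and applying Proposition \ref{comparing-alphas}: either $\beta,\beta'$ are $\prec$-comparable, in which case the smaller one lies in the $(\infty,\cdot]$-chain of the larger and hence in both rays, or they are $\prec$-separated by a common alpha-point $\gamma$ with $\gamma\prec\beta$ and $\gamma\prec\beta'$, placing $\gamma\in(\infty,\beta]\cap(\infty,\beta']\subset\Rbold\cap\Rbold'$.

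Let $\mathcal{A}$ denote the set of alpha-points in $\Rbold\cap\Rbold'$. By the key observation $\mathcal{A}$ is $\prec$-downward closed, and since any two alpha-points on a single ray are $\prec$-comparable, $\mathcal{A}$ is $\prec$-totally ordered. Assuming $\Rbold\neq\Rbold'$, I would establish that $\mathcal{A}$ admits a $\prec$-maximum $\alpha_*$ by splitting into two cases. If $\mathcal{A}$ contains an increasing sequence $\gamma_n$ of unbounded generation, then for every chain $[\alpha_m,\alpha_{m+1}]$ of $\Rbold$ one has $\alpha_{m+1}\prec\gamma_n$ for large $n$ (since $\prec$ respects generation along a fixed ray), so $[\alpha_m,\alpha_{m+1}]\subset(\infty,\gamma_n]\subset\Rbold'$ by Lemma \ref{lem:ext-chain}; this forces $\Rbold=\Rbold'$ by symmetry, a contradiction. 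If the generations in $\mathcal{A}$ remain bounded, the escape-time parametrization of Corollary \ref{cor:description-of-chains} makes the closure of $\mathcal{A}$ inside $\Rbold$ a compact sub-arc with a well-defined $\prec$-top $x_*\in\Rbold\cap\Rbold'$; if $x_*$ failed to be an alpha-point it would be an interior point of an external chain at which the local structure of $\Esc^\bullet_{<\infty}\cap\Obold^\bullet$ is one-dimensional, forcing $\Rbold$ and $\Rbold'$ to agree in a neighborhood of $x_*$ and thereby producing alpha-points in $\mathcal{A}$ beyond $x_*$, against its supremality.

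Once $\alpha_*$ is secured, I would conclude $\Rbold\cap\Rbold' = (\infty,\alpha_*]$: the inclusion $\supseteq$ is immediate, while a point of $\Rbold\cap\Rbold'$ strictly beyond $\alpha_*$ would, by density of alpha-points on external chains (Proposition \ref{prop:zero-chains}(3)) together with $\prec$-downward closure of $\mathcal{A}$, produce an element of $\mathcal{A}$ exceeding $\alpha_*$. The main obstacle is the local-structure argument at a non-alpha-point: formalizing it requires unpacking Lemma \ref{alpha-points} and the coast/lake structure of external chains to verify that the only branch points of the tree $\Esc^\bullet_{<\infty}\cap\Obold^\bullet$ are alpha-points, so that away from alpha-points two rays cannot diverge.
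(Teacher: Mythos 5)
Your overall architecture is the right one: the paper offers no written proof, treating the statement as an immediate consequence of Proposition \ref{comparing-alphas}, and your reduction (non-emptiness via the comparable/$\prec$-separated dichotomy, then locating a $\prec$-maximal common alpha-point $\alpha_*$) is the natural way to make that precise. Non-emptiness and your case (a) are fine. The genuine gap is the one you flag, plus a second, unflagged occurrence of the same gap. The missing ingredient in both places should be extracted from Lemma \ref{alpha-points} rather than from any ``local one-dimensionality'' of $\Esc_{<\infty}$ -- that heuristic is false as stated, since $\Esc_{<\infty}$ is dense in $\C$ and every neighborhood of every escaping point contains alpha-points and branching (Corollaries \ref{cor:triviality-of-comb-class} and \ref{cor:tiling-of-wakes}). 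The correct statement is: if $\delta$ and $\delta'$ are $\prec$-separated by $\epsilon$, they lie in two \emph{distinct} lakes attached to $\epsilon$, and the closures of two distinct lakes with common alpha-point $\epsilon$ meet $\Esc_{<\infty}$ only at $\{\epsilon\}$, because the rest of their boundaries consists of coasts, which are non-escaping. With this fact you can bypass the limit point $x_*$ entirely: if $\mathcal{A}$ had no maximum and $\sigma:=\sup\{\mathrm{gen}(\gamma):\gamma\in\mathcal{A}\}$ were strictly below both ray generations, pick alpha-points $\delta\in\Rbold\setminus\Rbold'$ and $\delta'\in\Rbold'\setminus\Rbold$ of generation slightly above $\sigma$; they are incomparable, hence $\prec$-separated by some $\epsilon\in\mathcal{A}$, and choosing $\gamma\in\mathcal{A}$ with $\epsilon\prec\gamma\prec\delta,\delta'$ forces $\delta$ and $\delta'$ into the single lake of generation $\mathrm{gen}(\epsilon)$ containing $\gamma$, contradicting their separation.

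The same gap recurs, unacknowledged, in your final step. A point $z\in\Rbold\cap\Rbold'$ strictly beyond $\alpha_*$ gives you, by density, alpha-points $\delta\in\Rbold$ and $\delta'\in\Rbold'$ strictly between $\alpha_*$ and $z$, but neither is known to lie on the \emph{other} ray, so ``$\prec$-downward closure of $\mathcal{A}$'' produces nothing by itself; you again need the separation fact ($\delta,\delta'$ incomparable $\Rightarrow$ $\prec$-separated by some $\epsilon\preceq\alpha_*$ $\Rightarrow$ $z$ lies in the closures of two distinct lakes attached to $\epsilon$, forcing $z=\epsilon$, which contradicts $\mathrm{gen}(\epsilon)\leq\mathrm{gen}(\alpha_*)<$ the escape time of $z$). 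Finally, your case split overlooks the possibility that the generations in $\mathcal{A}$ are bounded but their supremum equals the (possibly finite) generation of $\Rbold$ or $\Rbold'$; then $x_*$ is the landing point of that ray and does \emph{not} lie on it, so ``$x_*\in\Rbold\cap\Rbold'$'' fails. This case is harmless -- the argument of your case (a) applies verbatim, since $\mathcal{A}$ is then cofinal among the alpha-points of that ray and the third condition in the definition of an external ray rules out a proper inclusion $\Rbold\subsetneq\Rbold'$ -- but it has to be treated.
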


\subsection{Wakes}
\label{ss:wakes}

Consider a critical point $\prescript{}{J}C^\blacksquare_S$. For every lake of the form $\prescript{}{J,j}\Obold^{\blacksquare,\bullet}_S$ where $j$ is either in $\{l,r\}$ or an even number, the map $\Fbold^{|S|}$ sends such a lake conformally onto $\Obold^\bullet$. The zero ray $\Rbold^\bullet$ lifts under $\Fbold^{|S|}: \prescript{}{J,j}\Obold^{\blacksquare,\bullet}_S \to \Obold^\bullet$ to a ray segment, which we will label as $\prescript{}{J,k}\Rbold^{\blacksquare,\bullet}_S$ where
\[
k = \begin{cases}
    1 & \text{ if } j=r,\\
    \frac{j}{2}+1 & \text{ if } j \text{ is even},\\
    d_\bullet & \text{ if } j=l.
\end{cases}
\]
Therefore, we obtain $d_\bullet$ ray segments
\begin{equation}
    \label{rays-for-wakes}
    \prescript{}{J,1}\Rbold^{\blacksquare,\bullet}_S, \enspace \prescript{}{J,2}\Rbold^{\blacksquare,\bullet}_S, \enspace \ldots, \prescript{}{J,d_\bullet}\Rbold^{\blacksquare,\bullet}_S
\end{equation}
starting from the alpha-point $\prescript{}{J}\alpha^{\blacksquare,\bullet}_S$ and landing at the critical point $\prescript{}{J}\Rbold^{\blacksquare}_S$, labelled in an anticlockwise order about $\prescript{}{J}C^{\blacksquare}_S$. See Figure \ref{fig:rays-and-wakes}. For $k \in \{1,\ldots,d_\bullet-1\}$, the closure of $\prescript{}{J,k}\Rbold^{\blacksquare,\bullet}_S \cup \prescript{}{J,k+1}\Rbold^{\blacksquare,\bullet}_S$ bounds a Jordan domain which we denote by $\prescript{}{J,k}\Wbold^{\blacksquare,\bullet}_S$.

\begin{figure}
    \centering
    \begin{tikzpicture}[scale=0.59]
        \filldraw[gray, thin, fill=green!10!white] (-4.8,0) .. controls (-3.75, 1.75) .. (-4,3.5) .. controls (-3,4) .. (-2.5,4.5) .. controls (-1.75, 4.25) .. (-1,4.3) .. controls (-0.7,3.9) .. (-0.3,3.8) .. controls (-0.2,3.63) .. (0,3.5) .. controls (0.2,3.63) .. (0.3,3.8) .. controls (0.7,3.9) .. (1,4.3) .. controls (1.75, 4.25) .. (2.5,4.5) .. controls (3,4) ..  (4,3.5) .. controls (3.75, 1.75) .. (4.8,0) -- (2.4,-0.8) -- (0,0) -- (-2.4,-0.8) -- (-4.8,0); 
        \filldraw[gray, thin, fill=yellow!10!white] (-4.8,0) .. controls (-3.9, -1.5) .. (-4,-3) .. controls (-3,-3.2) .. (-2.5,-3.5) .. controls (-1.75,-3.25) .. (-1,-3.3) .. controls (-0.7,-2.9) .. (-0.3,-2.8) .. controls (-0.2,-2.63) .. (0,-2.5) .. controls (0.2,-2.63) .. (0.3,-2.8) .. controls (0.7,-2.9) .. (1,-3.3) .. controls (1.75,-3.25) .. (2.5,-3.5) .. controls (3,-3.2) ..  (4,-3) .. controls (3.9, -1.5) .. (4.8,0) -- (2.4,-0.8) -- (0,0) -- (-2.4,-0.8) -- (-4.8,0); 
        
        \draw[ultra thick]  (-5.1,-0.1) -- (-4.8,0) -- (-2.4,-0.8) -- (0,0) -- (2.4,-0.8) -- (4.8,0) -- (5.1,-0.1);
        \node [black, font=\bfseries] at (-5.2,-0.5) {$\Hq$};

        \filldraw[gray, thin, fill=yellow!10!white] (0,0) .. controls (1.5,0.2) and (2,1.5) .. (2,2) .. controls (1.8,3) and (0.5,3.8) .. (0,3.5) .. controls (-0.5,3.8) and (-1.8,3) .. (-2,2) .. controls (-2,1.5) and (-1.5,0.2) .. (0,0); 
        \filldraw[gray, thin, fill=green!10!white] (0,0) .. controls (1, 2.2) .. (0,3.5) .. controls (-1,2.2) .. (0,0); 
        \filldraw[gray, thin, fill=green!10!white] (0,0) .. controls (0.9,-1.8) .. (0,-2.5) .. controls (-0.9,-1.8) .. (0,0);

        \draw[thick,blue] (0,0) -- (2,0) -- (3,1) -- (2.8,2.8) -- (1.8,3.5) -- (0.4,3.7) -- (0.05,3.52);
        \draw[thick,blue] (0,0) -- (-2,0) -- (-3,1) -- (-2.8,2.8) -- (-1.8,3.5) -- (-0.4,3.7) -- (-0.05,3.52);
        \draw[thick,blue] (0,0) -- (0,0.2) -- (-0.1,0.6) -- (0.1,1.3) -- (-0.1,2.3) -- (0.1,3) -- (0,3.4) -- (0,3.5);
        \draw[thick,purple] (0,0) -- (1,-0.8) -- (1.5,-1.1) -- (1.6,-2) -- (0.7,-2.5) -- (0.05,-2.5);
        \draw[thick,purple] (0,0) -- (-1,-0.8) -- (-1.5,-1.1) -- (-1.6,-2) -- (-0.7,-2.5) -- (-0.05,-2.5);

        \filldraw[white,fill=green!10!white] (8.5,-0.1) -- (10,0.2) -- (11,-0.3) -- (12,0) -- (13,-0.2) -- (14,0.2) -- (15.5,-0.07) -- (15.5,4) -- (8.5,4) -- (8.5,-0.1);
        \filldraw[white,fill=yellow!10!white] (8.5,-0.1) -- (10,0.2) -- (11,-0.3) -- (12,0) -- (13,-0.2) -- (14,0.2) -- (15.5,-0.07) -- (15.5,-3.5) -- (8.5,-3.5) -- (8.5,-0.1);
        \draw[ultra thick]  (8.5,-0.1) -- (10,0.2) -- (11,-0.3) -- (12,0) -- (13,-0.2) -- (14,0.2) -- (15.5,-0.07);
        \draw[blue, thick]  (12,0) -- (12.1,0.5) -- (11.9,1.5) -- (12.2,3) -- (12,4);
        \draw[purple, thick]  (12,0) -- (11.9,-0.5) -- (12.1,-1.5) -- (11.8,-3) -- (11.9,-3.5);
        
        \node [red, font=\bfseries] at (0,-0.02) {\large $\bullet$};
        \node [red, font=\bfseries] at (0.05,-0.5) {\footnotesize $C_P$};
        \node [red, font=\bfseries] at (0, -2.5) {$\bullet$};
        \node [red, font=\bfseries] at (0,-3) {\small $\alpha_P^\infty$};
        \node [red, font=\bfseries] at (0,3.5) {$\bullet$};
        \node [red, font=\bfseries] at (0,4) {\small $\alpha_P^0$};
        \node [blue, font=\bfseries] at (3.2,0.5) {\small $\prescript{}{1}\Rbold^{0}_P$};
        \node [blue, font=\bfseries] at (-0.6,1.7) {\small $\prescript{}{2}\Rbold^{0}_P$};
        \node [blue, font=\bfseries] at (-3.25,0.5) {\small $\prescript{}{3}\Rbold^{0}_P$};
        \node [purple, font=\bfseries] at (-2.15,-1.6) {\small $\prescript{}{1}\Rbold^{\infty}_P$};
        \node [purple, font=\bfseries] at (2.2,-1.6) {\small $\prescript{}{2}\Rbold^{\infty}_P$};
        \draw[-latex] (5.5,0) -- (7.8,0);
        \node[blue, font=\bfseries] at (11.6,3.2) {\small $\Rbold^0$};
        \node[purple, font=\bfseries] at (12.4,-3) {\small $\Rbold^\infty$};
        \node[green!50!black, font=\bfseries] at (14,2) {\small $\Obold^0$};
        \node[yellow!50!black, font=\bfseries] at (10,-1.8) {\small $\Obold^\infty$};
        \node[font=\bfseries] at (6.7,-0.8) {\small $\Fbold^P$};
\end{tikzpicture}

    \caption{The construction of wakes rooted at $C_P$ when $(d_0,d_\infty)=(3,2)$. }
    \label{fig:rays-and-wakes}
\end{figure}

\begin{definition}
    A wake $\Wbold$ is a Jordan domain of the form $\prescript{}{J,k}\Wbold^{\blacksquare,\bullet}_S$. We call $\prescript{}{J}C^{\blacksquare}_S$ the \emph{root} of $\Wbold$ and $\prescript{}{J}\alpha^{\blacksquare,\bullet}_S$ the \emph{alpha-point} of $\Wbold$. The \emph{generation} of $\Wbold$ is $|S|$. If $S$ is a tuple of length $m$, we say that $m$ is the \emph{level} of $\Wbold$. If $m=1$, we call $\Wbold$ a \emph{primary} wake.
\end{definition}

Due to the tree structure of $\Esc_{<\infty}$, primary wakes are always pairwise disjoint.

\begin{lemma}
    Consider a wake $\prescript{}{J,j}\Wbold^{\blacksquare,\bullet}_S$ rooted at a critical point $\prescript{}{J}C^\blacksquare_S$.
    \begin{enumerate}[label=\textnormal{(\arabic*)}]
        \item If $\Fbold^Q$ sends $\prescript{}{J}C^\blacksquare_S $ to another critical point $\prescript{}{J'}C^\square_{S'}$, then $\Fbold^Q : \overline{\prescript{}{J,j}\Wbold^{\blacksquare,\bullet}_S} \to \overline{\prescript{}{J',j}\Wbold^{\square,\bullet}_{S'}}$ is a homeomorphism.
        \item The map $\Fbold^{|S|}$ conformally sends $\prescript{}{J,j}\Wbold^{\blacksquare,\bullet}_S$ onto $\C \backslash \overline{ \Rbold^\bullet }$.
    \end{enumerate}    
\end{lemma}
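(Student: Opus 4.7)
The plan is to prove part~(2) first and then deduce part~(1) from it.

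\textbf{Part (2).} By construction, each of the two ray segments $\prescript{}{J,k}\Rbold^{\blacksquare,\bullet}_S$ and $\prescript{}{J,k+1}\Rbold^{\blacksquare,\bullet}_S$ bounding the wake is the lift of the zero ray $\Rbold^\bullet$ under a conformal restriction $\Fbold^{|S|}\colon \Obold \to \Obold^\bullet$ on one of the $|S|$-generation lakes adjacent to the wake. Hence $\Fbold^{|S|}$ maps each boundary ray segment bijectively onto $\Rbold^\bullet$, sending the common endpoint $\prescript{}{J}C^\blacksquare_S$ to $0$. The other common endpoint $\prescript{}{J}\alpha^{\blacksquare,\bullet}_S$ lies in $\partial\Esc_{\leq|S|}(\Fbold)$; by Lemma~\ref{lem:transitivity} sequences in the wake approaching this alpha-point have $\Fbold^{|S|}$-images escaping to $\infty$. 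Together with $\sigma$-properness of $\Fbold^{|S|}$, these observations show that the restriction of $\Fbold^{|S|}$ to the interior of $\prescript{}{J,j}\Wbold^{\blacksquare,\bullet}_S$ is a proper holomorphic map into $\C\setminus\overline{\Rbold^\bullet}$. Both source and target are simply connected, so the map is a branched cover of some finite degree $n\geq 1$.

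To force $n=1$, I compactify on the sphere: the alpha-point plays the role of $\infty$, and $\hat\C\setminus(\overline{\Rbold^\bullet}\cup\{\infty\})=\C\setminus\overline{\Rbold^\bullet}$ is a Jordan domain bounded by a single Jordan arc. Tracing the Jordan boundary of the wake once, say along $\prescript{}{J,k}\Rbold^{\blacksquare,\bullet}_S$ from $\prescript{}{J}C^\blacksquare_S$ to $\prescript{}{J}\alpha^{\blacksquare,\bullet}_S$, then back along $\prescript{}{J,k+1}\Rbold^{\blacksquare,\bullet}_S$, the image traverses $\Rbold^\bullet$ once from $0$ out to $\infty$ and once from $\infty$ back to $0$, which is precisely the spherical boundary of the slit plane traversed exactly once. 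Hence the boundary map has topological degree $1$, forcing $n=1$, and $\Fbold^{|S|}$ is a conformal bijection of the wake onto $\C\setminus\overline{\Rbold^\bullet}$.

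\textbf{Part (1).} First I verify that $\Fbold^Q$ sends $\prescript{}{J,j}\Wbold^{\blacksquare,\bullet}_S$ into $\prescript{}{J',j}\Wbold^{\square,\bullet}_{S'}$ with matching sector label $j$. Since $Q<|S|$, the critical point $\prescript{}{J}C^\blacksquare_S$ (of generation $|S|$) is not a critical point of $\Fbold^Q$; so $\Fbold^Q$ is locally conformal at $\prescript{}{J}C^\blacksquare_S$ and preserves the counterclockwise ordering of adjacent lakes and bounding ray segments. Moreover it sends each adjacent $|S|$-generation lake conformally to the corresponding lake at $\prescript{}{J'}C^\square_{S'}$ and, by the $\Gbold$-invariance in the proof of Lemma~\ref{alpha-points}, sends $\prescript{}{J}\alpha^{\blacksquare,\bullet}_S$ to $\prescript{}{J'}\alpha^{\square,\bullet}_{S'}$. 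This identifies source and target wakes with matching labels. Part~(2) now applies to both, and since $\Fbold^{|S|}=\Fbold^{|S'|}\circ\Fbold^Q$, the restriction of $\Fbold^Q$ to the source wake coincides with $(\Fbold^{|S'|})^{-1}\circ\Fbold^{|S|}$, where the branch of the inverse is the one landing in the target wake. This composition is a conformal bijection of interiors, and continuity of $\Fbold^Q$ extends it to a homeomorphism of closures since critical points, alpha-points, and ray segments pair up bijectively.

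\textbf{Main obstacle.} The degree-one argument in part~(2) is the main technical point; the delicate aspect is that $\Fbold^{|S|}$ has an essential singularity at the alpha-point, so the boundary traversal needs to be formulated cleanly via sphere compactification (or equivalently by Lemma~\ref{lem:transitivity}). An alternative that avoids degree arguments is to locate directly the unique preimage in the wake of a generic point in $\C\setminus\overline{\Rbold^\bullet}$ close to $\Rbold^\bullet$, using the combinatorial structure of the lakes adjacent to $\prescript{}{J}C^\blacksquare_S$.
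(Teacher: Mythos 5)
Your overall route is the same as the paper's: everything rests on the fact that the two bounding ray segments are lifts of $\Rbold^\bullet$ under the conformal restrictions of $\Fbold^{|S|}$ to the lakes of generation $|S|$ attached to $\prescript{}{J}C^\blacksquare_S$, and (1) is a formal consequence once the boundary correspondence is set up. However, there is a genuine gap in your proof of (2), precisely at the point you flag as delicate. You assert properness of $\Fbold^{|S|}\colon \Wbold \to \C\setminus\overline{\Rbold^\bullet}$ by claiming, via Lemma~\ref{lem:transitivity}, that sequences in the wake approaching the alpha-point have images escaping to $\infty$. Lemma~\ref{lem:transitivity} says the opposite: the image of any component of $D\cap\Dom\bigl(\Fbold^{|S|}\bigr)$, for $D$ a small disk about the alpha-point, is \emph{dense} in $\C$. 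Since the component of $D\cap\Dom\bigl(\Fbold^{|S|}\bigr)$ meeting your wake near $\prescript{}{J}\alpha^{\blacksquare,\bullet}_S$ also contains points of the two adjacent full wakes (their union is a neighborhood of the alpha-point), nothing in that lemma prevents a sequence in $\Wbold$ tending to the alpha-point from having images converging to an arbitrary prescribed point of $\C\setminus\overline{\Rbold^\bullet}$. Without properness the degree/boundary-traversal argument does not start, and the statement that boundary sequences map to $\infty$ is exactly what Carath\'eodory would give you \emph{after} you know the map is a conformal bijection — so as written the argument is circular at its crucial step.

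The repair is the ``alternative'' you mention in passing, and it is in fact the necessary ingredient (and the paper's implicit argument): every lake of generation $|S|$ meeting the open wake is attached to $\prescript{}{J}C^\blacksquare_S$ (any other lake of that generation is disjoint from the two boundary rays, hence lies entirely inside or outside the wake, and if inside its root would be a second critical point of generation $|S|$ in $\overline{\Wbold}$, which does not exist). Hence $\Wbold$ decomposes into the two partial side/even lakes carrying the boundary rays, the full odd middle lake(s) between them, and the separating spines; these map conformally, respectively, onto the two components of $\Obold^\bullet\setminus\overline{\Rbold^\bullet}$, onto $\C\setminus\overline{\Obold^\bullet}$, and homeomorphically onto the two components of $\Hq\setminus\{0\}$. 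Matching these pieces gives bijectivity (and, as a byproduct, both properness and the absence of critical points of $\Fbold^{|S|}$ in $\Wbold$, either of which then closes your degree argument). With that substitution your part (1) goes through as written.
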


\begin{proof}
    (1) follows from the fact that $\Fbold^Q$ maps $\prescript{}{J,j}\Rbold^{\blacksquare,\bullet}_S \cup \prescript{}{J,j+1}\Rbold^{\blacksquare,\bullet}_S$ homeomorphically onto $\prescript{}{J',j}\Rbold^{\square,\bullet}_{S'} \cup \prescript{}{J',j+1}\Rbold^{\square,\bullet}_{S'}$, whereas (2) follows from the fact that $\Fbold^{|S|}$ maps $\prescript{}{J,j}\Rbold^{\blacksquare,\bullet}_S$ for each $j \in \{1,\ldots,d_\bullet\}$ homeomorphically onto the zero ray $\Rbold^\bullet$.
\end{proof}

To reduce notation, let us consider the \emph{full wake} 
\[
    \prescript{}{J}\Wbold^{\blacksquare,\bullet}_S := \bigcup_{j=1}^{d_\bullet-1}\prescript{}{J,j}\Wbold^{\blacksquare,\bullet}_S(j)
\]
which is the union of wakes attached to the critical point $\prescript{}{J}C^{\blacksquare,\bullet}_S$ on the same side.

\begin{lemma}[Primary wakes shrink]
\label{lem:primary-wakes-shrink}
    For every $n \in \Z$ and every $\varepsilon>0$, there are at most finitely many primary wakes of diameter at most $\varepsilon$ rooted at a point on $\Hq \cap \Sbold_n^\#$.
\end{lemma}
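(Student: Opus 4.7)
The plan is a self-similarity plus compactness argument by contradiction. Fix $n \in \Z$ and $\varepsilon > 0$, and suppose there is an infinite sequence of distinct primary wakes $\Wbold_k = \prescript{}{j_k}\Wbold^{\bullet_k}_{P_k}$ rooted at critical points $C_{P_k}$ lying on the bounded quasiarc $\Hq \cap \Sbold_n^\#$, all meeting the prescribed diameter condition.

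First I would verify that the generations $P_k$ are unbounded in $\Tbold$. By Lemma \ref{lem:discreteness}, each set $\Fbold^{-P}(\{0\})$ is discrete, hence on a bounded subarc of $\Hq$ there are only finitely many critical points of any bounded range of generations. An infinite sequence of distinct $C_{P_k}$ on the bounded set $\Hq \cap \Sbold_n^\#$ therefore forces $P_k \to \infty$ along a subsequence. By the self-similarity relation $A_*(C_P) = C_{\tbold P}$ from Lemma \ref{lem:self-sim-lakes} together with $|\mu_*|<1$, the roots $C_{P_k}$ necessarily accumulate at $0$.

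Next, I would invoke self-similarity of the wake structure to reduce to a compact family. Write each $P_k = \tbold^{m_k} Q_k$ with $m_k \to \infty$ and $Q_k$ lying in a fixed bounded fundamental domain for the action $P \mapsto \tbold P$ on $\Tbold_{>0}$. Lemma \ref{lem:self-sim-lakes} then gives $\Wbold_k = A_*^{m_k}(\Wbold_k')$, where $\Wbold_k' = \prescript{}{j_k}\Wbold^{\bullet_k}_{Q_k}$ is a primary wake rooted at $C_{Q_k}$, which stays in a fixed bounded region of $\Hq$ bounded away from $0$. By Lemma \ref{lem:bdd-limbs}, each $\Wbold_k'$ is contained in the limb attached to $C_{Q_k}$, which is a bounded set in $\C$. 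A precompactness argument on the family of such base wakes gives a uniform upper bound $M$ on $\diam(\Wbold_k')$. Since $A_*^{m_k}$ is a Euclidean contraction by $|\mu_*|^{m_k}$, we obtain
\[
    \diam(\Wbold_k) \leq |\mu_*|^{m_k} \cdot M \xrightarrow{k \to \infty} 0,
\]
so the family $\{\Wbold_k\}$ contains wakes of arbitrarily small diameter, and in particular of diameter bounded by any prescribed threshold, which is the mechanism that yields finiteness for the complementary range of diameters.

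The main obstacle is establishing the uniform bound $M$ on $\diam(\Wbold_k')$. Once one knows that the base roots $C_{Q_k}$ accumulate only within a fixed compact subset of $\Hq$ avoiding $0$, and that each $\Wbold_k'$ is confined to its (bounded) limb by Lemma \ref{lem:bdd-limbs}, the required precompactness follows: limbs rooted at a convergent sequence of critical points converge in the Hausdorff topology on compact subsets of $\C$, giving the common upper bound $M$. Once the reduction above is in place, the shrinking mechanism under iterated $A_*^{m_k}$ does all the work, and the desired finiteness statement follows immediately.
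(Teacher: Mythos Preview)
Your argument has a genuine gap at the step where you invoke Lemma \ref{lem:bdd-limbs}. A limb $\prescript{}{J}\Lbold^\blacksquare_S$ is by definition a union of spines, i.e.\ a tree of quasiarcs contained in $\bigcup_P \Fbold^{-P}(\Hq)$; it has empty interior. A wake $\prescript{}{j}\Wbold^\bullet_P$, on the other hand, is an open Jordan domain bounded by external ray segments in $\Esc_{<\infty}$ and containing entire lakes. So the containment ``$\Wbold_k' \subset$ limb attached to $C_{Q_k}$'' is simply false, and Lemma \ref{lem:bdd-limbs} gives you nothing about the diameters of the $\Wbold_k'$. Without that containment you have no uniform bound $M$, and the contraction $A_*^{m_k}$ has nothing to act on.

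Even setting that aside, the self-similarity reduction does not close. In a fundamental domain $[Q_0,\tbold Q_0)$ for the $\tbold$-action there are still infinitely many power-triples $Q$, and you would need to know \emph{a priori} that the corresponding primary wakes have uniformly bounded diameter --- which is essentially the statement you are trying to prove. Your Hausdorff-convergence sentence does not supply this: you have no compactness for the family of wakes themselves, only for their roots. The paper's proof avoids this circularity by a different mechanism: it fixes a single base wake $\Wbold_0$, realizes every other primary wake rooted on $\Hq\cap\Sbold_0^\#$ as a univalent pullback of $\Wbold_0$ under the first-return pair $\fbold_\pm$, and then combines (i) local connectivity of the Julia set of the rational model $F_c$ (via the quasiconformal conjugacy of Theorem \ref{thm:naive-rigidity}) to control Euclidean diameters of pullbacks near $\Hq$, with (ii) expansion of $\fbold_\pm:\Obold_\pm\to\Obold^\bullet$ in the hyperbolic metric of the ocean. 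Self-similarity under $A_*$ is used only for the trivial reduction from general $n$ to $n=0$, not for the shrinking itself.
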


\begin{proof}
    The proof we present below is similar to \cite[Lemma 5.29]{DL23}. By self-similarity, it is sufficient to prove the lemma for $n=0$. Let 
    \[
    \Jbold_- := \Ubold_- \cap \Hq, \qquad \Jbold_+ := \Ubold_+ \cap \Hq, \quad \text{and} \quad \Jbold := \Jbold_- \cup \Jbold_+.
    \]
    The maps $\fbold_-=\Fbold^{(0,1,0)}: \Jbold_- \to \Jbold$ and $\fbold_+=\Fbold^{(0,0,1)}: \Jbold_- \to \Jbold$ are precisely the first return maps of $\Fbold$ back to $\Jbold$.

    Consider the semigroup generated by $(0,1,0)$ and $(0,0,1)$ and let us label its elements by $0$, $Q_0$, $Q_1$, $Q_2,\ldots$ written in increasing order. Then, every critical point on $\Jbold$ is of the form $C_{Q_n}$ for some $n \geq 0$. Let us fix $\bullet \in \{0,\infty\}$ and consider the full primary wake $\Wbold_n := \Wbold^\bullet_{Q_n}$ attached to $C_{Q_n}$. For all $n > 0$, $\Wbold_n$ is a preimage under $\Fbold^{Q_n-Q_0}$ of the full wake $\Wbold_0$ with the smallest generation. 
    
    Let us pick a curve $\Gamma_0$ in $\Wbold_0$ connecting a point in $\Wbold_0$ to the critical point $C_{Q_0}$. Consider the lift $\Gamma_{-n}$ of $\Gamma_0$ under $\Fbold^{Q_n-Q_0}: \Wbold_n \to \Wbold_0$, which connects a point in $\Wbold_n$ to the critical point $C_{Q_n}$.

    \begin{claim}
        There is a sequence $\varepsilon_0$, $\varepsilon_{-1}$, $\varepsilon_{-2}$, $\ldots$ of positive numbers decreasing to $0$ such that the following holds. If the (Euclidean) diameter of $\Gamma_0$ is less than $\varepsilon_0$, then the diameter of $\Gamma_{-n}$ is less than $\varepsilon_n$ for all $n \geq 0$.
    \end{claim}

    \begin{proof}
        It is sufficient to prove the claim in the dynamical plane of the corona $f_*$. Consider the rational map $g=F_c$ from Theorem \ref{thm:comb-rigidity} which admits a $(d_0,d_\infty)$-critical Herman curve $\Hq_g$ with rotation number equal to that of $f_*$. By Theorem \ref{thm:naive-rigidity}, $g$ is quasiconformally conjugate to $f_*$ on a neighborhood of $\Hq_g$, so it suffices to prove the claim in the dynamical plane of $g$. We shall do so by applying the local connectivity of the boundary of the immediate basin of attraction of $\bullet$ of $g$.

        Recall that the critical point of $g$ is normalized at $1$. For $k \in \Z$, denote $c_k := (g|_{\Hq_g})^{k}(1)$. Within the immediate basin of $\bullet$, let us pick two external rays $R_l$ and $R_r$ landing at points on $\Hq_g$ that are slightly on the left and right of $c_0$ respectively. Let us pick a disk $D_0$ of small diameter bounded by $\Hq_g$, $R_l$, $R_r$, and an equipotential within the immediate basin of $\bullet$. Let $D_{-k}$ be the unique lift of $D_0$ under $g^k$ whose boundary contains $c_{-k}$. The disk $D_{-k}$ is bounded by $g^{-k}(\Hq)$, a pair of external rays which are preimages of $R_l$ and $R_r$, and an equipotential of an even smaller level. By local connectivity, the Euclidean diameter of $D_{-k}$ shrinks to zero as $k \to \infty$.
    \end{proof}
    
    Let $\Obold_- \subset \Obold^\bullet$ be the union of all lakes of generation $(0,1,0)$ whose closure intersects $\Jbold_-$, and let $\Obold_+ \subset \Obold^\bullet$ be the union of all lakes of generation $(0,0,1)$ whose closure intersects $\Jbold_+$. The maps $\fbold_\pm : \Obold_\pm \to \Obold^\bullet$ expand the hyperbolic metric of $\Obold^\bullet$. Note that for all $n\geq 0$, $\Fbold^{Q_{n+1}-Q_n}: \Wbold_{n+1}\to \Wbold_n$ is a restriction of $\fbold_\pm : \Obold_\pm \to \Obold^\bullet$. Then, due to the claim, the Euclidean diameter of $\Wbold_n$ shrinks as $n \to \infty$.
\end{proof}

The outer boundary of each of the full wakes attached to $\prescript{}{J}C^{\blacksquare}_S$ consists of two ray segments, which we will relabel as
\[
    \prescript{+}{J}\Rbold^{\blacksquare,0}_S := \prescript{}{J,1}\Rbold^{\blacksquare,0}_S,
    \quad
    \prescript{-}{J}\Rbold^{\blacksquare,0}_S := \prescript{}{J,d_\bullet}\Rbold^{\blacksquare,0}_S,
    \quad
    \prescript{-}{J}\Rbold^{\blacksquare,\infty}_S := \prescript{}{J,1}\Rbold^{\blacksquare,\infty}_S,
    \quad
    \prescript{+}{J}\Rbold^{\blacksquare,\infty}_S := \prescript{}{J,d_\bullet}\Rbold^{\blacksquare,\infty}_S.
\]

For every $P \in \Tbold_{>0}$, let $P^-$ (resp. $P^+$) be the first entry of the left (resp. right) itinerary of the side lake $\prescript{}{l}\Obold^0_P$ (resp. $\prescript{}{r}\Obold^0_P$). Both $P^-$ and $P^+$ are characterized by the property that $(C_{P^-}, C_{P^+}) \subset \Hq$ is the maximal open interval in which the only critical point of generation $\leq P$ is $C_P$.

\begin{lemma}[Combinatorics of primary wakes]
\label{lem:comb-of-wakes}
    Given $P \in \Tbold_{>0}$ and $\bullet \in \{0,\infty\}$,
    \begin{enumerate}[label=\textnormal{(\arabic*)}]
        \item both $\prescript{+}{}\Rbold^\bullet_{P^-}$ and $\prescript{-}{}\Rbold^\bullet_{P^+}$ contain $\alpha_P^\bullet$;
        \item the closure of $\Wbold^\bullet_{P} \cup \Wbold^\bullet_{P^-} \cup \Wbold^\bullet_{P^+}$ is a neighborhood of $\alpha^\bullet_P$;
        \item the ray segments $\prescript{+}{}\Rbold^{\bullet}_P$ and $\prescript{-}{}\Rbold^{\bullet}_P$ can be presented as infinite concatenations of ray segments
        \begin{align*}
        \prescript{\pm}{}\Rbold^{\bullet}_P &= 
        \left[\alpha^\bullet_P, \alpha^\bullet_{Q^\pm_1}\right]
        \cup 
        \left[\alpha^\bullet_{Q^\pm_1}, \alpha^\bullet_{Q^\pm_2}\right]
        \cup 
        \left[\alpha^\bullet_{Q^\pm_2}, \alpha^\bullet_{Q^\pm_3}\right]
        \cup \ldots, 
        \end{align*}
    where
        \[
        \prescript{\pm}{}\Rbold^{\bullet}_P \cap \prescript{\mp}{}\Rbold^{\bullet}_{P^\pm} = \left[\alpha^\bullet_P, \alpha^\bullet_{Q^\pm_1}\right], 
        \quad \text{and for } i\geq 1, \enspace
        \prescript{\pm}{}\Rbold^{\bullet}_P \cap \prescript{\mp}{}\Rbold^{\bullet}_{Q^\pm_i} = \left[\alpha^\bullet_{Q^\pm_i}, \alpha^\bullet_{Q^\pm_{i+1}}\right];
        \]
        \item the sequences of alpha-points $\left\{\alpha_{Q_i^+}^\bullet\right\}_{i \geq 1}$ and $\left\{\alpha_{Q_i^-}^\bullet\right\}_{i \geq 1}$ tend to $C_P$ as $i \to \infty$.
    \end{enumerate}
\end{lemma}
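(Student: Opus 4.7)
The plan is to prove all four items together via a combinatorial analysis of the side lakes and rays at the three consecutive critical points $C_{P^-}$, $C_P$, $C_{P^+}$ on $\Hq$, exploiting the fact that $\Fbold^{P^\pm}$ acts as a local conformal diffeomorphism on a neighborhood of $C_P$. The key preliminary step is to establish the containment $\prescript{}{r}\Obold^\bullet_P \subset \prescript{}{l}\Obold^\bullet_{P^+}$ (and its symmetric counterpart $\prescript{}{l}\Obold^\bullet_P \subset \prescript{}{r}\Obold^\bullet_{P^-}$). I would apply Proposition~\ref{comparing-alphas} to $\alpha^\bullet_P$ and $\alpha^\bullet_{P^+}$: any alpha-point of generation $<P^+$ that $<$-separated them would have to arise from a critical point of generation $<P^+$ sitting in the open interval $(C_P, C_{P^+}) \subset \Hq$, which is excluded by the very characterization of $P^+$. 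Hence $\alpha^\bullet_{P^+} \prec \alpha^\bullet_P$, and by the counterclockwise layout of lakes at $C_{P^+}$ in Section~\ref{ss:lakes}, the unique side lake whose closure contains $\overline{\prescript{}{r}\Obold^\bullet_P}$ is $\prescript{}{l}\Obold^\bullet_{P^+}$.

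For item (1), observe that $\prescript{-}{}\Rbold^\bullet_{P^+}$ is by construction the lift of the zero ray $\Rbold^\bullet$ under the conformal map $\Fbold^{P^+}\colon \prescript{}{l}\Obold^\bullet_{P^+} \to \Obold^\bullet$. Since $(C_{P^-}, C_{P^+})$ contains no critical points of $\Fbold^{P^+}$, this map restricts to a local diffeomorphism near $C_P$, sending $\alpha^\bullet_P$ to $\alpha^\bullet_{P-P^+}$ and mapping the interval $(C_P, C_{P^+})$ onto an interval on $\Hq$ with $0$ as an endpoint containing no critical points of generation $\leq P-P^+$ in its interior; this identifies $\prescript{}{r}\Obold^\bullet_{P-P^+}$ with the lake $\Obold^\bullet(P-P^+)$ in the notation preceding Proposition~\ref{prop:zero-chains}, so $\alpha^\bullet_{P-P^+} = \alpha^\bullet(P-P^+) \in \Rbold^\bullet$. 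Pulling back under $\Fbold^{P^+}$ gives $\alpha^\bullet_P \in \prescript{-}{}\Rbold^\bullet_{P^+}$, and the analogous statement for $\prescript{+}{}\Rbold^\bullet_{P^-}$ follows by the same argument with $P^-$ in place of $P^+$. Item (2) is then immediate: the four ray-segments $\prescript{\pm}{}\Rbold^\bullet_P$ and $\prescript{\mp}{}\Rbold^\bullet_{P^\pm}$ all pass through $\alpha^\bullet_P$ and partition a neighborhood of $\alpha^\bullet_P$ on the $\bullet$-ocean side into three sectors, each covered by the closure of one of $\Wbold^\bullet_P$, $\Wbold^\bullet_{P^-}$, $\Wbold^\bullet_{P^+}$.

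For items (3) and (4), I would iterate the preliminary step inside $\prescript{}{r}\Obold^\bullet_P$: successive sub-lake structure along the ray $\prescript{+}{}\Rbold^\bullet_P$ produces a sequence of alpha-points $\alpha^\bullet_{Q^+_i}$, and applying item~(1) recursively (with $Q^+_i$ in place of $P$) identifies the intersection $\prescript{+}{}\Rbold^\bullet_P \cap \prescript{-}{}\Rbold^\bullet_{Q^+_i}$ with $[\alpha^\bullet_{Q^+_i}, \alpha^\bullet_{Q^+_{i+1}}]$, giving the chain decomposition of (3). The generations $Q^+_i$ correspond to escape times of sub-lake alpha-points along $\prescript{+}{}\Rbold^\bullet_P$ and are unbounded as one approaches $C_P$; each $\alpha^\bullet_{Q^+_i}$ sits on the boundary of a primary wake $\Wbold^\bullet_{Q^+_i}$ rooted at a critical point on $\Hq$ converging combinatorially to $C_P$, and by Lemma~\ref{lem:primary-wakes-shrink} these primary wakes shrink in Euclidean diameter, forcing $\alpha^\bullet_{Q^+_i} \to C_P$ spatially. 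The main technical obstacle I anticipate is the careful bookkeeping of the $l/r$ conventions for side lakes: the counterclockwise orientation reverses between the inner and outer oceans, so verifying that the side lake of $C_{P^+}$ containing $\overline{\prescript{}{r}\Obold^\bullet_P}$ is correctly labelled $\prescript{}{l}\Obold^\bullet_{P^+}$ uniformly in $\bullet \in \{0,\infty\}$, and that the image under $\Fbold^{P^+}$ of $\prescript{}{r}\Obold^\bullet_P$ is precisely the lake of generation $P-P^+$ containing $0$ on its boundary, requires a uniform orientation analysis tracking which side of each critical point on $\Hq$ points toward $0$ under the rotation dynamics of Lemma~\ref{lem:dynamics-on-H}.
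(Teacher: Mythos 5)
Your strategy is viable and would yield a correct proof, but it is a genuinely different route from the paper's at the two key points, and in each case it leans on an assertion that is true but needs proof. For item (1) the paper works entirely in the domain: the left coast of the side lake $\prescript{}{l}\Obold^{\bullet}_P$ begins with a segment of the spine $\prescript{}{1}\Hq^{\bullet}_{P^-}$ and is disjoint from the rays landing at $C_{P^-}$, so it is trapped in $\overline{\Wbold^{\bullet}_{P^-}}$ and its landing point $\alpha^\bullet_P$ must sit on the bounding ray $\prescript{+}{}\Rbold^\bullet_{P^-}$. You instead push forward by the conformal map $\Fbold^{P^+}$ and pull back the zero ray. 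That works, but it silently uses that $\alpha^\bullet(P-P^+)$ lies on $\Rbold^\bullet$: the zero ray is defined only through the \emph{dominant} alpha-points $\alpha^\bullet_{P_n}$, and $P-P^+$ need not be dominant, so you must check from the definition of the external chain $[\alpha^\bullet_{P_n},\alpha^\bullet_{P_{n+1}}]$ that every $\alpha^\bullet(Q)$ with $P_n<Q\le P_{n+1}$ belongs to it (it does, because the only lake of generation $R<Q$ containing $\alpha^\bullet(Q)$ is the central lake $\Obold^\bullet(R)$, which also contains $\alpha^\bullet(P_{n+1})$); this fact is not recorded in the paper. For item (4) the paper uses Proposition~\ref{prop:zero-chains} to find alpha-points on $\prescript{-}{}\Rbold^\bullet_P$ arbitrarily close to $C_P$ and a finiteness count to show each is some $\alpha^\bullet_{Q^-_i}$, whereas you invoke Lemma~\ref{lem:primary-wakes-shrink}. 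Your route is fine, but it requires first knowing that the roots $C_{Q^\pm_i}$ converge to $C_P$ \emph{along} $\Hq$, which you assert without argument; it follows from $Q^\pm_{i+1}$ being the smallest generation of a critical point in $(C_{Q^\pm_i},C_P)$ together with the discreteness of critical points of bounded generation, and that step should be written out since it is exactly the content of (4).

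Two smaller points. Your justification of the preliminary containment via Proposition~\ref{comparing-alphas} is correct (it reruns the proof of Lemma~\ref{dominant-alphas}), but it is a detour: the containment follows directly because the coast-interval on $\Hq$ of the generation-$P^+$ lake whose coast contains $C_P$ must extend all the way to $C_{P^+}$, as $(C_{P^-},C_{P^+})$ contains no critical point of generation $\le P^+$. And for item (2), "partition into three sectors" glosses over why there is no gap between consecutive closed wakes at $\alpha^\bullet_P$; the paper closes this by applying Corollary~\ref{cor:two-external-rays} to show that the adjacent bounding rays share a non-degenerate arc through $\alpha^\bullet_P$ rather than merely the point itself, and your write-up should do the same.
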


See Figures \ref{fig:wake-combinatorics} and \ref{fig:wake-real-picture}.

\begin{figure}
    \centering
    \begin{tikzpicture}[scale=1.05]
        \filldraw[white, fill=gray!15!white] (-3,0) -- (-1.25,-1.25) -- (-1,-2) -- (0.5,-3) -- (1.5,-4.667) -- (3,-5.667) -- (4,-6) -- (3.75,-6.5) -- (-3,-6.5) -- (-3,0);
        \filldraw[white, fill=gray!05!white] (3.75,-6.5) -- (4,-6) -- (5,-5.667) -- (6.5,-4.5) -- (7.25,-3) -- (8,-2.5) -- (8.5,-2) -- (8.5,-6.5) -- (3.75,-6.5);
        \filldraw[gray, fill=green!10!white] (0,0) -- (-1.25,-1.25) -- (-1,-2) -- (0.5,-3) -- (2,-1.5) -- (1.333,-0.8) -- (0,0);
        \filldraw[gray, fill=yellow!10!white] (4,0) -- (2,-1.5) -- (0.5,-3) -- (1.5,-4.667) -- (3,-5.667) -- (4,-6) -- (5,-5.667) -- (6.5,-4.5) -- (7.25,-3) -- (6,-1.25) -- (4,0);
        \filldraw[gray, fill=blue!10!white] (2,-0.25) -- (1.333,-0.8) -- (2,-1.5) -- (2.667,-1) -- (2,-0.25);
        \draw[gray] (-1.25,-1.25) -- (-3,-0);
        \draw[gray] (4,-6) -- (3.75,-6.5);
        \draw[gray] (7.25,-3) -- (8,-2.5) -- (8.5,-2);

        \filldraw[white,fill=gray!15!white] (-3,1) -- (-2,1.5) -- (-1.75,1.667) -- (-1.5,2.333) -- (-1,2.667) -- (-0.5,2.667) -- (0,3) -- (0.667, 3) -- (1,3.667) -- (1.5,4) -- (2,4.5) -- (2.5,4.5) -- (3.25,5) -- (4,5) -- (3.875,5.5) -- (-3,5.5) -- (-3,1);
        \filldraw[white,fill=gray!05!white] (3.875,5.5) -- (4,5) -- (5,5) -- (5.5,4.5) -- (6.5,4.25) -- (7.5,3) -- (8.5,2.667) -- (8.5,5.5) -- (3.875,5.5);
        \filldraw[gray, fill=green!10!white] (0,0) -- (-1,0.333) -- (-1.5,1.33) -- (-1.75,1.667) -- (-1.5,2.333) -- (-1,2.667) -- (-0.5,2.667) -- (0,3) -- (0.667, 3) -- (1,2.667) -- (1.25,2) -- (1.5,1.667) -- (2,1) -- (1.5,0.75) -- (1.25, 0.5) -- (0.75,0.333) -- (0.4,0.1) -- (0,0);
        \filldraw[gray, fill=yellow!10!white] (4,0) -- (5.25,0.25) -- (6.75, 1.25) -- (7.125,2.75) -- (7.5,3) -- (6.5,4.25) -- (5.5,4.5) -- (5,5) -- (3.25,5) -- (2.5,4.5) -- (2,4.5) -- (1.5,4) -- (1,3.667) -- (0.667, 3) -- (1,2.667) -- (1.25,2) -- (1.5,1.667) -- (2,1) -- (2.5,0.667) -- (2.75,0.25) -- (3.25,0.1) -- (4,0);
        \filldraw[gray, fill=blue!10!white] (2,-0.25) -- (2.5,0) -- (2.75,0.25) -- (2.5,0.667) -- (2,1) -- (1.5,0.75) -- (1.25, 0.5) -- (1.5,0) -- (2,-0.25);
        \draw[gray] (-1.75,1.667) -- (-2,1.5) -- (-3,1);
        \draw[gray] (4,5) -- (3.875,5.5);
        \draw[gray] (7.5,3) -- (8.5,2.667);
        \draw[gray] (0,0) -- (0,1.25) -- (0.667,3);
        \draw[gray] (4,0) -- (4.2,1.5) -- (3.8,3) -- (4,5);
        \draw[gray] (2,-0.25) -- (2,1);
        
        \draw[ultra thick] (-3,0.25) -- (-1.5,-0.25) -- (0,0) -- (1.33,-0.33) -- (2,-0.25) -- (2.66,-0.33) -- (4,0) -- (6,-0.5) -- (8,0) -- (8.5,-0.125);
        \node [black, font=\bfseries] at (7,-0.5) {$\Hq$};
        
        \node [red, font=\bfseries] at (0,-0.02) {\large $\bullet$};
        \node [red, font=\bfseries] at (0.08,-0.35) {\small $C_{P^-}$};
        \node [red, font=\bfseries] at (0.667,2.98) {\large $\bullet$};
        \node [red, font=\bfseries] at (0.5,3.3) {\small $\alpha^0_{P^-}$};
        \node [red, font=\bfseries] at (0.5,-3.02) {\large $\bullet$};
        \node [red, font=\bfseries] at (0.9,-3.05) {\small $\alpha^\infty_{P^-}$};
        
        \node [red, font=\bfseries] at (2,-0.27) {\large $\bullet$};
        \node [red, font=\bfseries] at (2,-0.55) {\small $C_{P}$};
        \node [red, font=\bfseries] at (2,0.98) {\large $\bullet$};
        \node [red, font=\bfseries] at (2.2,1.2) {\small $\alpha^0_{P}$};
        \node [red, font=\bfseries] at (2,-1.52) {\large $\bullet$};
        \node [red, font=\bfseries] at (2.2,-1.7) {\small $\alpha^\infty_{P}$};
        
        \node [red, font=\bfseries] at (4,-0.02) {\large $\bullet$};
        \node [red, font=\bfseries] at (4.05,-0.35) {\small $C_{P^+}$};
        \node [red, font=\bfseries] at (4,4.98) {\large $\bullet$};
        \node [red, font=\bfseries] at (4.3,4.75) {\small $\alpha^0_{P^+}$};
        \node [red, font=\bfseries] at (4,-6.02) {\large $\bullet$};
        \node [red, font=\bfseries] at (4,-5.7) {\small $\alpha^\infty_{P^+}$};
                
        \node [blue, font=\bfseries] at (1.65,0.55) {\scriptsize$\prescript{}{2}\Wbold^0_{P}$};
        \node [green!50!black, font=\bfseries] at (-0.8,1.6) {$\prescript{}{2}\Wbold^0_{P^-}$};
        \node [yellow!50!black, font=\bfseries] at (2.4,2.9) {$\prescript{}{2}\Wbold^0_{P^+}$};
        
        \node [blue, font=\bfseries] at (2.35,0.35) {\scriptsize$\prescript{}{1}\Wbold^0_{P}$};
        \node [green!50!black, font=\bfseries] at (0.9,1.1) {$\prescript{}{1}\Wbold^0_{P^-}$};
        \node [yellow!50!black, font=\bfseries] at (5.5,2.8) {$\prescript{}{1}\Wbold^0_{P^+}$};
        
        \node [blue, font=\bfseries] at (2.05,-1) {\small$\prescript{}{1}\Wbold^\infty_{P}$};
        \node [green!50!black, font=\bfseries] at (0.33,-1.5) {$\prescript{}{1}\Wbold^\infty_{P^-}$};
        \node [yellow!50!black, font=\bfseries] at (4,-3) {$\prescript{}{1}\Wbold^\infty_{P^+}$};
\end{tikzpicture}

    \caption{A cartoon picture of the structure of wakes when $(d_0,d_\infty)=(3,2)$. See Figure \ref{fig:wake-real-picture} for a more realistic picture.}
    \label{fig:wake-combinatorics}
\end{figure}

\begin{figure}
    \centering
    \includegraphics[width=\linewidth]{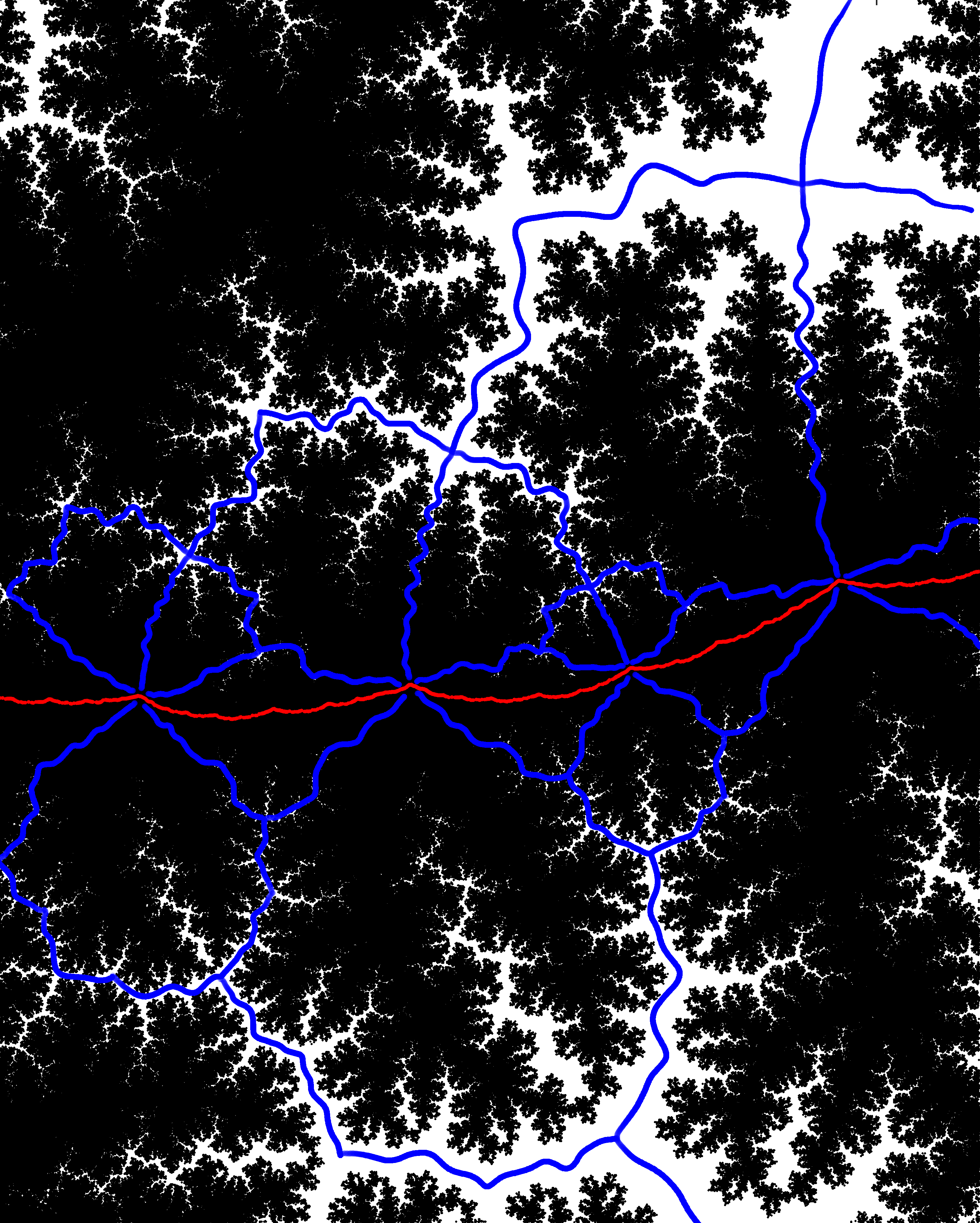}

    \caption{An approximate picture of the dynamical plane of $\Fbold_*$ when $(d_0,d_\infty)=(3,2)$ and $\theta$ is the golden mean irrational. This figure is obtained from the magnification of the Julia set of the rational map $f_{3,2}$ in Figure \ref{fig:cqc-comparison} around a point on its Herman curve. The Herman curve $\Hq$ of $\Fbold_*$ is colored red and some external ray segments are displayed in blue. These external rays are the boundaries of the primary wakes attached to four critical points on $\Hq$.}
    \label{fig:wake-real-picture}
\end{figure}

\begin{proof}
    The left coast of $\prescript{}{l}\Obold^0_P$ is contained in $\prescript{}{1}\Wbold^0_{P^-}$ because it starts with a segment of the spine $\prescript{}{1}\Hq^0_{P-}$ rooted at $C_{P^-}$ and is disjoint from the external rays landing at $C_{P^-}$. Since the left coast of $\prescript{}{l}\Obold^0_P$ lands at the alpha-point $\alpha^0_P$, the boundary of the wake $\prescript{}{1}\Wbold^0_{P^-}$ must contain $\alpha^0_{P}$. The treatment for the other side lakes of $C_P$ is analogous, and this implies (1).
    
    We have established that $\alpha^\bullet_P$ is in the boundary of each of $\Wbold^\bullet_P$, $\Wbold^\bullet_{P^-}$, and $\Wbold^\bullet_{P^+}$. By Corollary \ref{cor:two-external-rays}, there exist alpha-points $\alpha'$, $\alpha_-$, and $\alpha_+$ such that
    \[
        \alpha' \prec \alpha^\bullet_P, \quad \alpha^\bullet_P \prec \alpha_-, \quad  \alpha^\bullet_P \prec \alpha_+
    \]
    and
    \[
        \prescript{+}{}\Rbold^\bullet_{P^-} \cap \prescript{-}{}\Rbold^\bullet_{P^+} = [\alpha' ,\alpha_P^\bullet], \quad \prescript{+}{}\Rbold^\bullet_{P^-} \cap \prescript{-}{}\Rbold^\bullet_{P} = [\alpha_P^\bullet, \alpha_-], \quad \prescript{+}{}\Rbold^\bullet_{P} \cap \prescript{-}{}\Rbold^\bullet_{P^+} = [\alpha_P^\bullet, \alpha_+].
    \]
    Therefore, the union of $\Wbold^\bullet_{P^-}$, $\Wbold^\bullet_{P}$, and $\Wbold^\bullet_{P^+}$ form a neighborhood of $\alpha^\bullet_P$, thus proving (2). More generally, we have just shown that every primary alpha-point is the meeting point of exactly three distinct primary full wakes.

    Let us prove (3) and (4) for $\prescript{-}{}\Rbold^\bullet_P$. The treatment for $\prescript{+}{}\Rbold^\bullet_P$ is analogous. Let us define $Q^-_1 \in \Tbold$ to be the unique smallest moment greater than $P$ such that $C_{Q^-_1}$ is contained on the interval $(C_{P^-}, C_P) \subset \Hq$. Then, based on the previous paragraph, the alpha-point $\alpha_-$ must be equal to $\alpha^\bullet_{Q^-_1}$ because it is the meeting point of $\prescript{-}{}\Rbold^\bullet_{P}$, $\prescript{+}{}\Rbold^\bullet_{P^-}$, and the boundary of a primary full wake, which is $\Wbold_{Q^-_1}^\bullet$. Similarly, $\prescript{+}{}\Rbold^\bullet_{Q^-_1}$ and $\prescript{-}{}\Rbold^\bullet_{P}$ meet along a ray segment $\left[\alpha_{Q^-_1}^\bullet, \alpha_{Q^-_2}^\bullet \right]$ for some $Q^-_2 > Q^-_1$. Inductively, we obtain the desired increasing sequence $\left\{Q^-_i\right\}_{i\in\N}$ of power-triples. It remains to show that the corresponding sequence of alpha-points $\alpha^\bullet_{Q^-_i}$ indeed converges to $C_P$.

    By Proposition \ref{prop:zero-chains}, there exists an alpha-point $\alpha$ on $\prescript{-}{}\Rbold^\bullet_{P}$ close to $C_P$, which is the alpha-point of some primary full wake $\Wbold^\bullet_Q$ where $Q > P$. Since there are at most finitely many critical points on $\Hq$ of generation less than $Q$ between $C_Q$ and $C_{Q^-_1}$, the ray segment $[\alpha^\bullet_{Q^-_1}, \alpha^\bullet_Q]$ intersects the boundaries of at most finitely many primary wakes. Therefore, $Q = Q^-_i$ for some $i \in \N$. Since $\alpha$ can be picked to be arbitrarily close to $C_P$, then $\alpha^\bullet_{Q^-_i}$ indeed converges to $C_P$.
\end{proof}

\begin{corollary}[Tiling of wakes] 
\label{cor:tiling-of-wakes}
\leavevmode
    \begin{enumerate}[label=\textnormal{(\arabic*)}]
        \item Primary wakes fill up the ocean: for $\bullet \in \{0,\infty\}$, 
        \[
            \Obold^\bullet \subset \bigcup_{P\in\Tbold_{>0}}\overline{\Wbold^\bullet_P}.
        \]
        \item The closure of a wake $\prescript{}{J,j}\Wbold^{\blacksquare,\bullet}_S$ is the union of spines $\prescript{}{J,2j-1}\Hq^{\blacksquare,\bullet}_{S}$ and $\prescript{}{J,2j}\Hq^{\blacksquare,\bullet}_{S}$ and the closure of all full wakes rooted at critical points on any of these two spines. 
        \item For every $z \in \Esc_{<\infty}$ and $m \in \N_{\geq 1}$, there are at most three disjoint full wakes of level $\geq m$ containing $z$ on their boundaries. The union of these full wakes forms a neighborhood of $z$.
    \end{enumerate} 
\end{corollary}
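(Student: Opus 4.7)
My approach is to prove the three items in the order (2), (1), (3), since (2) is the structural description underpinning the others.

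For (2), I would proceed by induction on the level of the wake $\Wbold = \prescript{}{J,j}\Wbold^{\blacksquare,\bullet}_S$. By the conformal identification $\Fbold^{|S|}\colon \Wbold \to \C\setminus\overline{\Rbold^\bullet}$ noted in \S\ref{ss:wakes}, it suffices to establish the description in the primary case, which then lifts equivariantly to arbitrary level. For a primary wake $\Wbold^\bullet_P$, Lemma \ref{lem:comb-of-wakes}(3)--(4) shows that each bounding ray $\prescript{\pm}{}\Rbold^\bullet_P$ is a concatenation of sub-segments shared with primary wakes $\Wbold^\bullet_{Q_i^\pm}$ rooted at critical points $C_{Q_i^\pm}$ on the spines of $C_P$, with $C_{Q_i^\pm}$ accumulating at $C_P$. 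Iterating this concatenation into each $\Wbold^\bullet_{Q_i^\pm}$, the boundary of $\Wbold^\bullet_P$ is exhausted by the two spines together with the closures of all sub-full-wakes rooted along them; the general case follows by lifting.

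For (1), set $W := \bigsqcup_P \Wbold^\bullet_P$ and aim to show $\Obold^\bullet\setminus W = \Rbold^\bullet$. The inclusion $\supseteq$ is immediate. For $\subseteq$, Lemma \ref{lem:comb-of-wakes}(3) implies that every interior point of a primary ray segment lies in $W$, since the segment is a common boundary of two adjacent primary wakes, each contributing an open side; so $\partial W \cap \Obold^\bullet$ can contain only accumulation points of alpha-points, which by Lemma \ref{dominant-alphas} and Proposition \ref{prop:zero-chains} form exactly $\Rbold^\bullet$. To conclude, every alpha-point $\alpha^\bullet_Q \in \Rbold^\bullet$ lies in $\overline{\Wbold^\bullet_Q}$ by construction; any non-alpha $z$ on an external chain $[\alpha^\bullet_{P_n},\alpha^\bullet_{P_{n+1}}]$ lies on the coast of a side lake of $C_{P_n}$, which by (2) is contained in $\overline{\Wbold^\bullet_Q}$ for an appropriate $Q \in \{P_n,P_{n+1}\}$.

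For (3), I would induct on $m$. The case $m=1$ follows from (1) and Lemma \ref{lem:comb-of-wakes}(2): any $z \in \Esc_{<\infty}$ lies on the boundary of at most three primary full wakes, depending on whether $z$ is in the interior of a shared ray segment (two wakes) or at a primary alpha-point (three wakes), and their union is a neighborhood of $z$ by the wake-tiling from (1). For $m > 1$, fix $z$ and use the inductive hypothesis to locate the at most three full wakes of level $m-1$ containing $z$ on their boundary; within each such wake, use the conformal descent $\Fbold^{|S|}$ from (2) and apply the base case inside to identify the level-$m$ sub-wakes at $z$. A tree-surgery argument in $\Esc_{<\infty}$ then caps the total count of pairwise disjoint level-$\geq m$ wakes meeting $z$ at three, with their union forming a neighborhood by the corresponding assertion at each level.

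The main obstacle is the identification $\Obold^\bullet\setminus W = \Rbold^\bullet$ in (1). Verifying that every interior of a primary ray segment is covered by its two adjacent wakes requires careful unfolding of the concatenation in Lemma \ref{lem:comb-of-wakes}(3), and the shrinking of primary wakes (Lemma \ref{lem:primary-wakes-shrink}) is needed to rule out the accumulation of boundary points off $\Rbold^\bullet$.
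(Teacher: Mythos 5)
Your proposal has two genuine gaps, both traceable to the order in which you take the items and to the argument you sketch for (1).

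First, item (2) is a statement about the \emph{closure} of a wake, i.e.\ about tiling a two--dimensional region by sub-wakes, not merely a description of its boundary. Your primary-case argument (iterating the concatenation of Lemma \ref{lem:comb-of-wakes}(3)--(4)) only exhausts $\partial\, \prescript{}{J,j}\Wbold^{\blacksquare,\bullet}_S$; it says nothing about why an interior point of the wake must lie in the closure of some full wake rooted on one of the two spines. Moreover, your "induction on level" has no base case independent of (1): the map $\Fbold^{|S|}$ identifies \emph{every} wake, including a primary one, conformally with $\C\setminus\overline{\Rbold^\bullet}$, so the primary case of (2) is already equivalent to the tiling of the whole plane by $\Hq$ and the primary full wakes --- which is exactly item (1) for both oceans. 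The paper therefore proves (1) first and obtains (2) by pulling back that tiling through $\Fbold^{|S|}$; your proposed order is circular.

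Second, the key step of your argument for (1) fails as stated. With $W=\bigsqcup_P\Wbold^\bullet_P$ a disjoint union of \emph{open} Jordan domains, an interior point of a shared bounding ray segment lies in neither adjacent wake, so the identity $\Obold^\bullet\setminus W=\Rbold^\bullet$ is false; and even after passing to closures, the residual set is not controlled by "accumulation points of alpha-points," since alpha-points accumulate on far more than $\Rbold^\bullet$ (they are dense on every external chain, hence their closure meets all of $\Esc_{<\infty}\cap\Obold^\bullet$). The real difficulty, which you only gesture at in your final paragraph, is to exclude an entire open component $X$ of $\Obold^\bullet\setminus\bigcup_P\overline{\Wbold^\bullet_P}$. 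The paper does this by contradiction: by Lemma \ref{lem:comb-of-wakes} such an $X$ would accumulate at a point $x\in\Hq$ and would be enclosed, for every $n$, by $\Hq$ together with a pair of touching primary wakes $\overline{\Wbold^\bullet_{Q_n}}\cup\overline{\Wbold^\bullet_{T_n}}$ whose roots converge to $x$; Lemma \ref{lem:primary-wakes-shrink} then forces the enclosed disks to shrink to a point, so $X=\emptyset$. You cite the right lemmas but do not assemble this compactness argument, and without it (1) --- and hence (2) and (3) --- is not established.
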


\begin{proof}
    To prove (1), let us assume for a contradiction that there is a non-empty connected component $X$ of $\Obold^\bullet \backslash \bigcup_{P}\overline{\Wbold^\bullet_P}$. By Lemma \ref{lem:comb-of-wakes}, $\overline{X}$ intersects some point $x$ on $\Hq$. There exists two distinct sequences of power-triples $\{Q_n\}$ and $\{T_n\}$ such that for all $n \in \N$, the primary wakes $\Wbold^\bullet_{Q_n}$ and $\Wbold^\bullet_{T_n}$ touch, the union $\Hq \cup \overline{\Wbold^\bullet_{Q_n}} \cup \overline{\Wbold^\bullet_{T_n}}$ encloses a unique disk $D_n$ containing $X$, and the corresponding roots $C_{Q_n}$ and $C_{T_n}$ converge to $x$ as $n\to \infty$. By Lemma \ref{lem:primary-wakes-shrink}, the diameter of $D_n$ tends to $0$ as $n \to \infty$, which implies that such $X$ cannot exist.

    Item (2) follows from pulling back the tiling of wakes in (1) by the map $\Fbold^{|S|}$ on $\overline{\prescript{}{J,j}\Wbold^{\blacksquare,\bullet}_S}$. We have thus shown that wakes of a fixed level tile each of the two oceans, and every point in the ocean is contained in the closure of at most three wakes of the same level. This implies (3).
\end{proof}

\begin{lemma}
\label{lem:uniform-expansion}
    Let us equip $\C \backslash \Hq$ with the hyperbolic metric $\rho_0$. For every $P \in \Tbold_{>0}$, 
    \begin{enumerate}[label=\textnormal{(\arabic*)}]
        \item the map $\Fbold^P: \Wbold^\bullet_P \backslash \Fbold^{-P}(\Hq) \to \C \backslash \Hq$ is uniformly expanding (with respect to $\rho_0$) with a factor independent of $P$;
        \item the hyperbolic diameter of every wake of level two is at most some uniform constant independent of $P$.
    \end{enumerate}
\end{lemma}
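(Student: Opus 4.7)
The proof rests on the conformal isomorphism $\Fbold^P \colon \Wbold^\bullet_P \to \C \backslash \overline{\Rbold^\bullet}$ (established together with Corollary \ref{cor:tiling-of-wakes}) combined with the self-similarity relation \eqref{eqn:self-sim-cascade}. Since $\Wbold^\bullet_P \subset \Obold^\bullet$ and the preimage $\Fbold^{-P}(\Hq) \cap \Wbold^\bullet_P$ corresponds under this map to $\Hq \backslash \{0\}$, it restricts to conformal isomorphisms between the components of $\Wbold^\bullet_P \backslash \Fbold^{-P}(\Hq)$ and the components of $\C \backslash (\Hq \cup \overline{\Rbold^\bullet})$.

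For item (1), I would apply Schwarz--Pick to the inverse $(\Fbold^P)^{-1} \colon \C \backslash \overline{\Rbold^\bullet} \to \Obold^\bullet$: its image $\Wbold^\bullet_P$ is a proper subdomain of $\Obold^\bullet$, so the inverse strictly contracts the slit-plane hyperbolic metric into $\rho_0$, and $\Fbold^P$ is correspondingly $\rho_0$-expanding. Uniformity of the factor in $P$ is forced by self-similarity: the relation $\Fbold^{\tbold P} = A_*^{-1} \Fbold^P A_*$, combined with the $A_*$-invariance of both $\Hq$ and $\overline{\Rbold^\bullet}$ (the latter from Proposition \ref{prop:zero-chains}), shows that $A_*$ acts as a $\rho_0$-isometry of each ocean and conjugates $\Fbold^P$ to $\Fbold^{\tbold P}$. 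The expansion factor is therefore $\tbold$-periodic in $P$, so Schwarz--Pick applied over a compact fundamental domain of power-triples furnishes a single constant $\lambda > 1$.

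For item (2), let $V_2 = \prescript{}{J,j}\Wbold^{\blacksquare,\bullet}_{(P_1,P_2)}$ be a level-$2$ wake sitting inside its parent primary wake $V_1 = \Wbold^{\bullet_1}_{P_1}$. The root of $V_2$ lies strictly in the interior of $V_1 \subset \Obold^{\bullet_1}$, so $\overline{V_2} \Subset V_1$. The conformal isomorphism $\Fbold^{P_1} \colon V_1 \to \C \backslash \overline{\Rbold^{\bullet_1}}$ is an isometry of intrinsic hyperbolic metrics and sends $V_2$ onto the primary wake $\Wbold^{\bullet_2}_{P_2}$. By the same self-similarity argument as in (1), primary wakes have uniformly bounded intrinsic hyperbolic diameter in the slit plane; pulling back along the isometry gives a uniform $\rho_{V_1}$-diameter bound for $V_2$. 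Finally, Schwarz--Pick applied to the inclusion $V_1 \subset \Obold^{\bullet_1}$ gives $\rho_0 \leq \rho_{V_1}$ on $V_1$, which upgrades this bound to the desired $\rho_0$-diameter estimate.

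The main technical subtlety is the behavior of $\Fbold^P$ near the critical boundary point $C_P \in \Hq$: since the local degree is $d$ and $\rho_0$ blows up along $\Hq$, a direct computation suggests that the naive pointwise $\rho_0$-expansion factor degenerates to $1$ in that limit. The self-similarity argument via $A_*$ is the essential device that bypasses a delicate boundary analysis and produces a $P$-uniform estimate.
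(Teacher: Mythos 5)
Your argument for item (1) does not close because the metrics on the target do not match. Schwarz--Pick applied to $(\Fbold^P)^{-1}\colon \C\backslash\overline{\Rbold^\bullet}\to\Obold^\bullet$ shows that $\Fbold^P$ expands from $\rho_0$ on the domain to the hyperbolic metric $\rho_{\mathrm{slit}}$ of $\C\backslash\overline{\Rbold^\bullet}$ on the target; to deduce the $\rho_0$-to-$\rho_0$ expansion asserted in the lemma you would need $\rho_0\geq\rho_{\mathrm{slit}}$ at the image point. Since neither of $\C\backslash\Hq$ and $\C\backslash\overline{\Rbold^\bullet}$ contains the other, this inequality fails wherever the image point is close to $\Rbold^\bullet$ but hyperbolically far from $\Hq$ (there $\rho_{\mathrm{slit}}$ blows up while $\rho_0$ stays bounded), and such points do occur in the image $\C\backslash(\Hq\cup\overline{\Rbold^\bullet})$. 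The structure the proof actually needs is that $\Fbold^P\colon\Dom\left(\Fbold^P\right)\backslash\Fbold^{-P}(\Hq)\to\C\backslash\Hq$ is an unbranched covering (all critical values lie on $\Hq$), hence a local isometry between the hyperbolic metric $\rho_P$ of $\C\backslash\Fbold^{-P}(\Hq)$ and $\rho_0$; the whole content of (1) is then the \emph{uniform} contraction of the inclusion $\left(\C\backslash\Fbold^{-P}(\Hq),\rho_P\right)\hookrightarrow\left(\C\backslash\Hq,\rho_0\right)$ on the wake. Away from $C_P$ this is a compactness statement, but near $C_P$ --- the degeneration you correctly flag at the end --- the global conjugacy $\Fbold^{\tbold P}=A_*^{-1}\circ\Fbold^P\circ A_*$ does not help: it only identifies different values of $P$ and says nothing about the limit $z\to C_P$ inside a fixed wake. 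What is needed is the asymptotic self-similarity of $\Hq$ and $\partial\Wbold^\bullet_P$ at $C_P$ itself, obtained by pulling back the $A_*$-invariance at $0$ through $\Fbold^P\colon C_P\mapsto 0$; this local boundary analysis cannot be bypassed.

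Item (2) has an independent gap: the intermediate claim that primary wakes have uniformly bounded intrinsic hyperbolic diameter in $\C\backslash\overline{\Rbold^{\bullet_1}}$ is false. For every dominant $P_n$ the alpha-point $\alpha^{\bullet_1}_{P_n}$ lies both on the zero ray $\Rbold^{\bullet_1}$ and on $\partial\Wbold^{\bullet_1}_{P_n}$ (Lemma \ref{lem:comb-of-wakes}), so $\Wbold^{\bullet_1}_{P_n}$ contains points arbitrarily close to $\partial\left(\C\backslash\overline{\Rbold^{\bullet_1}}\right)$ and its hyperbolic diameter there is infinite. The bound you pull back to $V_2$ is therefore vacuous for those wakes, even though their $\rho_0$-diameters --- the quantity the lemma actually asserts --- are finite; passing through $\rho_{V_1}\geq\rho_0$ is too lossy for the same reason. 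The correct route is to bound the $\rho_0$-diameter directly: secondary wakes rooted away from $C_{P_1}$ form a compact family, those rooted near $C_{P_1}$ are again controlled by the asymptotic self-similarity at $C_{P_1}$, and uniformity in $P$ follows by mapping any full wake dynamically onto a full wake of bounded generation rather than by invoking a ``compact fundamental domain'' of power-triples (note that $\Tbold$ meets any scaling fundamental domain in an infinite set).
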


\begin{proof}
    For all $P \in \Tbold$, let $\rho_P$ be the hyperbolic metric of $\C \backslash \Fbold^{-P}(\Hq)$. To prove (1), it suffices to show that the inclusion map 
    \[
    \iota: \left(\C \backslash \Fbold^{-P}(\Hq), \rho_P\right) \to \left(\C \backslash \Hq, \rho_0\right)
    \]
    is uniformly contracting on $\Wbold^\bullet_P \backslash \Fbold^{-P}(\Hq)$.

    Clearly, $\iota$ is uniformly contracting on $\Wbold^\bullet_P$ minus a small neighborhood of $C_P$ because this region is a compact subset of $\Obold^\bullet$. The uniform contraction of $\iota$ on a neighborhood of $C_P$ follows from the asymptotic self-similarity of $\Hq$ and $\partial \Wbold^\bullet_P$ near $C_P$ induced by pulling back $A_*$-invariance near $0$ by $\Fbold^P: C_P \mapsto 0$. One may refer to \cite[Lemma 5.33]{DL23} for further details.
    
    Item (2) follows from essentially the same argument. By compactness, every subwake of $\Wbold^\bullet_P$ of level two has uniformly bounded diameter away from a neighborhood of $C_P$. Near $C_P$, the claim again follows from the asymptotic self-similarity at $C_P$. 
    
    Lastly, the bounds in both claims are independent of $P$ because every full wake can be mapped to a full wake $\Wbold^\bullet_Q$ for some $\bullet \in \{0,\infty\}$ for all sufficiently small $Q \in \Tbold_{>0}$.
\end{proof}

\begin{lemma}
\label{lem:nested-wakes-shrink}
    Any infinite sequence of nested wakes shrinks to a point.
\end{lemma}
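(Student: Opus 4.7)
The plan is to show that the hyperbolic diameter in $\C\setminus\Hq$ of $\Wbold_k$ decays geometrically in its level, and then to convert this to Euclidean shrinking via a comparison argument on a compact piece away from $\Hq$.

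First I would reduce to a level-by-level nested sequence. By Corollary \ref{cor:tiling-of-wakes}(2), the closure of any wake at level $m$ is the union of its two spines together with the closures of all full wakes rooted at critical points on those spines, and these rooted full wakes are at level $m+1$. Hence the interior of a level-$m$ wake is disjointly tiled by open wakes at level exactly $m+1$ together with the one-dimensional spine tree, so every proper sub-wake of a level-$m$ wake is contained in a unique wake at level $m+1$. Inserting such intermediate wakes into the given sequence $\Wbold_1 \supsetneq \Wbold_2 \supsetneq \ldots$, I may assume (after discarding finitely many initial terms) that $\Wbold_k$ has level $k$.

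Denoting the itinerary of $\Wbold_k$ by $(P_1,\ldots,P_k)$, the composition $\Fbold^{P_1+\cdots+P_{k-2}}$ maps $\Wbold_k$ conformally onto a level-$2$ wake, whose $\rho_0$-diameter is bounded by a uniform constant $C$ by Lemma \ref{lem:uniform-expansion}(2). Each factor $\Fbold^{P_i}$ is a restriction of the conformal map $\Wbold^{\bullet_i}_{P_i} \to \C\setminus\overline{\Rbold^{\bullet_i}}$ which, by Lemma \ref{lem:uniform-expansion}(1), expands $\rho_0$ off $\Fbold^{-P_i}(\Hq)$ by at least some uniform factor $\lambda>1$ independent of $P_i$. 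Chaining $k-2$ of the inverse contractions (and noting that the excised preimages of $\Hq$ form a nowhere dense set that does not affect diameters) gives
\[
    \operatorname{diam}_{\rho_0}(\Wbold_k) \;\leq\; C\,\lambda^{-(k-2)} \;\longrightarrow\; 0.
\]

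Finally I would verify that $\overline{\Wbold_2}$ is a compact subset of $\C\setminus\Hq$: its topological boundary consists of two ray segments contained in $\Esc_{<\infty}$, together with its secondary root critical point (lying in $\Fbold^{-P_1}(\Hq)\setminus\Hq$) and its secondary alpha-point (interior to a secondary lake), none of which lie on $\Hq$. On this compact set, $\rho_0$ and the Euclidean metric are comparable, so the Euclidean diameter of $\Wbold_k \subset \overline{\Wbold_2}$ also tends to $0$, and the nested intersection $\bigcap_k \overline{\Wbold_k}$ is a non-empty compact set of diameter zero, hence a single point. The key non-routine ingredient is the uniformity in $P$ of the expansion factor $\lambda$ in Lemma \ref{lem:uniform-expansion}(1); without this uniformity, the iterated factor $\lambda^{k-2}$ could fail to beat the potentially blowing-up intermediate $\rho_0$-diameters as the level $k$ grows.
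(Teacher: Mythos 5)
Your proof is correct and follows essentially the same route as the paper: a uniform bound on the $\rho_0$-diameter of level-two wakes (Lemma \ref{lem:uniform-expansion}(2)) combined with uniform level-by-level expansion (Lemma \ref{lem:uniform-expansion}(1)) yields exponential decay of the hyperbolic diameter in the level. The only cosmetic difference is that the paper packages the expansion into a single renormalized map $\chi = A_*^{-n}\circ\Fbold^{P+T}$ between unions of wakes rooted at a critical point rather than chaining the maps $\Fbold^{P_i}$ directly; your preliminary reduction to consecutive levels and your final hyperbolic-to-Euclidean conversion are steps the paper leaves implicit.
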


\begin{proof}
    Let us define a holomorphic map $\chi$ sending level two wakes to level one wakes as follows. Given a critical point $c$ of $\Fbold^{\geq 0}$, let $W(c)$ be the union of all wakes rooted at $c$. Consider a secondary critical point $\prescript{}{j}C^\bullet_{P,Q}$, which is contained in $W(C_P)$. The map $\Fbold^P$ sends $W(\prescript{}{j}C^\bullet_{P,Q})$ univalently onto $W(C_Q)$. 
    Let $T \in \Tbold$ be the smallest power-triple such that $Q-T=\tbold^n P$ for some $n \in \Z$. Then, $\chi := A^{-n} \circ \Fbold^{P+T}$ sends $W(\prescript{}{j}C^\bullet_{P,Q})$ univalently back onto $W(C_P)$. 
    By Lemma \ref{lem:uniform-expansion} (1), $\chi$ must be uniformly expanding on $W(\prescript{}{j}C^\bullet_{P,Q})$ with expansion factor independent of $P$.

    Now, consider an infinite sequence of nested wakes $W_1 \supset W_2 \supset W_3 \supset \ldots$ where each $W_n$ is of level $n$. By Lemma \ref{lem:uniform-expansion} (2), there is a uniform constant $C>0$ such that for all $n \geq 3$,
    \[
        \diam_{\rho_0}\left(\chi^{n-2}(W_n)\right) \leq C.
    \]
    Since $\chi$ is uniformly expanding, the hyperbolic diameter of $W_n$ tends to $0$ exponentially fast as $n \to \infty$.
\end{proof}

\subsection{The structure of $\Esc_{<\infty}$ and $\mathfrak{X}$}
\label{ss:structure-esc-set-and-X}

Using wakes, we will show in this final subsection that the finite-time escaping set consists of topologically tame external rays.

\begin{corollary}
    Every external ray lands at a unique point.
\end{corollary}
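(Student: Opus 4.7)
The plan is to realize the landing point as the intersection of a nested shrinking sequence of full wakes and conclude via Lemma \ref{lem:nested-wakes-shrink}. Fix an external ray $\Rbold = \bigcup_{n \in \Z}[\alpha_n, \alpha_{n+1}]$ with $\alpha_n \prec \alpha_{n+1}$ and write $\alpha_n = \prescript{}{J_n}\alpha^{\blacksquare_n, \bullet}_{S_n}$, where by Proposition \ref{comparing-alphas} the ocean side $\bullet \in \{0, \infty\}$ is independent of $n$. It suffices to prove that $\{\alpha_n\}_{n \geq 0}$ is Cauchy with a unique limit $p$ and that the ray segments shrink to $p$; the behavior as $n \to -\infty$ is already controlled by Proposition \ref{prop:zero-chains} and Corollary \ref{cor:description-of-chains}.

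First I would associate to each $n \geq 0$ a full wake $\mathbf{V}_n$ containing the tail $T_n := \bigcup_{k \geq n}[\alpha_k, \alpha_{k+1}]$. By Lemma \ref{lem:comb-of-wakes}~(2) at primary alpha-points, pulled back to arbitrary levels via an appropriate iterate of $\Fbold$ that sends $\alpha_n$ to a primary alpha-point, a neighborhood of $\alpha_n$ is the closure of exactly three disjoint full wakes, each of the same level as the length of $S_n$. Since $\alpha_n \prec \alpha_{n+1}$ means $\alpha_{n+1}$ lies in a lake attached to $\alpha_n$ on the $\bullet$-side, $\alpha_{n+1}$ must lie in exactly one of these three full wakes; declare this wake to be $\mathbf{V}_n$. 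By induction on $n$, the entire tail $T_n$ lies in $\overline{\mathbf{V}_n}$.

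Second, because $\alpha_{n+1}$ sits in a sub-lake of $\alpha_n$, its itinerary strictly extends that of $\alpha_n$, so the length of $S_n$ is strictly increasing in $n$ and tends to infinity. The nesting $\mathbf{V}_{n+1} \subseteq \mathbf{V}_n$ is forced by the tree structure in Corollary \ref{cor:tiling-of-wakes}~(2): full wakes at any fixed level are pairwise disjoint, and $\mathbf{V}_{n+1}$ (of strictly higher level) contains $\alpha_{n+2} \in \mathbf{V}_n$, so $\mathbf{V}_{n+1}$ must be a subwake of $\mathbf{V}_n$. Applying Lemma \ref{lem:nested-wakes-shrink} then yields $\diam(\mathbf{V}_n) \to 0$, hence $\diam(T_n) \to 0$. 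Consequently $\{\alpha_n\}$ converges to a unique point $p \in \bigcap_n \overline{\mathbf{V}_n}$ and the entire tail collapses onto $p$, proving that $\Rbold$ lands at the unique point $p$.

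The main obstacle is extending Lemma \ref{lem:comb-of-wakes}~(2) to non-primary alpha-points and confirming that the chosen full wake $\mathbf{V}_n$ is well-defined. This reduces to pulling back the primary statement through a suitable dynamical iterate of $\Fbold$ (which exists because every alpha-point is an iterated preimage of a primary one), combined with Corollary \ref{cor:middle-or-side} to identify which of the three adjacent full wakes contains the lake at $\alpha_n$ housing $\alpha_{n+1}$. A related subtlety is checking that $\alpha_{n+1}$ indeed lies in the interior of the chosen $\mathbf{V}_n$ rather than on a shared boundary ray, which is ensured by the uniqueness of alpha-point itineraries (Lemma \ref{alpha-points}).
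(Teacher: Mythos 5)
There is a genuine gap in the second step. Your claim that ``because $\alpha_{n+1}$ sits in a sub-lake of $\alpha_n$, its itinerary strictly extends that of $\alpha_n$, so the length of $S_n$ is strictly increasing'' is false. Lemma \ref{dominant-alphas} exhibits $\prec$-chains $\alpha^\bullet_{P_n} \prec \alpha^\bullet_{P_{n+1}}$ consisting entirely of \emph{primary} alpha-points (itinerary length one for every $n$); the zero rays $\Rbold^\bullet$ are built from exactly such chains. More generally, the lakes attached to $\alpha_n$ include the two side lakes, whose coasts contain long stretches of $\Fbold^{-|S_n|}(\Hq)$, and these side lakes contain alpha-points of the \emph{same} level as $\alpha_n$. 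Consequently the full wakes $\mathbf{V}_n$ you select need not have strictly increasing level, the nesting $\mathbf{V}_{n+1} \subseteq \mathbf{V}_n$ is not forced by disjointness of same-level wakes (indeed for the zero ray the $\mathbf{V}_n$ would all be distinct primary wakes, which are pairwise disjoint rather than nested), and Lemma \ref{lem:nested-wakes-shrink} cannot be invoked. This is precisely the case in which the ray accumulates on $\Hq$ or on some $\Fbold^{-P}(\Hq)$.

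Your construction does work in the complementary case, where the tail of the ray eventually leaves every spine tree $\Fbold^{-P}(\Hq)$ and is therefore trapped in wakes of unboundedly high level; that is the third paragraph of the paper's argument. But you must handle the other case separately. The paper does this by working with the accumulation set $X$ of the ray: since wake boundaries consist of ray segments, $X$ is either contained in $\overline{W}$ or disjoint from $W$ for every wake $W$; if $X$ meets some $\Fbold^{-P}(\Hq)$, then Corollary \ref{cor:tiling-of-wakes} forces $X \subset \Fbold^{-P}(\Hq)$, and the density of wake roots there (together with the shrinking of primary wakes, Lemma \ref{lem:primary-wakes-shrink}) pinches $X$ to a point. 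You need an argument of this kind — or some other mechanism, such as the escape-time parametrization of Proposition \ref{prop:zero-chains} and Corollary \ref{cor:description-of-chains} extended to the tail — to cover rays of the zero-ray type before your nested-wake argument can be applied to the rest.
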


\begin{proof}
    Let $X$ be the accumulation set of an external ray. Since the boundary of every wake is made of ray segments, then for every wake $W$, either $X \subset \overline{W}$ or $X \subset \C \backslash W$. 
    
    If $X$ intersects $\Hq$, then by Corollary \ref{cor:tiling-of-wakes}, $X$ must be contained in $\Hq$. In general, if $X$ intersects $\Fbold^{-P}(\Hq)$ for some $P \in \Tbold$, then $X \subset \Fbold^{-P}(\Hq)$. Since the roots of wakes are dense in $\Fbold^{-P}(\Hq)$, $X$ must be a singleton.

    Suppose $X$ is disjoint from $\Fbold^{-P}(\Hq)$ for all $P$. Then, $X$ is contained in an infinite sequence of nested wakes which, by Lemma \ref{lem:nested-wakes-shrink}, implies that $X$ is a singleton.
\end{proof}

We say that two points $x$ and $y$ in $\Esc_{\leq P}$ are \emph{combinatorially equivalent} if there is no alpha-point $\alpha$ such that $x$ and $y$ belong in distinct connected components of $\Esc_{\leq P} \backslash \{\alpha\}$. Combinatorial equivalence is an equivalence relation in $\Esc_{<\infty}$.

\begin{corollary}\leavevmode
\label{cor:triviality-of-comb-class}
    \begin{enumerate}[label=\textnormal{(\arabic*)}]
        \item Every combinatorial equivalence class in $\Esc_{<\infty}$ is a singleton.
        \item $\Esc_{<\infty}$ is dense in $\C$ and has empty interior.
        \item For every $P \in \R_{>0}$,
    \[
        \Esc_{\leq P} = \overline{\bigcup_{Q<P} \Esc_{\leq Q}}.
    \]
    \end{enumerate}
\end{corollary}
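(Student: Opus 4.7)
The plan is to derive the three items from the tree structure of $\Esc_{<\infty}$ developed in this section, combined with the density of alpha-points on every external chain and the continuous parametrization of chains by escape time (Proposition \ref{prop:zero-chains}, Corollary \ref{cor:description-of-chains}).

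For (1), given distinct $x,y \in \Esc_{<\infty}$, I would pick $P \in \Tbold$ so large that both lie in $\Esc_{\leq P}$ and so that the unique arc $\gamma \subset \Esc_{\leq P}$ connecting them in the tree structure from \S\ref{ss:external-chains} contains an alpha-point $\alpha$ of generation $\leq P$ in its relative interior. Such $\alpha$ exists because alpha-points are dense on every chain segment (Proposition \ref{prop:zero-chains}(3), Corollary \ref{cor:description-of-chains}), so after absorbing the generation of one such alpha-point into $P$, we get a separating point. Removing $\alpha$ disconnects $\gamma$ and hence $\Esc_{\leq P}$, so $x,y$ lie in different components of $\Esc_{\leq P}\setminus\{\alpha\}$, which rules out combinatorial equivalence.

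For (2), density of $\Esc_{<\infty}$ follows from a case split on $z \in \C$: either $z$ lies on some $\Fbold^{-P}(\Hq)$, where alpha-points are dense by Lemma \ref{lem:comb-of-wakes}(4) combined with Lemma \ref{lem:dynamics-on-H} and pullback under $\Fbold^P$; or $z$ lies in an ocean disjoint from every $\Fbold^{-P}(\Hq)$, in which case Corollary \ref{cor:tiling-of-wakes} and Lemma \ref{lem:nested-wakes-shrink} place $z$ inside an infinite nested sequence of wakes whose boundaries (contained in $\Esc_{<\infty}$) shrink to $z$. For empty interior, each $\Esc_{\leq P}$ with $P \in \Tbold$ is a countable union of arcs (alpha-points form a countable set, being indexed by finite tuples of power-triples), hence closed of two-dimensional Lebesgue measure zero and in particular nowhere dense; so $\Esc_{<\infty} = \bigcup_{P \in \Tbold}\Esc_{\leq P}$ is of first category in $\C$ and Baire's theorem yields empty interior. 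For (3), the inclusion $\supset$ is immediate from monotonicity of $P \mapsto \Esc_{\leq P}$ and closedness of $\Esc_{\leq P}$ (using its defining intersection when $P \notin \Tbold$). For $\subset$, let $z \in \Esc_{\leq P}$ have escape time $\tau \leq P$: if $\tau < P$ then $z \in \Esc_{\leq \tau} \subset \bigcup_{Q<P}\Esc_{\leq Q}$ already; otherwise $\tau = P$ and $z$ lies on some external chain $(\infty,\alpha]$, and the continuous parametrization by escape time supplied by Corollary \ref{cor:description-of-chains} produces points on the chain with escape time strictly less than $P$ converging to $z$, witnessing $z \in \overline{\bigcup_{Q<P}\Esc_{\leq Q}}$.

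The main obstacle will be cleanly justifying the tree structure underlying item (1): one must check that any two distinct points in $\Esc_{\leq P}$ are joined by a unique simple arc whose interior meets an alpha-point of some finite generation that can then be absorbed into a larger $P$. This should follow from the combinatorial ingredients already assembled in \S\ref{ss:alpha}--\ref{ss:external-chains}, namely the $\prec$-order of Proposition \ref{comparing-alphas}, the chain subdivision of Lemma \ref{lem:comb-of-wakes}, and the density of alpha-points on chains, via an induction on the combinatorial distance between the primary wakes or chain segments containing $x$ and $y$, with base case the situation where $x$ and $y$ lie on a common chain segment.
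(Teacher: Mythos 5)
There is a genuine gap, concentrated in items (1) and (2), even though you have assembled the right ingredients. For (1), your argument hinges on "the unique arc $\gamma \subset \Esc_{\leq P}$ connecting $x$ and $y$ in the tree structure," but no such arcwise (let alone uniquely arcwise) connectivity of $\Esc_{\leq P}$ is established in \S\ref{ss:alpha}--\ref{ss:external-chains}: the connected set $\Esc_{\leq P}^\bullet$ is the closure of a countable union of chains together with the landing points of (possibly uncountably many) external rays, and a point $x$ that lies on no chain $(\infty,\alpha]$ is reached only as the intersection of infinitely many nested wakes. Your plan never addresses such points, and you flag this yourself as "the main obstacle" without resolving it. The paper avoids the issue entirely by arguing about a single point $x$: if $x$ lies on a chain, triviality of its class follows from the continuous, injective escape-time parametrization and density of alpha-points (Corollary \ref{cor:description-of-chains}); if not, $x$ is the intersection of nested wakes, which shrink to a point by Lemma \ref{lem:nested-wakes-shrink}, so its class is $\{x\}$. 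You should restructure (1) along these lines rather than via a connecting arc.

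For the empty-interior part of (2), the claim that "each $\Esc_{\leq P}$ is a countable union of arcs, hence of measure zero" is not justified: only the alpha-points (and hence the chains $(\infty,\alpha]$) are countable, while $\Esc_{\leq P}$ also contains the landing points of all external rays of generation $\leq P$, a set which is not covered by those arcs and whose Lebesgue measure is not controlled by anything proved so far (compare Julia sets of positive area). The paper instead deduces empty interior directly from item (1): a component of the interior of $\Esc_{<\infty}$ would be an open connected set, hence could not be disconnected by removing a single alpha-point, so it would constitute a single nontrivial combinatorial class. Your density argument in (2) and your item (3) are essentially the paper's (the paper notes (3) follows from (1); your escape-time argument works, but again you must also treat points not lying on any chain, e.g. by approximating them along the external ray landing there).
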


\begin{proof}
    Consider a point $x$ in $\Esc_{\leq P}$. There are two cases. 
    If $x$ is on an external ray segment, then the triviality of the combinatorial class follows from Corollary \ref{cor:description-of-chains}. 
    Otherwise, by Corollary \ref{cor:tiling-of-wakes}, $x$ is contained in an infinite sequence of nested wakes $W_1 \supset W_2 \supset W_3 \supset \ldots$. In this case, since the boundary of every wake is made of ray segments, the whole combinatorial equivalence class of $x$ must be contained in the same nest $W_n$'s. Then, the triviality of combinatorial class of $x$ follows from Lemma \ref{lem:nested-wakes-shrink}. 

    Suppose for a contradiction that the interior of $\Esc_{<\infty}$ is non-empty. Any connected component of the interior would be contained in a single combinatorial equivalence class, and this would contradict item (1). By Corollary \ref{cor:tiling-of-wakes} and Lemma \ref{lem:nested-wakes-shrink}, wakes of any fixed level tile the plane and any nested wakes shrink to points. Since $\Esc_{<\infty}$ intersects the closure of every wake of every level, then
    $\Esc_{<\infty}$ is dense in $\C$. This proves item (2). Lastly, item (3) follows directly from item (1).
\end{proof}



Since the finite-time escaping set is a subset of the Julia set, this corollary gives an alternative proof of Lemma \ref{lem:jcas-and-H}. 

Let us end this section with a discussion on the dynamics of $\Fbold$ outside of $\Esc_{<\infty}$ and the grand orbit of $\Hq$. Consider the set
\[
    \mathfrak{X} := \C \backslash \left( \Esc_{<\infty} \cup \bigcup_{P \in \Tbold} \Fbold^{-P} (\Hq) \right).
\]
Note that $\mathfrak{X}$ contains the infinite-time escaping set $\Esc_\infty = \Esc_{\infty}(\Fbold_*)$ of $\Fbold_*$.

Every point $x$ in $\mathfrak{X}$ is characterized by the property that for all $P \in \Tbold$, $\Fbold^P(x)$ is contained in a unique primary wake. Consider the holomorphic map
\begin{equation}
\label{eqn:F-tilde}
\hat{\Fbold}: \mathfrak{X} \to \mathfrak{X}, \quad \hat{\Fbold}(z) = \Fbold^P(z) \enspace \text{ if } z \in \Wbold^0_P \cup \Wbold^\infty_P.
\end{equation}
Thus, every point in $\mathfrak{X}$ is subject to infinite iteration of the map $\hat{\Fbold}$.

\begin{definition}
    For every point $z$ in $\mathfrak{X}$, the \emph{complete address} of $z$ is an infinite tuple $(\Wbold_0, \Wbold_1, \Wbold_2,\ldots)$ where for every $n\geq 0$, $\Wbold_n$ is the primary wake containing the unique point $\hat{\Fbold}^n(z)$. The (\emph{incomplete}) \emph{address} of $z$ is the infinite tuple $(P_0,P_1,P_2,\ldots) \in \Tbold^\N_{>0}$ where $P_n$ is the generation of $\Wbold_n$.
\end{definition}

We say that an element $(P_0,P_1,\ldots)$ of $\Tbold^\N_{>0}$ is \emph{admissible} if 
\[
\displaystyle{\sum_{n=0}^\infty P_n = \infty}.
\]
Moreover, we say that an infinite tuple of primary wakes is \emph{admissible} if the corresponding tuple of generations is admissible.

\begin{proposition}
\label{prop:address}
    \leavevmode
    \begin{enumerate}[label=\textnormal{(\arabic*)}]
        \item An infinite tuple of primary wakes is admissible if and only if it is the complete address of a point in $\mathfrak{X}$. 
        \item Two different points in $\mathfrak{X}$ always have distinct complete addresses.
    \end{enumerate}
\end{proposition}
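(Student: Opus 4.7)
The plan is to prove both parts simultaneously via the nested pullback wake construction together with the shrinking lemma (Lemma \ref{lem:nested-wakes-shrink}). Given any tuple $(\Wbold_n)_{n\geq 0}$ of primary wakes with generations $P_n$ and partial sums $T_n := P_0+\ldots+P_{n-1}$, I would set $\mathfrak{W}_0 := \Wbold_0$ and inductively let $\mathfrak{W}_{n+1}$ be the unique connected component of $\Fbold^{-T_{n+1}}(\Wbold_{n+1})$ contained in $\mathfrak{W}_n$. Using Corollary \ref{cor:tiling-of-wakes}(2) and the covering structure of $\Fbold^{T_n}:\mathfrak{W}_n \to \Wbold_n$, each $\mathfrak{W}_n$ is a wake of level $n+1$ with generation $T_{n+1}$, and $\Fbold^{T_n}$ restricts to a biholomorphism $\mathfrak{W}_n \to \Wbold_n$. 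By Lemma \ref{lem:nested-wakes-shrink}, $\bigcap_n \overline{\mathfrak{W}_n}$ is a single point $z_0$.

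Part (2) is then immediate: any $z \in \mathfrak{X}$ whose complete address is $(\Wbold_n)$ satisfies $\hat{\Fbold}^n(z) \in \Wbold_n$, and pulling back by $\Fbold^{T_n}$ gives $z \in \mathfrak{W}_n$ for all $n$. Hence two such points both coincide with $z_0$. For Part (1) $(\Leftarrow)$, assume admissibility so that $T_n \to \infty$. The boundary $\partial \mathfrak{W}_n$ consists of two ray-segments joining the root $C^{(n)}$ (a critical point of generation $T_{n+1}$) and the alpha-point $\alpha^{(n)}$ (of generation $T_{n+1}$). I claim $z_0 \in \mathfrak{W}_n^\circ$ for every $n$: if instead $z_0 \in \partial \mathfrak{W}_{n_0}$, then by Corollary \ref{cor:tiling-of-wakes}(3) the point $z_0$ lies on the boundary of at most three full wakes of each level, but admissibility produces infinitely many distinct generations $T_{n+1}$, forcing $z_0 \in \mathfrak{W}_n^\circ$ for all sufficiently large $n$; nesting then yields interiority for every $n$. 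Consequently $\Fbold^{T_n}(z_0) \in \Wbold_n^\circ$ is defined for every $n$, and since any $P \in \Tbold$ is eventually dominated by $T_n$ and the escape sets satisfy $\Esc_{\leq P} \subset \Esc_{\leq T_n}$, we get $z_0 \notin \Esc_{<\infty}$. Similarly, if $\Fbold^P(z_0) \in \Hq$ for some $P$, then $\Fbold^{T_n}(z_0) = \Fbold^{T_n-P}(\Fbold^P(z_0)) \in \Hq$ by the invariance of $\Hq$ described in Lemma \ref{lem:dynamics-on-H}, contradicting $\Fbold^{T_n}(z_0) \in \Wbold_n^\circ \subset \Obold^\bullet$. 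Therefore $z_0 \in \mathfrak{X}$ and, by construction, its complete address is the given tuple.

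For Part (1) $(\Rightarrow)$, suppose $z \in \mathfrak{X}$ has complete address $(\Wbold_n)$ with $\sum P_n = Q < \infty$. Then $z$ lies in every $\mathfrak{W}_n$, so by the shrinking lemma $z = z_0$. The alpha-points $\alpha^{(n)}$ of $\mathfrak{W}_n$ lie in $\overline{\mathfrak{W}_n}$, so they converge to $z_0 = z$ as $n \to \infty$; their escape times are exactly $T_{n+1} \nearrow Q$. Using the definition of escape time extended to $\R_{>0}$ via $\Esc_{\leq Q}(\Fbold) := \bigcap_{Q' > Q,\, Q' \in \Tbold} \Esc_{\leq Q'}$ and the fact that $\Esc_{\leq Q'}$ is closed, one obtains $z_0 \in \Esc_{\leq Q} \subset \Esc_{<\infty}$, contradicting $z \in \mathfrak{X}$. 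Hence the address must be admissible.

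The main obstacle is verifying the interiority claim $z_0 \in \mathfrak{W}_n^\circ$ in the admissible case; this is where the combinatorics of the wake tiling (Corollary \ref{cor:tiling-of-wakes}(3)) is essential, since \emph{a priori} $z_0$ could lie on some boundary ray segment shared by arbitrarily many nested wakes. The subsidiary technical point is that in Part (1) $(\Rightarrow)$ the value $Q$ need not lie in $\Tbold$, which is harmless because $\Esc_{\leq Q}$ is defined by intersecting over all $Q' > Q$ in the dense set $\Tbold$ and is automatically contained in $\Esc_{<\infty}$.
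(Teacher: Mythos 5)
Your proposal follows the same route as the paper's proof: build the nested sequence of pullback wakes $\mathfrak{W}_n$ and invoke Lemma \ref{lem:nested-wakes-shrink} to identify their intersection with a single point; part (2) and the converse direction of (1) then drop out, and the forward direction of (1) is handled by the divergence of escape times. The paper's own argument is in fact terser than yours -- it simply asserts that the intersection of the nested wakes is the set of points with the given complete address -- so your extra verification that the limit point $z_0$ actually lies in $\mathfrak{X}$ is a worthwhile addition, and your treatment of the forward direction via the alpha-points $\alpha^{(n)}\to z$ with escape times $T_{n+1}\nearrow Q$ and the closedness of $\Esc_{\leq Q'}$ is sound. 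The one step that does not hold up as written is your justification of the interiority claim: Corollary \ref{cor:tiling-of-wakes}(3) bounds the number of \emph{disjoint} full wakes of a given level whose boundaries contain a point, whereas the $\mathfrak{W}_n$ are nested, so that corollary yields no contradiction with $z_0$ lying on all of their boundaries. The claim is nevertheless true and is better proved via escape times: since $\Fbold^{T_{n+1}}$ maps $\mathfrak{W}_n$ conformally onto $\C\backslash\overline{\Rbold^\bullet}$ and $\partial\mathfrak{W}_n$ consists of lifts of the zero ray under $\Fbold^{T_{n+1}}$ together with the root and the alpha-point, every point of $\overline{\mathfrak{W}_n}\cap\Esc_{<\infty}$ has escape time at least $T_{n+1}$; admissibility gives $T_{n+1}\to\infty$, so $z_0\notin\Esc_{<\infty}$ and in particular $z_0$ avoids every boundary ray segment and alpha-point. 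The roots are excluded separately: if $z_0$ were the root of $\mathfrak{W}_{n_0}$, then $\Fbold^{T_{n_0+1}}(z_0)=0$ would lie in $\overline{\Wbold_{n_0+1}}\cap\Hq$, forcing $0$ to be the root of a primary wake, i.e.\ a periodic critical value, which is false for $\Fbold_*$. With that repair your argument is complete.
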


\begin{proof}
    Given a point $z \in \mathfrak{X}$, if the sum of its incomplete address were finite, say $Q \in \R_{>0}$, then $z$ would have escape time $Q$ instead. Conversely, consider any admissible tuple of primary wakes $(\Wbold_0,\Wbold_1,\Wbold_2,\ldots)$. Consider the sequence of nested wakes $ \Wbold'_0:=\Wbold_0 \supset \Wbold'_1 \supset \Wbold'_2 \supset \ldots$ where for $n\geq 0$, $\Wbold'_{n+1}$ is defined inductively by the lift of $\Wbold_n$ under $\hat{\Fbold}^{n+1}|_{\Wbold'_n}$. The intersection of such nested wakes is precisely the set of points admitting the complete address $(\Wbold_0,\Wbold_1,\Wbold_2,\ldots)$, and according to Lemma \ref{lem:nested-wakes-shrink}, it is a singleton.
\end{proof}

\begin{corollary}
    $\mathfrak{X}$ is a dense, totally disconnected subset of $\C$.
\end{corollary}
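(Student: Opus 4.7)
My plan is to establish density and total disconnectedness separately, both built on the wake structure from Section~\ref{ss:wakes} and Proposition~\ref{prop:address}. For density, I will locate a wake $W \subset U$ inside any prescribed non-empty open $U \subset \C$ and then pull back to $W$ a point of $\mathfrak{X}$ via the uniformization of $W$. Density of $\Esc_{<\infty}$ (Corollary~\ref{cor:triviality-of-comb-class}(2)) supplies some $z \in U \cap \Esc_{<\infty}$, and Corollary~\ref{cor:tiling-of-wakes}(3) gives a neighborhood of $z$ inside $U$ that is a finite union of full wakes of arbitrarily high level; from it I extract an individual wake $W = \prescript{}{J,j}\Wbold^{\blacksquare,\bullet}_S \subset U$. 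The map $\Fbold^{|S|}\colon W \to \C \setminus \overline{\Rbold^\bullet}$ is a conformal isomorphism. Using any admissible primary-wake sequence (e.g.\ one with constant generation) in the sense of Proposition~\ref{prop:address}(1), I obtain a point $z' \in \mathfrak{X}$, which automatically lies in $\C \setminus \overline{\Rbold^\bullet}$ since $\overline{\Rbold^\bullet} \subset \Esc_{<\infty} \cup \Hq$ is disjoint from $\mathfrak{X}$. The preimage $w := (\Fbold^{|S|}|_W)^{-1}(z')$ is the candidate point in $\mathfrak{X} \cap U$.

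The subtle step, and the main technical obstacle, is checking that the pullback $w$ does indeed lie in $\mathfrak{X}$, i.e.\ that it avoids both $\Esc_{<\infty}$ and every $\Fbold^{-Q}(\Hq)$. Membership in every $\Dom(\Fbold^P)$ is routine: $W \subset \Dom(\Fbold^{|S|})$ covers $P \leq |S|$, and for $P > |S|$ one has $\Fbold^P(w) = \Fbold^{P-|S|}(z')$, defined since $z' \in \mathfrak{X}$. The delicate case is $Q \leq |S|$, because the interior of $W$ genuinely contains preimages of $\Hq$ of intermediate level. The forward $\Fbold$-invariance of $\Hq$ recorded in Lemma~\ref{lem:dynamics-on-H} is decisive here: if $\Fbold^Q(w) \in \Hq$, then $z' = \Fbold^{|S|}(w) = \Fbold^{|S|-Q}(\Fbold^Q(w)) \in \Hq$, contradicting $z' \in \mathfrak{X}$. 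For $Q > |S|$, an analogous computation gives $\Fbold^Q(w) = \Fbold^{Q-|S|}(z') \notin \Hq$ by $z' \in \mathfrak{X}$. Hence $w \in \mathfrak{X} \cap U$, and since $U$ is arbitrary, $\mathfrak{X}$ is dense.

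For total disconnectedness, given $x \in \mathfrak{X}$ I will use the nested-wake presentation from the proof of Proposition~\ref{prop:address}: writing the complete address of $x$ as $(\Wbold_0, \Wbold_1, \ldots)$ yields nested Jordan domains $\Wbold'_0 \supset \Wbold'_1 \supset \ldots$ with $\bigcap_n \Wbold'_n = \{x\}$, where each $\Wbold'_n$ is a wake of level $n+1$. The boundary $\partial \Wbold'_n$ consists of external ray segments together with their alpha-point and critical-point endpoints, all of which lie in $\Esc_{<\infty} \cup \bigcup_Q \Fbold^{-Q}(\Hq)$ and are therefore disjoint from $\mathfrak{X}$. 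This makes $\Wbold'_n \cap \mathfrak{X}$ a clopen subset of $\mathfrak{X}$. Since $\diam(\Wbold'_n) \to 0$ by Lemma~\ref{lem:nested-wakes-shrink}, any $y \in \mathfrak{X} \setminus \{x\}$ lies outside $\Wbold'_n$ for all sufficiently large $n$, so $\Wbold'_n \cap \mathfrak{X}$ separates $x$ from $y$ by clopens. Thus the connected component of $x$ in $\mathfrak{X}$ is $\{x\}$, completing the argument.
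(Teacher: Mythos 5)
Your argument is correct, but it is organized quite differently from the paper's, which disposes of the whole corollary in three sentences. For density, the paper simply observes that the complement $\Esc_{<\infty}(\Fbold)\cup\bigcup_{P}\Fbold^{-P}(\Hq)$ has empty interior (a countable union of closed sets with empty interior, by Corollary \ref{cor:triviality-of-comb-class}(2) and the fact that each $\Fbold^{-P}(\Hq)$ is a tree of quasiarcs), so every open set meets $\mathfrak{X}$. Your route is instead constructive: you produce an explicit point of $\mathfrak{X}\cap U$ by locating a wake $W\subset U$ and pulling back a point of $\mathfrak{X}$ through the conformal isomorphism $\Fbold^{|S|}\colon W\to\C\setminus\overline{\Rbold^\bullet}$. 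The verification that the pullback avoids $\Esc_{<\infty}$ and all $\Fbold^{-Q}(\Hq)$ via forward invariance of $\Hq$ is the right check and is carried out correctly; the price of this approach is length, and the payoff is that it exhibits points of $\mathfrak{X}$ with prescribed tails of addresses inside any open set, which the Baire-type argument does not. One small step you should make explicit: Corollary \ref{cor:tiling-of-wakes}(3) by itself does not put the neighborhood of $z$ inside $U$ — you need that the (at most three) full wakes of level $\geq m$ containing $z$ on their boundaries are nested as $m$ grows (Corollary \ref{cor:tiling-of-wakes}(2)) and hence shrink by Lemma \ref{lem:nested-wakes-shrink}. For total disconnectedness your argument is the same as the paper's in substance — distinct points have distinct complete addresses and so are separated by disjoint wakes of high level — but you make the separation precise by noting that $\partial\Wbold'_n$ lies in the complement of $\mathfrak{X}$, so $\Wbold'_n\cap\mathfrak{X}$ is clopen in $\mathfrak{X}$; this is a worthwhile sharpening of a step the paper leaves implicit.
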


\begin{proof}
    $\mathfrak{X}$ is dense because its complement has no interior. By Proposition \ref{prop:address} (2), two distinct points in $\C \backslash \Esc_{<\infty}$ have different complete itineraries and thus belong in disjoint wakes of sufficiently high generation. This implies the total disconnectivity of $\C \backslash \Esc_{<\infty}$.
\end{proof}

For $R>0$, define the large radius ``non-escaping`` set of $\Fbold$ by
\[
\mathfrak{K}_R := \left\{ z \in \C \backslash \Esc_{<\infty} \: : \: \left|\Fbold^P(z)\right| \geq R \text{ for all } P \in \Tbold \right\}.
\]
Whenever $R'>R>0$, we have
\[
\mathfrak{K}_{R'} \subset \mathfrak{K}_R \subset \mathfrak{X}.
\]

We say that $(P_0,P_1,P_2,\ldots) \in \Tbold^\N_{>0}$ is \emph{bounded by} $T \in \Tbold$ if $P_n \leq T$ for all $n$.

\begin{lemma}
    \leavevmode
    \begin{enumerate}[label=\textnormal{(\arabic*)}]
        \item For any high $R>0$, there exists some $Q_R \in \Tbold_{>0}$ such that $Q_R \to 0$ as $R\to \infty$ and that every point $z$ in $\mathfrak{K}_R$ has address bounded by $Q_R$.
        \item For any $Q \in \Tbold_{>0}$, there is some $R_Q>0$ such that every point in $\mathfrak{X}$ with address bounded by $Q$ is contained in $\mathfrak{K}_{R_Q}$.
    \end{enumerate}
\end{lemma}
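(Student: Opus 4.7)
Both parts rest on the self-similarity $A_*(\Wbold^\bullet_P)=\Wbold^\bullet_{\tbold P}$ from Lemma~\ref{lem:self-sim-lakes}, combined with the following auxiliary boundedness statement, which I will extract from Lemma~\ref{lem:primary-wakes-shrink} together with the tiling of the oceans by primary wakes: for any fundamental range $[T_*,\tbold T_*)\subset\Tbold_{>0}$, the union of all primary wakes whose generation lies in this range has compact closure in $\C\setminus\{0\}$, so that it is contained in some annular region $\{r_1\le|w|\le r_2\}$ with $0<r_1<r_2<\infty$. The inclusion in $\{|w|\le r_2\}$ combines the fact that each primary wake is a bounded Jordan domain with the finiteness of primary wakes of diameter bounded below in a fixed slab (Lemma~\ref{lem:primary-wakes-shrink}); the lower bound $r_1>0$ uses that both the root $C_P$ and the alpha-point $\alpha^\bullet_P$ for $P$ in the fundamental range lie at uniformly positive distance from $0$, via the parametrization of $\Hq$ in Lemma~\ref{lem:dynamics-on-H}.

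For~(1), applying $A_*^n$ shows that primary wakes of generation in $[\tbold^n T_*,\tbold^{n+1}T_*)$ are contained in $\{r_1|\mu_*|^n\le|w|\le r_2|\mu_*|^n\}$. Hence a primary wake meeting $\{|w|\ge R\}$ must lie in some level $n$ with $r_2|\mu_*|^n\ge R$, forcing $n\le n_R:=\lfloor\log(r_2/R)/\log|\mu_*|^{-1}\rfloor$ and generation at most $Q_R:=\tbold^{n_R+1}T_*$, which decays polynomially as $R\to\infty$. If $z\in\mathfrak{K}_R$ then $|\hat\Fbold^k(z)|\ge R$ for every $k\ge 0$, so the generation $P_k$ of the primary wake containing $\hat\Fbold^k(z)$ is at most $Q_R$, giving the desired address bound.

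For~(2), the same argument in reverse yields $r(Q):=r_1|\mu_*|^{n(Q)}>0$, where $n(Q)$ is the largest integer with $\tbold^{n(Q)}T_*\le Q$; this shows every primary wake of generation at most $Q$ lies in $\{|w|\ge r(Q)\}$, so each $\hat\Fbold^k(z)$ satisfies $|\hat\Fbold^k(z)|\ge r(Q)$. To upgrade this to all $\Fbold$-iterates, I intend to establish the following combinatorial identity using the conformal isomorphism $\Fbold^P:\Wbold^\bullet_P\to\C\setminus\overline{\Rbold^\bullet}$ from \S\ref{ss:wakes}: for every $P\in\Tbold_{>0}$ and $T'\in\Tbold_{\ge 0}$ with $T'<P$, the map $\Fbold^{T'}$ restricts to a conformal isomorphism $\Fbold^{T'}:\Wbold^\bullet_P\to\Wbold^{\bullet'}_{P-T'}$ for some $\bullet'\in\{0,\infty\}$ determined by $\bullet$, $P$, and $T'$. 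Granting the identity, every $\Fbold^P(z)$ can be written as $\Fbold^{T'}(\hat\Fbold^k(z))$ with $0\le T'<P_k\le Q$, hence lies in a primary wake of generation $P_k-T'\le Q$ and satisfies $|\Fbold^P(z)|\ge r(Q)$; setting $R_Q:=r(Q)$ finishes the proof.

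\textbf{Main obstacle.} The hardest step is the boundedness of wakes in a fundamental range in $\C\setminus\{0\}$ on both sides: the upper bound $|w|\le r_2$ requires carefully combining Lemma~\ref{lem:primary-wakes-shrink} with the Jordan nature of individual wakes, while the lower bound $|w|\ge r_1>0$ requires tracking the roots and alpha-points uniformly via self-similarity. The combinatorial identity $\Fbold^{T'}(\Wbold^\bullet_P)=\Wbold^{\bullet'}_{P-T'}$ should follow from the factorization $\Fbold^P=\Fbold^{P-T'}\circ\Fbold^{T'}$ together with the conformal uniformization of primary wakes, but the identification of $\bullet'$ must be verified using orientation data from the local dynamics near $\Hq$.
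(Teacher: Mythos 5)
Your proof is correct and follows essentially the same route as the paper's: both parts reduce to localizing the primary wakes of a given generation range inside a definite annulus via the self-similarity $A_*$, which is precisely the (mostly implicit) justification the paper gives for the existence of $Q_R$ and $R_Q$, and then to the decomposition $\Fbold^P = \Fbold^{T'}\circ\hat{\Fbold}^k$. Note that the combinatorial identity you flag as needing verification, $\Fbold^{T'}(\Wbold^\bullet_P)=\Wbold^{\bullet}_{P-T'}$ (with the same superscript $\bullet$ and the same index $j$), is already the first lemma of \S\ref{ss:wakes}, so no additional orientation analysis is required.
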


\begin{proof}
    Let us fix $R>0$, and let $Q_R \in \Tbold_{>0}$ be the smallest power-triple such that all primary wakes of generation $Q_R$ are contained in the disk $\D_R:=\{|z|<R\}$. (This quantity exists due to Lemma \ref{lem:discreteness}.) Consider a point $z$ in $\mathfrak{X}$ and let $(P_0,P_1,P_2,\ldots)$ be its address. If $P_n \geq Q_R$ for some $n \in \N$, then $z$ is eventually mapped into a wake of generation $Q_R$, which is contained inside of $\D_R$. This implies (1).

    Next, let us fix $Q \in \Tbold_{>0}$, and let $R_Q>0$ be such that all primary wakes of generation $\leq Q$ are disjoint from $\D_{R_Q}$. Suppose that $\Fbold^P(z)$ is in $\D_{R_Q}$ for some $P \in \Tbold$. Then, $\Fbold^P(z)$ is contained in a wake of generation greater than $Q$. This implies (2).
\end{proof}

In the next section, we are interested in the infinite-time escaping set as well. For $\Fbold=\Fbold_*$, this set can be described as follows.

\begin{corollary}
    The infinite-time escaping set $\Esc_{\infty}$ of $\Fbold$ is the set of points in $\mathfrak{X}$ whose address $(P_0,P_1,P_2,\ldots)$ satisfies $P_n \to 0$ as $n \to \infty$.
\end{corollary}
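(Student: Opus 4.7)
The plan is to use the equivariance of the dynamics on primary wakes to convert the address-based data $P_n \to 0$ into a geometric bound on $\Fbold^P(z)$ at \emph{every} time $P \in \Tbold$, not just at the coarse times $T_n := P_0 + \cdots + P_{n-1}$ (so that $w_n := \hat\Fbold^n(z) = \Fbold^{T_n}(z)$). The key input is the preceding lemma: every primary wake of generation $\geq Q_R$ sits inside $\{|z|<R\}$, and every primary wake of generation $\leq Q$ sits in $\{|z|\geq R_Q\}$, with $Q_R \to 0$ as $R \to \infty$ and $R_Q \to \infty$ as $Q \to 0$.

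For the direction $\Fbold^P(z) \to \infty \Rightarrow P_n \to 0$: since $T_n \to \infty$ by admissibility, $|w_n| \to \infty$. Given $\delta > 0$, choose $R$ with $Q_R \leq \delta$; eventually $|w_n| \geq R$, so the wake $\mathbf{W}_n \ni w_n$ is not contained in $\{|z|<R\}$, forcing its generation $P_n < Q_R \leq \delta$. Hence $P_n \to 0$.

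For the converse, the crux is the equivariance
\[
\Fbold^s\bigl(\mathbf{W}^\bullet_P\bigr) \;=\; \mathbf{W}^\bullet_{P-s} \qquad \bigl(0 \leq s < P, \enspace s \in \Tbold\bigr),
\]
which reduces to: (a) $\Fbold^s$ maps the critical point $C_P$ to $C_{P-s}$, since the induced action on $\Hq$ is conjugate to the cascade of translations of Lemma~\ref{lem:dynamics-on-H}; and (b) the two bounding rays $\prescript{\pm}{}\Rbold^\bullet_P$ of $\mathbf{W}^\bullet_P$ are pullbacks of the zero ray $\Rbold^\bullet$ under $\Fbold^P$, so $\Fbold^s$ sends them equivariantly onto $\prescript{\pm}{}\Rbold^\bullet_{P-s}$, with the side $\bullet$ preserved because $\Fbold$ preserves each ocean. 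Writing $\mathbf{W}_n = \mathbf{W}^{\bullet_n}_{P_n}$ and applying the equivariance to $w_n$ with $s = P - T_n$ for $P \in [T_n, T_{n+1}) \cap \Tbold$ yields
\[
\Fbold^P(z) \;=\; \Fbold^{P - T_n}(w_n) \;\in\; \mathbf{W}^{\bullet_n}_{T_{n+1} - P},
\]
a primary wake of generation at most $P_n$. Given $R > 0$, pick $\epsilon > 0$ with $R_\epsilon \geq R$ and choose $N$ so that $P_n \leq \epsilon$ for all $n \geq N$; then every $P \in \Tbold$ with $P \geq T_N$ lies in some interval $[T_n, T_{n+1})$ with $n \geq N$, so $\Fbold^P(z)$ sits in a primary wake of generation $\leq \epsilon$, hence $|\Fbold^P(z)| \geq R_\epsilon \geq R$. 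Thus $\Fbold^P(z) \to \infty$.

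The main obstacle is verifying the equivariance $\Fbold^s(\mathbf{W}^\bullet_P) = \mathbf{W}^\bullet_{P-s}$ cleanly. Two technicalities need care: that $s = P - T_n$ is a genuine element of $\Tbold$, which is established by representing both $P$ and $T_n$ at a sufficiently negative level of the inverse system defining $\Tbold$ and subtracting the resulting nonnegative integer components; and that the image is the precise primary wake $\mathbf{W}^\bullet_{P-s}$ rather than a sub-wake or the wake attached to $C_{P-s}$ on the opposite ocean, which is settled by orientation-tracking along the ray boundaries together with ocean preservation. Once the equivariance is granted, the corollary reduces to a bookkeeping translation between the address data $(P_n)$ and the position-generation correspondence of the preceding lemma.
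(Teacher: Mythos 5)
Your converse direction is correct and is essentially the paper's argument with the intermediate bookkeeping inlined: the equivariance $\Fbold^s(\Wbold^\bullet_P)=\Wbold^\bullet_{P-s}$ that you spend effort justifying is already available in the paper (part (1) of the lemma following the definition of wakes, together with the fact that $\Fbold^s(C_P)=C_{P-s}$, which follows from the translation model of Lemma \ref{lem:dynamics-on-H} and the uniqueness of the critical point on $\Hq$ of a given generation), and the paper packages the resulting all-time control of the orbit into the sets $\mathfrak{K}_R$ of the preceding lemma rather than writing out the interval decomposition $[T_n,T_{n+1})$.

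The forward direction, however, contains a genuine gap. You assert that ``every primary wake of generation $\geq Q_R$ sits inside $\{|z|<R\}$'' and conclude from $|w_n|\geq R$ that $P_n<Q_R$. The preceding lemma only gives that the wakes of generation \emph{exactly} $Q_R$ — the finitely many wakes rooted at the single critical point $C_{Q_R}$ — lie in $\D_R$, and containment in $\D_R$ is not monotone in the generation: the critical point $C_{a(0,1,0)}$ corresponds to the point $a\ubold$ of $\R$ in the translation model, which tends to infinity with $a$, so there are primary wakes of arbitrarily large generation rooted arbitrarily far from the origin and hence nowhere near $\D_R$. Thus $|w_n|\geq R$ by itself gives no bound on $P_n$. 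The correct argument must use the entire future orbit of $z$, exactly as the paper does via $\mathfrak{K}_R$: if $P_n\geq Q_R$, then by the very equivariance you establish for the converse, $\Fbold^{P_n-Q_R}(w_n)$ lies in a wake of generation exactly $Q_R$ and hence in $\D_R$, so $\left|\Fbold^{T_n+P_n-Q_R}(z)\right|<R$ at a time $T_n+P_n-Q_R\geq T_n\to\infty$, contradicting $\Fbold^P(z)\to\infty$. With that replacement your proof closes; as written, the key inference in the forward direction rests on a false premise.
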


\begin{proof}
    Consider a point $z$ in $\mathfrak{X}$ with some address $(P_0,P_1,P_2,\ldots)$. If $z \in \Esc_\infty$, then given any $R>0$, $\hat{\Fbold}^n(z)$ must be in $\mathfrak{K}_R$ for all sufficiently high $n$. By the previous lemma, $(P_n, P_{n+1},\ldots)$ is bounded by $Q_R$ where $Q_R \to 0$ as $R \to \infty$. Conversely, if $P_n \to 0$ as $n \to \infty$, then for all $n$, $(P_n,P_{n+1},\ldots)$ is bounded by some $Q_n \in \Tbold_{>0}$ where $Q_n \to 0$ as $n \to \infty$. By the previous lemma, $\hat{\Fbold}^n(z)$ is contained in $\mathfrak{K}_{R_{Q_n}}$ where $R_{Q_n} \to \infty$ as $n \to \infty$, thus $z$ is contained in $\Esc_\infty$.
\end{proof}


\section{The escaping set \texorpdfstring{$\Esc(\Fbold)$}{I(F)}}
\label{sec:escaping-sets}

We will now discuss the topology and rigidity of both the finite-time and the infinite-time escaping sets of a cascade in $\manibold$. Our aim is to prove the first half of Theorem \ref{main-theorem-rigidity} by applying the external structure of $\Fbold_*$ and adapting an argument by Rempe \cite{R09} to show that the set of points in the full escaping set that remain sufficiently close to $\infty$ under iteration must move holomorphically with quasiconformal dilatation arbitrarily close to one.

\subsection{The finite-time escaping set}
\label{ss:rigidity-finite-esc-set}

Let us fix
\[
    T:= \min\{(0,1,0), (0,0,1)\}.
\]

\begin{lemma}
\label{lem:hol-mot-finite-esc}
    There is a unique equivariant holomorphic motion of $\Esc_{\leq T}(\Fbold)$ over some neighborhood $\Ucalbold$ of $\Fbold_*$.
\end{lemma}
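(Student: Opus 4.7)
The statement asserts a uniqueness-plus-existence result at the level of a single map $\Fbold^T$. Assume without loss of generality that $T=(0,1,0)$, so that $\Fbold^T=\fbold_{0,-}^\#$ and $\Esc_{\leq T}(\Fbold)=\C\setminus\mathbf{X}_{0,-}$. The plan is to construct an equivariant holomorphic motion of an exhaustion of $\partial\mathbf{X}_{0,-}$ from inside, using Lemma~\ref{lem:stability-of-branched-covering}, and then pass to the closure by Slodkowski's $\lambda$-lemma.

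By Lemma~\ref{lem:stability-of-branched-covering}, for each $k<0$ the map $\fbold_{0,-}\colon\mathbf{W}_-^{(k)}\to\mathbf{D}^{(k)}$ is a proper branched covering of fixed finite degree depending continuously on $\Fbold$, with critical points moving holomorphically, and $\bigcup_k\mathbf{W}_-^{(k)}=\mathbf{X}_{0,-}$, $\bigcup_k\mathbf{D}^{(k)}=\C$. The critical values in $\mathbf{D}^{(k)}$ form a finite set of iterated images of $0$, hence depend holomorphically on $\Fbold$. Shrinking $\mathbf{D}^{(k)}$ slightly, I replace it by a round disk $\widetilde{\mathbf{D}}^{(k)}$ that is \emph{independent} of $\Fbold$ and still contains every relevant critical value for $\Fbold$ in a small neighborhood $\Ucalbold\subset\manibold$ of $\Fbold_*$. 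The boundary $\partial\widetilde{\mathbf{D}}^{(k)}$ is then a trivial holomorphic motion disjoint from all critical values, and pulling it back by the proper branched covering yields, via the implicit function theorem, a holomorphic motion of the Jordan-curve system $\partial\widetilde{\mathbf{W}}_-^{(k)}:=\fbold_{0,-}^{-1}(\partial\widetilde{\mathbf{D}}^{(k)})$. Choosing $\widetilde{\mathbf{D}}^{(k)}$ nested in $k$, the resulting motions are compatible, giving a single holomorphic motion on the union $\Sigma:=\bigcup_{k<0}\partial\widetilde{\mathbf{W}}_-^{(k)}\subset\mathbf{X}_{0,-}$.

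Since $\widetilde{\mathbf{W}}_-^{(k)}$ exhausts $\mathbf{X}_{0,-}$ and each $\partial\widetilde{\mathbf{W}}_-^{(k)}$ eventually leaves every compact subset of $\mathbf{X}_{0,-}$, the accumulation set of $\Sigma$ in $\C$ coincides with $\C\setminus\mathbf{X}_{0,-}=\Esc_{\leq T}(\Fbold_*)$. By Slodkowski's $\lambda$-lemma applied on a small polydisk neighborhood of $\Fbold_*$, the motion on $\Sigma$ extends to a holomorphic motion of its closure $\overline{\Sigma}\supset\Esc_{\leq T}(\Fbold_*)$. Equivariance is built in: by construction, $\fbold_{0,-}^\#$ conjugates the motion on $\partial\widetilde{\mathbf{W}}_-^{(k+1)}$ to the motion on $\partial\widetilde{\mathbf{W}}_-^{(k)}$, and this relation persists to limit points by continuity.

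Uniqueness follows from combining equivariance with density. Given any non-critical reference point $x\in\C$, the preimages $\Fbold_*^{-T}(x)$ form a discrete set that moves holomorphically (via the implicit function theorem applied to $\fbold_{0,-}^\#$), and by Corollary~\ref{cor:accumulation-to-bdry} these preimages accumulate on all of $\partial\Esc_{\leq T}(\Fbold_*)=\Esc_{\leq T}(\Fbold_*)$. Any two equivariant holomorphic motions of $\Esc_{\leq T}$ must agree on such preimages (which are forced by the equivariance with $\fbold_{0,-}^\#$) and therefore on their accumulation set by continuity. The main obstacle is the passage from the continuous dependence supplied by Lemma~\ref{lem:stability-of-branched-covering} to honest holomorphic dependence; this is handled by the fixed-disk trick above, which leverages the fact that critical values depend holomorphically on $\Fbold$. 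The secondary obstacle is extending the motion from the exhausting curve system $\Sigma$ to the fractal accumulation set $\Esc_{\leq T}(\Fbold_*)$, which is precisely where Slodkowski's $\lambda$-lemma is needed.
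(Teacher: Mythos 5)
Your existence construction is a workable variant of the paper's: where you pull back a nested family of circles $\partial\widetilde{\mathbf{D}}^{(k)}$ avoiding the critical values, the paper pulls back a single point $x$ avoiding $\CV\left(\Fbold^T\right)$ and takes the closure of $\bigcup_{S\le T}\Fbold^{-S}(x)$; both routes then invoke the $\lambda$-lemma together with Corollary \ref{cor:accumulation-to-bdry} and the empty interior of $\Esc_{\leq T}$. Two points you gloss over deserve care: (i) the implicit-function-theorem lift must be a single injective motion over one fixed $\Ucalbold$, which follows because fibers of the proper coverings over points outside $\CV\left(\Fbold^T\right)$ cannot collide, but needs saying; and (ii) the equivariance required here is with respect to \emph{all} intermediate maps $\Fbold^S$, $0<S<T$ (such $S$ exist since $\Tbold$ is dense in $\R_{\geq 0}$), whereas your construction only manifestly commutes with $\fbold^{\#}_{0,-}$ itself. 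The paper's choice of the set $\bigcup_{S\leq T}\Fbold^{-S}(x)$ builds the full equivariance in from the start.

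The genuine gap is in your uniqueness argument. A preimage $w\in\Fbold_*^{-T}(x)$ of a generic point $x$ lies by definition in $\Dom\left(\Fbold_*^T\right)$, i.e.\ \emph{outside} $\Esc_{\leq T}(\Fbold_*)$. A holomorphic motion of $\Esc_{\leq T}$ is not defined at $w$, and its equivariance (a constraint relating points of $\Esc_{\leq T}$ whose dynamical images or preimages also lie in $\Esc_{\leq T}$) places no constraint on how $w$ moves; nor is a second equivariant motion $\tau'$ of $\Esc_{\leq T}$ obliged to be the continuous extension of the canonically forced motion of $\Fbold^{-T}(x)$. So "any two equivariant holomorphic motions must agree on such preimages" is vacuous. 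The fix, which is what the paper does, is to start from a point $y$ that already lies in $\Esc_{\leq S}(\Fbold)$ for some $0<S<T$: every preimage of $y$ under $\Fbold^{T-S}$ then does lie in $\Esc_{\leq T}(\Fbold)$, so equivariance of $\tau'$ forces its values on $\Fbold^{-(T-S)}(y)$, whose accumulation set contains $\Esc_{\leq T-S}(\Fbold)$ by Corollary \ref{cor:accumulation-to-bdry}. One then needs the basepoint-independence of the induced limit motion (the paper's annulus argument, showing that preimages of an arc joining two basepoints shrink as they accumulate on $\Esc_{\leq T}$) to identify this forced motion with the constructed one, and finally Corollary \ref{cor:triviality-of-comb-class}(3) to pass from $\bigcup_{S>0}\Esc_{\leq T-S}$ to all of $\Esc_{\leq T}$. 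Without these two ingredients your uniqueness claim does not close.
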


\begin{proof}
    By Lemma \ref{lem:critical-pts}, the set of critical values $\CV\left(\Fbold^T\right)$ of $\Fbold^T$ moves holomorphically within a small neighborhood of $\Fbold_*$. By Lemma \ref{lem:stability-of-branched-covering}, there is a small neighborhood $\Ucalbold$ of $\Fbold_*$ and some point $x \in \C$ such that $x$ belongs in the interior of $\Ubold_-(\Fbold)$ and does not collide with $\CV\left(\Fbold^T\right)$ for all $\Fbold \in \Ucalbold$. Moreover, $\Fbold^{-S}(x)$ moves holomorphically with $\Fbold \in \Ucalbold$ for all $S \leq T$. 
    
    Given $Q,S \in \Tbold_{>0}$, if $Q<S \leq T$, then $\Fbold^{-S}(x)$ is disjoint from $\Fbold^{-Q}(x)$ because every point is mapped by $\Fbold^S$ and $\Fbold^Q$ to different tiles of the zeroth renormalization tiling of $\Fbold$. Hence, $\bigcup_{S \leq T} \Fbold^{-S}(x)$ moves holomorphically and equivariantly with $\Fbold \in \Ucalbold$. By the $\lambda$-lemma, this holomorphic motion extends to the closure. Then, by Corollaries \ref{cor:triviality-of-comb-class} (2) and \ref{cor:accumulation-to-bdry}, $\Esc_{\leq T}(\Fbold)$ has no interior and moves holomorphically and equivariantly over $\Ucalbold$.

    Let us show that the motion $\tau$ of $\Esc_{\leq T}(\Fbold)$ obtained above is independent of $x$. Let us pick another point $y=y(\Fbold) \in \C \backslash \CV(\Fbold)$ depending holomorphically on $\Fbold \in \Ucalbold$. By shrinking $\Ucalbold$, we can connect $x$ and $y$ by a simple arc $l=l(\Fbold)$ which is surrounded by an annulus $A = A(\Fbold) \subset \C \backslash \CV(\Fbold)$. Every preimage of $l$ under $\Fbold^T$ is separated from $\Esc_{\leq T}(\Fbold)$ by a conformal preimage of $A$. Therefore, any sequence of preimages of $l$ under $\Fbold^T$ which accumulates at a point in $\Esc_{\leq T}(\Fbold)$ necessarily shrinks in diameter. As a result, the holomorphic motion $\tau$ coincides with the motion of $\overline{\Fbold^{-T}(y(\Fbold))}$.

    Finally, let us show that the equivariant holomorphic motion $\tau$ of $\Esc_{\leq T}(\Fbold)$ over $\Ucalbold$ is unique. Suppose there is another equivariant holomorphic motion $\tau'$ of $\Esc_{\leq T}(\Fbold)$. Pick any $S \in \Tbold_{>0}$ where $S<T$ and consider the motion $y(\Fbold)$ of a point in $\Esc_{\leq S}(\Fbold)$ induced by $\tau'$. By equivariance, $\Fbold^{-(T-S)}(y(\Fbold))$ moves holomorphically by $\tau'$. However, since $\Esc_{\leq T-S} (\Fbold)$ is contained in the closure of $\Fbold^{-(T-S)}(y(\Fbold))$, then $\tau$ and $\tau'$ coincide on $\Esc_{\leq T-S}(\Fbold)$ for all $S \in \Tbold_{>0}$. By Corollary \ref{cor:triviality-of-comb-class} (3), $\tau \equiv \tau'$. 
\end{proof}

\begin{definition}
\label{def:conformal-motion}
    Consider a holomorphic motion $\{ \phi_{\lambda}: E \to \C\}_{\lambda \in X}$ of a set $E \subset \C$ parametrized by some complex manifold $X$. We say that $\phi_\lambda$ is a \emph{conformal motion} of $E$ if for all $\lambda$, there exists a global quasiconformal extension $\phi_\lambda: \C \to \C$ that is conformal ($\bar{\partial} \phi_\lambda = 0$) almost everywhere on $E$.
\end{definition}

\begin{theorem}
\label{thm:finite-esc-set}
    For every $\Fbold \in \manibold$, $\Esc_{<\infty}(\Fbold)$ has empty interior and supports no invariant line field. For every $P \in \Tbold_{>0}$, on every connected component of the open set $\{ \Fbold \in \manibold \: : \: 0 \not\in \Esc_{\leq P}(\Fbold) \}$, there is a unique equivariant holomorphic motion of $\Esc_{\leq P}$, and this motion is conformal.
\end{theorem}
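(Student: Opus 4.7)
The plan is to bootstrap from Lemma~\ref{lem:hol-mot-finite-esc} using iterated pullback and renormalization, with rigidity of $\Fbold_*$ (Lemma~\ref{lem:no-ilf-fixed-pt}) providing conformality. First, I would extend the motion to $\Esc_{\leq P}$ on each connected component $\mathcal{W}$ of the open set $\{\Fbold \in \manibold : 0 \notin \Esc_{\leq P}(\Fbold)\}$. On such a component, the critical values $\{\Fbold^S(0) : 0 \leq S < P\}$ of $\Fbold^P$ remain in $\C$ and move holomorphically by Lemma~\ref{lem:stability-of-branched-covering}, so iterated preimages $\Fbold^{-Q}(y)$ of a suitable marked point $y$ (avoiding these critical values) move holomorphically and equivariantly for $Q \leq P$. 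Their accumulation (Corollary~\ref{cor:accumulation-to-bdry}) together with Corollary~\ref{cor:triviality-of-comb-class}(3) gives the motion of $\Esc_{\leq P}$ on the component containing $\Fbold_*$. For components disjoint from $\Fbold_*$, I would use that $\renorm^{-n} : \manibold \to \manibold$ is a biholomorphism rescaling escape times by $\tbold^{-n}$, so passing to $\renorm^{-n}\Fbold \in \manibold_{\textnormal{loc}}$ for sufficiently large $n$ reduces the problem to the previous case. Uniqueness is argued as in Lemma~\ref{lem:hol-mot-finite-esc}: an equivariant motion is determined by its values on the $\Fbold_*^{\geq 0}$-orbit of the marked points, which is dense in $\Esc_{\leq P}(\Fbold_*)$ by Corollary~\ref{cor:triviality-of-comb-class}.

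Next, I would establish conformality by a pullback argument centered at $\Fbold_*$. Let $\tau$ denote the motion of $\Esc_{\leq P}(\Fbold_*)$ and let $\mu_\Fbold$ be the Beltrami coefficient of its $\lambda$-lemma extension, restricted to $\Esc_{\leq P}(\Fbold_*)$. If this set has zero Lebesgue measure, conformality is automatic. Otherwise, equivariance lets me pull $\mu_\Fbold$ back by $(\Fbold_*^S)^*$ for $S > 0$ to an a.e.-defined Beltrami differential $\boldsymbol{\mu}$ on $\Esc_{<\infty}(\Fbold_*)$ invariant under the whole cascade $\Fbold_*^{\geq 0}$; the extension is well-defined on overlaps because $\mu_\Fbold$ is already invariant under $\Fbold_*^{S_2 - S_1}$ wherever both pullback formulas make sense. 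If $\boldsymbol{\mu}$ is non-zero on a positive measure set, then $\boldsymbol{\mu}/|\boldsymbol{\mu}|$ (extended by zero) is an invariant line field for $\Fbold_*$, contradicting Lemma~\ref{lem:no-ilf-fixed-pt}. Hence $\mu_\Fbold = 0$ a.e., so the motion is conformal near $\Fbold_*$. Holomorphic dependence of $\mu_\Fbold$ on $\Fbold$ propagates conformality across the whole component, and renormalization transports it to components disjoint from $\Fbold_*$. Finally, the empty-interior and no-invariant-line-field assertions for general $\Fbold \in \manibold$ follow by transport through the conformal motion from Corollary~\ref{cor:triviality-of-comb-class}(2) and Lemma~\ref{lem:no-ilf-fixed-pt} respectively, combined with renormalization rescaling for components not meeting $\Fbold_*$.

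The main obstacle I expect is ensuring that the pullback extension in the first paragraph is globally consistent over each component $\mathcal{W}$ without monodromy, especially for components disjoint from $\Fbold_*$. The $\sigma$-branched covering structure (Lemma~\ref{lem:stability-of-branched-covering}) must be tracked coherently as $\Fbold$ varies, and for components not meeting $\Fbold_*$ this has to be coupled with the renormalization biholomorphism so that the motion transferred from $\manibold_{\textnormal{loc}}$ is compatible with equivariance at the component's own base-point. A secondary concern is measure-theoretic: verifying that the a.e.-defined Beltrami extension in the second paragraph is genuinely invariant in the sense required for an invariant line field, which reduces to checking compatibility of $(\Fbold_*^S)^*$-pullbacks on overlaps --- this is a consequence of equivariance of $\tau$ but warrants explicit verification.
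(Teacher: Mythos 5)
Your proposal is correct in outline and, for the existence and uniqueness of the motion, follows essentially the paper's reduction: anti-renormalize until $\Fbold_{-n}$ lies in the neighborhood $\Ucalbold$ of Lemma \ref{lem:hol-mot-finite-esc}, use that $0 \notin \Esc_{\leq P}(\Fbold)$ if and only if $0 \notin \Esc_{\leq \tbold^{n}P}(\Fbold_{-n})$, and pull back. One small caveat in your version: running the marked-point argument directly for all $Q \leq P$ requires the marked point $y$ to be non-periodic of period $\leq P$, since otherwise branches of $\Fbold^{-Q}(y)$ for different $Q$ collide and the branch-wise motions need not agree; the paper sidesteps this by keeping the marked-point argument at level $T$, where the zeroth tiling forces disjointness, and then pulling back the resulting motion through the single holomorphic map $\Fbold^{P-T}$. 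Where you genuinely diverge is conformality. The paper does not invoke Lemma \ref{lem:no-ilf-fixed-pt} for this part: it observes that the dilatation of the $T$-motion tends to $0$ as the parameter tends to $\Fbold_*$, that anti-renormalizing $n$ times therefore makes the dilatation of the motion of $\Esc_{\leq \tbold^{-n}T}$ at any fixed $\Fbold$ arbitrarily small, and that pulling back by the holomorphic map $\Fbold^{P-\tbold^{-n}T}$ does not change dilatation --- so the dilatation on $\Esc_{\leq P}$ vanishes outright, pointwise in $\Fbold$. Your route instead spreads the Beltrami coefficient of the motion over the cascade and contradicts the rigidity of $\Fbold_*$. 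This is valid (the overlap compatibility you flag is the standard density-point argument, and the line-field contradiction in fact applies at every $\Fbold$ in the component of $\Fbold_*$, making your ``holomorphic propagation'' step unnecessary --- which is fortunate, since holomorphy of $\Fbold \mapsto \mu_\Fbold$ would require a Bers--Royden or S\l odkowski extension rather than a bare $\lambda$-lemma), but it is heavier: it consumes the rigidity input, and for components not containing $\Fbold_*$ you must still route through anti-renormalization to avoid circularity, since absence of line fields for a general basepoint $\Fbold_0$ is a conclusion of the theorem, not a hypothesis. The paper's contraction argument buys conformality from self-similarity alone and localizes cleanly at each parameter; your argument, suitably organized, reaches the same place.
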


\begin{proof}
    Fix $P \in \Tbold_{>0}$ and consider the set $\boldsymbol{\Omega}_P := \{\Fbold \in \manibold \: : \: 0 \not\in \Esc_{\leq P}(\Fbold)\}$. If $P<T$, then the neighborhood $\Ucalbold$ of $\Fbold_*$ from Lemma \ref{lem:hol-mot-finite-esc} is contained in $\boldsymbol{\Omega}_P$. Else, if $P \geq T$, then $\Fbold \in \boldsymbol{\Omega}_P \cap \Ucalbold$ if and only if $\Fbold^{P-T}(0) \not\in \Esc_{\leq T}(\Fbold)$, which is an open condition because $\Esc_{\leq T}$ moves holomorphically over $\Ucalbold$. Therefore, $\boldsymbol{\Omega}_P \cap \Ucalbold$ is open for all $P$.

    If $\Fbold \in \boldsymbol{\Omega}_P \cap \Ucalbold$, we can obtain the unique equivariant holomorphic motion of $\Esc_{\leq P}$ by pulling back the holomorphic motion of $\Esc_{\leq T}$ via $\Fbold^{P-T}$. 
    Otherwise, we can pick a sufficiently large $n \in \N$ such that $\Fbold_{-n}$ is in $\Ucalbold$. 
    Clearly, $\Fbold \in \boldsymbol{\Omega}_P$ if and only if $\Fbold_{-n} \in \boldsymbol{\Omega}_{\tbold^{n} P}$, so $\boldsymbol{\Omega}_P$ is always an open subset of $\manibold$ on which $\Esc_{\leq P}(\Fbold)$ moves holomorphically and equivariantly. 
    The dilatation of the motion of $\Esc_{\leq T}(\Fbold)$ over $\Ucalbold$ goes to one as $\Fbold \to \Fbold_*$. Therefore, for every $\Fbold \in \boldsymbol{\Omega}_P$, we can take an arbitrarily high $n$ to ensure that the dilatation of the motion of $\Esc_{\leq T}$ at $\Fbold_{-n}$, and hence that of the motion of $\Esc_{\leq \tbold^{-n}T}$ at $\Fbold$ as well, are arbitrarily small. Pulling back via $\Fbold^{P-\tbold^{-n}T}$ does not affect the dilatation, so the motion of $\Esc_{\leq P}$ is indeed conformal over $\boldsymbol{\Omega}_P$.

    By Corollaries \ref{cor:triviality-of-comb-class} (2) and \ref{cor:no-ilf-fixed-pt}, $\Esc_{\leq \tbold^{n}P}(\Fbold_{-n})$ has empty interior and supports no invariant line field of $\Fbold_{-n}$. Therefore, $\Esc_{\leq P}(\Fbold)$ also has empty interior and supports no invariant line field of $\Fbold$.
\end{proof}

\subsection{The infinite-time escaping set}

For $R>0$ and $\Fbold \in \manibold$, define 
\[
\Jcas_R(\Fbold) := \left\{ z \in \C \backslash \Esc_{<\infty}(\Fbold) \: : \: \left|\Fbold^P(z)\right| \geq R \text{ for all } P \in \Tbold \right\}.
\]
The forward orbit of every point in $\Jcas(\Fbold) \cap \Esc_{\infty}(\Fbold)$ is eventually contained in $\Jcas_R(\Fbold)$. The following lemma is inspired by \cite{R09}.

\begin{lemma}
\label{lem:hol-mot-inf}
    For every $\Fbold$ on a neighborhood $\Ucalbold \subset \manibold_{\textnormal{loc}}$ of $\Fbold_*$, there exists a totally disconnected subset $\Lambda(\Fbold)$ of $\C \backslash \Esc_{<\infty}(\Fbold)$ with the following properties.
    \begin{enumerate}[label=\textnormal{(\arabic*)}]
        \item $\Lambda(\Fbold)$ is forward invariant under $\Fbold^{\geq 0}$.
        \item There is a unique equivariant holomorphic motion of $\Lambda$ over $\Ucalbold$.
        \item There exists some $R>1$ such that $\Lambda(\Fbold)$ contains $\Jcas_R(\Fbold)$.
    \end{enumerate}
\end{lemma}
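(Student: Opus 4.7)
The plan is to code points in $\Jcas_R(\Fbold_*)$ by their combinatorial itinerary in the primary-wake structure of $\Fbold_*$, then continue this coding holomorphically in $\Fbold$ by tracking the motion of the wakes. This is the analog, in our setting, of Rempe's parametrization of the escaping set of a transcendental function by external addresses running through its logarithmic tracts.

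Fix $R$ large. By the preceding lemma on bounded addresses, every $z \in \Jcas_R(\Fbold_*) = \mathfrak{K}_R(\Fbold_*)$ has a complete $\hat\Fbold_*$-itinerary $(\Wbold_0,\Wbold_1,\ldots)$ in which every primary wake $\Wbold_n$ has generation at most some fixed $Q = Q_R \in \Tbold_{>0}$. Let $\Sigma$ be the set of admissible sequences of primary wakes of $\Fbold_*$ of generation at most $Q$ that are realized by $\hat\Fbold_*$. For $\sigma = (\Wbold_0,\Wbold_1,\ldots) \in \Sigma$ and $n \geq 0$, I define the cylinder
\[
[\sigma]_n(\Fbold_*) := \left\{ z \: : \: \hat\Fbold_*^k(z) \in \Wbold_k \text{ for } 0 \leq k \leq n \right\},
\]
which by Corollary \ref{cor:tiling-of-wakes} is a wake of level $n+1$ whose boundary consists of finitely many ray segments inside $\Esc_{\leq P}(\Fbold_*)$ for some $P \in \Tbold_{>0}$. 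By Theorem \ref{thm:finite-esc-set} those bounding segments move holomorphically and conformally on an open neighborhood of $\Fbold_*$, so $[\sigma]_n$ has a well-defined holomorphic continuation $[\sigma]_n(\Fbold)$. Combining Lemma \ref{lem:nested-wakes-shrink} with the uniform hyperbolicity of Lemma \ref{lem:uniform-expansion}, the nested cylinders $[\sigma]_n(\Fbold)$ will shrink uniformly in $n$ and $\sigma$ to a single point $z_\sigma(\Fbold)$, which consequently depends holomorphically on $\Fbold$ over some neighborhood $\Ucalbold \subset \manibold_{\textnormal{loc}}$ of $\Fbold_*$. I then set
\[
\Lambda(\Fbold) := \{ z_\sigma(\Fbold) \: : \: \sigma \in \Sigma \}.
\]

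Forward invariance of $\Lambda(\Fbold)$ under $\Fbold^{\geq 0}$ will follow from shift invariance of $\Sigma$ together with the observation that, for $z \in \Lambda(\Fbold)$ with address $(\Wbold_0,\Wbold_1,\ldots)$ and $0 < P < P_0$ (where $P_0$ denotes the generation of $\Wbold_0$), the iterate $\Fbold^P(z)$ lies in a primary wake of generation $P_0 - P \leq Q$, whose $\hat\Fbold$-address is just a shift of $\sigma$; iterating yields $\Fbold^P(z) \in \Lambda(\Fbold)$ for all $P \in \Tbold$. Total disconnectedness is immediate from the parametrization by distinct sequences, and the inclusion $\Jcas_R(\Fbold) \subset \Lambda(\Fbold)$ for $\Fbold \in \Ucalbold$ follows by applying the preceding lemma at $\Fbold$, using that each wake $\Wbold(\Fbold)$ remains uniformly Hausdorff-close to $\Wbold$ over $\Ucalbold$.

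The principal obstacle is that the collection of primary wakes of generation $\leq Q$ is infinite: $A_*$-self-similarity creates copies at every scale $\Sbold_n^\#$, and a uniform Banach neighborhood over which \emph{all} of them simultaneously move holomorphically is not directly supplied by Theorem \ref{thm:finite-esc-set}, which only controls one escape time $P$ at a time. I expect to resolve this by combining the holomorphic motion of wakes at the finitely many generations inside a fundamental domain of $A_*$ with the $A_*$-self-similarity of Lemma \ref{lem:self-sim-lakes}, which identifies the far-out wakes with linear rescalings of the fundamental ones. A related delicacy is the uniformity of the contraction rate of the cylinders $[\sigma]_n(\Fbold)$ over $\sigma \in \Sigma$ and $\Fbold \in \Ucalbold$; this should follow from the uniform expansion of $\hat\Fbold$ on wakes of bounded generation (Lemma \ref{lem:uniform-expansion}), which persists under small perturbations of $\Fbold_*$.
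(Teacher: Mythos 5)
There is a genuine gap, and it sits exactly where you placed your ``principal obstacle'' --- but the real problem is not the one you identified. You propose to move the cylinders $[\sigma]_n$ holomorphically by invoking Theorem \ref{thm:finite-esc-set} for their bounding ray segments, asserting that these lie in $\Esc_{\leq P}(\Fbold_*)$ for some $P \in \Tbold_{>0}$. That assertion is false: the boundary of a (full, untruncated) wake consists of two complete ray segments running from the alpha-point all the way down to the root critical point, and since these are lifts of the zero ray $\Rbold^\bullet$ under $\Fbold_*^{|S|}$, the escaping time along them tends to $+\infty$ as one approaches the root (Proposition \ref{prop:zero-chains}(4)). So no single escape time $P$ controls a wake boundary, and Theorem \ref{thm:finite-esc-set} does not hand you a holomorphic motion of the cylinders. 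The device you are missing is truncation: the paper removes from each primary wake $\Wbold_i$ of generation $P_i \leq T$ the preimage $\Fbold_*^{-P_i}(\overline{\D_r})$ of a fixed disk, so that the truncated boundary $\partial \hat{\Wbold}_i$ becomes the $\Fbold^{P_i}$-preimage of the fixed set $\hat{\Rbold}^0 \cup \hat{\Rbold}^\infty \cup \partial \Vbold$, whose escaping times are uniformly bounded; all these preimages then move holomorphically and equivariantly at once (this also disposes of the infinitude of the collection, which you tried to fix with $A_*$-self-similarity --- a fix that does not address the unbounded escaping time on each individual boundary). The set $\Lambda(\Fbold)$ is then defined dynamically as the non-escaping set of the induced Markov-type map $\hat{\Fbold}$ on the union of truncated wakes, and the motion is produced by a pullback argument, not by continuing each cylinder separately. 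Points of $\Jcas_R$ never visit the removed root neighborhoods (there the orbit would enter $\D_R$), so nothing is lost.

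A second, related weakness: you need the nested cylinders $[\sigma]_n(\Fbold)$ to shrink for the \emph{perturbed} $\Fbold$, but Lemma \ref{lem:nested-wakes-shrink} and Lemma \ref{lem:uniform-expansion} are statements about $\Fbold_*$ only, proved using the exact $A_*$-self-similarity and the Herman curve $\Hq$, neither of which persists for general $\Fbold \in \Ucalbold$. The paper's pullback construction needs shrinking (more precisely, empty interior of $\Lambda(\Fbold_*)$) only at the fixed point, after which equivariance and compactness of quasiconformal maps yield the limit motion; uniqueness then follows from the address coding, essentially as you sketch. So your combinatorial skeleton is the right one, but without truncation the analytic backbone of the argument does not stand.
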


\begin{proof}
    In the dynamical plane of $\Fbold_*$, every point in the forward orbit of a point in $\Jcas_R(\Fbold_*)$ must be contained in a wake of sufficiently low generation in order to avoid the disk $\D_R:= \{|z|<R\}$. We consider all such points and define $\Lambda(\Fbold_*)$. In the proof below, we apply the motion of the finite-time escaping set from the previous subsection to show that $\Lambda(\Fbold)$ can be defined naturally via a unique holomorphic motion. The proof will be broken down into four steps.
    \vspace{0.1in}
    
    \noindent \textbf{Step 1:} Construct truncated wakes which move holomorphically.

    Let $T:= \min\{(0,1,0),(0,0,1)\}$. Let us enumerate primary wakes of $\Fbold_*$ of generation at most $T$ by $\{ \Wbold_i \}_{i \in I}$ for some countable index set $I$. Denote the generation of each wake $\Wbold_i$ by $P_i$. 

    Let us pick a sufficiently small $r>0$ such that $\cup_{i\in I} \Wbold_i$ is contained in the domain $\Vbold := \C \backslash \overline{\D_r}$. 
    For every $i \in I$, consider the truncated wake 
    \[
    \hat{\Wbold}_i := \Wbold_i \cap \Fbold_*^{-P_i}(\Vbold)
    \]
    obtained by removing from $\Wbold_i$ a small neighborhood of the critical point $C_{P_i}$ that gets mapped to $\overline{\D_r}$.

    For each $\bullet \in \{0,\infty\}$, there exists a unique point $z^\bullet$ on the intersection of $\partial \Vbold$ and the zero ray $\Rbold^\bullet$ such that the external ray segment 
    \[
    \hat{\Rbold}^\bullet := (\infty,z^\bullet) \subset \Rbold^\bullet
    \]
    is contained in $\Vbold$. The ray segments $\hat{\Rbold}^0$ and $\hat{\Rbold}^\infty$ are contained in $\Esc_{\leq Q}(\Fbold_*)$ where $Q$ is the maximum of the escaping times of $z^0$ and $z^\infty$. By Theorem \ref{thm:finite-esc-set}, the $Q$\textsuperscript{th} escaping set $\Esc_{\leq Q}$ moves holomorphically and equivariantly on a small neighborhood $\Ucalbold$ of $\Fbold_*$. By the $\lambda$-lemma, such a motion induces a holomorphic motion of $\hat{\Rbold}^0(\Fbold) \cup \hat{\Rbold}^\infty(\Fbold) \cup \partial \Vbold(\Fbold)$, which, by shrinking $\Ucalbold$ if necessary, can be assumed to not collide with $\CV\left(\Fbold^T\right)$. This allows us to pull back via $\Fbold^P$ for all $P \leq T$ and further extend this motion to a holomorphic motion of
    \[
        \hat{\Rbold}^0(\Fbold) \cup \hat{\Rbold}^\infty(\Fbold) \cup \partial \Vbold(\Fbold) \cup \bigcup_{i \in I} \partial \hat{\Wbold}_i(\Fbold)
    \]
    that is equivariant on $\partial \hat{\Wbold}_i(\Fbold)$ with respect to $\Fbold^{P_i}$ for every $i \in I$. By $\lambda$-lemma, this motion can again be extended to a holomorphic motion $\Phi_0$ on the whole plane that is equivariant with respect to $\Fbold^{P_i}$ on $\partial \hat{\Wbold}_i(\Fbold)$ for every $i \in I$.
    \vspace{0.1in}
    
    \noindent \textbf{Step 2:} Construct $\Lambda$ which moves holomorphically and equivariantly.

    Consider $\Vbold_0(\Fbold):= \bigcup_{i\in I} \hat{\Wbold}_i(\Fbold)$ and define the holomorphic map 
    \[
        \hat{\Fbold}: \Vbold_0(\Fbold) \to \Vbold(\Fbold), \quad \hat{\Fbold}(z) = \Fbold^{P_i}(z) \text{ for } z\in \hat{\Wbold}_i(\Fbold).
    \]
    This map satisfies a Markov-like property that $\Vbold_0(\Fbold) \subset \Vbold(\Fbold)$ and $\hat{\Fbold}$ sends every connected component of $\Vbold_0(\Fbold)$ univalently onto a dense subset of $\Vbold(\Fbold)$. Note that $\hat{\Fbold}_*$ coincides with the map defined in (\ref{eqn:F-tilde}).
    
    Consider the non-escaping set $\Lambda(\Fbold)$ of $\hat{\Fbold}$ which is defined by
    \[
        \Lambda(\Fbold) := \bigcap_{n \geq 0} \Vbold_{-n}(\Fbold) \quad \text{where} \quad \Vbold_{-n}(\Fbold):= \hat{\Fbold}^{-n}(\Vbold_0(\Fbold)). 
    \]
    Clearly, $\Lambda(\Fbold)$ is non-empty and forward invariant under $\Fbold^{\geq 0}$. For $\Fbold=\Fbold_*$, the set $\Lambda(\Fbold_*)$ is a subset of the set $\mathfrak{X}$ defined in \S\ref{ss:structure-esc-set-and-X} and is totally disconnected.

    Let us treat the holomorphic motion $\Phi_0 = \Phi_0(\Fbold)$ discussed in Step 1 as a map from the dynamical plane of $\Fbold_*$ to the dynamical plane of $\Fbold$. We will apply the pullback argument to $\Phi_0$ as follows. For $n \geq 0$, let us inductively define the lift of $\Phi_n$ to be 
    \[
        \Phi_{n+1} := \begin{cases}
            \Phi_n & \text{ on } \C \backslash \Vbold_{-n}(\Fbold_*), \\
            \left( \hat{\Fbold}|_{\hat{\Wbold}_i(\Fbold)} \right)^{-1} \circ \Phi_n \circ \hat{\Fbold}_* & \text{ on } \Vbold_{-n}(\Fbold_*) \cap \hat{\Wbold}_i(\Fbold_*) \text{ for each } i \in I.
        \end{cases}
    \]
    By equivariance, for all $n$, $\Phi_n$ is quasiconformal on $\C$ with uniformly bounded dilatation and it eventually stabilizes at every point outside of $\Lambda(\Fbold_*)$. Since $\Lambda(\Fbold_*)$ has no interior, $\Phi_n$ converges in subsequence to a limiting holomorphic motion $\Phi$ which is equivariant on $\Lambda(\Fbold)$.
    \vspace{0.1in}
    
    \noindent \textbf{Step 3:} Show that the equivariant holomorphic motion of $\Lambda$ is unique.

    Suppose $\Psi$ is another holomorphic motion of $\Lambda(\Fbold)$ on some small neighborhood $\Ucalbold \subset \manibold_{\textnormal{loc}}$ of $\Fbold_*$. We will use the notation $\Psi_\Fbold(x)$ to highlight the dependence of $\Fbold$. Let us pick any point $x \in \Lambda(\Fbold_*)$. By Proposition \ref{prop:address}, there is some $(i_0,i_1,\ldots) \in I^\N$ such that $x$ is the unique point with address $(i_0,i_1,\ldots)$, that is, $\hat{\Fbold}_*^n(x)$ lies in the truncated wake $\hat{\Wbold}_{i_n}(\Fbold_*)$ for all $n$. 
    
    Suppose for a contradiction that $\Psi_\Fbold(x)$ and $\Phi_\Fbold(x)$ are distinct. Then, the address of $\Psi_\Fbold(x)$ is not equal to $(i_0,i_1,\ldots)$ and, in particular, there is some $n \in \N$ such that $\hat{\Fbold}^n (\Psi_\Fbold(x))$ lies in a truncated wake other than $\hat{\Wbold}_{i_n}(\Fbold)$. Since the boundary of $\hat{\Wbold}_{i_n}(\Fbold)$ moves holomorphically and equivariantly, there exists some $\Gbold \in \manibold_{\textnormal{loc}}$ sufficiently close to $\Fbold_*$ such that $x'_n:= \hat{\Gbold}^n \left( \Psi_\Gbold(x) \right)$ is on the boundary of $\hat{\Wbold}_{i_n}(\Gbold)$. Then, the image $y'_n := \Gbold^{P_{i_n}}(x'_n)$ would lie on $\hat{\Rbold}^0(\Gbold) \cup \hat{\Rbold}^\infty(\Gbold) \cup \partial \Vbold(\Gbold)$, which is disjoint from $\Lambda(\Gbold)$. However, due to forward invariance, $y'_n$ must be contained in $\Lambda(\Gbold)$, hence a contradiction.
    \vspace{0.1in}
    
    \noindent \textbf{Step 4:} Show that $\Lambda(\Fbold)$ contains $\Jcas_R(\Fbold)$ for some $R> 0$ independent of $\Fbold \in \Ucalbold$.
    
    It suffices to find $R$ such that for all $\Fbold \in \Ucalbold$, every point outside of $\Esc_{<\infty}(\Fbold) \cup \Lambda(\Fbold)$ will be sent into the disk $\D_{R}$ by $\Fbold^P$ for some $P \in \Tbold$.

    Let us recall the renormalization tiling $\Deltabold_n(\Fbold)$ defined in \S\ref{ss:crit-periodic-points}. In the dynamical plane of $\Fbold_*$, there exists some sufficiently large $N \in \N$ such that all primary wakes rooted at critical points located in $\Delta_0(0,\Fbold_*)\cup\Delta_0(1,\Fbold_*)$ are contained in the tile $\Delta_{-N}(i, \Fbold_*)$ for some $i \in \{0,1\}$. Then, every wake of generation greater than $T$ is contained in the tiling $\Deltabold_{-N}(\Fbold_*)$. In particular, $\overline{\Vbold_0(\Fbold_*)}$ is disjoint from $\Deltabold_{-N}(\Fbold_*)$.
    
    By shrinking $\Ucalbold$ if needed, the tiling $\Deltabold_{-N}(\Fbold)$ moves holomorphically and equivariantly over $\Ucalbold$ and always contains $\C \backslash \overline{\Vbold_0(\Fbold)}$. Therefore, for all $\Fbold \in \Ucalbold$, every point outside of $\Esc_{<\infty}(\Fbold) \cup \Lambda(\Fbold)$ is eventually mapped to a point in $\C \backslash \overline{\Vbold_0(\Fbold)}$, which is eventually mapped to another point in $\Fbold^{(-N,0,1)}(\Delta_{-N}(0,\Fbold)) \cup \Fbold^{(-N,1,0)}(\Delta_{-N}(1, \Fbold))$, which is contained in the disk $\D_R$ for some large $R>0$ independent of $\Fbold$.
\end{proof}

\begin{theorem}
\label{thm:inf-esc-set}
    For every $\Fbold \in \manibold$, $\Esc_\infty(\Fbold)$ is a totally disconnected subset of $\Jcas(\Fbold)$ and supports no invariant line field. Moreover, on every connected component of the interior of $\{ \Fbold \in \manibold \: : \: 0 \not\in \Esc_\infty(\Fbold) \}$, there is a unique equivariant holomorphic motion of $\Esc_\infty(\Fbold)$, and this motion is conformal.
\end{theorem}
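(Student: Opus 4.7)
The plan is to mirror the proof of Theorem~\ref{thm:finite-esc-set}, with Lemma~\ref{lem:hol-mot-inf} replacing Lemma~\ref{lem:hol-mot-finite-esc}. Set $\Lambda_\infty(\Fbold) := \{z \in \Lambda(\Fbold) : \hat{\Fbold}^n(z) \to \infty\}$. By equivariance, the unique holomorphic motion of $\Lambda(\Fbold)$ over the neighborhood $\Ucalbold$ of $\Fbold_*$ restricts to a motion of $\Lambda_\infty(\Fbold)$. Every $z \in \Esc_\infty(\Fbold)$ eventually enters $\Jcas_R(\Fbold) \subset \Lambda(\Fbold)$ for any large $R$ and then lies in $\Lambda_\infty$, giving
\[
    \Esc_\infty(\Fbold) \;=\; \bigcup_{P\in\Tbold} \Fbold^{-P}\bigl(\Lambda_\infty(\Fbold)\bigr).
\]
Since $\Lambda(\Fbold_*) \subset \mathfrak{X}$ is totally disconnected and $\sigma$-proper holomorphic maps have discrete fibers, $\Esc_\infty(\Fbold)$ is a countable union of totally disconnected sets, hence totally disconnected; for general $\Fbold$ I reduce to the case $\Fbold$ near $\Fbold_*$ by anti-renormalizing, using that $\Fbold$ and $\Fbold_{-n}^\#$ generate the same cascade. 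The containment $\Esc_\infty \subset \Jcas$ then follows from a normal-family argument: any hypothetical $z \in \Esc_\infty \cap \Fcas$ would produce a subsequential limit of $\{\Fbold^{P_n}\}$ equal to the constant $\infty$ on a neighborhood of $z$, dragging an open set into $\Esc_\infty$ and contradicting total disconnectedness.

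Next, let $\boldsymbol{\Omega}^*$ denote the interior of $\{\Fbold \in \manibold : 0 \notin \Esc_\infty(\Fbold)\}$. On $\boldsymbol{\Omega}^*$, complete invariance of $\Esc_\infty$ under $\Fbold^{\geq 0}$ implies $\CV(\Fbold^P) = \{\Fbold^S(0) : 0 \leq S < P\}$ is disjoint from $\Esc_\infty$ for every $P$, so the motion of $\Lambda_\infty$ lifts along $\Fbold^{-P}$ to an equivariant holomorphic motion of $\Esc_\infty(\Fbold)$. Uniqueness is argued exactly as in Theorem~\ref{thm:finite-esc-set}: any two equivariant motions agree on $\Lambda_\infty$ by Lemma~\ref{lem:hol-mot-inf}, and then on all of $\Esc_\infty$ by equivariance. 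For the absence of invariant line fields, suppose $\Fbold$ admits an ILF $\boldsymbol{\mu}$ supported on $\Esc_\infty(\Fbold)$. Proposition~\ref{prop:integration-of-ILF} integrates $\boldsymbol{\mu}$ to a holomorphic family $\{\Gbold_t\}$ with quasiconformal conjugacies $\phi_t$ whose Beltrami coefficient is $t\boldsymbol{\mu}$. When $\Fbold \in \boldsymbol{\Omega}^*$, uniqueness forces $\phi_t|_{\Esc_\infty}$ to equal the canonical motion; conformality of the latter yields $\boldsymbol{\mu} \equiv 0$, a contradiction. For $\Fbold \notin \boldsymbol{\Omega}^*$, I would anti-renormalize $\Fbold \to \Fbold_{-n}^\#$ with $\Fbold_{-n}$ close to $\Fbold_*$ to transport $\boldsymbol{\mu}$ into a neighborhood of $\Fbold_*$, and contradict Lemma~\ref{lem:no-ilf-fixed-pt}.

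The principal obstacle is establishing conformality of the canonical motion on $\Lambda_\infty$, since the $\lambda$-lemma extension $\Phi_0$ in the proof of Lemma~\ref{lem:hol-mot-inf} is only quasiconformal off the ray structure. To overcome this I would combine two ingredients. First, the rays $\hat{\Rbold}^0,\hat{\Rbold}^\infty$ and the truncated wake boundaries $\partial\hat{\Wbold}_i$ from which $\Phi_0$ is constructed all lie in $\Esc_{<\infty}(\Fbold)$, which moves conformally by Theorem~\ref{thm:finite-esc-set}; thus $\Phi_0$ can be arranged to have vanishing Beltrami coefficient on the entire ray structure. Second, for $z \in \Lambda_\infty$ the $\hat{\Fbold}$-orbit escapes to infinity and, by the shrinking of primary wakes (Lemma~\ref{lem:primary-wakes-shrink}), spends all its late time in small wakes clustered against the conformally-moving ray structure. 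Iterated holomorphic pullback of a nearly-conformal $\Phi_0$ along such an expanding-Markov escaping trajectory---a strategy analogous to Rempe's rigidity argument for logarithmic tracts in \cite{R09}---forces the Beltrami coefficient of the limiting motion $\Phi$ to vanish on $\Lambda_\infty$. Conformality on all of $\Esc_\infty(\Fbold)$ for arbitrary $\Fbold \in \boldsymbol{\Omega}^*$ then follows by anti-renormalizing into $\Ucalbold$ and lifting by the holomorphic maps $\Fbold^P$, which preserve the vanishing of the Beltrami coefficient.
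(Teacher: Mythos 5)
Your overall skeleton -- reduce to the set $\Lambda$ of Lemma \ref{lem:hol-mot-inf}, spread it by preimages, lift the motion over the locus where $0$ avoids $\Esc_\infty$, prove uniqueness by equivariance, and transport a hypothetical line field back to $\Fbold_*$ to contradict Lemma \ref{lem:no-ilf-fixed-pt} -- matches the paper's proof, as does the normality argument for $\Esc_\infty\subset\Jcas$. The genuine gap is in the conformality of the motion, which you rightly identify as the crux but then attack with a mechanism that does not work. In the pullback construction of Lemma \ref{lem:hol-mot-inf} one has $|\mu_{\Phi_{n}}(z)| = |\mu_{\Phi_0}(\hat{\Fbold}_*^{n}(z))|$ for $z\in\Lambda(\Fbold_*)$: holomorphic pullback transports the pointwise modulus of the Beltrami coefficient along the orbit, it never shrinks it. The forward $\hat{\Fbold}_*$-orbit of a point of $\Lambda_\infty$ lives in the interpolation region of the truncated wakes, not on the measure-zero skeleton $\hat{\Rbold}^0\cup\hat{\Rbold}^\infty\cup\partial\Vbold\cup\bigcup_i\partial\hat{\Wbold}_i$, so arranging $\Phi_0$ to be conformal on that skeleton buys nothing, and the limit $\Phi$ has no reason to have vanishing dilatation on $\Lambda_\infty$. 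The mechanism that actually works (and is also the one underlying Rempe's argument) is to produce a whole family of motions with smaller and smaller dilatation and glue them by uniqueness: for each $n$, the set $\Lambda_{-n}(\Fbold):=A_*^{-n}(\Lambda(\Fbold_{-n}))$ carries a unique equivariant motion $\Phi_{-n}$ defined over the \emph{expanding} parameter domain $\renorm^{n}(\Ucalbold)$, so by the $\lambda$-lemma its dilatation at the fixed parameter $\Fbold$ tends to zero as $n\to\infty$; pulling back by the holomorphic maps $\Fbold^{P}$ does not change the dilatation, and uniqueness forces all the resulting motions of $\Esc_\infty(\Fbold)$ to coincide, whence the common motion is conformal. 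Note that the absence of invariant line fields also depends on this: to pull a line field of $\Gbold$ back to a line field of $\Fbold_*$ supported on $\Esc_\infty(\Fbold_*)$ one needs the conjugacy to have zero dilatation on $\Esc_\infty\cap\Lambda_{-n}$, so the gap propagates to that part of your argument as well.

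A secondary point: the inference ``$\Esc_\infty(\Fbold)$ is a countable union of totally disconnected sets, hence totally disconnected'' is invalid as a general principle (an increasing union of totally disconnected subsets of $[0,1]$ can equal $[0,1]$). The separation of two distinct points of $\Esc_\infty(\Fbold)$ has to come from the dynamics: after applying $\Fbold^P$ they land in $\Lambda_{-n}(\Fbold)$, where they are separated by boundaries of disjoint truncated wakes, and these boundaries consist of ray segments lying in $\Esc_{<\infty}(\Fbold)$, hence in the complement of $\Esc_\infty(\Fbold)$; their $\Fbold^{P}$-preimages then separate the original points.
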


\begin{proof}
    Let $\Ucalbold$, $\Lambda$, and $R$ be from the previous lemma. For every $\Fbold \in \manibold$, there is some sufficiently large $n \in \N$ such that the $n$\textsuperscript{th} anti-renormalization $\Fbold_{-n}$ lies in $\Ucalbold$. Since $\Fbold^P = A_*^{-n} \circ \Fbold_{-n}^{P/\tbold^{-n}} \circ A_*^{n}$ for all $P \in \Tbold$, the set 
    \[
    \Lambda_{-n}(\Fbold) := A_*^{-n}(\Lambda(\Fbold_{-n}))
    \]
    is forward invariant, contains $\Jcas_{|\mu_*|^{-n} R}(\Fbold)$, and admits a unique equivariant holomorphic motion $\Phi_{-n}$ over $\Rboldstar^{n}(\Ucalbold)$. The dilatation of $\Phi_{-n}$ near $\Fbold$ can be made arbitrarily close to one by choosing $\Fbold_{-n}$ arbitrarily close to $\Fbold_*$, or equivalently, $n$ to be an arbitraily large. In particular, there is a unique equivariant holomorphic motion of $\Esc_\infty(\Fbold) \cap \Lambda_{-n}(\Fbold)$ and its dilatation near $\Fbold$ tends to one as $n \to \infty$.

    Every point in $\Esc_\infty(\Fbold)$ is eventually mapped to $\Esc_\infty(\Fbold) \cap \Lambda_{-n}(\Fbold)$. Since $\Lambda_{-n}(\Fbold)$ is totally disconnected, then so is $\Esc_\infty(\Fbold)$. To show that $\Esc_{\infty}(\Fbold)$ is in the Julia set, suppose for a contradiction that $\Esc_{\infty}(\Fbold)$ contains a point $x$ in the Fatou set. By normality, points sufficiently close to $x$ are also attracted to $\infty$, which contradicts the total disconnectivity of $\Esc_{\infty}(\Fbold)$.
    
    On a component $\boldsymbol{\Omega}$ of the interior of $\{ \Fbold \in \manibold \: : \: 0 \not\in \Esc_\infty(\Fbold) \}$, for $\Fbold \in \boldsymbol{\Omega}$, we can extend the motion $\Phi_{-n}$ by iteratively pulling back the holomorphic motion of $\Esc_\infty(\Fbold) \cap \Lambda_{-n}(\Fbold)$, yielding a unique equivariant holomorphic motion $\widetilde{\Phi}_{-n}$ of $\Esc_\infty(\Fbold)$. Since we are pulling back by a holomorphic map, the dilatation of $\widetilde{\Phi}_{-n}$ is equal to that of $\Phi_{-n}$. By the uniqueness of the motion, $\widetilde{\Phi}=\widetilde{\Phi}_{-n}$ is independent of $n$. Moreover, since the dilatation tends to one as $n \to \infty$, then $\widetilde{\Phi}$ is a conformal motion of $\Esc_\infty$.

    Lastly, suppose for a contradiction that $\Esc_\infty(\Gbold)$ supports an invariant line field $\boldsymbol{\mu}$ of some $\Gbold \in \manibold$. Since $\Esc_\infty(\Fbold) \cap \Lambda_{-n}(\Fbold)$ moves holomorphically over a neighborhood of $\Fbold_*$ containing $\Gbold$ for some sufficiently high $n$, then there is a quasiconformal map $\phi: \C \to \C$ which conformal almost everywhere on $\Esc_\infty(\Fbold_*) \cap \Lambda_{-n}(\Fbold)$ and conjugates $\Fbold_*|_{\Esc_\infty(\Fbold_*) \cap  \Lambda_{-n}(\Fbold_*)}$ to $\Gbold|_{\Esc_\infty(\Gbold) \cap \Lambda_{-n}(\Gbold)}$. Consider $\phi^* \boldsymbol{\mu}$ on $\Esc_\infty(\Fbold_*) \cap \Lambda_{-n}(\Fbold)$ and pull it back via $\Fbold_*$ to obtain an $\Fbold_*$-invariant Beltrami differential $\boldsymbol{\mu}'$ supported on $\Esc_\infty(\Fbold_*)$. Then, $\boldsymbol{\mu}'$ would be an invariant line field of $\Fbold_*$ supported on $\Esc_\infty(\Fbold_*)$, which is impossible due to Corollary \ref{cor:no-ilf-fixed-pt}.
\end{proof}

\section{Fatou-Julia theory}
\label{sec:fatou-julia}

Throughout this section, we will consider a single cascade $\Fbold$ in $\manibold$ and describe the properties of the Fatou set $\Fcas(\Fbold)$ and the Julia set $\Jcas(\Fbold)$, mirroring the classical Fatou-Julia theory for rational maps.

\subsection{The Julia set}

In the study of dynamics of transcendental entire functions $g: \C \to \C$, a fundamental yet highly non-trivial result is the non-emptiness of the escaping set $I(g)$ of $g$ \cite{E89,D98}, which implies that the Julia set of $g$ is the closure of the boundary of $I(g)$. Similarly, we have the following.

\begin{proposition}
\label{prop:closure-of-esc}
    The finite-time escaping set $\Esc_{<\infty}(\Fbold)$ is non-empty and 
    \[
    \Jcas(\Fbold) = \overline{\Esc_{<\infty}(\Fbold)}.
    \]
\end{proposition}

\begin{proof}
    By Theorem \ref{thm:finite-esc-set}, there exist some small $P \in \Tbold_{>0}$ and some open neighborhood $\Ucalbold \subset \manibold$ of $\Fbold_*$ containing $\Fbold$ in which the $P$\textsuperscript{th} escaping set moves holomorphically. Therefore, $\Esc_{\leq P}(\Fbold)$ is clearly non-empty. 

    Consider any open disk $D \subset \C$ disjoint from $\Esc_{<\infty}(\Fbold)$. By Montel's theorem, since $\Esc_{<\infty}(\Fbold)$ contains more than two points, then $\{\Fbold^P: D \to \C \backslash \Esc_{<\infty}(\Fbold)\}_P$ forms a normal family. Thus, such a disk $D$ is necessarily contained in $\Fcas(\Fbold)$. In particular, any open disk centered at a point in $\Jcas(\Fbold)$ must intersect $\Esc_{<\infty}(\Fbold)$.
\end{proof}

This implies many classical properties of the Julia set.

\begin{corollary}
\label{cor:accumulation-julia}
    For any $x \in \C$, the set of accumulation points of $\bigcup_{P \in \Tbold} \Fbold^{-P}(x)$ is contained in $\Jcas(\Fbold)$.
\end{corollary}

\begin{proof}
    This follows from Corollary \ref{cor:accumulation-to-bdry} and Proposition \ref{prop:closure-of-esc}.
\end{proof}

\begin{corollary}
\label{cor:no-interior}
    Either $\Jcas(\Fbold) = \C$ or $\Jcas(\Fbold)$ has no interior. 
\end{corollary}

\begin{proof}
    Suppose $\Jcas(\Fbold)$ contains an open disk $B$. Proposition \ref{prop:closure-of-esc} tells us that $\Esc_{<\infty}(\Fbold) \cap B$ is dense in $B$. By Lemma \ref{lem:transitivity}, there is some $P \in \Tbold_{>0}$ such that $\Fbold^P(B \backslash \Esc_{\leq P}(\Fbold))$ is dense in $\C$. Thus, $\Jcas(\Fbold)$ is the whole plane.
\end{proof} 

Given a periodic point $x$ of period $P$ of some $\Fbold$, we say that $x$ is \emph{superattracting / attracting / parabolic / Siegel / Cremer / repelling} if $x$ is a \emph{superattracting / attracting / parabolic / Siegel / Cremer / repelling} fixed point of $\Fbold^P$.

\begin{theorem}
\label{thm:density-of-repelling}
    Repelling periodic points of $\Fbold$ are dense in $\Jcas(\Fbold)$.
\end{theorem}

\begin{proof}
    Pick a small disk neighborhood $D=\D(y,\varepsilon)$ of a point $y$ in $\Jcas(\Fbold)$. We will show that there exists a repelling periodic point in $D$.
    
    Pick a repelling periodic point $x$ of $\Fbold$, and let $P \in \Tbold_{>0}$ be its period. There exists a conformal map $\phi: (K,x) \to (\D,0)$ on a small disk neighborhood $K$ of $x$ such that $\phi \circ \Fbold^P \circ \phi^{-1}$ is an expanding linear map. By Corollary \ref{cor:accumulation-julia}, there exists a point $x_{-R}$ in $\Fbold^{-R}(x) \cap D$ for some $R \in \Tbold_{>0}$. We then apply Corollary \ref{cor:accumulation-julia} again to obtain a point $z_0$ in $\Fbold^{-Q}(x_{-R}) \cap K$ for some $Q\in \Tbold_{>0}$. We can assume $z_0 \neq x$ without loss of generality.
    
    Pick a disk $V_0 \subset K$ around $z_0$ small enough such that $\Fbold^Q(V_0)$ is contained in $D$ and that $\Fbold^{Q+R}|_{V_0}$ is either univalent or a covering map branched only at $z_0$. Let $V := \Fbold^{Q+R}(V_0)$ be the image, which can be assumed to be disjoint from $V_0$. Consider the following backward orbit under $\Fbold^P|_K$:
    \[
        \ldots \to (V_{-3}, z_{-3}) \to (V_{-2}, z_{-2}) \to (V_{-1}, z_{-1}) \to (V_0, z_0).
    \]
    The pointed disks $(V_{-n}, z_{-n})$ converge to $x$ as $n \to \infty$, so there exists some high $N \in \N$ such that $V_{-N}$ is compactly contained in $V$.

    We have constructed a map $g := \Fbold^{NP+Q+R}: V_{-N} \to V$ that is either conformal or a covering map branched at the point $z_{-N}$, in which case the image $g(z_{-N})$ is outside of $V_{-N}$. Select a connected component $W$ of the preimage of $V_{-N}$ under this map, then $g^2: W \to V$ is a conformal map and $W$ is compactly contained in $V$. By Schwarz Lemma, $W$ contains a unique repelling fixed point $z$ of $g^2$. Then, $\Fbold^{NP+Q}(z)$ is the desired repelling periodic point of $\Fbold$ contained in $D$.
\end{proof}

\subsection{The postcritical set}

The \emph{postcritical set} of $\Fbold$ is
    \[
        \Pcas(\Fbold) := \overline{ \{ \Fbold^P(0) \: : \: P \in \Tbold \} }.
    \]
This is the smallest forward invariant closed subset of $\C$ that satisfies the property that for all $P \in \Tbold_{>0}$,
    \[
        \Fbold^P : \Dom \left( \Fbold^P \right) \backslash \Fbold^{-P}\left(\Pcas(\Fbold)\right) \to \C \backslash \Pcas(\Fbold)
    \]
is an unbranched covering map which is a local isometry with respect to the hyperbolic metrics.

\begin{proposition}
\label{prop:per-pt-poset}
    Suppose $\Fbold$ admits a periodic point $x$ of some period $P$.
    \begin{enumerate}[label=\textnormal{(\arabic*)}]
        \item If $x$ is attracting or parabolic, then the critical orbit $\{\Fbold^T(0)\}_{T \in \Tbold}$ converges to the periodic orbit $\textnormal{orb}_x^P(\Fbold)$.
        \item If $x$ is Cremer, then $x \in \Pcas(\Fbold)$.
        \item If $x$ is Siegel, then the boundary of the Siegel disk of $\Fbold^P$ centered at $x$ is contained in $\Pcas(\Fbold)$.
    \end{enumerate}
\end{proposition}

The proof is an adaptation of classical proofs for rational maps.

\begin{proof}
    Suppose $x$ is attracting. Let $A$ be the component of the Fatou set containing $x$, the immediate attracting basin of $x$.
    By Proposition \ref{prop:simply-conn}, $A$ is conformally equivalent to the unit disk $\D$. 
    Since $\Fbold^P$ is $\sigma$-proper, the map $\Fbold^P: A \to A$ is a branched covering map of the disk $A$ onto itself.
    By Denjoy-Wolff Theorem \cite[Theorem 5.2]{M06}, all orbits under $\Fbold^P|_A$ converge towards $x$.
    The basin $A$ has to contain a critical of $\Fbold^P$ because if otherwise, by Schwarz Lemma, $\Fbold^P: A \to A$ would have to be a conformal isomorphism without any attracting fixed point.
    Therefore, by Lemma \ref{lem:critical-pts}, there exists some $T \in \Tbold$ such that $\Fbold^T(0)$ is in $A$ and $\Fbold^{T+nP}(0)\to x$ as $n\to\infty$. Item (1) for the attracting case follows.

    Suppose $x$ is parabolic. The argument is similar to the previous paragraph. Denote by $A$ an attracting parabolic basin containing $x$ on its boundary. 
    Standard Fatou coordinate analysis \cite{M06,Shi98} tells us that, near $x$, the quotient of $A$ by $\Fbold^P$ is conformally isomorphic to the cylinder $\C/\Z$. 
    Suppose for a contradiction that $A$ does not contain any points in the critical orbit.
    Then, $\Fbold^P : A \to A$ would be a conformal isomorphism of a conformal disk and there would be a conformal conjugacy $A \to \D$ between $\Fbold^P$ and a non-elliptic M\"obius transformation $M: \D \to \D$. 
    However, the quotient of $\D$ by $M$ is conformally isomorphic to $\{r< |z|<1\}$ for some $0\leq r < 1$, which is not conformally equivalent to the cylinder. 
    This gives us (1) for the parabolic case.
    
    Suppose $x$ is not in $\Pcas(\Fbold)$ and is not repelling. From (1), $x$ is either Cremer or Siegel. Let us first prove (2) by showing that $x$ must be Siegel. For all $T \in \Tbold$, let us denote by $D_T$ the connected component of $\Dom \left( \Fbold^T \right) \backslash \Fbold^{-T}\left(\Pcas(\Fbold)\right)$ containing $x$. Suppose first that $D_P$ is properly contained in $D_0$. Then, $\Fbold^P : D_P \to D_0$ is strictly expanding with respect to the hyperbolic metric of $D_0$, which implies that $x$ must be repelling. Suppose instead $D_P = D_0$. Then, $\left\{\Fbold^{nP}|_{D_0}\right\}_{n \in \N}$ is a normal family of automorphisms of a hyperbolic Riemann surface. By Denjoy-Wolff, the fixed point $x$ must be Siegel.

    Denote by $Z$ the Siegel disk centered at $x$. If there exists some minimal $T \in \Tbold$ where $\Fbold^T(0)$ intersects $Z$, then the intersection $\Pcas(\Fbold) \cap Z$ is a single $\Fbold^P$-invariant curve on $Z$. Suppose for a contradiction that $\C \backslash \Pcas(\Fbold)$ intersects the boundary $\partial Z$. Then, a component $E_0$ of $\C \backslash \Pcas(\Fbold)$ contains some neighborhood of $\partial Z$. For $n \in \N$, let $E_{nP}$ be the connected component of $\Dom \left(\Fbold^{nP}\right) \backslash \Fbold^{-nP}(\Pcas(\Fbold))$ containing $E_0 \cap Z_0$. There are again two cases. If $E_{P} = E_{0}$, then $\left\{\Fbold^{nP}|_{E_0}\right\}_{n \in \N}$ forms a normal family and $E_0$ must be contained in the Fatou set, which is a contradiction. If $E_P$ is a proper subset of $E_0$, then $\Fbold^P : E_P \to E_0$ is strictly expanding with respect to the hyperbolic metric of $E_0$, which would contradict the fact that $\Fbold^P$ restricts to a self diffeomorphism of any invariant curve in $Z \cap E_0$.
\end{proof}

For any tangent vector $v$ at a point $z$ in $\C \backslash \Pcas(\Fbold)$, denote by $\|v\|$ the norm of $v$ with respect to the hyperbolic metric of $\C \backslash \Pcas(\Fbold)$. If $z \in \Pcas$, we set $\|v\| = \infty$.

\begin{lemma}[Julia expansion]
\label{lem:julia-expansion}
    For every point $z$ in $\Jcas(\Fbold) \backslash \Esc_{<\infty}(\Fbold)$,
    \[
    \left\|\left(\Fbold^P\right)'(z)\right\| \to \infty \quad \text{as} \quad P \to \infty.
    \]
\end{lemma}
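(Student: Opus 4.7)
The plan is to run the classical Schwarz--Pick expansion argument in the $\sigma$-proper cascade setting. Assume $z \notin \Pcas(\Fbold)$, since otherwise the conclusion $\|(\Fbold^P)'(z)\| = \infty$ is immediate. For each $P \in \Tbold_{>0}$, let $U_P$ denote the connected component of $\Dom(\Fbold^P) \setminus \Fbold^{-P}(\Pcas(\Fbold))$ that contains $z$. By the maximality property of the postcritical set recorded directly after its definition in Section~\ref{ss:fatou-julia}, the map
\[
\Fbold^P : U_P \;\longrightarrow\; V := \C \setminus \Pcas(\Fbold)
\]
is an unbranched holomorphic covering, hence a local isometry between the Poincar\'e metrics $\rho_{U_P}$ and $\rho_V$. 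The target $V$ is genuinely hyperbolic: its complement in $\RS$ contains $\infty$ together with the two distinct points $0$ and $\Fbold^{(0,0,1)}(0)$, the latter being the image of the critical point along the Herman direction. Writing the derivative in hyperbolic coordinates gives
\[
\|(\Fbold^P)'(z)\| \;=\; \frac{\rho_{U_P}(z)}{\rho_V(z)}.
\]
Forward invariance of $\Pcas(\Fbold)$ makes the sets $U_P$ nested decreasingly in $P$, so by Schwarz--Pick the ratio is nondecreasing in $P$. Since $\rho_V(z)$ is a fixed finite positive constant, the lemma reduces to showing $\rho_{U_P}(z) \to \infty$.

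To establish this I would exhibit boundary points of $U_P$ converging to $z$. By Corollary~\ref{cor:closure-of-esc}, $z$ lies in $\overline{\Esc_{<\infty}(\Fbold)}$, so there is a sequence $w_k \in \Esc_{\leq Q_k}(\Fbold)$ with $w_k \to z$. The hypothesis $z \notin \Esc_{<\infty}(\Fbold)$ together with the closedness of each level set $\Esc_{\leq Q}$ forces $Q_k \to \infty$. Setting $P_k := Q_k$, we have $w_k \notin \Dom(\Fbold^{P_k})$ and in particular $w_k \notin U_{P_k}$. A short connectedness argument in the Euclidean ball $B(z, 2|z - w_k|)$ then produces a boundary point $w_k' \in \partial U_{P_k}$ within distance $2|z - w_k|$ of $z$: any arc in $U_{P_k}$ from $z$ toward $w_k$ must exit $U_{P_k}$ inside this ball. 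Fixing a second point $p \in \Pcas(\Fbold)$ bounded away from $z$, we obtain the containment $U_{P_k} \subset \C \setminus \{w_k', p\}$, and the classical asymptotic
\[
\rho_{\C \setminus \{w_k', p\}}(z) \;\asymp\; \frac{1}{|z - w_k'|\log(1/|z-w_k'|)} \;\longrightarrow\; \infty
\]
as $w_k' \to z$, combined with Schwarz--Pick monotonicity, yields $\rho_{U_{P_k}}(z) \to \infty$. The nondecreasing nature of $P \mapsto \rho_{U_P}(z)$ then promotes this to convergence along the full sequence $P \to \infty$.

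The main obstacle I anticipate is the connectedness step upgrading $w_k \notin U_{P_k}$ to a boundary point of $U_{P_k}$ near $z$: a priori, $w_k$ could lie in the closure of some other component of $\Dom(\Fbold^{P_k}) \setminus \Fbold^{-P_k}(\Pcas(\Fbold))$, and the mere existence of points of $\Esc_{<\infty}$ arbitrarily close to $z$ does not by itself control the geometry of the chosen component $U_{P_k}$. The Euclidean ball argument resolves this because $U_{P_k}$ is open, $z$ belongs to it, and any continuous path from $z$ to a point outside $U_{P_k}$ must cross $\partial U_{P_k}$ inside the intermediate ball. The remaining ingredient, namely the Poincar\'e density asymptotic on the twice-punctured plane, is classical and follows from uniformization via a Möbius transformation sending $\{w_k', p\}$ to $\{0, 1\}$.
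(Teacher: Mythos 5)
Your proof is correct and follows essentially the same route as the paper's: both exploit that $\Fbold^P$ is a local hyperbolic isometry from $\Dom\left(\Fbold^P\right)\backslash\Fbold^{-P}(\Pcas(\Fbold))$ onto $\C\backslash\Pcas(\Fbold)$, use $\Jcas(\Fbold)=\overline{\Esc_{<\infty}(\Fbold)}$ to produce boundary points of the source domain converging to $z$, and conclude by the standard pinching estimate for the Poincar\'e density together with monotonicity in $P$. The only point to tidy is the degenerate case: you must exclude not merely $z\in\Pcas(\Fbold)$ but also the possibility that $\Fbold^Q(z)\in\Pcas(\Fbold)$ for some $Q>0$ (for $P>Q$ your $U_P$ then no longer contains $z$, although the conclusion is immediate there from the convention $\|v\|=\infty$ on $\Pcas(\Fbold)$), which is exactly the paper's reduction ``without loss of generality, $z$ does not eventually land on $\Pcas(\Fbold)$.''
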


\begin{proof}
    Let us fix a point $z \in \Jcas(\Fbold) \backslash \Esc_{<\infty}(\Fbold)$. Without loss of generality, assume that $z$ does not eventually land on $\Pcas(\Fbold)$.
    
    For any $P \in \Tbold_{>0}$, let 
    \[
    \Pcas_P := \Esc_{\leq P}(\Fbold) \cup \Fbold^{-P}(\Pcas(\Fbold)).
    \]
    The map $\Fbold^P: \C \backslash \Pcas_P \to \C \backslash \Pcas$ is a local isometry with respect to their hyperbolic metrics. By Corollary \ref{cor:accumulation-julia}, the closure of the union $\bigcup_{P \in \Tbold} \Pcas_P$ contains the Julia set, so the distance between $\Pcas_P$ and $z$ shrinks to $0$ as $P \to \infty$. Consequently, the distance $r_P$ between $z$ and $\Pcas_P$ with respect to the hyperbolic metric of $\C \backslash \Pcas$ tends to $0$ as $P \to \infty$. The inclusion map $\iota: \C \backslash \Pcas_P \to \C \backslash \Pcas$ is contracting by some factor $C(r_P)$ where $C(r) \to 0$ as $r \to 0$. Therefore, as $P \to \infty$, 
    \[
    \left\|\left(\Fbold^P\right)'(z)\right\| \geq C(r_P)^{-1} \to \infty. \qedhere
    \]
\end{proof}

Denote by $\dist_{\RS}(\cdot,\cdot)$ the spherical distance between two subsets of $\RS$.

\begin{theorem}[Measure-theoretic attractor]
\label{thm:poset-attractor}
    If $\Jcas(\Fbold)$ has no interior, then for almost every point $z$ in $\Jcas(\Fbold) \backslash \Esc_{<\infty}(\Fbold)$,
    \[
        \dist_{\RS}\left(\Fbold^P(z), \Pcas(\Fbold) \cup \{\infty\} \right) \to 0 \qquad \text{ as } P \to \infty.
    \]
\end{theorem}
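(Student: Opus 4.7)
The plan is to adapt the classical Lebesgue density argument of Lyubich and Ma\~n\'e. I will argue by contradiction: assume that a positive-measure subset $E \subset \Jcas(\Fbold) \setminus \Esc_{<\infty}(\Fbold)$ violates the conclusion. After countable exhaustion I may assume there is some $\varepsilon > 0$ such that for every $z \in E$ there is an unbounded sequence $\{P_n(z)\} \subset \Tbold$ with $\Fbold^{P_n(z)}(z) \in K_\varepsilon$, where $K_\varepsilon := \{w \in \C : \dist_{\RS}(w, \Pcas(\Fbold) \cup \{\infty\}) \geq \varepsilon\}$. The set $K_\varepsilon$ is a bounded subset of $\C$ whose Euclidean distance to $\Pcas(\Fbold)$ is bounded below by some $\eta > 0$. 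After a further reduction I may assume $E$ is disjoint from $\Pcas(\Fbold)$, and I fix a Lebesgue density point $z_0 \in E$, writing $w_n := \Fbold^{P_n(z_0)}(z_0) \in K_\varepsilon$.

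The first key step is to construct univalent pullbacks with controlled Euclidean geometry. The Euclidean disk $D_n := D(w_n, \eta)$ lies in $\C \setminus \Pcas(\Fbold)$; by Lemma \ref{lem:critical-pts}(2), $\CV(\Fbold^{P_n}) \subset \Pcas(\Fbold)$, so $D_n$ contains no critical values of $\Fbold^{P_n}$. Since $D_n$ is simply connected, the connected component $D_n' \ni z_0$ of $\Fbold^{-P_n}(D_n)$ maps univalently onto $D_n$. Lemma \ref{lem:julia-expansion} guarantees that $|(\Fbold^{P_n})'(z_0)|$ measured in the hyperbolic metric of $\C \setminus \Pcas(\Fbold)$ tends to infinity, and since $z_0 \notin \Pcas(\Fbold)$ while $w_n$ stays at Euclidean distance at least $\eta$ from $\Pcas(\Fbold)$, a comparison of hyperbolic and Euclidean metrics shows that the Euclidean derivative also diverges. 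The Koebe distortion theorem then forces the Euclidean diameter of $D_n'$ to shrink to zero with bounded eccentricity.

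Next I will transfer Lebesgue density through the pullback. Set $D_n'' := \Fbold^{-P_n}(D(w_n, \eta/2)) \subset D_n'$; by Koebe, $D_n''$ is also of bounded eccentricity and shrinks to $z_0$, so the density of $E$ in $D_n''$ tends to $1$ by the Lebesgue density theorem at $z_0$. Bounded distortion of the univalent map $\Fbold^{P_n}: D_n'' \to D(w_n, \eta/2)$ transports this estimate: the density of $\Fbold^{P_n}(E \cap D_n'')$ in the round disk $D(w_n, \eta/2)$ also tends to $1$. Forward invariance of $\Jcas(\Fbold)$ yields $\Fbold^{P_n}(E \cap D_n'') \subset \Jcas(\Fbold)$, so
\[
\frac{m(\Jcas(\Fbold) \cap D(w_n, \eta/2))}{m(D(w_n, \eta/2))} \to 1 \quad \text{as } n \to \infty,
\]
where $m$ denotes planar Lebesgue measure. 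Extracting a subsequence along which $w_n \to w_\infty \in K_\varepsilon$, continuity of Lebesgue measure gives density $1$ for $\Jcas(\Fbold)$ inside the fixed round disk $D(w_\infty, \eta/2)$. Since $\Jcas(\Fbold)$ is closed, it must contain $D(w_\infty, \eta/2)$ entirely, contradicting the hypothesis that $\Jcas(\Fbold)$ has empty interior.

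The main obstacle is producing univalent pullbacks with uniformly controlled eccentricity and shrinking diameter: this requires combining the combinatorial input from Lemma \ref{lem:critical-pts}(2), which ensures that every critical value of every $\Fbold^P$ lies in $\Pcas(\Fbold)$ so that Euclidean disks avoiding $\Pcas(\Fbold)$ lift univalently, with a careful Euclidean-versus-hyperbolic metric comparison that converts the Julia expansion of Lemma \ref{lem:julia-expansion} into the quantitative Euclidean estimates demanded by the density argument.
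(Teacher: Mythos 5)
Your proposal is correct and follows essentially the same route as the paper: a Lebesgue density point argument in which definite-size balls around the returns $w_n$ (which stay in the compact set $K_\varepsilon$ away from $\Pcas(\Fbold)\cup\{\infty\}$) are pulled back univalently, Lemma \ref{lem:julia-expansion} plus Koebe distortion forces these pullbacks to shrink with bounded distortion, and transporting density gives an open disk inside the closed set $\Jcas(\Fbold)$, contradicting empty interior. The only differences are cosmetic (Euclidean disks versus spherical balls) or are details the paper leaves implicit and you spell out, namely the univalence of the pullback via $\CV(\Fbold^{P_n})\subset\Pcas(\Fbold)$ and the hyperbolic-to-Euclidean derivative comparison.
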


In other words, as long as the Fatou set is non-empty, almost every non-escaping point in the Julia set is attracted to the postcritical set.

\begin{proof}
    Suppose for a contradiction that there exist a positive number $\varepsilon>0$ and a positive area subset $E$ of $\Jcas(\Fbold) \backslash \Esc_{<\infty}(\Fbold)$ such that for all $z \in E$, 
    \[
    \limsup_{P\to \infty} \dist_{\RS} \left(\Fbold^P(z), \Pcas(\Fbold) \cup \{\infty\} \right) \geq \varepsilon.
    \]
    Let $z$ be a Lebesgue density point of $E$. There is a sequence of power-triples $P_n$ such that $P_n \to \infty$ and $y_n := \Fbold^{P_n}(z)$ lies in the compact subset
    \[
    K := \{ z \in \C \: : \: \dist_{\RS} (z, \Pcas(\Fbold) \cup \{\infty\} ) \geq \varepsilon \}.
    \]
    For each $n \in \N$, consider the spherical ball $B_n$ of radius $\varepsilon/2$ centered at $y_n$, and let $B'_n$ be the lift of $B_n$ under $\Fbold^{P_n}$ containing $z$.

    By Lemma \ref{lem:julia-expansion}, $\left\| \left(\Fbold^{P_n}\right)'(z)\right\| \to \infty$. Since $K$ is compact and $\Fbold^{P_n}|_{B'_n}$ has bounded distortion, the disks $B'_n$ must shrink to a point. Since $z$ is a density point of $E$,
    \[
        \lim_{n\to\infty} \frac{ \text{area}(B'_n \cap E)}{\text{area}(B'_n)} = 1.
    \]
    Therefore, we also have
    \[
        \lim_{n\to\infty} \frac{ \text{area}(B_n \cap \Jcas(\Fbold) \backslash \Esc_{<\infty}(\Fbold) )}{\text{area}(B_n)} = 1.
    \]
    Since $K$ is compact, $y_n$ converges in subsequence to some point $y \in K$. Then, the ball $B$ of radius $\varepsilon/2$ centered at $y$ must have the same area as $B \cap \Jcas(\Fbold) \backslash \Esc_{<\infty}(\Fbold)$. Since $\Jcas(\Fbold)$ is closed, then the ball $B$ has to be contained in $\Jcas(\Fbold)$. This contradicts the assumption that $\Jcas(\Fbold)$ has no interior.
\end{proof}

\subsection{No wandering domains}
\label{ss:no-wandering}

We say that a connected component $U$ of the Fatou set $\Fcas(\Fbold)$ is \emph{wandering} if for all $P,Q \in \Tbold$, $\Fbold^P(U)$ is disjoint from $\Fbold^Q(U)$ whenever $P\neq Q$. If $U$ is not wandering, then it is pre-periodic, that is, there exists $P, Q_0 \in \Tbold$ such that for all $Q \geq Q_0$, $\Fbold^Q(U) = \Fbold^{P+Q}(U)$.

The absence of wandering domains for rational maps was proven by Sullivan \cite{S85}, in which he pioneered the use of quasiconformal mappings in the field of complex dynamics. The idea is to show that the presence of wandering domains induces an infinite-dimensional deformation space, which would contradict the finite-dimensionality of the space of rational maps of any fixed degree. The original proof was simplified in a short note by Zakeri \cite{Z02}.

\begin{theorem}
\label{thm:NWD}
    Every component of $\Fcas(\Fbold)$ is pre-periodic.
\end{theorem}

The proof below mimics the one for rational maps presented by Zakeri. It relies on the finite-dimensionality of the unstable manifold $\manibold$, which automatically comes from the compactness of the renormalization operator. We will first need a little bit of preparation.

Let us extend the unstable manifold $\manibold$ to
\[
    \widetilde{\manibold} := \{L \circ \Gbold \circ L^{-1} \: : \: \Gbold \in \manibold \text{ and } L \text{ is an invertible linear map} \}.
\]
Clearly, $\widetilde{\manibold}$ is still a finite-dimensional complex manifold.

Let us pick a sufficiently small power-triple $T \in \Tbold_{>0}$ such that $\zeta:=\Fbold^T(0)$ is non-zero and well-defined. Denote by $\mathcal{Q}_\zeta$ the vector space of quasiconformal vector fields $v=v(z)/dz$ on $\RS$ vanishing on $0$, $\zeta$, and $\infty$. We will associate any invariant Beltrami differential $\boldsymbol{\mu} \in \mathcal{B}(\Fbold)$ to a quasiconformal vector field $v \in \mathcal{Q}_\zeta$ and a tangent vector $\mathbf{w} \in T_\Fbold \widetilde{\manibold}$ as follows. 

By Proposition \ref{prop:integration-of-ILF}, there exists some sufficiently small $\varepsilon>0$ such that the $1$-parameter family $\{t \boldsymbol{\mu} \}_{t \in \D_{\varepsilon}}$ can be integrated to a unique analytic $1$-parameter family $\{ \phi_t: \C \to \C \}_{t \in \D_{\varepsilon}}$ of quasiconformal maps, where $\phi_0$ is the identity map, and that $\{\Gbold_t := \phi_t \circ \Fbold \circ \phi_t^{-1}\}_{t \in \D_{\varepsilon}}$ is a $1$-parameter holomorphic family of cascades in $\manibold$. For $t \in \D_\varepsilon$, let $L_t$ be the unique linear map sending $\Gbold^T_t(0)$ to $\zeta$, and then let $\psi_t := L_t \circ \phi_t$ and $\Fbold_t := L_t \circ \Gbold_t \circ L_t^{-1} \in \widetilde{\manibold}$. Then, $\psi_t$ is a quasiconformal conjugacy between $\Fbold$ and $\Fbold_t$ that fixes $0$, $\zeta$, and $\infty$.
    
Consider the quasiconformal vector field 
\[
v := \frac{d}{dt}\psi_t \; \bigg|_{t=0}
\]
and consider the vector $\mathbf{w} = \left\{\mathbf{w}^P\right\}_{P \in \Tbold} \in T_\Fbold \widetilde{\manibold}$ tangent to the analytic curve $\left\{\Fbold_t\right\}_{t \in \D_\varepsilon}$, which is given by
    \[
    \mathbf{w}^P := \frac{d}{dt}\Fbold_t^P \, \bigg|_{t=0} \qquad \text{for } P \in \Tbold. 
    \]
Observe that $v$ is the unique solution in $\mathcal{Q}_\zeta$ to the $\bar{\partial}$-equation $\bar{\partial} v = \boldsymbol{\mu}$. Moreover, $v$ and $\mathbf{w}$ are related by the cohomological equation
\begin{equation}
\label{eqn:cohomological-equation}
    \mathbf{w}^P(z) = v\left(\Fbold^P(z)\right) - \left(\Fbold^P\right)'(z) \cdot v(z) \qquad \text{for } z \in \Dom\left(\Fbold^P\right) \text{ and } P \in \Tbold.
\end{equation}
These two observations immediately imply that
\begin{equation}
\label{eqn:D-maps}
    D_1: \mathcal{B}(\Fbold) \to \mathcal{Q}_\zeta, \: \boldsymbol{\mu} \mapsto v \qquad \text{and} \qquad D_2: \mathcal{B}(\Fbold) \to T_\Fbold \widetilde{\manibold}, \: \boldsymbol{\mu} \mapsto \mathbf{w}
\end{equation}
are linear maps between complex vector spaces.

\begin{lemma}
\label{lem:v-and-w}
    For $\boldsymbol{\mu} \in \mathcal{B}(\Fbold)$, if $D_2(\boldsymbol{\mu}) \equiv 0$, then $D_1(\boldsymbol{\mu})(z) = 0$ for all $z \in \Jcas(\Fbold)$.
\end{lemma}

\begin{proof}
    Let $D_1(\boldsymbol{\mu})=v$ and $D_2(\boldsymbol{\mu})=\mathbf{w}$, and suppose $\mathbf{w}^P \equiv 0$ for all $P \in \Tbold$. By continuity of $v$ and Theorem \ref{thm:density-of-repelling}, it suffices to show that $v(x)=0$ for every repelling periodic point $x$ of $\Fbold$. Let $x$ be a repelling periodic point of some period $Q \in \Tbold_{>0}$. Since $\mathbf{w}^Q(x)=0$, then by (\ref{eqn:cohomological-equation}),
        \[
            v(x) = v\left(\Fbold^Q(x)\right) = \left(\Fbold^Q\right)'(z) \cdot v(x).
        \]
        Since $x$ is repelling in nature, then $v(x)=0$.
\end{proof}

\begin{proof}[Proof of Theorem \ref{thm:NWD}]
    Suppose for a contradiction that a cascade $\Fbold \in \manibold$ admits a wandering domain $U$. Denote by $\textnormal{Bel}(U)$ the vector space of $L^\infty$ Beltrami differentials $\mu$ on $U$. Let us define an inclusion map $\iota : \textnormal{Bel}(U) \hookrightarrow \mathcal{B}(\Fbold)$ by spreading. Given $\mu \in \textnormal{Bel}(U)$, $\iota(\mu)$ is supported on a subset of the grand orbit of $U$, and it is equal to $\left(\Fbold^{-P}\right)^*\mu$ on $\Fbold^P(U)$ and $\left(\Fbold^P\right)^*\mu$ on $\Fbold^{-P}(U)$ for all $P \in \Tbold$. Since $U$ is wandering, $\iota$ is a well-defined injective linear map. 

    To get a contradiction, we need the following key observation. The space $\textnormal{Bel}(U)$ contains an infinite-dimensional subspace $\textnormal{Bel}^*(U)$ of compactly supported Beltrami differentials satisfying the following property: if $\mu \in \textnormal{Bel}^*(U)$ satisfies $\mu = \bar{\partial} v$ for some quasiconformal vector field $v$ on $U$ with $v|_{\partial U} =0$, then $\mu=0$. A short constructive proof of this fact can be found in \cite[pg. 3--4]{Z02}.

    Consider the linear maps $D_1$ and $D_2$ from (\ref{eqn:D-maps}). We will now show that the linear map $D_2 \circ \iota |_{\textnormal{Bel}^*(U)}$ is injective; this will immediately contradict the finite-dimensionality of $\widetilde{\manibold}$. Suppose $\mu \in \textnormal{Bel}^*(U)$ is in the kernel of $D_2 \circ \iota$. By Lemma \ref{lem:v-and-w}, the solution $v = D_1 \circ \iota(\mu)$ to the equation $\bar{\partial} v = \iota(\mu)$ vanishes on the Julia set of $\Fbold$ and in particular on $\partial U$. Since $\mu$ is in $\textnormal{Bel}^*(U)$, then $\mu$ must be zero.
\end{proof}

\section{Hyperbolic cascades}
\label{sec:hyperbolic}

\begin{definition}
    We say that a cascade $\Fbold \in \manibold$ is \emph{hyperbolic} if $\Fbold$ admits an attracting cycle of periodic points.
\end{definition}

In this section, we will discuss the properties of hyperbolic cascades, including their existence and rigidity properties. These will be essential in the proof of the second half of Theorem \ref{main-theorem-rigidity} as well as Theorem \ref{main-theorem} (4).

\subsection{Hyperbolic cascades}

If $\Fbold$ is hyperbolic, the critical orbit $\Fbold^P(0)$ automatically converges to an attracting periodic cycle (by Proposition \ref{prop:per-pt-poset}) and so $\Fbold$ has a unique attracting periodic cycle. By Proposition \ref{prop:per-pt-poset}, the postcritical set $\Pcas(\Fbold)$ is contained in $\Fcas(\Fbold)$. By Corollary \ref{cor:no-interior} and Theorem \ref{thm:poset-attractor}, we have the following.

\begin{corollary}
\label{cor:no-leb-meas}
    If $\Fbold$ is hyperbolic, then $\Jcas(\Fbold) \backslash \Esc(\Fbold)$ has zero Lebesgue measure.
\end{corollary}

Hyperbolicity is clearly an open condition. An open subset $\boldsymbol{\Omega}$ of $\manibold$ is called a \emph{hyperbolic component} if it is a connected component of the set of hyperbolic cascades in $\manibold$.

\begin{corollary}
\label{cor:conformal-motion-julia-set}
    Consider a hyperbolic component $\boldsymbol{\Omega} \subset \manibold$. There is a unique equivariant holomorphic motion of $\Jcas(\Fbold)$ over $\Fbold \in \boldsymbol{\Omega}$, and such a motion is a conformal motion. For $\Fbold \in \boldsymbol{\Omega}$, $\Jcas(\Fbold)$ supports no invariant line field of $\Fbold$.
\end{corollary}

\begin{proof}
    For $\Fbold \in \boldsymbol{\Omega}$, the critical value $0$ is not contained in $\Esc(\Fbold)$, and so the assertion follows from Theorems \ref{thm:finite-esc-set} and \ref{thm:inf-esc-set}, Corollary \ref{cor:no-leb-meas}, and the $\lambda$-lemma.
\end{proof}

Together with Theorems \ref{thm:finite-esc-set} and \ref{thm:inf-esc-set}, this corollary completes the proof of Theorem \ref{main-theorem-rigidity}. In order to prove Theorem \ref{main-theorem}, we first need to also understand the dynamics on the Fatou set.

One feature of transcendental dynamics that distinguishes itself from rational dynamics is the emergence of wandering domains and Baker domains. We have previously shown in \S\ref{ss:no-wandering} that $\Fbold$ never admits any wandering domains. Examples of Baker domains arising from lifts of parabolic basins can be found amongst pacman renormalization cascades \cite[\S7]{DL23}. Below, we will show that Baker domains do not exist when $\Fbold$ is hyperbolic.

\begin{proposition}
\label{prop:no-baker-wandering}
    If $\Fbold \in \manibold$ is hyperbolic, the Fatou set of $\Fbold$ is equal to the basin of the unique attracting periodic cycle of $\Fbold$.
\end{proposition}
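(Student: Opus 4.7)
The plan is to classify every Fatou component of $\Fbold$ and show that none can exist outside the basin $\mathcal{B}$ of the unique attracting cycle $\mathcal{C}$. Since $\Fbold$ is hyperbolic, Proposition~\ref{prop:per-pt-poset}(1) forces the critical orbit $\{\Fbold^P(0)\}_P$ to converge to $\mathcal{C}$, so $\Pcas(\Fbold)$ is a compact subset of $\mathcal{B}$ and is disjoint from $\Jcas(\Fbold)$. To exclude non-attracting periodic Fatou components I invoke Proposition~\ref{prop:per-pt-poset} once more: part (1) rules out parabolic cycles, since a parabolic cycle would attract the critical orbit and coexist with $\mathcal{C}$ as a second attractor; parts (2) and (3) exclude Cremer cycles and Siegel disks, since their associated $\Pcas$-requirements would place Julia-set points inside $\Pcas(\Fbold) \subset \mathcal{B}$.

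Next I would rule out Baker domains and wandering components. If $D$ is a Baker domain of period $P_0 \in \Tbold_{>0}$, then $\Fbold^{nP_0}(z) \to \infty$ for every $z \in D$; since the $\Tbold$-orbit of $D$ comprises finitely many Fatou components (by the proper discontinuity in Lemma~\ref{lem:discreteness}), each itself a Baker domain, a decomposition of a general $P \in \Tbold$ according to which component in the cycle it lands in shows $\Fbold^P(z) \to \infty$ as $P \to \infty$ in $\Tbold$. Hence $D \subset \Esc_\infty(\Fbold)$, contradicting Theorem~\ref{thm:inf-esc-set}, which asserts that $\Esc_\infty(\Fbold)$ is totally disconnected while $D$ is a non-empty open set. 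For wandering Fatou components $W$, I apply a Sullivan-style quasiconformal surgery: the orbit $\{\Fbold^P(W)\}_{P \in \Tbold}$ determines infinitely many disjoint grand orbits of Fatou components $W_1, W_2, \ldots$; choosing a compactly supported non-trivial Beltrami differential $\mu_n$ on each $W_n$ and spreading it equivariantly along its grand orbit yields linearly independent $\Fbold$-invariant line fields $\boldsymbol{\mu}_n$. By Proposition~\ref{prop:integration-of-ILF}, these integrate to linearly independent analytic tangent directions of families in $\manibold$ through $\Fbold$, contradicting the finite-dimensionality of $\unstloc \cong \manibold_{\textnormal{loc}}$ established in Theorem~\ref{thm:weak-hyperbolicity}.

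The main technical obstacle will be making the Sullivan argument rigorous within the cascade setting, since Proposition~\ref{prop:integration-of-ILF} requires the quasiconformal deformations integrating the $\boldsymbol{\mu}_n$'s to remain inside $\manibold$; this reduces to checking that the $\sigma$-proper branched covering structure of Theorem~\ref{thm:max-extension} and Lemma~\ref{lem:stability-of-branched-covering} is stable under Beltrami surgery supported away from the critical orbit. A secondary technical point is ensuring that a Baker domain really does lie inside $\Esc_\infty(\Fbold)$ with respect to the density of $\Tbold$ in $\R_{\geq 0}$, which requires a uniformity argument along the finite cycle of Fatou components in the orbit of $D$.
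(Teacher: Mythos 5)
Your route differs substantially from the paper's, and it contains two genuine gaps. First, your treatment of periodic components tacitly assumes the classical classification of periodic Fatou components (attracting / parabolic / Siegel / Herman / Baker). That classification is a theorem for rational and entire maps, but it is never established for cascades and does not come for free here: $\Fbold^{P_0}$ has ``finite essential singularities'' along $\partial\Esc_{\leq P_0}(\Fbold)$. If $D$ is a component with $\Fbold^{P_0}(D)=D$ and no fixed point in $D$, Denjoy--Wolff gives convergence of $\Fbold^{nP_0}|_D$ to a boundary point $\zeta$; when $\zeta$ is a finite point of $\Esc_{<\infty}(\Fbold)$ the snail lemma is unavailable ($\Fbold^{P_0}$ is not holomorphic on any neighborhood of $\zeta$), so this case is neither parabolic nor Baker and is excluded by none of your three steps. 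Second, the Sullivan surgery for wandering components is missing its crucial injectivity step. Proposition \ref{prop:integration-of-ILF} does produce a family $\{\Gbold_t\}$ in $\manibold$ from each Beltrami datum---so the obstacle you flag (stability of the $\sigma$-proper structure) is already handled---but that family may be constant (part (3)(c) of the proposition addresses exactly this), and proving that an infinite-dimensional space of Beltrami data on the grand orbits of wandering domains injects into the tangent space of $\manibold$ is the hard core of Sullivan's theorem. The paper offers this only as a remark after Proposition \ref{prop:integration-of-ILF} (``it should also be possible to show that every cascade has no wandering domains''), precisely because it is not carried out.

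The paper's actual proof sidesteps both issues with a single expansion argument. Hyperbolicity places $\Pcas(\Fbold)$ inside the attracting basin $\mathcal{A}$, so $K=\{|z|\leq R\}\setminus\mathcal{A}$ is a compact subset of $\C\setminus\Pcas(\Fbold)$ on which the hyperbolic metric of $\C\setminus\Pcas(\Fbold)$ is comparable to the Euclidean one. If $\Omega$ is a component of $\Fcas(\Fbold)\setminus\mathcal{A}$ and $x\in\Omega$, then $x\notin\Esc_\infty(\Fbold)$ by Theorem \ref{thm:inf-esc-set}, so the orbit of $x$ returns to $K$ along times $P_n\to\infty$; each return contributes a definite factor of hyperbolic expansion, so $\bigl\|\bigl(\Fbold^{P_n}\bigr)'(x)\bigr\|\geq C^n$, and since $\Fbold^{P_n}$ is univalent on the simply connected, critical-point-free $\Omega$, Koebe's theorem forces $\Fbold^{P_n}(\Omega)$ to contain Euclidean disks of radius tending to infinity centered in $K$ --- impossible since $\Fcas(\Fbold)\neq\C$. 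This handles periodic, wandering, Baker-type, and finite-essential-singularity behavior uniformly, with no classification and no surgery. To salvage your approach you would at minimum need to prove the classification theorem for cascades and supply the Teichm\"uller-theoretic injectivity in the Sullivan step; I recommend adopting the expansion argument instead.
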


\begin{proof}
    Let $\mathcal{A}$ denote the basin of attraction of the unique attracting cycle of $\Fbold$, and suppose for a contradiction that $\Fcas(\Fbold) \backslash \mathcal{A}$ is non-empty. Let us pick a connected component $\Omega$ of $\Fcas(\Fbold) \backslash \mathcal{A}$. 

    Let us pick any point $x$ in $\Omega$. By Theorem \ref{thm:inf-esc-set}, $\Esc_\infty(\Fbold)$ is contained in the Julia set and so it is disjoint from $\Omega$. Hence, there exist some $R>1$ and some increasing sequence of times $P_0:=0$, $P_1$, $P_2$, $\ldots$ in $\Tbold$ such that $P_n \to \infty$ as $n \to \infty$ and that each of $x_n := \Fbold^{P_n}(x)$ is contained in
    \[
        K := \{z \in \C \: : \: |z|\leq R \text{ and } z \not\in \mathcal{A} \}.
    \]
    
    Let $P$ denote the period of the attracting cycle of $\Fbold$. Since $K$ is a compact subset of $\C \backslash \Pcas(\Fbold)$, the hyperbolic metric $\rho(z) dz$ of $\C \backslash \Pcas(\Fbold)$ satisfies
    \begin{equation}
        \label{eqn:pcas-rho}
        \rho(z) \asymp 1 \qquad \text{ for all } z \in K,
    \end{equation}
        and the hyperbolic distance between any point in $K$ and $\Fbold^{-P}\left(\Pcas(\Fbold)\right) \cup \Esc_{\leq P}(\Fbold)$ is uniformly bounded from above. As such, since each of $x_n$ is in $K$, then there is some constant $C>1$ such that for all $n \geq 1$, $\left\| \left( \Fbold^P \right)'(x_n) \right\| \geq C$. Let us pass to a subsequence and assume that $P_{n+1}-P_n \geq P$ for all $n\geq 1$. By chain rule,
    \begin{equation}
    \label{eqn:pcas-blowup}
        \left\| \left( \Fbold^{P_n} \right)'(x) \right\|
        \geq \prod_{k=0}^{n-1} \left\| \left( \Fbold^{P_{k+1}-P_{k}} \right)'(x_k) \right\|
        \geq C^n \to \infty \quad \text{as } n \to \infty.
    \end{equation}

    Since $\Omega$ is simply connected (Proposition \ref{prop:simply-conn}) and does not contain any critical point of $\Fbold^{\geq 0}$, then $\Fbold^{P_n}$ is univalent on $\Omega$ for all $n$. Suppose $\Omega$ contains the Euclidean disk $D:=\D(x,\varepsilon)$ for some $\varepsilon>0$. By Koebe quarter, $\Fbold^{P_n}(D)$ contains the Euclidean disk $\D(x_n, r_n)$ where
    \[
        r_n := \frac{\varepsilon}{4} \left| \left( \Fbold^{P_n} \right)'(x) \right|.
    \]
    By (\ref{eqn:pcas-rho}) and (\ref{eqn:pcas-blowup}), we have
    \[
        r_n \asymp \left| \left( \Fbold^{P_n} \right)'(x) \right| = \frac{\rho(x)}{\rho(x_n)} \left\| \left( \Fbold^{P_n} \right)'(x) \right\| \asymp \left\| \left( \Fbold^{P_n} \right)'(x) \right\| \to \infty.
    \]
    
    We have just established that $\Fbold^{P_n}(\Omega)$ contains the disk $\D(x_n,r_n)$ where $|x_n|\leq R$ and $r_n \to \infty$ as $n \to \infty$. Therefore, $\Fbold^{P_n}(\Omega)$ converges to the whole plane in the Hausdorff metric, which is absurd since the Fatou set cannot be the whole plane.
\end{proof}

\subsection{Superattracting cascades}
\label{ss:superattracting}

 We say that $\Fbold \in \manibold$ is \emph{superattracting} if $0$ is a periodic point of $\Fbold^{\geq 0}$. Superattracting cascades are clearly hyperbolic.

\begin{lemma}[Density of hyperbolicity at $\Fbold_*$]
\label{lem:superattr-reduction}
    Every neighborhood $\Ucalbold \subset \manibold_{\textnormal{loc}}$ of the renormalization fixed point $\Fbold_*$ contains a superattracting cascade.
\end{lemma}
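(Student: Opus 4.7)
The plan is to use the self-similarity of $\renorm$ at $\Fbold_*$ to reduce the problem to producing a single superattracting cascade in the global unstable manifold $\manibold$. Applying equation~\ref{eqn:self-sim-cascade} to a general $\Fbold \in \manibold$ yields the identity $(\renorm^{-n}\Fbold)^{\tbold^n P}(0) = \mu_*^n \cdot \Fbold^P(0)$; in particular, if $\Fbold$ is superattracting of period $P$ then $\renorm^{-n}\Fbold$ is superattracting of period $\tbold^n P$ for every $n \geq 0$. Since $\renorm$ is hyperbolic at $\Fbold_*$ by Theorem~\ref{thm:weak-hyperbolicity}, $\renorm^{-n}\Fbold \to \Fbold_*$ for every $\Fbold \in \manibold$, so a single superattracting cascade anywhere in $\manibold$ spawns a sequence of superattracting cascades that eventually enters any prescribed $\Ucalbold$.

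To find such a cascade, I would fix a repelling eigenvalue $\nu$ of $D\renorm|_{\Fbold_*}$ with $|\nu|>1$ and the corresponding one-dimensional $\renorm$-invariant leaf $\ell \subset \manibold$ through $\Fbold_*$. Koenigs' linearization, extended globally via the $\renorm$-action, supplies a coordinate $\eta \in \C$ on $\ell$ with $\Fbold_* \leftrightarrow 0$ and $\renorm(\Fbold_\eta) = \Fbold_{\nu \eta}$. I would then choose a power-triple $T \in \Tbold_{>0}$ so that $h(\eta) := \Fbold_\eta^T(0)$ is not identically constant on its open domain of definition $\Omega \subset \C$ (which contains $0$). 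Such $T$ must exist, for if every $h_T$ were constant, the critical orbit of $\Fbold_\eta$ would coincide with that of $\Fbold_*$ for all $\eta$; by density of $\{\Fbold_*^P(0)\}_P$ in $\Hq$ and continuity of $\Fbold_\eta$, this forces $\Fbold_\eta|_{\Hq} = \Fbold_*|_{\Hq}$, so $\Fbold_\eta$ is a rotational cascade lying in $\mani^s_{\textnormal{loc}} \cap \ell = \{\Fbold_*\}$ by Theorem~\ref{thm:weak-hyperbolicity}, contradicting non-triviality of $\ell$.

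The concluding and hardest step is to show that $h$ attains the value $0$ on $\Omega$: any such zero gives a superattracting $\Fbold_{\eta_0}$ and completes the proof via the first paragraph. Observe that $h$ blows up near $\partial \Omega$ (the critical value escapes to an essential singularity of $\Fbold_\eta^T$), making $h : \C \to \hat{\C}$ a meromorphic-type map. A Picard-type argument then shows that a non-constant such $h$ omits at most one value in $\hat{\C}$; combined with the self-similarity recursion $h(\nu^n \eta) = \mu_*^{-n} h_{\tbold^n T}(\eta)$ --- which realizes $h$ as the scaling limit of critical-value maps at arbitrarily high periods --- this forces $0$ to lie in the image. The main obstacle is making this Picard/scaling argument rigorous in the $\sigma$-proper transcendental setting; equivalently, one can invoke a Ma\~n\'e--Sad--Sullivan-type argument stating that non-constancy of some $h_T$ places $\ell$ in the bifurcation locus of the family, where superattracting parameters are dense by the classical perturbation of Misiurewicz parameters, paralleling the transcendental-dynamics reasoning of Sections~\ref{sec:codim-one}--\ref{sec:hyperbolic}.
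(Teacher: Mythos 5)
Your first paragraph is a correct (if optional) reduction: the identity $(\renorm^{-n}\Fbold)^{\tbold^n P}(0)=\mu_*^n\,\Fbold^P(0)$ does follow from the rescaling relations, so anti-renormalization preserves superattracting cascades and pushes them into any prescribed neighborhood of $\Fbold_*$. Two caveats before the main issue: at this stage one only knows $0<\dime{\unstloc}<\infty$, so a one-dimensional $\renorm$-invariant leaf $\ell$ tangent to an arbitrarily chosen repelling eigenvalue is not automatic (this is fixable by taking the leading eigenvalue, or by dropping $\ell$ and arguing on all of $\unstloc$); and your paragraph 2 only establishes that some critical-value function $h_T$ is non-constant, which is strictly weaker than what the final step needs.

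The genuine gap is the step you yourself flag as "the hardest": showing $h_T$ attains the value $0$. The Picard-type route does not work as stated. The function $h_T(\eta)=\Fbold_\eta^T(0)$ is holomorphic on an open set $\Omega\subsetneq\C$, and there is no reason for $|h_T|$ to blow up at $\partial\Omega$: the boundary of $\Dom(\Fbold^T)$ consists of essential-singularity-type points near which, by Lemma \ref{lem:transitivity}, images of small disks are \emph{dense in} $\C$ rather than close to $\infty$, and the same wild behavior is expected in the parameter direction. Without properness of $h_T$ onto $\C$ (or a genuine Picard theorem for it), non-constancy gives no control on the image, and the "scaling recursion" does not by itself force $0$ into the image. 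The Ma\~n\'e--Sad--Sullivan alternative you mention is essentially the paper's argument, but the two facts you "invoke" are the entire content of the proof and must be established in this $\sigma$-proper setting: (i) the existence of a parameter with a critical-orbit collision $\Gbold^{P+Q}(0)=\Gbold^Q(0)$ in every neighborhood of $\Fbold_*$ --- the paper proves this by contradiction: if no collision occurs anywhere in $\Ucalbold$, the postcritical set moves holomorphically by the $\lambda$-lemma, every nearby cascade is then rotational, and $\Ucalbold$ would lie in the stable manifold, contradicting Theorem \ref{thm:weak-hyperbolicity}; and (ii) the perturbation of such a collision with $Q>0$ (a repelling periodic point on the critical orbit, repelling by Proposition \ref{prop:per-pt-poset}) to a superattracting parameter --- the paper does this by producing critical points $x_{\Fbold,n}$ of generation $P_n\to\infty$ accumulating at the repelling point, which requires Corollaries \ref{cor:accumulation-to-bdry} and \ref{cor:closure-of-esc}, and then applying Rouch\'e to $\Fbold\mapsto\Fbold^{Q+P_n}(x_{\Fbold,n})-x_{\Fbold,n}$. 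Neither ingredient is supplied by your sketch, so the proof is incomplete at its decisive point.
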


\begin{proof}
    Suppose for a contradiction that there is a neighborhood $\Ucalbold$ of $\Fbold_*$ in which for all $\Fbold \in \Ucalbold$, we have $\Fbold^{P+Q}(0) \neq \Fbold^Q(0)$ for all $P \in \Tbold_{>0}$, $Q \in \Tbold$. By $\lambda$-lemma, this implies that the postcritical set of $\Fbold$ moves holomorphically over $\Ucalbold$. In the realm of coronas, the corresponding neighborhood $\mathcal{V} \subset \unstloc$ of $f_*$ consists of rotational coronas. By Theorem \ref{thm:weak-hyperbolicity}, $\mathcal{V}$ must lie in the stable manifold, which is impossible.

    Therefore, every neighborhood $\Ucalbold$ of $\Fbold_*$ contains some $\Gbold$ such that $\Gbold^{P+Q}(0) = \Gbold^Q(0)$ for some $P \in \Tbold_{>0}$ and $Q \in \Tbold$. If $Q =0$, then $\Gbold$ is superattracting and we are done. Hence, let us assume that $Q>0$. In this case, $\Gbold^Q(0)$ is a periodic point of period $P$, and by Proposition \ref{prop:per-pt-poset}, it must be repelling in nature.

    Consider any sufficiently small one-dimensional disk $\Ucalbold'$ about $\Gbold$ embedded in $\Ucalbold$. By implicit function theorem, every $\Fbold \in \Ucalbold'$ admits a repelling periodic point $x_\Fbold$ of period $P$ such that $x_{\Gbold} = \Gbold^Q(0)$ and $x_\Fbold$ depends holomorphically on $\Fbold$. By Corollary \ref{cor:accumulation-julia}, there exists a sequence of critical points $x_{\Fbold,n}$ of some generation $P_n$ depending holomorphically on $\Fbold \in \Ucalbold'$ such that $P_n \to \infty$ and $x_{\Fbold,n} \to x_\Fbold$ as $n \to \infty$. By Rouch\'e's theorem, for sufficiently large $n$, the number of zeros of $\Fbold^{Q+P_n}(x_{\Fbold,n}) - x_{\Fbold,n}$ as a function of $\Fbold \in \Ucalbold'$ is equal to that of $\Fbold^{Q+P_n}(x_{\Fbold,n}) - x_\Fbold$, which is at least one (e.g. $\Gbold$). Therefore, there exist some large $n \in \N$ and some $\Fbold \in \Ucalbold'$ such that $\Fbold^{Q+P_n}( x_{\Fbold,n} ) = x_{\Fbold,n}$ and so $\Fbold^{Q+P_n}(0)=0$.
\end{proof}

\begin{remark}
    With a little more effort, one can improve the proof of Lemma \ref{lem:superattr-reduction} and adapt McMullen's approach \cite{McM00} to show the existence of infinitely many disjoint baby Multibrot copies in $\manibold$ accumulating at $\Fbold_*$.
\end{remark}

\begin{lemma}
\label{lem:superattracting-variety}
    For $P \in \Tbold$, the set $\{ \Fbold \in \manibold \: : \: \Fbold^P(0)=0\}$ is a zero-dimensional analytic variety in $\manibold$.
\end{lemma}

\begin{proof}
    The equation ``$\Fbold^P(0)=0$`` surely cuts out an analytic variety in $\manibold$. Suppose for a contradiction that it has a component of dimension at least one. Then, there exists an embedded holomorphic curve $\D \to \manibold, t \mapsto \Fbold_t$ such that each $\Fbold_t$ is superattracting of period $P$. Below, we will run the pullback argument.
    
    Let $D_t$ be the immediate basin of attraction of $0$ for the cascade $\Fbold_t$. By Proposition \ref{prop:simply-conn} (or alternatively, Riemann-Hurwitz formula), $D_t$ is simply connected. Let $b_t: (D_t,0) \to (\D,0)$ be a B\"ottcher conjugacy, i.e. a Riemann mapping which conjugates $\Fbold_t^P$ with the power map $z \mapsto z^{d}$ where $d=d_0+d_\infty-1$. Observe that 
    \[
    B_t:= b_t^{-1} \circ b_0: (D_0,0) \to (D_t,0)
    \]
    conjugates $\Fbold_0^P$ with $\Fbold_t^P$. The B\"ottcher conjugacy is unique up to multiplication by some roots of unity. We can select them such that $b_t$ depends holomorphically on $t$ and so $B_0$ is the identity map on $D_0$.
    
    By Corollary \ref{cor:conformal-motion-julia-set}, the Julia set $\Jcas(\Fbold_t)$ moves conformally and equivariantly in $t$. More precisely, there exists a holomorphic family of quasiconformal maps $\phi_t : \C \to \C$ such that $\phi_t$ is conformal almost everywhere on $\Jcas(\Fbold_0)$ and conjugates $\Fbold_0|_{\Jcas(\Fbold_0)}$ and $\Fbold_t|_{\Jcas(\Fbold_t)}$. 
    
    For $r \in (0,1)$, let $E_{t}(r) := b_t^{-1}(\D_r)$ be a disk neighborhood of $0$ cut out by an equipotential. Let $\varepsilon = \frac{1}{2}$ and $\varepsilon' = \varepsilon^{d}$. Define the global quasiconformal map
    \begin{align*}
        \psi_{t,0}(z) := \begin{cases}
            \phi_t(z) & \text{ if } z \in \C \backslash \bigcup_{0\leq T < P} \Fbold_0^T(E_{t}(\varepsilon)) \\
            \Fbold_{t}^T \circ B_{t} \circ
            \left( \Fbold_0^T|_{E_0(\varepsilon')}
            \right)^{-1} & 
            \text{ if } z \in \Fbold_0^T(E_0(\varepsilon')) 
            \text{ for some } T<P \\
            \text{quasiconformal interpolation} & \text{ if otherwise}.
        \end{cases}
    \end{align*}
    On $\Jcas(\Fbold_0)$ and a neighborhood of the periodic cycle $\left\{\Fbold_0^T(0)\right\}_T$, $\psi_{t,0}$ conjugates $\Fbold_0^P$ and $\Fbold_t^P$. Inductively, we define for all $n \geq 1$ the quasiconformal map $\psi_{t,n}: \C \to \C$ by lifting $\psi_{t,n-1}$ such that 
    \[
    \Fbold_t^P \circ \psi_{t,n} =  \psi_{t,n-1} \circ \Fbold_0^P.
    \]
    The map $\psi_{t,n}$ has dilatation equal to that of $\psi_{t,0}$ and it agrees with $\psi_{t,n-1}$ on a neighborhood of $\Jcas(\Fbold_0)$ and on increasingly large part of the Fatou set $\Fcas(\Fbold_0)$. Moreover $\psi_{t,n}$ is a conformal conjugacy between $\Fbold_0^P$ and $\Fbold_t^P$ on $\Fbold^{-nP} \left( \Fbold^T (E_0(\varepsilon)) \right)$ for all $0\leq T < P$. As $n \to \infty$, $\psi_{t,n}$ stabilizes and converges to a quasiconformal map $\psi_t$ conjugating $\Fbold_0^P$ to $\Fbold_t^P$ everywhere. By Proposition \ref{prop:no-baker-wandering}, $\psi_t$ is conformal on the whole Fatou set and almost everywhere on the Julia set. By Weyl's lemma, $\psi_t$ is a linear conjugacy between $\Fbold_0$ and $\Fbold_t$. By Lemma \ref{lem:linear-conjugacy}, $\psi_t$ must be the identity map for all $t$.
\end{proof}


\part{Conclusion}

\section{Proof of the main theorem}

In the previous section, we have proven Theorem \ref{main-theorem-rigidity}. Here, we will complete the rest of the proof of our main results.

\begin{theorem}
\label{thm:codim-one}
    Consider the corona renormalization operator $\Rstar: (\Ustar,f_*) \to (\Bstar,f_*)$ from \S\ref{sec:stab-mani} and let $\manibold$ be the associated global unstable manifold constructed in \S\ref{ss:trans-cascades}.
    Then, $\manibold$ is biholomorphic to $\C$.
\end{theorem}

\begin{proof}
    By Lemma \ref{lem:superattr-reduction}, there exists a superattracting cascade in $\manibold$ of some period $P>0$. The equation ``$\Fbold^P(0)=0$`` defines a non-empty analytic hypersurface in $\manibold$. By Lemma \ref{lem:superattracting-variety}, the dimension of $\manibold$ must be equal to one. Since $\Rboldstar$ is an automorphism of $\manibold$ admitting a unique repelling fixed point $\Fbold_*$, then clearly $\manibold$ must be isomorphic to $\C$ and $\Rboldstar: \manibold \to \manibold$ must be conformally conjugate to an expanding linear map $\C \to \C, z \mapsto \lambda z$.
\end{proof}

This theorem clearly implies Theorem \ref{main-theorem} (4). Let us also prove Corollary \ref{cor:continuity-submanifold}.

\begin{corollary}
    Consider a sufficiently small Banach neighborhood $\mathcal{N}$ of a $(d_0,d_\infty)$-critical quasicircle map $f$ with a preperiodic type rotation number $\tau$. The space $\mathcal{S}$ of maps in $\mathcal{N}$ that admit a $(d_0,d_\infty)$-critical Herman quasicircle with rotation number $\tau$ forms an analytic submanifold of $\mathcal{N}$ of codimension at most one. The Herman quasicircles of maps in $\mathcal{S}$ move holomorphically.
\end{corollary}

\begin{proof}
    Let $G$ be the Gauss map. There exists some $k \in \N$ such that $\theta := G^k(\tau)$ is a periodic type irrational. Consider the corona renormalization operator $\Rstar: (\Ustar,f_*) \to (\Bstar,f_*)$ from \S\ref{sec:stab-mani} associated to the data $(d_0,d_\infty,\theta)$. 
    
    By Lemma \ref{lem:renorm-of-any-critical rotational}, there is a compact analytic corona renormalization operator $\mathcal{R}$ on a Banach neighborhood of $f$ such that $\mathcal{R} f$ is sufficiently close to the fixed point $f_*$ of $\Rstar$, and thus it lies in the stable manifold of $f_*$. Then, the preimage $\mathcal{S}:= \renorm^{-1}(\mani^s_{\textnormal{loc}})$ is an analytic submanifold of the Banach neighborhood of $f$ consisting of perturbations of $f$ which admit a $(d_0,d_\infty)$-critical Herman quasicircle with rotation number $\tau$. By $\lambda$-lemma, the Herman quasicircles of coronas in $\mani^s_{\textnormal{loc}}$ moves holomorphically. Thus, the Herman quasicircles of maps in $\mathcal{S}$ also move holomorphically over $\mathcal{S}$.
    
    By Theorem \ref{main-theorem}, the codimension of $\mani^s_{\textnormal{loc}}$ is one, so there is an analytic function $\phi: \mathcal{U}' \to \C$ on a Banach neighborhood $\mathcal{U}'$ of $f_*$ such that $\mani^s_{\textnormal{loc}} = \phi^{-1}(0)$. Therefore, $\mathcal{S}$ is the zero set of $\phi \circ \renorm$ and so the codimension of $\mathcal{S}$ is at most one. 
\end{proof}

\section{Some remarks and conjectures}
\label{sec:conjectures}

\subsection{Structural instability}
\label{ss:structural-instability}

Consider a critical quasicircle map $f: \Hq \to \Hq$ with irrational rotation number. Recall that a Banach neighborhood $\mathcal{N}$ of $f$ is defined to be the space of unicritical holomorphic maps $g:U \to \C$ such that $g$ extends continuously to the boundary of $U$, where $U$ is a fixed small neighborhood of $\Hq$, and $g$ is close to $f$ in sup norm over $U$.

When the rotation number is the pre-periodic, Corollary \ref{cor:continuity-submanifold} and Theorem \ref{thm:qc-rigidity} tell us that the local quasiconformal conjugacy class $\mathcal{S} \subset \mathcal{N}$ of $f$ is an analytic submanifold of codimension at most one. We believe in the following conjecture.

\begin{conj}
    The local conjugacy class $\mathcal{S}$ of $f$ described above is a codimension one submanifold of the Banach neighborhood $\mathcal{N}$ of $f$. In particular, critical quasicircle maps are structurally unstable.
\end{conj}

So far, this conjecture is known to be true for:
\begin{itemize}
    \item periodic type critical quasicircle maps that are close to the associated renormalization fixed point $f_*$ (due to Theorem \ref{main-theorem}), and
    \item critical circle maps with arbitrary irrational rotation number (due to standard monotonicity properties of the rotation number).
\end{itemize}
To solve the conjecture, one needs to show the existence of a vertical direction, that is, a holomorphic vector field that is not tangent to $\mathcal{S}$. It is likely that the answer follows from an infinitesimal argument similar to unimodal maps \cite{ALM}.

\begin{figure}
    \centering
    \includegraphics[width=0.67\linewidth]{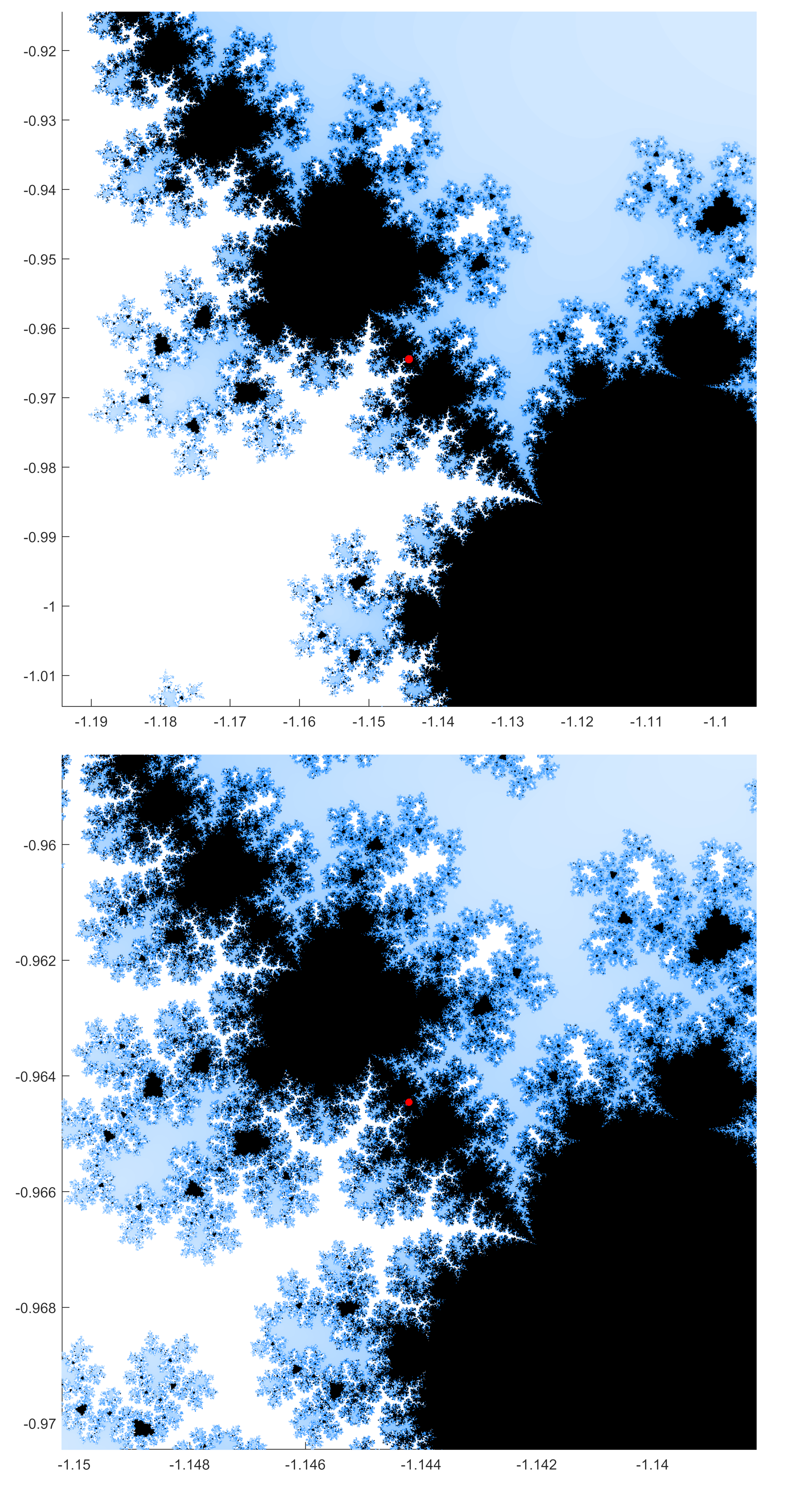}
    \caption{Magnifications of the bifurcation locus of the parameter space
    \leavevmode\\
    \vspace{-0.1in}
    \\
    \begin{minipage}{\linewidth} 
    \begin{align*}
        \left\{F_c(z) = cz^3 \dfrac{4-z}{1-4z+6z^2}\right\}_{c \in \C^*}
    \end{align*}
    by different scales about the parameter $c_* \approx -1.144208-0.964454i$ marked in red. This family is characterized by critical points $0$, $\infty$, and $1$ of local degrees $2$, $3$, and $4$ respectively, where both $0$ and $\infty$ are fixed and $F_c(1)=c$. The point $c_*$ is the unique parameter such that $F_{c_*}$ has a golden mean Herman curve. The Julia set of $F_{c_*}$ can be found in Figure \ref{fig:cqc-comparison}.
  \end{minipage}}
    \label{fig:parspace2}
\end{figure}

\subsection{Parameter self-similarity}
\label{ss:self-similarity}

Consider the one-parameter family of degree $d_0+d_\infty-1$ rational maps $\{F_c\}_{c \in \C^*}$ characterized by critical fixed points at $0$ and $\infty$ with local degrees $d_0$ and $d_\infty$ respectively, and a full degree critical point at $1$ with image $c=F_c(1)$. (See (\ref{eqn:rat-map-formula}).) Based on \cite{Lim23a, Lim23b}, the first examples of $(d_0,d_\infty)$-critical quasicircle maps can be found in this family. As stated in Theorem \ref{thm:comb-rigidity}, for every $\theta \in \IrratBdd$, there is a unique parameter $c(\theta)$ such that $F_{c(\theta)}$ admits an invariant quasicircle passing through $1$ with rotation number $\theta$. Computer pictures (Figure \ref{fig:parspace2}) suggest the following conjecture.

\begin{conj}
    For every $\theta \in \IrratPer$, the bifurcation locus of the family $\{F_c\}_{c \in \C^*}$ is asymptotically self-similar at $c(\theta)$ with a universal self-similarity factor depending only on $(d_0,d_\infty,\theta)$.
\end{conj}

Here, the universality of the factor means that self-similarity with the same scaling factor holds for arbitrary families of unicritical holomorphic maps with similar properties. It can be viewed as an extension of the golden mean universality of critical circle maps \cite{Y03a}. A version of this conjecture initially appears in \cite{Lim23a}. Theorem \ref{main-theorem} is a major step forward towards solving this conjecture, but a complete solution remains out of reach due to two subtle challenges.

The first challenge is the lack of control of the regularity of the renormalization operator sending $\{F_c\}$ to a holomorphic curve near $f_*$. Once hyperbolicity of the renormalization horseshoe for bounded type rotation numbers is established (see the discussion on \S\ref{ss:general-horseshoe}), we will have a lamination of stable manifolds to help us address this regularity issue. 

The second challenge is our limited understanding of the bifurcation locus within the unstable manifold $\unstloc$. To resolve this, we likely need to study the parameter rays within $\unstloc$ (see \cite[\S6.5]{DL23}), but it is currently unclear how to do that.

\subsection{Multiple critical points}
\label{ss:multicritical}

Many of the methods used in proving Theorem \ref{main-theorem}, particularly item \ref{main-4}, are fairly soft. For example, our hyperbolicity result holds even when we allow perturbations of $f_*$ which are no longer unicritical (i.e. when item (4) in Definition \ref{def:corona} no longer holds). Let us elaborate in more detail.

The corona structure persists under small perturbation even when the critical point splits. Therefore, the corona renormalization operator $\Rstar : (\Ustar, f_*) \to (\Bstar,f_*)$ from \S\ref{sec:stab-mani} naturally extends to a compact analytic operator $\renorm_{\text{full}} : (\mathcal{U}_{\text{full}},f_*) \to (\mathcal{B}_{\text{full}}, f_*)$ where $\mathcal{U}_{\text{full}}$ is a small open ball in $\mathcal{B}_{\text{full}}$ in which the coronas generically have $d_0+d_\infty-2$ distinct simple critical points. 

As a fixed point of $\renorm_{\text{full}}$, $f_*$ is still hyperbolic with the same local stable manifold. However, its local unstable manifold becomes larger; it is $(d_0+d_\infty-2)$-dimensional. Indeed, the proof presented in this paper still applies. At the final step, we can perform a quasiconformal surgery (along with a technique similar to that in the proof of Proposition \ref{prop:integration-of-ILF}) to perturb the unicritical superattracting cascade described in Lemma \ref{lem:superattr-reduction} to a cascade $\Fbold$ on the unstable manifold with $d_0+d_\infty-2$ distinct critical orbits. This shows that the hyperbolic component in the unstable manifold containing $\Fbold$ is $(d_0+d_\infty-2)$-dimensional.

Analogously, our result and a similar remark hold for pacman renormalization for higher degree Siegel maps. The general philosophy can be formulated as follows.

\begin{conj}
    Consider a compact analytic renormalization operator with a hyperbolic fixed point such that every map on the unstable manifold $\unstloc$ admits a global transcendental extension. Then, 
    \[
    \dime{\unstloc} \leq \textnormal{number of critical orbits}.
    \]
\end{conj}

\subsection{The general horseshoe case}
\label{ss:general-horseshoe}

For $N \geq 1$, denote by $\Theta_N$ the set of irrationals whose continued fraction expansion terms are bounded above by $N$. 
In \cite{Lim23b}, we demonstrated for any integers $d_0 \geq 2$, $d_\infty \geq 2$, and $N \geq 1$, the renormalizations of $(d_0,d_\infty)$-critical quasicircle maps with rotation numbers in $\Theta_N$ converge to a renormalization horseshoe on which $\Rcp$ is conjugate to a shift map on $N$ symbols. 
Mimicking the arguments in \S\ref{sec:stab-mani}, we should have a corona renormalization operator $\renorm$ with a renormalization attractor $\mathcal{A}_N$ on which $\renorm: \mathcal{A}_N \to \mathcal{A}_N$ is conjugate to a shift map on $N$ symbols.
In the future, we hope to investigate and prove a generalization of our main theorem: that $\mathcal{A}_N$ is uniformly hyperbolic with exactly one unstable direction.
However, at this moment, we recognize that there are many subtleties that need to be addressed in order to prove such a generalization.
For example, in this paper, we rely on the spectral theory of compact operators to construct stable and unstable manifolds.
In the general case, the construction of invariant manifolds requires a lot more work. We highly anticipate that techniques from non-hyperbolic dynamics, in particular Oseledets-Pesin theory, will be needed to rule out neutral eigenvalues and construct the local unstable manifold.

The keen reader may have also noticed the lack of discussion on $(d_0,d_\infty)$-critical quasicircle maps with unbounded irrational rotation number. 
The reason is simple. 
Currently, the only known examples of such maps are circle maps when $d_0=d_\infty$; for example, the formula from (\ref{eqn:rat-map-formula}) becomes a Blaschke product and methods from real dynamics become amenable. 
When either of the criticalities is one, we do not anticipate any generalization of Siegel maps with unbounded irrational rotation number due to Cremer phenomenon. Recently, Yang Fei \cite{Y24} proved the existence of cubic rational maps admitting Herman curves with a high type irrational number; however, in his examples, the critical points are simple and located away from the Herman curves.
When $d_0\neq d_\infty$ and both are at least two, I would expect that examples of critical quasicircle maps with arbitrary unbounded irrational rotation number may exist within the family in (\ref{eqn:rat-map-formula}).
Solving the realization problem would likely require some analysis in the near-degenerate regime that is more complicated than the work presented in \cite{Lim23a}.

Another natural question would be on the renormalization theory for bounded type multicritical quasicircle maps. In recent years, there has been great progress in the renormalization theory of multicritical circle maps \cite{EdFG18,Y19,ESY22,EY23,GY25}. The existence of multicritical quasicircle maps for arbitrary combinatorics was solved in \cite{Lim23a}, but there has yet been any study of the renormalization theory beyond the circle case. We believe that once complex a priori bounds is attained for the multicritical case, we should be able to understand the properties of the corresponding renormalization horseshoe allowing multicriticality.

\subsection*{Acknowledgements}

I would like to thank Dzmitry Dudko for numerous discussions and valuable suggestions on this project. I cannot thank him enough for his kindness and constant encouragement. I would also like to thank James Waterman, Lasse Rempe, Tanya Firsova, and Jonathan Galv\'{a}n Bermudez for helpful discussions, and Timothy Alland for sharing the results of his PhD thesis which ultimately inspired me to study transcendental dynamics associated to renormalizations. This project has been partially supported by the NSF grant DMS 2055532 and by Simons Foundation International, LTD. 

\subsection*{Data availability}

This manuscript has no associated data.

\subsection*{Conflict of interest}

The author has no conflict of interest relating to the contents of this paper.


\bibliographystyle{alpha}
 
{\small \bibliography{bibliography}}

\end{document}